\documentclass[12pt, a4paper, reqno]{amsart}
\usepackage{tikz}
\usetikzlibrary{arrows.meta, positioning, fit}
\usepackage{amssymb,amsmath,amsfonts,amsthm}
\usepackage{yhmath}
\usepackage{url}
\usepackage{preamble}
\usepackage{tikz-cd}
\usepackage{color}
\usepackage{todonotes}

\allowdisplaybreaks
\newcommand{\tf}{{\tilde f}}

\newcommand{\tg}{{\tilde g}}

\newcommand{\tA}{{\mathcal A}}
\newcommand{\tG}{{\mathcal G}}
\newcommand{\tQ}{{\mathcal Q}}
\newcommand{\tB}{{\mathcal B}}

\newcommand{\Diff}{\mathrm{Diff}^1}
\newcommand{\td}{\mathrm{d}}
\newcommand{\gs}{\geqslant}
\newcommand{\ls}{\leqslant}
\newcommand{\e}{\mathrm{e}}
\newcommand{\rk}{\mathrm{k}}
\newcommand{\m}{\mathrm{m}}
\newcommand{\csubset}{\underset{\circ}{\subseteq}}
\newcommand{\csupset}{\underset{\circ}{\supseteq}}
\newcommand{\ceq}{\overset{\circ}{=}}
\newcommand{\cneq}{\overset{\circ}{\neq}}
\newcommand{\orb}{\mathrm{Orb}}
\newcommand{\M}{\mathcal{M}}

\newtheorem{corollary}[theorem]{Corollary}

\DeclareMathOperator*{\essinf}{ess\,inf}
\DeclareMathOperator*{\esssup}{ess\,sup}

\def\supp{\operatorname{supp}}
\def\diam{\operatorname{diam}}

\title
[Continuity properties of partial entropy]
{ Continuity properties of partial entropy }
\begin{document}
\begin{abstract}
 We establish a general criterion on the upper semi-continuity of partial entropy in all directions for $C^{1+\alpha}$ diffeomorphisms: it holds when the respective sums of Lyapunov exponents are continuous. This addresses, in arbitrary dimensions, the converse aspect of the entropic continuity of the Lyapunov exponents established by Buzzi, Crovisier, and Sarig \cite{bcs2022}. Consequently, the entropy (and all the partial entropies) is always upper semi-continuous at generic ergodic measures of every $C^{1+\alpha}$ diffeomorphism, which extends the $C^{\infty}$ result of Newhouse \cite{New89}. 

Numerous applications and examples are provided, including topics related to measures with dominated splittings, SRB measures, average expanding diffeomorphisms, singular flows, standard maps, and symbolic codings for diffeomorphisms.
\end{abstract}

\maketitle

\tableofcontents

\section{Introduction}\label{sec:intro}

In the study of chaotic dynamical systems, the quantification and stability of complexity
invariants remain a central challenge. Such invariants usually link geometric properties of typical orbits (e.g., asymptotic counting and expansion) to probabilistic structures (e.g., invariant measures) of the system. Among them, Kolmogorov-Sinai metric entropy and Lyapunov
exponents stand out as foundational objects: metric entropy captures complexity from probabilistic and information-theoretic perspectives, while Lyapunov exponents characterize
the exponential rates of expansion or contraction of tangent vectors along orbits.

While metric entropy and Lyapunov exponents both serve as quantitative measures of chaos, the precise relations between these two quantities were established through a series of seminal works by Margulis, Ruelle \cite{1978Ruelle-inequality}, Pesin \cite{Pesin-entropyformula_1977}, and Ledrappier and Young \cite{LEDRAPPIER_YOUNG_A, LEDRAPPIER_YOUNG_B}.  Their stability against measure perturbations is demonstrated    in their respective continuity properties (for entropy \cite{Bowenexpansiveness, mis1976, New89, BuzziIsrael, Downarowicz2004, burguet2012} and for the Lyapunov exponent \cite{avilaviana2010, bochi2022, bockeviana2017, Furstenberg1983, ruelle1979, viana2020}).   Nevertheless, how the stabilities of entropy and Lyapunov exponents depend on each other has remained unknown for a long time. Only recently did Buzzi, Crovisier, and Sarig \cite{bcs2022} establish a fundamental form of entropic continuity for the Lyapunov exponent:
$$\textbf{\it Continuity of entropy }\quad \Longrightarrow\quad  \text{\it Continuity of Lyapunov exponent,}$$  on the set of ergodic measures with entropy bounded away from zero, with respect to $C^{\infty}$ surface diffeomorphisms. 

In this paper, we study the converse of the entropic continuity of Lyapunov exponents. It is noteworthy that the entropy is possibly not lower semi-continuous. For example, periodic measures can approximate any ergodic hyperbolic measure of positive entropy (Katok \cite{katok1980}), with continuous approximations of the Lyapunov exponents (Wang and Sun \cite{Sunwang2010}). Therefore, the reasonable problem is as follows:
\[
\textbf{\it Continuity of Lyapunov exponent }
\quad \stackrel{\textbf{\normalsize ?}}{\Longrightarrow} \quad
\textbf{\it Upper semi-continuity of entropy }.
\]

In this work, within the arbitrary dimensional setting and only under the $C^{1+\alpha}$ regularity, we obtain a positive answer to this question for entropy along all partial directions. 
The entropy along all partial directions, namely the partial entropy, constitutes a refinement of the metric entropy introduced by Ledrappier and Young in \cite{LEDRAPPIER_YOUNG_A, LEDRAPPIER_YOUNG_B}. It is a crucial tool for characterizing the distribution of dynamical complexity over Lyapunov subspaces or along strong-unstable directions. The results in this paper provide a new criterion for the upper semi-continuity of partial entropy.

Among the various formulations regarding the upper semi-continuity of the partial entropy given in \Cref{sec:1.1}, we present the following version herein:

\begin{theorem}\label{first}
  Let $f$ be a $C^{1+\alpha}$ diffeomorphism of a compact Riemannian manifold without boundary. Consider a sequence of ergodic measures $\{\mu_n\}_{n\in\mathbb N}$ converging to an ergodic measure $\mu$ in the weak$^{*}$ topology. For any $\rk\in\mathbb N$, if the sum of the top $\rk$ Lyapunov exponents of $\mu_n$ converges to that of $\mu$, then the corresponding partial entropies (whenever defined) satisfy $$\limsup_{n\to\infty} h_{\mu_n}^{\rk}(f) \ls h_{\mu}^{\rk}(f).$$
\end{theorem}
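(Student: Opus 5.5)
The plan is to reduce the statement to a comparison between the partial entropy and the sum of the top $\rk$ exponents, and to get that comparison from a Ruelle-type inequality together with a semi-continuous ``defect'' quantity. We recall that $h^{\rk}_\mu(f)$ is defined when $\lambda_\rk(\mu)>\lambda_{\rk+1}(\mu)$, via measurable partitions subordinate to the Pesin manifold $W^{\rk}$ tangent to the top $\rk$ Oseledets directions. Write $\Lambda^{\rk}(\nu)=\lambda_1(\nu)+\dots+\lambda_\rk(\nu)$. The Ledrappier--Young (partial Ruelle) inequality gives $h^{\rk}_\nu(f)\ls \Lambda^{\rk}(\nu)$, with equality exactly when the conditional measures along $W^{\rk}$ are absolutely continuous.

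Since equality is not available in general, we instead set up an upper semi-continuous ``volume-growth'' functional. For a $\rk$-dimensional disc $D$ nearly tangent to the top $\rk$-dimensional cone at a $\nu$-typical point, we consider the exponential rate at which $\nu$-typical Bowen balls intersected with $D$ are covered by $f^{-n}$-images of small $\rk$-discs. In this way we write
\[
h^{\rk}_\nu(f)=\Lambda^{\rk}(\nu)-\mathcal D^{\rk}(\nu),
\]
where $\mathcal D^{\rk}(\nu)\gs 0$ is a ``dimension/distortion defect''. It is expressed as the limit, as the scale $\epsilon\to 0$ and time $n\to\infty$, of $-\frac1n\log$ of the conditional-measure mass of unstable-$\rk$ Bowen balls relative to their Riemannian $\rk$-volume. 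Concretely, we would follow a Ledrappier--Young computation of $h^{\rk}$ as $\lim \frac{-1}{n}\log\mu^{\rk}_x(V^{\rk}_n(x,\epsilon))$, and compare it with $\frac1n\log\det(Df^n|_{E^{\rk}})$. Then $\limsup h^{\rk}_{\mu_n}\ls h^{\rk}_\mu$ follows from the assumed convergence $\Lambda^{\rk}(\mu_n)\to\Lambda^{\rk}(\mu)$ once we show the defect is lower semi-continuous: $\liminf \mathcal D^{\rk}(\mu_n)\gs \mathcal D^{\rk}(\mu)$.

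The key steps, in order, are as follows.

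First, we use convergence of $\Lambda^{\rk}$ together with upper semi-continuity of each partial sum $\Lambda^{j}$ (a subadditive-limit fact) and ergodicity. Taking a limit of Grassmannian lifts, this yields that the top $\rk$-dimensional Oseledets bundles of $\mu_n$ converge, in the sense of lifted measures on the Grassmann bundle, to $E^{\rk}$ of $\mu$. We also get uniform-in-$n$ Pesin blocks of large measure on which $f^{m}$ expands $\rk$-volume at rate near $\Lambda^{\rk}$ for a fixed $m$. Second, using $C^{1+\alpha}$ graph transforms on these blocks, we construct local $\rk$-dimensional unstable plaques of uniform size for $\mu_n$, converging to plaques of $\mu$, with bounded distortion of $\log\det Df|_{T\,\text{plaque}}$; this is where the H\"older regularity is used. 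Third, we express $h^{\rk}$ through a finite-partition formula: $h^{\rk}_\nu\approx\frac1m H_\nu(\xi^m\mid\eta^{\rk})$ for a partition $\xi$ with $\nu$-null boundary and a subordinate partition $\eta^{\rk}$ built from these plaques. We then show that the conditional entropy relative to plaques is upper semi-continuous in the lifted measure, in the spirit of Misiurewicz's argument conditioned on fibres. The defect formulation is used only to absorb the error from the growth of plaque volume, which is controlled by $\Lambda^{\rk}$ and hence converges.

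The main obstacle is the third step. Conditional entropy along a measurable partition that depends on the measure (the plaques of $\mu_n$ versus those of $\mu$) is not weak$^*$ continuous, and the Pesin plaques are only measurable. The first thing to try is to replace conditioning on plaques by counting: we bound $h^{\rk}_{\mu_n}$ from above by $\frac1N$ times the $\log$ of the number of $\xi^N$-atoms meeting a typical plaque of $f^N$-image, plus $\Lambda^{\rk}$-controlled volume terms. This is a Yomdin/Newhouse-type local volume growth estimate, in which the tail term is bounded by the gap between $\frac1N\log\|\wedge^{\rk}Df^N\|$ integrated against the lifted measure and $\Lambda^{\rk}$. The continuity hypothesis is exactly what forces that gap to vanish uniformly in $n$, which keeps the $C^{1+\alpha}$ argument free of the $C^\infty$ Yomdin theory.
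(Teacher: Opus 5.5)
Your reduction to lower semicontinuity of $\mathcal D^{\rk}=\Lambda^{\rk}-h^{\rk}$ is only a restatement. Since $\Lambda^{\rk}(\mu_n)\to\Lambda^{\rk}(\mu)$ is assumed, $\liminf_n\mathcal D^{\rk}(\mu_n)\ge\mathcal D^{\rk}(\mu)$ is equivalent to the theorem. So everything rests on your third step, which is not carried out. You name the obstacle correctly: the subordinate partitions change with $n$. But your remedy, bounding $h^{\rk}_{\mu_n}$ by $\frac1N\log$ of the number of $\xi^N$-atoms meeting a plaque, throws away the measure and cannot return $h^{\rk}_\mu$ in the limit. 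Take the constant sequence $\mu_n=\mu$, a periodic orbit measure of a linear Anosov automorphism of $\mathbb T^2$, with $\rk=1$. That count grows like $e^{N\lambda_1(\mu)}$, so your bound gives $\lambda_1(\mu)>0=h^1_\mu$. If the count is meant only for a remainder, then the upper semicontinuity of the main conditional term is simply left open.

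The paper instead keeps the measure in the main term $H_{\mu_n|_{\Xi^u_n}}(g_n^{-1}\mathcal A\mid\xi^u_n)$. Here $g_n=f_n^m$, and $\xi^u_n=\bigvee_{i\ge0}g_n^i\mathcal A^u_n$ is built from one fixed small-boundary partition $\mathcal A$, refined by the plaques through a uniform block. Two facts let it replace $\xi^u_n$ by the finite partitions $\bigvee_{i=0}^w g_n^i\mathcal A\vee\mathcal A^u_{n,w}$:
uniform backward contraction together with the $\epsilon_0/10$ separation of distinct plaques inside an atom of $\mathcal A$ (\Cref{lem 3.4}), and a coarsening by transversal grids whose holonomy preimages are null in the compact set $\Xi^u_n$ (\Cref{lem 3.5}). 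Each finite-level term then converges as $\mu_n|_{\Xi^u_n}\to\nu\le\mu|_{\Xi^u_0}$, using $\limsup_n\Xi^u_n\subseteq\Xi^u_0$ and $C^1$ convergence of plaques. Taking $\inf_w$ and using monotonicity in the measure (\Cref{lem 3.2}) yields $h^{\rk}_\mu(g)$.

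Your treatment of the remainder is also unjustified. You claim the tail is bounded by the gap between $\frac1N\int\log\|\wedge^{\rk}Df^N\|$ on the lifted measure and $\Lambda^{\rk}$, via a Yomdin/Newhouse-type volume growth estimate. This has no proof in $C^{1+\alpha}$. The Yomdin--Newhouse bound there is of order $\frac{\mathrm d}{1+\alpha}\limsup_n\frac1n\log\|Df^n\|$ and does not vanish. Getting an exponent-gap bound from reparametrizations of $\rk$-discs with $\rk>1$ is exactly the higher-dimensional bounded-distortion problem the paper describes as open.

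The paper's error control uses no volume growth. The only loss comes from points whose atoms of $\xi^u_n$ are based outside the uniform block and may wind through an atom of $\mathcal A$. There $\xi^u_n(x)$ is covered by at most $(\sup_n\|g_n\|_{C^1})^{\rk\,t(x)}$ discs of unstable diameter at most $\epsilon_1$, where $t(x)$ is the backward return time to $\Xi^u_n$ (\Cref{lem 3.9}). The Kac-type estimate $\int_{g_n^{-1}\Xi^u_n\setminus\Xi^u_n}t\,d\mu_n\le\mu_n((\Xi^u_n)^c)$ (\Cref{lem 3.11}) then bounds the error by $\mu_n((\Lambda^{\rk}_n)^c)(3m\,\mathrm d\log\Upsilon_1+\log C_0)+\log 2$, which becomes harmless after dividing by $m$ and letting $m\to\infty$.

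Thus continuity of $\Lambda^{\rk}$ is used only to make $\mu_n(\Lambda^{\rk}_n)$ uniformly close to $1$ (\Cref{uniform size prop}). Your Step 1 understates what this needs, because $\rk$-volume expansion at rate near $\Lambda^{\rk}$ does not give plaques of uniform size. One must show, via the Grassmannian lifts, that the Oseledets bundles $E^u,E^{cs}$ of $\mu_n$ lie near those of $\mu$ on most of the mass. Then one applies the Pliss lemma and a tempered-envelope estimate separately to $E^u$ and $E^{cs}$, relative to a threshold strictly between $\max\{\lambda_{\rk+1},0\}$ and $\lambda_{\rk}$.
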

The present mechanism persists under perturbations of the system and along the sequence of invariant measures, as we can see in \Cref{thm bestcor}. In addition, we establish quantitative estimates for the upper semi-continuity defect of partial entropies in the setting where the sum of Lyapunov exponents lacks continuity; see \Cref{thm best}.  See \Cref{sec:1.1} for more detailed discussions.

The upper semi-continuity ensures that small perturbations of a system and its invariant measures cannot induce sudden, unbounded increases in its partial entropy. When combined with the variational principle (\cite{Walter, wuweisheng-u-entropy, HuWuZhu21}), it guarantees the existence of equilibrium states and therefore serves as an essential property for understanding the stochastic structure of chaotic systems and their stability.

Regarding the total entropy, a remarkable result of Newhouse \cite{New89} establishes the upper semi-continuity of the metric entropy for $C^{\infty}$ diffeomorphisms. In addition, the defect of upper semi-continuity is also investigated in the context of expansivity \cite{Bowenexpansiveness, mis1976, yomdin1987, BuzziIsrael} and in low-dimensional settings \cite{burguet2012, burgertliao, dawei-luo-2025USC,tianxueting2026USC}. Nevertheless, this property may fail at certain ergodic measures in lower regularity classes: counterexamples exist for $C^r$ systems with $r<\infty$, including Misiurewicz's four-dimensional examples \cite{mis1973}, the surface diffeomorphisms constructed by  Downarowicz and Newhouse \cite{New05}, and Buzzi \cite{Buzzi2014}.  In contrast to these counterexamples, our main theorem shows that—apart from  exceptional cases—upper semi-continuity holds at every generic ergodic measure in the $C^{1+\alpha}$ setting.

\begin{corollary}\label{sec}
Let $f$ be a $C^{1+\alpha}$ diffeomorphism of a compact Riemannian manifold without boundary, and $\mu$ be a generic ergodic measure of $f$. Consider any sequence $\{\nu_n\}_{n\gs1}$ of invariant measures converging to $\mu$ in the weak$^{*}$ topology.  
Then the metric entropies satisfy
$$\limsup_{n\to \infty} h_{\nu_n}(f) \ls h_{\mu}(f).$$ 
\end{corollary}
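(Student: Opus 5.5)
Here "generic ergodic measure" means an ergodic measure lying in the residual subset of $\mathcal M_{\rm erg}(f)$, or of $\mathcal M(f)$, where the relevant functions are continuous. The plan is to combine three ingredients: Theorem \ref{first} in its non-ergodic form (\Cref{thm bestcor}), a Baire-category argument for sums of Lyapunov exponents, and the identification of total entropy with a partial entropy. Write $\Lambda_{\rk}(\nu)=\int \sum_{i\le \rk}\lambda_i(x)\,d\nu$ for the sum of the top $\rk$ exponents, with multiplicity. By subadditivity,
\[
\Lambda_{\rk}(\nu)=\inf_{n}\frac1n\int\log\|\wedge^{\rk}D_xf^n\|\,d\nu .
\]
So $\nu\mapsto\Lambda_{\rk}(\nu)$ is an infimum of continuous functions on $\mathcal M(f)$, hence upper semi-continuous. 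An upper semi-continuous function on a compact metric space, or on the Baire space $\mathcal M_{\rm erg}(f)$, which is $G_\delta$, has a residual set of continuity points. Intersecting over $\rk=1,\dots,\dim M$ gives a residual set $\mathcal R$ of ergodic measures at which every $\Lambda_{\rk}$ is continuous. We take ``generic'' to mean $\mu\in\mathcal R$.

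Fix $\mu\in\mathcal R$ and invariant measures $\nu_n\to\mu$. Let $\rk$ be the number of positive exponents of $\mu$, counted with multiplicity. By Ruelle's inequality and the Ledrappier–Young characterization, $h_\mu(f)=h^{\rk}_\mu(f)$, the partial entropy along the full unstable direction. For the $\nu_n$ we use the inequality $h_{\nu}(f)\le h^{\rk}_{\nu}(f)+\int\sum_{i>\rk}\lambda_i^+\,d\nu$. This holds because the entropy is the unstable entropy, which splits into the partial entropy along the top $\rk$ directions plus at most the remaining positive exponents, by the Ledrappier–Young increments bounded by exponents. If the paper defines partial entropy only when there is a gap at index $\rk$, we instead define it through the non-ergodic/defect formulation of \Cref{thm best}.

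It then suffices to prove two bounds. The first is $\limsup h^{\rk}_{\nu_n}\le h^{\rk}_\mu$, which follows from \Cref{thm bestcor} (the non-ergodic version of Theorem \ref{first}) because $\Lambda_{\rk}(\nu_n)\to\Lambda_{\rk}(\mu)$. The second is $\limsup\int\sum_{i>\rk}\lambda_i^+\,d\nu_n\le \sum_{i>\rk}\lambda_i^+(\mu)=0$. For the second bound, note that $\sum_{i\le j}\lambda_i^+=\max_{m\le j}\Lambda_m$ pointwise. Hence
\[
\sum_{i>\rk}\lambda_i^+\le \max_{j}\Lambda_j(x)-\Lambda_{\rk}(x).
\]
Integrating, and using continuity at $\mu$ of $\Lambda_{\rk}$ together with upper semi-continuity of $\nu\mapsto\int\max_j\Lambda_j\,d\nu$ (again an infimum of continuous functions, via $\max_j\log\|\wedge^jDf^n\|$), we obtain
\[
\limsup\int\sum_{i>\rk}\lambda_i^+\,d\nu_n\le \max_j\Lambda_j(\mu)-\Lambda_{\rk}(\mu)=0 .
\]

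The main obstacle is making the first bound valid for non-ergodic $\nu_n$, and when $\mu$ has no dominated gap at index $\rk$ along the approximating sequence. My approach would be to decompose $\nu_n$ ergodically and apply the quantitative defect estimate of \Cref{thm best} to each component, in the form $h^{\rk}\le$ (limit partial entropy) plus a term controlled by the drop in $\Lambda_{\rk}$. Concavity and affinity of the integrals then show that the total defect vanishes when $\Lambda_{\rk}(\nu_n)\to\Lambda_{\rk}(\mu)$.
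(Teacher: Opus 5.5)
Your proposal has a genuine gap. It sits where you pass from ergodic information to the invariant measures $\nu_n$, and it enters in two places.

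\textbf{First gap: convergence of the exponent sums.} You feed $\Lambda_{\rk}(\nu_n)\to\Lambda_{\rk}(\mu)$ into \Cref{thm bestcor} and into your second bound, but you never justify it. For $\mathcal R$ to be residual among ergodic measures, i.e.\ generic in $\overline{\mathcal M_e(M,f)}$, its points are continuity points of $\Lambda_{\rk}$ relative to ergodic measures. That says nothing directly about non-ergodic $\nu_n$. Taking continuity points in $\mathcal M(f)$ instead would not obviously give a residual subset of $\overline{\mathcal M_e(M,f)}$.

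\textbf{Second gap, more serious: the entropy inequality.} The inequality $h_\nu(f)\ls h^{\rk}_\nu(f)+\int\sum_{i>\rk}\lambda_i^+\,\td\nu$ is a Ledrappier--Young statement for ergodic $\nu$ with $\mathrm{Hyp}^{\rk}(\nu)>0$. For invariant $\nu_n$, the quantity controlled by \Cref{thm bestcor} is $\tilde h^{\rk}_{\nu_n}$, which discards the ergodic components without a gap at $\rk$. On those components the inequality fails:
\begin{itemize}
\item a component with $\lambda_{\rk}=\lambda_{\rk+1}>0$ has $\tilde h^{\rk}=0$, but its entropy may be as large as $\sum_i\lambda_i^+$;
\item a component with $\lambda_{\rk}\ls 0$ has both right-hand terms equal to $0$, while its entropy may be positive.
\end{itemize}
So adding your two bounds does not bound $h_{\nu_n}$ unless you show that such components carry vanishing mass. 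Your proposed remedy, applying \Cref{thm best} componentwise and using affinity, targets the partial-entropy bound, which \Cref{thm bestcor} already gives for non-ergodic measures. It does not address the entropy of the gapless components. Note also that $\mu$ itself always has the gap at $\rk$; the problem lies only with the components of $\nu_n$.

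\textbf{The missing idea.} Since $\mu$ is ergodic, it is an extreme point of $\mathcal M(f)$. Let $\Omega_{\nu_n}$ be the ergodic decomposition of $\nu_n$. Every weak$^*$ limit of $\Omega_{\nu_n}$ has barycenter $\mu$, hence equals $\delta_\mu$. The paper obtains this concentration from \Cref{lem:joining} in the proof of \Cref{USC at generic erg}.

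Combine this with continuity of every $\Lambda_j$ at $\mu$ along ergodic measures. Then for each $\eta>0$, all ergodic components except an $o(1)$ part in $\Omega_{\nu_n}$-measure have all exponents $\eta$-close to those of $\mu$, and hence satisfy $\mathrm{Hyp}^{\rk}>0$. By Ruelle's inequality, the remaining components contribute at most $o(1)\cdot\dim M\cdot\log\|f\|_{C^1}$ to $h_{\nu_n}$. The same concentration gives $\Lambda_j(\nu_n)\to\Lambda_j(\mu)$ for all $j$, which closes the first gap.

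\textbf{Comparison with the paper.} With this repair your argument is complete, and it is a genuine variant of the paper's. The paper applies \Cref{USC of entropy} with $u=\dim E^+(\mu)$; its proof works inside the joining and bounds $\lambda_{u+1}(\nu)<3\gamma_1$ on most components. You instead control the exponents beyond $\rk$ globally, through the upper semi-continuity of $\nu\mapsto\int\sum_i\lambda_i^+\,\td\nu$. That observation is clean and correct.
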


\subsection{Main results}\label{sec:1.1}

Let $f$ be a $C^{1+\alpha}$ diffeomorphism of a compact $\td$-dimensional Riemannian manifold $M$ without boundary preserving a Borel probability measure $\mu$.
The Oseledets Multiplicative Ergodic Theorem \cite{Oseledets} guarantees that for $\mu$-almost every $x\in M$, there exist Lyapunov exponents $\hat{\lambda}_1(x, f) > \hat{\lambda}_2(x, f) > \dots > \hat{\lambda}_{r(x)}(x, f)$ and a corresponding decomposition of the tangent space $T_xM = \bigoplus_{i=1}^{r(x)} E_i(x)$ such that for any $0 \neq v \in E_i(x)$,
$$\lim_{n\to\infty} \frac{1}{n} \log \|Df^n(v)\| = \hat{\lambda}_i(x, f).$$
Taking multiplicities into account, we denote all Lyapunov exponents (counted with multiplicity) by $\lambda_1(x, f)\geqslant \lambda_2(x,f)\geqslant \dots \geqslant \lambda_{\td}(x, f)$. When $\mu$ is ergodic, the Lyapunov exponents are constant $\mu$-almost everywhere and are denoted by $\hat{\lambda}_i(\mu, f)$ ($1 \ls i \ls r(\mu)$) and $\lambda_i(\mu, f)$ ($1 \ls i \ls \td$). 

The Margulis–Ruelle inequality \cite{1978Ruelle-inequality} states that the metric entropy of $\mu$ is bounded above by the sum of its positive Lyapunov exponents. Moreover, Pesin \cite{Pesin-entropyformula_1977} proved that the equality holds if $\mu$ is absolutely continuous.

Ledrappier and Young \cite{LEDRAPPIER_YOUNG_A} further demonstrated that
measures absolutely continuous along unstable manifolds—known as Sinai–Ruelle–Bowen measures—are precisely the ones for which the Pesin entropy formula holds.
For general measures, they introduced in \cite{LEDRAPPIER_YOUNG_B} the partial entropy $\hat{h}_\mu^i(x, f)$, which is associated with the subspace $F_i(x) = E_1(x) \oplus \cdots \oplus E_i(x)$ corresponding to Lyapunov exponents at least $\hat{\lambda}_i(x, f)$. It is  the entropy of any measurable partition $\xi^i$ subordinate to the unstable manifold $W^i$ relative to $F_i$  (such partitions always exist), defined by
$$\hat{h}_\mu^i(x, f):=H_{\mu_x}(\xi^i\mid f\xi^i),$$
with  $\mu_x$ the ergodic component of $x$. 
Taking multiplicities into account, set $$\rk = \dim E_1(x)+\dots+\dim E_i(x)$$ and let $$h^{\rk}_{\mu}(x, f)=\hat{h}^i_{\mu}(x, f).$$ When $\mu$ is ergodic, $\hat{h}_\mu^i(x, f)=\hat{h}_\mu^i(f)$ and $h^{\rk}_{\mu}(x, f)=h^{\rk}_{\mu}(f)$ are constants. 
Using this, they established the  general entropy formula that incorporates the fractal transverse dimensions in \cite[Theorem C]{LEDRAPPIER_YOUNG_B}.

In this context, our results on the upper semi-continuity of partial entropies clarify how the continuous variation of Lyapunov exponents governs the stability of partial complexity. Within the Ledrappier-Young framework, this ensures that the complexity remains controlled when the dynamical system or its measures are perturbed.
The precise formulations of these results are presented in the following subsections, organized sequentially from sequences of ergodic measures to sequences of invariant measures and quantitative defect estimates.

\subsubsection{Along sequences of ergodic measures}$\,$\smallskip

Denote by $\mathrm{Diff^{1+\alpha}}(M)$ the set of $C^{1+\alpha}$ diffeomorphisms on a compact Riemannian manifold $M$ without boundary.     For any $f\in \mathrm{Diff^{1+\alpha}}(M)$, $f$-invariant  measure $m$ and  $\rk\in\mathbb N$, denote the $\rk$-dimensional exponent gap function as
$$\mathrm{Hyp}^{\mathrm{k}}(m):=\essinf_{x\sim m}\{\lambda_\rk(x,f)-\max\{\lambda_{\rk+1}(x,f),0\}\},$$
where $\essinf_{x\sim m} f(x)$ denotes the essential infimum of $f$ with respect to the measure \(m\). Denote 
$$\lambda_{\rk}(m,f)=\int_M \lambda_{\rk}(f,x) \, \td m(x).$$
If $\mathrm{Hyp}^{\mathrm{k}}(m)>0$, the $\rk$-dimensional partial entropy $$h^\rk_m(f):=\int_{M} h^\rk_{m_x}(f)\,\td m(x)$$ is well-defined, where $m_x$ is the ergodic component of $m$, corresponding to $x$.

\begin{theorem}\label{thm 3.3}
    Let $f_n\in\mathrm{Diff}^{1+\alpha}(M)$ and $\mu_n$ be an ergodic measure of $f_n$. Assume that 
    \begin{itemize}
    \item $f_n\xrightarrow{C^1} f\in \mathrm{Diff}^{1+\alpha}(M)$
    with $\sup_{n\in \mathbb N}\{\|f_n\|_{C^{1+\alpha}}\}< \infty$, \smallskip
    \item $\mu_n\xrightarrow{*}\mu$.
    \end{itemize}    
If there exists $\mathrm{k} \in\mathbb N$ such that
\begin{itemize}
    \item $\mathrm{Hyp}^{\mathrm{k}}(\mu)>0,$ \smallskip
    \item $\sum_{i=1}^{\mathrm{k} }\lambda_i(\mu_n,f_n)\xrightarrow{n\to\infty}\sum_{i=1}^\rk\lambda_i(\mu,f),$
\end{itemize}
then $h^\rk_{\mu_n}(f_n)$ is well-defined for any $n\in\mathbb N$ large enough and
\[
\limsup_{n\to\infty}h^\rk_{\mu_n}(f_n)\ls h_{\mu}^\rk(f).
\]
\end{theorem}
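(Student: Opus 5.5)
The plan is to first propagate the hypotheses on $\mu$ to $\mu_n$ for large $n$, and then prove a defect inequality for $h^\rk$ at a fixed time $N$ and a fixed scale. In that inequality the error term is the gap between a finite-time exponent integral and the true exponent sum. This gap passes to the limit by weak$^*$ convergence and vanishes as $N\to\infty$ by the continuity hypothesis.

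\emph{Step 1: persistence of the gap.} Write $S_j(m,g)=\sum_{i\ls j}\lambda_i(m,g)$. By subadditivity,
$$S_j(m,g)=\inf_N \frac1N\int\log\|\Lambda^jDg^N\|\,\td m .$$
Each integrand is continuous in $(g,m)$ for the $C^1$ and weak$^*$ topologies. Hence $(g,m)\mapsto S_j(m,g)$ is upper semi-continuous, giving $\limsup_n S_j(\mu_n,f_n)\ls S_j(\mu,f)$ for every $j$.
\begin{itemize}
\item Combining $j=\rk-1$ with the hypothesis that $S_\rk$ converges gives $\liminf_n\lambda_\rk(\mu_n,f_n)\gs\lambda_\rk(\mu,f)$.
\item Combining $j=\rk+1$ with the same hypothesis gives $\limsup_n\lambda_{\rk+1}(\mu_n,f_n)\ls\lambda_{\rk+1}(\mu,f)$.
\end{itemize}
Since each $\mu_n$ is ergodic, $\mathrm{Hyp}^\rk(\mu)>0$ then gives $\mathrm{Hyp}^\rk(\mu_n)\gs\frac12\mathrm{Hyp}^\rk(\mu)$ for $n$ large. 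So $h^\rk_{\mu_n}(f_n)$ is well defined. Moreover, the $\rk$-dimensional unstable manifolds of $\mu_n$ have a uniform exponent gap. Together with $\sup_n\|f_n\|_{C^{1+\alpha}}<\infty$, this yields uniform Pesin estimates: for any $\delta>0$ there are $\ell$ and sets $\Lambda_n$ with $\mu_n(\Lambda_n)>1-\delta$ on which local $W^\rk$-manifolds have size $\gs r_\ell$, Lipschitz constants bounded, and bounded distortion along $W^\rk$. I would obtain these by running the graph transform with constants depending only on the gap and the $C^{1+\alpha}$ bound.

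\emph{Step 2: finite-scale defect inequality.} Fix $N$ and a finite partition $\mathcal P$ of diameter $\ll r_\ell$ with $\mu(\partial\mathcal P)=0$. I aim for an inequality of Ledrappier--Young/Ruelle type,
$$h^\rk_{\mu_n}(f_n)\ls \frac1N H_{\mu_n}\big(\mathcal P^{N}\,\big|\,\mathcal Q\big)+\Big(\frac1N\int\log\|\Lambda^\rk Df_n^N\|\,\td\mu_n-S_\rk(\mu_n,f_n)\Big)+\varepsilon(\delta,\ell).$$
Here $\mathcal P^N=\bigvee_{i<N}f_n^{-i}\mathcal P$, and $\mathcal Q$ is a finite partition transverse to the $\rk$-direction (plaques of a fake $\rk$-dimensional foliation localized at scale $r_\ell$). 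The heuristic is the following. $h^\rk$ counts how many $\mathcal P^N$-atoms are needed to cover a local $W^\rk$-plaque, weighted by the conditional measures. That covering number is governed by the $\rk$-volume growth $|\det Df_n^N|_{E^\rk}|\ls\|\Lambda^\rk Df_n^N\|$. The amount by which this crude volume bound exceeds the actual exponential growth $e^{NS_\rk}$ is exactly the middle term. Outside $\Lambda_n$, the contribution is absorbed into $\varepsilon(\delta,\ell)$ using $\log\|Df_n\|$ bounds. This is the step I expect to be the main obstacle. It requires comparing the intrinsic conditional entropy on the nonlinear unstable partition with a finite-partition count, uniformly in $n$, without a dominated splitting. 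I would try to work inside Pesin charts and compare $W^\rk$ with linear plaques via the uniform graph-transform estimates of Step 1.

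\emph{Step 3: passage to the limit.} Let $n\to\infty$ with $N,\mathcal P,\mathcal Q$ fixed (boundaries of $\mu$-measure zero).
\begin{itemize}
\item The first term is upper semi-continuous and tends to $\frac1N H_\mu(\mathcal P^N|\mathcal Q)$.
\item The integral converges to $\frac1N\int\log\|\Lambda^\rk Df^N\|\,\td\mu$ by weak$^*$ and $C^1$ convergence.
\item The hypothesis gives $S_\rk(\mu_n,f_n)\to S_\rk(\mu,f)$.
\end{itemize}
Then let $N\to\infty$. The defect term $\frac1N\int\log\|\Lambda^\rk Df^N\|\,\td\mu-S_\rk(\mu,f)$ tends to $0$ by the subadditive limit. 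Also $\frac1N H_\mu(\mathcal P^N|\mathcal Q)\to h^\rk_\mu(f)+o_{\mathcal P,\ell}(1)$, by the Ledrappier--Young characterization of $h^\rk$ as the entropy along $W^\rk$ of fine partitions. Finally, letting $\delta\to0$ and refining $\mathcal P$ yields $\limsup_n h^\rk_{\mu_n}(f_n)\ls h^\rk_\mu(f)$.
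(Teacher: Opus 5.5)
Your Step 1 derivation of $\liminf_n \mathrm{Hyp}^\rk(\mu_n)>0$ is the same as the paper's. The argument breaks down after that, and the main failure is in Step 3: it does not produce the partial entropy. The first bullet of Step 3 (upper semi-continuity in $n$ for free) requires $\mathcal Q$ to be a fixed finite partition with null boundaries. But then
\[
H_\mu(\mathcal P^N)-H_\mu(\mathcal Q)\ls H_\mu(\mathcal P^N\mid\mathcal Q)\ls H_\mu(\mathcal P^N),
\]
so $\frac1N H_\mu(\mathcal P^N\mid\mathcal Q)\to h_\mu(f,\mathcal P)$. This approximates the full entropy $h_\mu(f)$, which can be strictly larger than $h^\rk_\mu(f)$ when $\lambda_{\rk+1}>0$. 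The Ledrappier--Young characterization you invoke requires conditioning on an increasing partition subordinate to $W^\rk$, not on a finite one. So your argument gives at best $\limsup_n h^\rk_{\mu_n}(f_n)\ls h_\mu(f)$, which is exactly the weaker comparison the paper attributes to earlier methods.

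To isolate $h^\rk$, you must condition on a partition subordinate to $\mu_n$'s own $\rk$-unstable lamination, which changes with $n$. Upper semi-continuity in $n$ then becomes the real problem, and letting $\mathcal Q$ shrink transversally with $N$ runs into the same issue. Solving it needs:
\begin{itemize}
\item uniform blocks for $f_n$ and $f$ with $\limsup_n\Lambda^\rk_n=\Lambda^\rk$;
\item $C^1$-convergence of the local unstable manifolds (\Cref{aprro});
\item a comparison of $\lim_n\mu_n|_{\Xi^u_n}$ with $\mu|_{\Xi^u_0}$.
\end{itemize}
This is what the paper's main-term argument is built for. It uses $\xi^u_n=\bigvee_{i\gs0}g_n^i\tA^u_n$, the transversal coarsenings $\tA^u_{n,w}$ with null boundary inside $\Xi^u_n$, and monotonicity of conditional entropy in the measure.

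The second gap is that the uniform sets $\Lambda_n$ of Step 1 do not follow from a uniform gap plus a $C^{1+\alpha}$ bound. The graph-transform constants depend on the tempered Pesin constant $K$, and a gap bound for each $\mu_n$ gives no control on the $\mu_n$-distribution of $K$ that is uniform in $n$. Your argument never uses convergence of $S_\rk$ at this point. It would therefore apply equally when the gaps persist but $S_\rk$ jumps. Yet uniform blocks whose measures approach $1$ force convergence of $S_\rk$ (remark after \Cref{uniform size prop}).

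This is exactly where the continuity hypothesis must enter. The paper bounds the $\mu_n$-measure of points whose Oseledets spaces deviate from reference ones by the exponent defect. It then applies the Pliss lemma and tempered envelopes of Buzzi--Crovisier--Sarig to a weighted cocycle. Because $\mu$ need not be ergodic, the threshold between $\lambda_\rk$ and $\lambda_{\rk+1}$ must be a continuous function $\varphi$ rather than a constant (\Cref{appendix a5}).

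Finally, Step 2's defect term has no mechanism behind it. A covering heuristic yields a Ruelle-type bound by $\rk$-volume growth, not a comparison with $\frac1N H(\mathcal P^N\mid\mathcal Q)$. Yomdin-type local-entropy bounds are unavailable in $C^{1+\alpha}$. In the paper the exponent defect enters only through $\mu_n(M\setminus\Lambda^\rk_n)$. The entropy off the blocks is controlled by the winding count $\sup_n\|g_n\|_{C^1}^{\,t(x)\rk}$ together with the Kac-type bound $\int t\,\td\mu_n\ls\mu_n((\Xi^u_n)^c)$ (\Cref{lem 3.11}).
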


\begin{remark}
  In Theorem \ref{thm 3.3}, the measure $\mu$ is $f$-invariant, but not necessarily ergodic. When $\mu$ is ergodic, the inequality $\mathrm{Hyp}^{\rk}(\mu)>0$ holds automatically at every jump among positive Lyapunov exponents, i.e., whenever $\lambda_{\rk}(\mu,f) \neq \lambda_{\rk+1}(\mu,f)$ and $\lambda_{\rk}(\mu,f)>0$.  \Cref{first} follows from this theorem when we take $f_1=f_2=\cdots=f$.
\end{remark}


\smallskip

\subsubsection{Along sequences of invariant measures}$\,$\smallskip

To begin, we extend the definition of partial entropy to all invariant measures, which may fail to possess distinct Lyapunov exponents, via ergodic decomposition.

\begin{definition}[Partial Entropy for Invariant Measures]
    For any $f$-invariant measure $m$, possibly with $\mathrm{Hyp}^\rk(m)=0$, the $\rk$-dimensional partial entropy $\tilde h^\rk_{m}(f)$ is defined by its ergodic decomposition:
    $$\tilde h^\rk_{m}(f):=\int_{\mathrm{Hyp}^{\mathrm{k}}(m_x)>0} h^\rk_{m_x}(f)\,\td m(x).$$
\end{definition}

It immediately follows that $\tilde h^\rk_{m}(f)$ is well-defined for any invariant measure and $\tilde h^\rk_{m}(f)= h^\rk_{m}(f)$ whenever $h^\rk_{m}(f)$ is well-defined.

Next, we prove the following theorem generalizing \Cref{thm 3.3}, which relaxes the ergodic assumption for $\mu_n$.

\begin{theorem}\label{thm bestcor}
    Let $f_n\in\mathrm{Diff}^{1+\alpha}(M)$ and $\mu_n$ be an invariant measure of $f_n$. Assume
    \begin{itemize}
    \item $f_n\xrightarrow{C^1} f\in \mathrm{Diff}^{1+\alpha}(M)$
    with $\sup_{n\in \mathbb N}\{\|f_n\|_{C^{1+\alpha}}\}< \infty$, \smallskip
    \item $\mu_n\xrightarrow{*}\mu$.
    \end{itemize}    
If there exists $\rk\in\mathbb N$ such that
\begin{itemize}
    \item $\mathrm{Hyp}^{\mathrm{k}}(\mu)>0,$ \smallskip
    \item $\sum_{i=1}^\rk\lambda_i(\mu_n,f_n)\xrightarrow{n\to\infty}\sum_{i=1}^\rk\lambda_i(\mu,f),$
\end{itemize}
then we have
\[
\limsup_{n\to\infty}\tilde h^\rk_{\mu_n}(f_n)\ls h_{\mu}^\rk(f).
\]
\end{theorem}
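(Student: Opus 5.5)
We derive Theorem~\ref{thm bestcor} from Theorem~\ref{thm 3.3} by passing to ergodic decompositions. The key tool is the upper semi-continuity of the Lyapunov sum $m\mapsto \Lambda^\rk(m):=\sum_{i=1}^\rk\lambda_i(m,f)$, which is an infimum of continuous functions $\frac1N\int\log\|\wedge^\rk Df^N\|\,\td m$. This holds jointly in $(f_n,m)$ under $C^1$ convergence.

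First we set up the decomposition. Write $\mu_n=\int \mu_{n,x}\,\td\mu_n(x)$ as its ergodic decomposition, viewed as a probability $P_n$ on the space of $f_n$-invariant measures $\mathcal M$. Pass to a subsequence realizing the $\limsup$ of $\tilde h^\rk_{\mu_n}(f_n)$ and with $P_n\to P$ weak$^*$ on $\mathcal{P}(\mathcal M(M))$, a compact space. Then $P$ is supported on $f$-invariant measures and has barycenter $\mu$. The hypothesis $\Lambda^\rk(\mu_n,f_n)\to\Lambda^\rk(\mu,f)$ together with linearity gives $\int\Lambda^\rk\,\td P_n\to\Lambda^\rk(\mu)=\int \Lambda^\rk(\nu)\,\td P(\nu)$, where the last equality again uses affinity of $\Lambda^\rk$ on invariant measures of $f$. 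Upper semi-continuity gives $\limsup\Lambda^\rk(\nu_n,f_n)\ls\Lambda^\rk(\nu,f)$ whenever $\nu_n\to\nu$. A standard argument then upgrades integral convergence to a ``convergence in $P_n$-probability'': for each $\epsilon>0$, the $P_n$-mass of $\{\nu:\Lambda^\rk(\nu,f_n)<\Lambda^\rk(\text{nearby limit})-\epsilon\}$ tends to zero. Concretely, we use Skorokhod representation, or a coupling of $P_n$ and $P$ through weak$^*$ convergence, to realize $\nu_n\to\nu$ almost surely, with $\Lambda^\rk(\nu_n)\to\Lambda^\rk(\nu)$ in $L^1$, hence almost surely along a subsequence. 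Note also that $P$-a.e.\ $\nu$ satisfies $\mathrm{Hyp}^\rk(\nu)>0$ and $\nu$'s ergodic components are components of $\mu$. Since $P$ is only a decomposition of $\mu$, not necessarily the ergodic one, each $\nu$ inherits $\mathrm{Hyp}^\rk(\nu)\gs\mathrm{Hyp}^\rk(\mu)>0$.

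Next we apply Theorem~\ref{thm 3.3} pointwise in the coupling. For a.e.\ coupled pair, $\nu_n$ is ergodic for $f_n$, $\nu_n\to\nu$, and the Lyapunov sums converge. Theorem~\ref{thm 3.3}, which explicitly allows a non-ergodic limit, then gives $\limsup \tilde h^\rk_{\nu_n}(f_n)\ls h^\rk_\nu(f)$. For large $n$ the term $\tilde h^\rk_{\nu_n}$ coincides with $h^\rk_{\nu_n}$, and it is bounded otherwise.

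To integrate, note that $h^\rk_{\nu_n}(f_n)\ls \Lambda^\rk$-type bound $\ls \rk\sup_n\log\|Df_n\|$ by the Ruelle inequality. This is a uniform bound, so reverse Fatou applies:
\[
\limsup_n \tilde h^\rk_{\mu_n}(f_n)=\limsup_n\mathbb E\,\tilde h^\rk_{\nu_n}(f_n)\ls \mathbb E\,h^\rk_\nu(f)=h^\rk_\mu(f).
\]
The final equality uses that $h^\rk$ is affine, being defined as the integral over ergodic components.

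The main obstacle is the coupling step: extracting almost sure convergence of Lyapunov sums of the ergodic components from convergence of the averages. This works only because upper semi-continuity makes the deficits $\Lambda^\rk(\nu)-\limsup\Lambda^\rk(\nu_n)$ nonnegative with integral zero. I would handle it carefully via Fatou applied to the nonnegative quantities $C-\Lambda^\rk$.
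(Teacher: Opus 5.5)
Your argument is correct, but it follows a different route from the paper's. The paper never applies the qualitative Theorem~\ref{thm 3.3} to the ergodic components. It deduces Theorem~\ref{thm bestcor} from the quantitative Theorem~\ref{thm best}. That proof integrates the ergodic defect bound of Theorem~\ref{thm 3.4}, which is \emph{linear} in $\sum_{i\ls\rk}\lambda_i(\tau,f)-\sum_{i\ls\rk}\lambda_i(\nu,f_n)$, against the Downarowicz--Maass joinings $J^n$ of $\Omega_{\mu_n}$ and $\Omega$ (Lemma~\ref{lem:joining}). Linearity of the Lyapunov sums then turns the integrated defect into $\sum_{i\ls\rk}\lambda_i(\mu,f)-\sum_{i\ls\rk}\lambda_i(\mu_n,f_n)$, so pointwise convergence of the sums along the coupling is never needed. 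The price is that the sequential Theorem~\ref{thm 3.4} must be converted into neighbourhoods $U_f(\tau)$, $V_\tau$, and a set $G_1$ of large $\Omega$-measure must be chosen on which their sizes are uniformly bounded below.

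You replace the passage lemma by Skorokhod's representation. You compensate for having only the qualitative theorem by upgrading convergence of the \emph{mean} Lyapunov sums to $L^1$ convergence along the coupling, using upper semi-continuity. This lets you apply Theorem~\ref{thm 3.3} directly to sequences and avoids the neighbourhood and selection step. It gives only the qualitative conclusion, however: when the sums do not converge, the $L^1$ upgrade is unavailable and one needs the linear defect bound. That is what the paper's route buys, namely Theorem~\ref{thm best} for non-ergodic $\mu_n$. Your device is close to the one the paper itself uses in the proof of Corollary~\ref{USC of entropy}.

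One step needs a different justification than the one you name. Write $S_\rk(\nu,g):=\sum_{i=1}^{\rk}\lambda_i(\nu,g)$. Fatou applied to $C-S_\rk$ only yields $\limsup_n S_\rk(\nu_n,f_n)=S_\rk(\nu,f)$ almost surely. That does not give a \emph{common} subsequence along which almost every coupled pair has convergent sums, and you need a common one because you then take a limsup of entropies.

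Argue instead with $D_n:=S_\rk(\nu,f)-S_\rk(\nu_n,f_n)$. Upper semi-continuity gives $D_n^-\to0$ almost surely. The $D_n$ are uniformly bounded by $2\rk\log\sup_n\|f_n\|_{C^1}$, so $\mathbb E D_n^-\to0$ by bounded convergence. Together with $\mathbb E D_n\to0$ this gives $\mathbb E|D_n|=\mathbb E D_n+2\,\mathbb E D_n^-\to0$. Now extract the almost-sure subsequence, after first fixing a subsequence realizing $\limsup_n\tilde h^\rk_{\mu_n}(f_n)$, as you do.

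Your phrase ``$\nu$'s ergodic components are components of $\mu$'' is also loose. The clean reason is that $P$ has barycenter $\mu$, so $\nu(A)=0$ for $P$-a.e.\ $\nu$, where $A$ is the single $\mu$-null set on which the exponent gap drops below $\mathrm{Hyp}^{\rk}(\mu)$.
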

\begin{remark}
For partially hyperbolic diffeomorphisms, the upper semi-continuity of the unstable entropy was established by Yang \cite{YANG_expanding_entropy} and Hu, Hua, and Wu  \cite{wuweisheng-u-entropy}. 
More generally, Newhouse \cite{New89} introduced the notion of hyperbolic rate to study the upper semi-continuity of metric entropy for hyperbolic measures. This  was  extended to nonhyperbolic measures in \cite{Liao19}.  For partial entropy,  Hu and Wu \cite{huwu2024} proved that the upper semi-continuity holds on a set of invariant measures with the same expansive rate or the same hyperbolic rate under the compactness assumption.  
See more related results of $u$-entropy in the survey \cite{Tahzibi2021}. 
\end{remark}

Together with the Ledrappier-Young entropy formula \cite[Theorem C]{LEDRAPPIER_YOUNG_B}, see \cite{LEDRAPPIER_YOUNG_A} and \cite{LEDRAPPIER_YOUNG_B} for instance, we obtain the following corollary for the upper semi-continuity of the metric entropy.
\begin{corollary}\label{USC of entropy}
        Let $f_n\in\mathrm{Diff^{1+\alpha}}(M)$ and $\mu_n$ be an invariant measure of $f_n$. Assume
    \begin{itemize}
    \item $f_n\xrightarrow{C^1} f\in \mathrm{Diff^{1+\alpha}}(M)$
    with $\sup_{n\in \mathbb N}\{\|f_n\|_{C^{1+\alpha}}\}< \infty$, \smallskip
    \item $\mu_n\xrightarrow{*}\mu$.
    \end{itemize}    
If there exists $u\in\mathbb N$ such that
\begin{itemize}
    \item $\essinf_{x\sim\mu}\lambda_u(x,f)>0\gs \esssup_{x\sim\mu}\lambda_{u+1}(x,f),$ \smallskip
    \item $\sum_{i=1}^u\lambda_i(\mu_n,f_n)\xrightarrow{n\to\infty}\sum_{i=1}^u\lambda_i(\mu,f),$
\end{itemize}
then we have
\[
\limsup_{n\to\infty} h_{\mu_n}(f_n)\ls h_{\mu}(f).
\]
\end{corollary}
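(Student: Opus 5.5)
The plan is to derive \Cref{USC of entropy} from \Cref{thm bestcor}, applied with $\rk=u$, and to pass from partial entropy to full entropy with the Ledrappier--Young formula. The first step is to check the hypothesis $\mathrm{Hyp}^{u}(\mu)>0$. For $\mu$-a.e.\ $x$ we have $\lambda_u(x,f)\gs \essinf_{x\sim\mu}\lambda_u>0$ and $\max\{\lambda_{u+1}(x,f),0\}=0$, because $\lambda_{u+1}\ls \esssup\lambda_{u+1}\ls 0$. Hence
\[
\mathrm{Hyp}^{u}(\mu)=\essinf_{x\sim\mu}\bigl(\lambda_u(x,f)-\max\{\lambda_{u+1}(x,f),0\}\bigr)\gs \essinf_{x\sim\mu}\lambda_u(x,f)>0 .
\]
The exponent-sum hypothesis is assumed verbatim. \Cref{thm bestcor} therefore gives $\limsup_n \tilde h^{u}_{\mu_n}(f_n)\ls h^{u}_{\mu}(f)$.

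The second step identifies both sides with metric entropies. On the limit side, almost every ergodic component $\mu_x$ of $\mu$ has exactly $u$ positive exponents, and $F_i$ for the appropriate $i$ is the full unstable subspace. By \cite[Theorem C]{LEDRAPPIER_YOUNG_B} (the top level of the unstable filtration carries all of the entropy), $h^{u}_{\mu_x}(f)=h_{\mu_x}(f)$. Integrating over the ergodic decomposition, and using that entropy is affine, gives $h^u_\mu(f)=h_\mu(f)$.

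The third step, which I expect to be the main obstacle, is the sequence side. I need $h_{\nu}(f_n)\ls \tilde h^{u}_{\nu}(f_n)$ up to an error that vanishes as $n\to\infty$, where $\nu$ ranges over the ergodic components of $\mu_n$. There is no a priori control on how many positive exponents the components of $\mu_n$ have, so the argument has to use the continuity of the exponent sum. Write $\Lambda_n(x)=\sum_{i\ls u}\lambda_i(x,f_n)$. By upper semicontinuity of exponent sums under weak$^*$ and $C^1$ convergence, there is a subadditive bound from above. Combined with the assumed convergence of the integrals $\int\Lambda_n\,\td\mu_n\to\int\Lambda\,\td\mu$, this should force the following.
\begin{enumerate}
\item[(a)] The mass of components with $\lambda_{u+1}(\nu)>\delta$, or with $\lambda_u(\nu)<\delta'$, tends to zero for suitable small $\delta$ and fixed $\delta'>0$.
\item[(b)] The total positive exponent mass beyond index $u$, namely $\int\sum_{i>u}\lambda_i^+\,\td\mu_n$, tends to zero.
\end{enumerate}
I would prove (b) via the fact that the sum of the top $j$ exponents is upper semicontinuous for every $j$, and that $\sum_{i\ls j}\lambda_i(\mu)=\sum_{i\ls u}\lambda_i(\mu)$ for $j\gs u$ up to nonpositive terms.

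Given (a) and (b), the estimate on each component $\nu$ with $\mathrm{Hyp}^u(\nu)>0$ goes as follows. Ledrappier--Young gives $h_\nu=h^{u}_\nu+(\text{entropy increments on the higher levels } u<\rk'\ls \#\{\lambda_i>0\})$, and each increment is at most the corresponding positive exponent times its multiplicity. So $h_\nu\ls h^u_\nu+\sum_{i>u}\lambda_i^+(\nu)$. On the remaining components, those with $\mathrm{Hyp}^u(\nu)=0$, the bound is $h_\nu\ls\sum_i\lambda_i^+(\nu)$ by Ruelle's inequality, and their mass is controlled by (a). That this term also vanishes is plausible if the degeneracy comes from $\lambda_u\ls 0$ on those components, since then the $u$-sum there is small; making this precise is the delicate part.

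Integrating the two bounds over the ergodic decomposition gives $h_{\mu_n}(f_n)\ls \tilde h^u_{\mu_n}(f_n)+o(1)$. Taking $\limsup$ and combining with the first two steps then yields $\limsup_n h_{\mu_n}(f_n)\ls h_\mu(f)$.
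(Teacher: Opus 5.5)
Your overall plan is sound, and it differs from the paper's route. You apply \Cref{thm bestcor} with $\rk=u$, identify $h^u_\mu(f)=h_\mu(f)$ by Ledrappier--Young, and then aim for $h_{\mu_n}(f_n)\ls \tilde h^u_{\mu_n}(f_n)+o(1)$. The paper instead reruns the joining argument of \Cref{lem:joining} for the total entropy. It pairs ergodic components $\nu$ of $\mu_n$ with components $\tau$ of the limit decomposition $\Omega$ and applies \Cref{thm 3.3} locally at each $\tau$. Local semicontinuity of the exponent sums near $\tau$ then gives $\lambda_{u+1}(\nu)<3\gamma_1$ on most of the joining mass.

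However, your third step is not proved, and it is exactly where the non-ergodicity of $\mu_n$ must be handled. You need two facts: (b) $\int\lambda^+_{u+1}(x,f_n)\,\td\mu_n\to0$, and (a) the $\Omega_{\mu_n}$-mass of components with $\lambda_u(\nu)<\delta'$ tends to $0$ for some fixed $\delta'>0$.
\begin{itemize}
\item For (b), upper semicontinuity of $\int\sum_{i\ls j}\lambda_i$ for each $j$ only gives $\limsup_n\int\sum_{u<i\ls j}\lambda_i(x,f_n)\,\td\mu_n\ls 0$. Integrated inequalities of this kind cannot detect positive parts over the ergodic decomposition. For example, half the mass with $\lambda_{u+1}=1$ and half with $\lambda_{u+1}=-1$ is compatible with them.
\item For (a), the same inequalities only give $\liminf_n\int\lambda_u\,\td\mu_n\gs\int\lambda_u\,\td\mu$. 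This does not rule out a positive fraction of components with $\lambda_u\approx 0$.
\item Your heuristic for degenerate components is also off. On a component with $\lambda_u(\nu)\ls0$, Ruelle only gives $h_\nu\ls\sum_{i<u}\lambda_i^+(\nu)$, which can be of order $\lambda_1$. What must be shown is that such components carry vanishing mass.
\end{itemize}

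The missing ingredient is upper semicontinuity of integrals of \emph{pointwise} maxima of exponent sums, which follows from submultiplicativity.
\begin{itemize}
\item For $t\in\mathbb R$, the functions $F_n=\max\{\e^{nt}\|\wedge^{u-1}Df^n\|,\|\wedge^{u}Df^n\|\}$ are submultiplicative along orbits.
\item Hence $(f,\nu)\mapsto\int\bigl(\sum_{i<u}\lambda_i+\max\{t,\lambda_u\}\bigr)\,\td\nu$ is an infimum of functions continuous under $C^1$ convergence of $f$ and weak$^*$ convergence of $\nu$, so it is upper semicontinuous.
\item Taking $t=c:=\essinf_{x\sim\mu}\lambda_u(x,f)$ and using the assumed convergence of the $u$-sums gives $\int(c-\lambda_u(x,f_n))^+\,\td\mu_n(x)\to0$.
\item The same argument with $\max\{\|\wedge^{u}Df^n\|,\|\wedge^{u+1}Df^n\|\}$ gives $\int\lambda^+_{u+1}(x,f_n)\,\td\mu_n(x)\to0$.
\end{itemize}
Now $\mathrm{Hyp}^u(\nu)\ls 0$ forces $\lambda_u(\nu)\ls\lambda^+_{u+1}(\nu)$, so Markov's inequality shows that degenerate components have vanishing mass. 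Combined with $\sum_{i>u}\lambda_i^+\ls(\td-u)\lambda^+_{u+1}$, this yields your bound $h_{\mu_n}(f_n)\ls\tilde h^u_{\mu_n}(f_n)+o(1)$.

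With this lemma, or with the paper's joining argument, in place, your route is a valid alternative. It uses \Cref{thm bestcor} as a black box instead of redoing the passage argument for total entropy.
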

\begin{remark}\label{rem of positive LE sum}
     When the first assumption
     $$\essinf_{x\sim\mu}\lambda_u(x,f)>0\gs \esssup_{x\sim\mu}\lambda_{u+1}(x,f)$$ 
     holds, the second assumption $$\sum_{i=1}^u\lambda_i(\mu_n,f_n)\xrightarrow{n\to\infty}\sum_{i=1}^u\lambda_i(\mu,f)$$ is equivalent to the continuity of the sum of positive Lyapunov exponents, that is
    \[
    \sum_{i=1}^\td \lambda_i^+(\mu_n,f_n)\xrightarrow{n\to\infty}\sum_{i=1}^\td \lambda_i^+(\mu,f).
    \]
    where $\lambda_{i}^+(m,f)=\int_M \lambda_{i}^+(f,x) \, \td m$, and $\lambda_{i}^+(f,x)=\max\{\lambda_{i}(f,x), 0\}$.
\end{remark}

For any ergodic limit measure $\mu$, the first assumption holds automatically for $u=\dim E^+(\mu)$, where $\dim E^+(\mu)$ denotes the number of positive Lyapunov exponents of the ergodic measure $\mu$ counted with multiplicities. Then \Cref{Cor 1.6} directly follows from \Cref{USC of entropy} and \Cref{rem of positive LE sum}.
\begin{corollary}\label{Cor 1.6}
           Let $f_n\in\mathrm{Diff^{1+\alpha}}(M)$ and $\mu_n$ be an invariant measure of $f_n$. Assume
    \begin{itemize}
    \item $f_n\xrightarrow{C^1} f\in \mathrm{Diff^{1+\alpha}}(M)$
    with $\sup_{n\in \mathbb N}\{\|f_n\|_{C^{1+\alpha}}\}< \infty$, \smallskip
    \item $\mu_n\xrightarrow{*}\mu$.
    \end{itemize}    
If $\mu$ is ergodic and  $\sum_{i=1}^\td \lambda_i^+(\mu_n,f_n)\xrightarrow{n\to\infty}\sum_{i=1}^\td \lambda_i^+(\mu,f)$,
then
\[
\limsup_{n\to\infty} h_{\mu_n}(f_n)\ls h_{\mu}(f).
\]
\end{corollary}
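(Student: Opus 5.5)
The plan is to derive \Cref{Cor 1.6} directly from \Cref{USC of entropy}, using \Cref{rem of positive LE sum} to translate its hypothesis. Since $\mu$ is ergodic, its Lyapunov exponents are constant $\mu$-a.e. Set $u=\dim E^+(\mu)$, the number of positive exponents of $\mu$ counted with multiplicity.

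Suppose first that $u\gs 1$. Then $\lambda_u(\mu,f)>0\gs\lambda_{u+1}(\mu,f)$ holds pointwise almost everywhere, because the exponents are constants. Hence $\essinf_{x\sim\mu}\lambda_u(x,f)>0\gs\esssup_{x\sim\mu}\lambda_{u+1}(x,f)$, so the first hypothesis of \Cref{USC of entropy} holds. By \Cref{rem of positive LE sum}, under this spectral gap the convergence $\sum_{i=1}^u\lambda_i(\mu_n,f_n)\to\sum_{i=1}^u\lambda_i(\mu,f)$ is equivalent to the assumed convergence of $\sum_{i=1}^\td\lambda_i^+$. \Cref{USC of entropy} then gives $\limsup_n h_{\mu_n}(f_n)\ls h_\mu(f)$.

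The only point needing care is this equivalence, and I would verify it briefly as follows. Let $\Lambda_k(m)=\int\sum_{i\ls k}\lambda_i\,\td m$. For every $n$ we have $\Lambda_u(\mu_n)\ls\int\sum_i\lambda_i^+\,\td\mu_n$, and at $\mu$ the two quantities are equal. In the other direction, fix $\epsilon>0$ and apply upper semicontinuity of $\Lambda_k$ (uniform under $C^1$ convergence, by subadditivity) to each $k$. Combining the resulting limsup inequalities with the fact that the positive part is a maximum over $k$ of partial sums yields the reverse inequality.

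Suppose instead that $u=0$, so $\mu$ has no positive exponents. Then $h_\mu(f)=0$ by Ruelle's inequality, and the hypothesis states $\sum_i\lambda_i^+(\mu_n,f_n)\to0$. The Margulis–Ruelle inequality, applied to each ergodic component of $\mu_n$ and integrated, gives $h_{\mu_n}(f_n)\ls\sum_i\lambda_i^+(\mu_n,f_n)\to0$. The claim follows in this case as well.
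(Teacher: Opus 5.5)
Your reduction is the paper's own: with $u=\dim E^+(\mu)$ the spectral condition holds, \Cref{rem of positive LE sum} converts the hypothesis, and \Cref{USC of entropy} concludes. Your separate treatment of $u=0$ via the Margulis--Ruelle inequality is correct.

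The gap is in your verification of the equivalence, in the direction you actually need: convergence of $\int\sum_i\lambda_i^+\,\td\mu_n$ $\Rightarrow$ convergence of $\Lambda_u(\mu_n)$. There are two problems.

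First, the $\mu_n$ are only invariant. Then $\int\sum_i\lambda_i^+\,\td\mu_n=\int\max_k\sum_{i\ls k}\lambda_i\,\td\mu_n$ can be strictly larger than $\max_k\Lambda_k(\mu_n)$, so upper semi-continuity of the individual $\Lambda_k$ gives no control on it. Concretely, take $\td=2$ and $\mu$ with exponents $(1,-1)$, and consider the data $\mu_n=\tfrac12\nu_a+\tfrac12\nu_b$ with components of exponents $(1,1)$ and $(-1,-1)$. This data is consistent with every inequality you use, including continuity of $\Lambda_2$. It gives $\int\sum_i\lambda_i^+\,\td\mu_n=1=\sum_i\lambda_i^+(\mu,f)$, yet $\Lambda_1(\mu_n)=0$.

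What excludes this is ergodicity of $\mu$: the limit $\Omega$ in \Cref{lem:joining} must be $\delta_\mu$, so most ergodic components of $\mu_n$ are weak$^*$-close to $\mu$. The uniform upper semi-continuity must be applied to those components, giving $\sum_i\lambda_i^+(\nu,f_n)\ls\Lambda_u(\mu)+\epsilon$ for them, and then integrated. This is the passage argument in the proof of \Cref{USC of entropy}.

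Second, even for an ergodic component $\nu$ near $\mu$, your max-over-$k$ argument forces $\nu$ to have exactly $u$ positive exponents (so that $\sum_i\lambda_i^+(\nu)=\Lambda_u(\nu)$) only when $\lambda_{u+1}(\mu,f)<0$. Only then is $\Lambda_k(\mu)<\Lambda_u(\mu)$ for every $k\neq u$. If $\mu$ has a zero exponent, then $\Lambda_{u+1}(\mu)=\Lambda_u(\mu)$, and semicontinuity does not rule out extra positive exponents. For example, exponents $(1,0)$ for $\mu$ and $(\tfrac12,\tfrac12)$ for $\nu$:
\begin{itemize}
\item satisfy all upper semi-continuity constraints on top partial sums and all lower semi-continuity constraints on bottom partial sums;
\item give the same $\sum_i\lambda_i^+$;
\item yet $\lambda_1$ drops by $\tfrac12$.
\end{itemize}

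So, once the ergodic-decomposition step is added, your argument proves \Cref{Cor 1.6} when $u=0$ or $\lambda_{u+1}(\mu,f)<0$. The case where $\mu$ has a zero exponent needs an additional idea. The paper's one-line reduction rests on the same \Cref{rem of positive LE sum}, whose proof is left to the reader, and the passage argument of \Cref{USC of entropy} does not supply it in that case either.
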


\subsubsection{Quantitative estimates}$\,$ \smallskip

In this section, we estimate the defect of the upper semi-continuity of partial entropy in the quantitative sense. We prove that the defect of the upper semi-continuity of partial entropy is explicitly bounded by a Lipschitz function with respect to the defect of the continuity of the sums of Lyapunov exponents. Theorem \ref{thm bestcor} follows directly from Theorem \ref{thm best}.    

\begin{theorem}\label{thm best}
      Let $f_n\in\mathrm{Diff^{1+\alpha}}(M)$ and $\mu_n$ be an invariant measure of $f_n$. Assume 
    \begin{itemize}
    \item $f_n\xrightarrow{C^1} f\in \mathrm{Diff}^{1+\alpha}(M)$
    with $\sup_{n\in \mathbb N}\{\|f_n\|_{C^{1+\alpha}}\}< \infty$, \smallskip
    \item $\mu_n\xrightarrow{*}\mu$,
    \end{itemize}    
and denote $$C_f:={\frac{10^9\cdot\dim M}{\alpha}\cdot(\log\|f\|_{C^1}+1)^4}.$$ If there exists 
$\rk\in\mathbb N$ such that $\mathrm{Hyp}^{\mathrm{k}}(\mu)>0$,
then
\[
\limsup_{n\to\infty}\tilde h^\rk_{\mu_n}(f_n)\ls h^\rk_{\mu}(f)+\frac{C_f}{\mathrm{Hyp}^{\mathrm{k}}(\mu)^4}\cdot\limsup_{n\to\infty}\left(\sum_{i=1}^\rk\lambda_i(\mu,f)-\sum_{i=1}^\rk\lambda_i(\mu_n,f_n)\right).
\] 
\end{theorem}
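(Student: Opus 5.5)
We argue by comparing $\tilde h^\rk_{\mu_n}(f_n)$ with a finite-scale conditional entropy along a fixed family of $\rk$-dimensional ``fake'' unstable foliations/cone families built from $\mu$. Since $\mathrm{Hyp}^\rk(\mu)=:\chi>0$, we fix $N$ large and use Pesin theory for $f$ and $\mu$ to find compact sets $\Lambda\subset M$ with $\mu(\Lambda)\gs 1-\varepsilon$. On $\Lambda$ there are continuous splittings $E^\rk\oplus F$ such that $Df^N$ expands $E^\rk$ (in volume and conformality) by roughly $e^{N\sum_{i\ls \rk}\lambda_i}$ and dominates $F$ by the gap $e^{N\chi}$. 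By continuity this splitting extends to cone fields $\mathcal C$ on a neighbourhood $U\supset\Lambda$. The $C^1$-convergence $f_n\to f$ with uniform $C^{1+\alpha}$ bounds keeps these cones invariant under $Df_n^N$ on $U$, with uniform constants.

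The first main step is an upper bound for the partial entropy of $\mu_n$. For ergodic components $m$ of $\mu_n$ with $\mathrm{Hyp}^\rk(m)>0$, we would represent $h^\rk_m(f_n)$ via volume growth of $\rk$-dimensional disks tangent to $\mathcal C$, i.e.\ unstable-type entropy as the exponential growth rate of $\rk$-dimensional volume of $f_n^j$ of local $W^\rk$-plaques. We then split time along orbits into visits to $U$ and excursions outside $U$. During visits to $U$ the growth is compared with $\mu$. During excursions, which have frequency $\approx\mu_n(M\setminus U)$, we bound the entropy contribution through the defect in the exponent sum. The Ruelle-type inequality gives $h^\rk\ls\sum_{i\ls\rk}\lambda_i^+$ locally. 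The crucial identity is that the integral of $\log|\det Df_n^N|_{\mathcal C}|$ against $\mu_n$ converges to that against $\mu$, which equals $N\sum_{i\ls \rk}\lambda_i(\mu,f)$ up to $\varepsilon$, because it is a continuous observable on $U$. Hence the pieces of orbits where the true $\rk$-exponent sum of $\mu_n$ falls below what the cone predicts cost exactly the quantity $\sum_{i\ls\rk}\lambda_i(\mu,f)-\sum_{i\ls\rk}\lambda_i(\mu_n,f_n)$. Dividing by the gap produces the losses $\chi^{-1}$ at each of the four stages (cone angle, distortion via the $\alpha$-Hölder constant, plaque size, covering number), which is where $C_f/\chi^4$ comes from.

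The second step is an upper semi-continuity argument on the $\mu$ side. On $U$, the disks tangent to $\mathcal C$ have uniform size and bounded distortion (this uses $C^{1+\alpha}$ and gives the $1/\alpha$ factor). We pick a finite partition $\mathcal P$ with $\mu(\partial\mathcal P)=0$ and small diameter, and bound the entropy along disks by the conditional entropy $\frac1N H_{\mu_n}(\mathcal P^N\mid \text{transverse}\ \sigma\text{-algebra})$, in the spirit of Yang and Hu–Hua–Wu for partially hyperbolic systems. This quantity is upper semi-continuous in $(f_n,\mu_n)$ by weak$^*$ convergence, so we pass to the limit and then let $N\to\infty$, $\varepsilon\to 0$, recovering $h^\rk_\mu(f)$ via the Ledrappier–Young characterization.

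The main obstacle is the first step: uniformly controlling the local $\rk$-unstable structure of $\mu_n$, whose own Pesin sets may be tiny or absent, by the cones of $\mu$, and quantitatively bounding the excess entropy gained off $\Lambda$ by the exponent defect rather than by $\log\|Df\|$ times the measure of the bad set. For this we would first try an integrated argument rather than a pointwise one. Subadditivity of $\log\|\wedge^\rk Df_n^N\|$ gives $\int\log\|\wedge^\rk Df^N_n\|\,d\mu_n\gs N\sum_{i\ls\rk}\lambda_i(\mu_n)$. Comparing this with the cone-restricted determinant, whose integral converges to $N\sum_{i\ls\rk}\lambda_i(\mu)$, bounds the total ``unpredicted expansion'' by $N$ times the defect.
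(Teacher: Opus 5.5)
Your plan has the right heuristic: the exponent defect should control how much of $\mu_n$ is misaligned with the hyperbolic structure of $\mu$, and entropy produced off that structure should cost a $\log\|Df\|$ multiple of its measure. But the steps that carry the proof are asserted rather than argued, and the one concrete inequality you give proves nothing. The quantity $\int\log\|\wedge^{\rk}Df_n^N\|\,d\mu_n$ is bounded below both by $N\sum_{i\le\rk}\lambda_i(\mu_n,f_n)$ and by $\int\log|\det Df_n^N|_{E}|\,d\mu_n\approx N\sum_{i\le\rk}\lambda_i(\mu,f)$. Two lower bounds for the same quantity give no information. Indeed $\|\wedge^{\rk}Df_n^N\|$ dominates every $\rk$-plane determinant, so your ``unpredicted expansion'' is $\le 0$ pointwise.

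\textbf{What is needed for the misaligned set.} This is what the proof of \Cref{uniform size prop} does.
\begin{itemize}
\item Lift $\mu_n$ to the Grassmannian bundle through its \emph{own} Oseledets bundle. Then $\int\log\|\wedge^{\rk}Df_n^{-N}|_{E^u_{\mu_n}}\|\,d\mu_n=-N\sum_{i\le\rk}\lambda_i(\mu_n,f_n)$ holds exactly.
\item Near a Lusin point $x$ of $\mu$, any $\rk$-plane at Grassmann distance $>\sigma$ from $E^u_\mu(x)$ is contracted by $Df_n^{-N}$ less than $E^u_\mu(x)$ by a factor $e^{cN\,\mathrm{Hyp}^{\rk}(\mu)}$.
\item Together these bound the $\mu_n$-measure of misaligned points by the defect divided by the gap. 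The $cs$ side is analogous, using continuity of $\int\log|\det Df_n|$.
\end{itemize}

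\textbf{Uniform-size plaques for $\mu_n$.} Even with that bound, one-step cone information on $U$ gives no $W^{\rk}$-plaques of uniform size for $\mu_n$. The plaque through $y$ is governed by its entire backward orbit, which leaves $U$. The missing ingredients are:
\begin{itemize}
\item the Pliss lemma (\Cref{pliss point}) and the tempered-envelope lemma;
\item applied to the \emph{weighted} cocycles $\|Df_n^{N}|_{E^{cs}}\|e^{-N\varphi\circ f_n^N}$ and $\|Df_n^{-N}|_{E^{u}}\|e^{N\varphi\circ f_n^{-N}}$;
\item with $\varphi\ge0$ a continuous extension of $\frac12(\lambda_{\rk}+\max\{\lambda_{\rk+1},0\})$ off a Lusin set of $\mu$.
\end{itemize}
Because $\mu$ need not be ergodic, the position of the gap varies between components. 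It must be carried along orbits by Birkhoff sums of a continuous weight; continuity also makes $\limsup_n\Lambda_n$ again a block for $f^N$. Appendix~\ref{appendix a5} shows that constant thresholds fail.

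This is what gives $\mu_n(\Lambda_n^c)\lesssim\mathrm{Hyp}^{\rk}(\mu)^{-4}\cdot(\text{defect})$ in \Cref{quant uniform size prop}. The four powers do not come from the four stages you list. One comes from dividing by the gap and one from the Pliss lemma. Two come from the factor $1/\epsilon$ in the tempered-envelope lemma, since $\epsilon\sim\alpha\chi^2$ is forced by temperedness of the chart sizes; this is also the only source of $1/\alpha$.

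\textbf{The entropy side.} Several steps here fail as stated.
\begin{itemize}
\item The $\rk$-volume growth of $W^{\rk}$-plaques is $\sum_{i\le\rk}\lambda_i$. This bounds $h^{\rk}$ from above (Ruelle) but does not represent it, so it cannot produce $h^{\rk}_\mu$ on the right-hand side.
\item Your split into visits to $U$ and excursions is the wrong split. Since $M\setminus U$ is closed, $\limsup_n\mu_n(M\setminus U)\le\mu(M\setminus U)$, which is small regardless of the defect. The defect-controlled bad set lies inside $U$: it is the complement of $\mu_n$'s own block.
\item There is no local Ruelle inequality to absorb the excess. The paper builds subordinate partitions $\xi^u_n$ from a small-boundary partition $\mathcal A$ and the lamination $\Xi^u_n$. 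At $x$, the number of winding pieces is at most $(\sup_n\|g_n\|_{C^1})^{\rk\, t(x)}$, where $t$ is the backward return time to $\Xi^u_n$. The Kac-type bound $\int_{g_n^{-1}\Xi^u_n\setminus\Xi^u_n}t\,d\mu_n\le\mu_n((\Xi^u_n)^c)$ (\Cref{lem 3.11}) then makes the error linear in $\mu_n(\Lambda_n^c)$.
\item Conditional entropy given an infinite transverse $\sigma$-algebra is not upper semi-continuous ``by weak$^*$ convergence'', least of all when the laminations change with $n$. The paper writes the main term $H_{\mu_n|_{\Xi^u_n}}(g_n^{-1}\mathcal A\mid\xi^u_n)$ as an infimum over $w$ of finite conditional entropies. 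This uses \Cref{lem 3.4} and coarsenings along transversals with null boundary relative to $\Xi^u_n$. It proves each term converges using the $C^1$-convergence of local plaques (\Cref{aprro}), and concludes with $\lim_n\mu_n|_{\Xi^u_n}\le\mu|_{\Xi^u_0}$.
\end{itemize}

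\textbf{Non-ergodic $\mu_n$.} You never say how the ergodic components of $\mu_n$, some with $\mathrm{Hyp}^{\rk}=0$, are matched with those of $\mu$. The paper does this with the passage lemma (\Cref{lem:joining}), using that the bound is linear in the defect.
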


\begin{remark}[Comparison with Yomdin Theory's Approaches]
Theorem \ref{thm best} reveals the intrinsic relation between the defect in the upper semi-continuity of partial entropy under general measure approximation procedures and the defect of continuity for Lyapunov exponents. The motivation for our defect formula is closely related to the estimates obtained by Yomdin theory \cite{yomdin1987}. Based on Yomdin theory, Newhouse \cite{New89} proved that for $f\in \mathrm{Diff}^r(M)$, such a defect is controlled by
\begin{equation}\label{Yom-bound}\limsup_{n\to\infty} h_{\mu_n}(f)- h_{\mu}(f)\ls\frac{\td }{r}\cdot\limsup_{n\to \infty} \frac{1 }{n}\log \|Df^n\|,\end{equation}
which further implies the upper semi-continuity of the metric entropy for $C^{\infty}$ systems (see also Buzzi \cite{BuzziIsrael}). In the case of $r<\infty$, the upper bound in (\ref{Yom-bound}) does not vanish.
Burguet \cite{Burguet11, burguet2012, burguet2024, burguetinvent} and Buzzi, Crovisier, and Sarig \cite{bcs2022} incorporated  bounded distortion and projective  tangent dynamics into the analysis of entropy growth. For one-dimensional curves, the bounded distortion control of reparameterizations in Yomdin theory has been established. Accordingly, the defect associated with the $1$-dimensional partial entropy can be refined to
\begin{equation}\label{difference}\limsup_{n\to\infty} h^1_{\mu_n}(f)- h_{\mu}(f)\ls \frac{1}{r-1}\cdot \limsup_{n\to \infty} \left(\lambda_1(\mu,f)-\lambda_1(\mu_n, f)\right).\end{equation}
This characterization enables effective control of total entropy when  $\dim M\ls 3$, as either the unstable direction with respect to $f$ or that with respect to $f^{-1}$ is necessarily one-dimensional. This estimate supports the construction of symbolic extensions in \cite{burgertliao} for $3$-dimensional diffeomorphisms.  More generalizations for perturbed systems are presented in \cite{dawei-luo-2025USC}, and related quantitative discussions can be found in \cite{tianxueting2026USC}. Notably, the term $h_{\mu}(f)$ in (\ref{difference}) is not reduced to $h^1_{\mu}(f)$, since the previous methods  compare the $1$-dimensional partial entropies of $\mu_n$ with the metric entropy of $\mu$, and no upper semi-continuity mechanism for partial entropy has been established.  In this sense,  the present work provides a  treatment of this issue.    

The higher dimensional control on bounded distortion in the reparameterzations of Yomdin theory is still unknown. Burguet \cite{Burguet11} {conjectured} the following upper estimate for the defect of total entropy in arbitrary dimension:
\begin{equation}\label{difference-sum}\limsup_{n\to\infty} h_{\mu_n}(f)- h_{\mu}(f) \ls\frac{1}{r-1}\cdot \limsup_{n\to \infty} \left(\sum_{i=1}^\td \lambda_i^+(\mu,f)-\sum_{i=1}^\td \lambda^+_i(\mu_n, f)\right).\end{equation} 

Motivated by the upper bound  formulation in  (\ref{difference-sum}) which centers on  the defect of continuity of Lyapunov exponents, we proceed to directly investigate how the Lyapunov exponent defect constrains the upper semi-continuity defect of (partial) entropy. 
The recent SPR theory for diffeomorphisms developed by Buzzi, Crovisier, and Sarig \cite{SPR2025} indicates that the uniform mass assigned to Pesin blocks by large entropy measures (which leads to the SPR property) arises from the continuity of Lyapunov exponents.
In Theorem \ref{thm best}, we prove that this mechanism, along with its partial counterpart established in this paper, exactly yields an upper bound for the upper semi-continuity defect of the $\rk$-dimensional partial entropy in arbitrary dimensions,
which allows us to establish a partial version for conjecture (\ref{difference-sum}). 

The approach developed here suggests the possibility of extending high-dimensional bounded distortion techniques within Yomdin theory, particularly for problems involving measure approximations. Furthermore, these methods may possibly open new avenues for exploring the statistical properties of dynamical systems, such as symbolic extension, SPR theory, and the distribution of Gibbs states along partial directions.
\end{remark}

\subsubsection{Upper semi-continuity at generic ergodic measures}$\,$ \smallskip

In the following, we prove that for any $C^{1+\alpha}$ diffeomorphism, both the metric entropy map and all the partial entropy maps are upper semi-continuous at generic ergodic measures within the space of invariant measures.
A notable advantage of our result lies in its broad applicability: it holds for every $C^{1+\alpha}$ diffeomorphism without any additional assumptions and extends to all orders of partial entropies.

Let us first explain the concept of generic ergodic measures. Consider the space of invariant measures $\M(M,f)$ and the space of ergodic measures $\M_e(M,f)\subseteq \M(M,f)$. When we say a property holds for generic ergodic measures, it means that it holds for a generic subset of the compact space $\overline{\M_e(M,f)}\subseteq \M(M,f)$ under the weak$^*$ topology. Since it is well-known that the set of ergodic measures $\M_e(M,f)$ is a $G_\delta$ set in $\M(M,f)$, see \cite[Theorem 2.1]{Parth1961}, it follows that $\M_e(M,f)$ itself is a generic subset of $\overline{\M_e(M,f)}$. Consequently, for statements regarding generic ergodic measures, we implicitly assume the ergodicity of the measures involved. Then we can establish the following theorem:

\begin{theorem}\label{USC at generic erg}
    Let $f\in \mathrm{Diff}^{1+\alpha}(M)$, then for generic ergodic measures $\mu$, 
\begin{itemize}
    \item[1.] The entropy map is upper semi-continuous at $\mu$. That is, for any sequence of invariant measures $\mu_n\xrightarrow[n\to\infty]*\mu$,
    $$\limsup_{n\to\infty}h_{\mu_n}(f)\ls h_{\mu}(f).$$
    \item[2.] All partial entropy maps are upper semi-continuous at $\mu$. That is, for any $\rk\in\mathbb N$ such that $h^\rk_\mu(f)$ is well-defined and any sequence of invariant measures $\mu_n\xrightarrow[n\to\infty]{*}\mu$,
    $$\limsup_{n\to\infty}\tilde h^\rk_{\mu_n}(f)\ls  h^\rk_{\mu}(f).$$
\end{itemize}
\end{theorem}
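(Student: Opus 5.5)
The plan is to reduce \Cref{USC at generic erg} to \Cref{thm bestcor} and \Cref{USC of entropy} (with $f_n\equiv f$). The reduction rests on the fact that, at generic points of $\overline{\M_e(M,f)}$, all the relevant sums of Lyapunov exponents are continuous. For each $\rk\in\{1,\dots,\td\}$ consider
\[
\Lambda_\rk(m):=\sum_{i=1}^\rk\lambda_i(m,f)=\lim_{n\to\infty}\frac1n\int\log\|\wedge^\rk Df^n(x)\|\,\td m(x)=\inf_{n}\frac1n\int\log\|\wedge^\rk Df^n\|\,\td m .
\]
The infimum holds by subadditivity. Each $m\mapsto \frac1n\int\log\|\wedge^\rk Df^n\|\,\td m$ is continuous, so $\Lambda_\rk$ is an infimum of continuous functions and hence upper semi-continuous on $\M(M,f)$. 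The same holds for its restriction to the compact metrizable space $\overline{\M_e(M,f)}$.

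An upper semi-continuous function on a Baire space is continuous on a residual set: write it as a pointwise limit of a decreasing sequence of continuous functions, so it is of Baire class one. Intersecting the finitely many residual sets for $\rk=1,\dots,\td$ with the $G_\delta$-dense set $\M_e(M,f)$ gives a residual set $\mathcal R\subseteq\overline{\M_e(M,f)}$. Every $\mu\in\mathcal R$ is ergodic, and every $\Lambda_\rk$ is continuous at $\mu$ relative to $\overline{\M_e(M,f)}$.

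The main obstacle is that the sequences $\mu_n$ in the statement are arbitrary invariant measures. They need not lie in $\overline{\M_e(M,f)}$, which in general is a proper subset of $\M(M,f)$, so continuity relative to $\overline{\M_e}$ does not directly give $\Lambda_\rk(\mu_n)\to\Lambda_\rk(\mu)$. I would handle this by passing to ergodic components. Both $\tilde h^\rk_{\mu_n}(f)$ and $\Lambda_\rk(\mu_n)$ are affine, being integrals over the ergodic decomposition. Hence it suffices to show the following: for each $\varepsilon>0$ there is a neighbourhood $U$ of $\mu$ such that every ergodic $\nu$ with $\nu\in U'$, a slightly larger neighbourhood, satisfies $\Lambda_\rk(\nu)\ge\Lambda_\rk(\mu)-\varepsilon$. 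Continuity on $\overline{\M_e}$ gives exactly this for ergodic $\nu$ near $\mu$.

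The remaining step is to control ergodic components that are far from $\mu$. If $\mu_n\to\mu$ with $\mu$ ergodic, hence extremal, then the decomposition measures of $\mu_n$ concentrate weak$^*$ near $\mu$: the barycenter map is continuous and $\mu$ is an extreme point, so the only probability measure on $\M_e$ with barycenter $\mu$ is $\delta_\mu$. Thus the mass of components outside any neighbourhood of $\mu$ tends to $0$. These far components have $\Lambda_\rk$ bounded by $\rk\log\|Df\|_{C^0}$ in absolute value and partial entropy bounded by $\td\log\|Df\|$. So $\liminf\Lambda_\rk(\mu_n)\ge\Lambda_\rk(\mu)$, and combined with upper semi-continuity this gives $\Lambda_\rk(\mu_n)\to\Lambda_\rk(\mu)$. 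In other words, the hypotheses of \Cref{thm bestcor} hold for all $\rk$.

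Finally I conclude both items. For item 2, fix $\rk$ with $h^\rk_\mu(f)$ well-defined, i.e. $\mathrm{Hyp}^\rk(\mu)>0$ since $\mu$ is ergodic. Then \Cref{thm bestcor} gives $\limsup\tilde h^\rk_{\mu_n}(f)\le h^\rk_\mu(f)$. For item 1, take $u=\dim E^+(\mu)$. If $u\ge1$, the exponent gap condition of \Cref{USC of entropy} holds automatically for ergodic $\mu$, and \Cref{rem of positive LE sum} together with the continuity of $\Lambda_u$ gives the conclusion. If $u=0$, then $h_\mu(f)=0$, and by Ruelle's inequality $h_{\mu_n}(f)\le\int\sum_i\lambda_i^+\,\td\mu_n$. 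In this case I would use continuity of $\Lambda_1$ at $\mu$: on components near $\mu$ the top exponent is near $\lambda_1(\mu)\le0$, and far components carry vanishing mass. Since the positive-part sum is bounded by $\td\cdot\max(\lambda_1,0)$, it tends to $0$.
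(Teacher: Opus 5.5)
Your proposal is correct and follows essentially the paper's argument: generic continuity of the upper semi-continuous maps $\mu\mapsto\sum_{i=1}^{\rk}\lambda_i(\mu,f)$ on $\overline{\M_e(M,f)}$, then transfer of this continuity to arbitrary invariant $\mu_n\to\mu$ using the concentration of the ergodic decompositions of $\mu_n$ near the ergodic (hence extreme) measure $\mu$, and finally \Cref{thm bestcor} and \Cref{USC of entropy}. For the concentration step the paper uses the passage lemma of \cite{DMsymbolicextension}, while you use uniqueness of the representing measure $\delta_\mu$; that uniqueness must hold on all of $\M(M,f)$ (it does), since limits of the decomposition measures need not be carried by $\M_e(M,f)$, and your separate treatment of $\dim E^+(\mu)=0$ via the Margulis--Ruelle inequality fills in a case the paper leaves implicit.
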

\begin{remark}
    \Cref{sec} follows directly from \Cref{USC at generic erg}.
\end{remark}

\subsection{Applications and examples}\label{sec: 1.2}
In this section, we present various applications and examples of our main theorems, including topics related to measures with dominated splitting, SRB measures, average expanding diffeomorphisms, singular flows, standard maps, and symbolic codings for diffeomorphisms. All the proofs can be found in \Cref{sec example}. 

\subsubsection{Measures with dominated splitting}$\,$\smallskip

Invariant measures that admit  dominated splitting arise naturally in various contexts, most notably within partially hyperbolic systems, and diffeomorphisms away from homoclinic tangencies \cite{LVY13}. 

First, let us recall some basic properties of the dominated splitting, see \cite{ABC} for instance. A $Df$-invariant splitting $T_\Lambda M=E\oplus F$ of the tangent bundle over an $f$-invariant set $\Lambda$ is $L$-dominated if there exists $L>0$ such that given any $x\in \Lambda$ and any unit vectors $u\in E(x), v\in F(x)$, we have $$\| Df_x^L(v)\|\ls \frac{1}{2}\| Df_x^L(u)\|.$$
Note that if an $f$ invariant set $\Lambda$ admits a $L$-dominated splitting $T_\Lambda M=E\oplus F$, then the splitting depends continuously on $x\in \Lambda$ and extends uniquely to a $L$-dominated splitting over the closure of $\Lambda$. More generally, a $Df$-invariant splitting $T_\Lambda M=E_1\oplus\cdots\oplus E_t$ is dominated if given any $\ell\in\{ 1,\cdots,t-1\}$, the splitting $(E_1\oplus\cdots\oplus E_\ell)\oplus(E_{\ell+1}\oplus\cdots\oplus E_t)$ is dominated.

Using results in the previous section, we can establish the following proposition, showing that the partial entropy is upper semi-continuous for measures with dominated splitting. 

\begin{proposition}\label{prop domniated}
     Let $f_n\in\mathrm{Diff^{1+\alpha}}(M)$ and $\mu_n$ be an ergodic measure of $f_n$. Assume
    \begin{itemize}
    \item $f_n\xrightarrow{C^1} f\in \mathrm{Diff^{1+\alpha}(M)}$
    with $\sup_{n\in \mathbb N}\{\|f_n\|_{C^{1+\alpha}}\}< \infty$, \smallskip
    \item $\mu_n\xrightarrow{*}\mu$.
    \end{itemize}    
Let $\rk \in \mathbb{N}$ and $L > 0$ satisfy the following:
\begin{itemize}
    \item There exists an $L$-dominated splitting of $f_n$ over the support of $\mu_n$:
    \[
    T_{\supp \mu_n}M = E^\rk_n \oplus F_n, \quad \dim E_n^\rk = \rk.
    \]
    \item There exists an $L$-dominated splitting of $f$ over the support of $\mu$:
    \[
    T_{\supp \mu}M = E^\rk \oplus F, \quad \dim E^\rk = \rk, \quad \text{and} \quad \essinf_{x \sim \mu} \lambda_\rk(x) > 0.
    \]
\end{itemize}
Then $h^\rk_{\mu_n}(f_n)$ and $h_{\mu}^\rk(f)$ are well-defined when $n$ is large enough, and
\[
\limsup_{n\to\infty} h^\rk_{\mu_n}(f_n)\ls h_{\mu}^\rk(f).
\]
\begin{remark}
  The regularity assumptions on $f_n$ and $f$ in \Cref{prop domniated} can be relaxed to $C^1$, namely $f,f_n\in\Diff(M)$ and $f_n\xrightarrow{C^1} f$. This is because in \cite{Part1} and \cite{Part2}, a parallel partial entropy theory was constructed for $C^1$ diffeomorphisms and measures with dominated splitting, which is analogous to the $C^{1+\alpha}$ theory used in this paper. That framework implies the validity of \Cref{prop domniated} under $C^1$ regularity with analogous proofs.
\end{remark}
\end{proposition}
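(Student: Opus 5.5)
The plan is to reduce \Cref{prop domniated} to \Cref{thm 3.3}. For this we need two things: $\mathrm{Hyp}^\rk(\mu)>0$, and convergence of $\sum_{i=1}^\rk\lambda_i(\mu_n,f_n)$ to $\sum_{i=1}^\rk\lambda_i(\mu,f)$. We also need $h^\rk_{\mu_n}(f_n)$ to be well-defined for large $n$; \Cref{thm 3.3} already gives this, but domination gives it directly as well.

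\emph{Step 1: gap at index $\rk$.} The $L$-dominated splitting $E^\rk\oplus F$ over $\supp\mu$ implies, for $\mu$-a.e.\ $x$, that the top $\rk$ exponents are exactly the exponents along $E^\rk$ and the remaining ones are those along $F$. Iterating the domination inequality gives
\[
\lambda_\rk(x,f)-\lambda_{\rk+1}(x,f)\gs \tfrac{\log 2}{L}.
\]
Together with $\essinf_{x\sim\mu}\lambda_\rk(x)>0$, this yields $\mathrm{Hyp}^\rk(\mu)\gs\min\{\log2/L,\ \essinf\lambda_\rk\}>0$. The same argument applied to each ergodic $\mu_n$ shows $\lambda_\rk(\mu_n,f_n)>\lambda_{\rk+1}(\mu_n,f_n)$. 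Positivity of $\lambda_\rk(\mu_n,f_n)$ for large $n$ will come out of Step 2.

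\emph{Step 2: continuity of the exponent sum.} By Oseledets/Birkhoff applied to the invariant continuous bundle $E^\rk_n$, we have
\[
\sum_{i=1}^\rk\lambda_i(\mu_n,f_n)=\int \log|\det Df_n|_{E^\rk_n(x)}|\,\td\mu_n(x),
\]
and similarly for $\mu$. To handle convergence, pass to the Grassmannian bundle $G_\rk(TM)$ and lift $\mu_n$ to $\hat\mu_n$, the push-forward of $\mu_n$ under $x\mapsto E^\rk_n(x)$. Uniform $L$-domination gives uniform cone fields and uniformly continuous dependence of the splittings, so any weak$^*$ limit $\hat\mu$ of $\hat\mu_n$ is $Df$-invariant, projects to $\mu$, and is supported on $\rk$-planes lying in the limit of the dominated bundles. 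The key point is that $L$-domination is a closed condition: the limit splitting over $\supp\hat\mu$ is $L$-dominated of index $\rk$, so by uniqueness of dominated splittings it coincides with $E^\rk$ on $\supp\mu$. Hence $\hat\mu$ is the lift of $\mu$ along $E^\rk$. Since $(f_n,V)\mapsto\log|\det D(f_n)|_V|$ converges uniformly to the corresponding continuous function for $f$ on $G_\rk(TM)$, we obtain convergence of the integrals, i.e.\ of $\sum_{i\ls\rk}\lambda_i$. Then $\lambda_\rk(\mu_n,f_n)>0$ for large $n$ follows from this convergence and the uniform gap, via a semicontinuity argument for the smallest exponent along $E_n^\rk$, namely $\lambda_\rk=\lim\frac1m\int\log m(Df^m|_{E})$, which is upper-bounded by but also limits from the closed dominated structure. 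If needed, one can use $\lambda_\rk(\mu_n)\gs$ the limit of $\int\frac1m\log m(Df_n^m|_{E^\rk_n})\,\td\mu_n$, which converges for fixed $m$ and is close to $\lambda_\rk(\mu)>0$.

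\emph{Step 3: conclude.} Apply \Cref{thm 3.3}.

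The main obstacle is Step 2's identification of the limit lift. It requires $\supp\mu$ to lie in the limit of the sets $\supp\mu_n$, where domination persists by closedness under $C^1$ limits $f_n\to f$ with the same $L$, combined with uniqueness of the index-$\rk$ dominated splitting over $\supp\mu$. A further subtle point is the positivity of $\lambda_\rk(\mu_n)$; I would handle this by the fixed-$m$ conorm argument, using that under domination the $\mathrm{Hyp}$ in \Cref{thm 3.3} for $\mu_n$ is only needed eventually.
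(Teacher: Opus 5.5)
Your proposal is correct and follows essentially the paper's route. Domination gives $\mathrm{Hyp}^\rk(\mu)>0$, and continuity of the dominated bundles identifies $\sum_{i\le\rk}\lambda_i$ with the integral of a continuous Jacobian, so the sums converge and the main theorem applies. The paper cites \Cref{thm bestcor} and you cite \Cref{thm 3.3}, which amounts to the same thing here since the $\mu_n$ are ergodic.

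Your Grassmannian-lift argument and the explicit gap $\log 2/L$ spell out what the paper treats as the standard continuity property of dominated splittings. Your separate discussion of positivity of $\lambda_\rk(\mu_n,f_n)$ is not needed, because \Cref{thm 3.3} already yields well-definedness of $h^\rk_{\mu_n}(f_n)$ for large $n$.
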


\subsubsection{SRB measures}$\,$\smallskip

Sinai-Ruelle-Bowen (SRB) measures occupy a central position in the statistical description of chaotic systems.
Recall that by \cite{LEDRAPPIER_YOUNG_A}, for any $f\in\mathrm{Diff}^{1+\alpha}(M)$, an $f$-invariant measure $\mu$ is called an SRB measure if it satisfies the Pesin entropy formula, that is,
\[
h_{\mu}(f)=\int \sum_{i:\lambda_i(x,f)>0}\lambda_i(x,f)\,\td\mu(x).
\]
The following proposition can be established, which shows that the limit of SRB measures is still an SRB measure if the sum of positive Lyapunov exponents is continuous.
\begin{proposition}\label{thm of SRB}
    Let $f_n\in\mathrm{Diff^{1+\alpha}}(M)$ and $\mu_n$ be an SRB measure of $f_n$. Assume
    \begin{itemize}
    \item $f_n\xrightarrow{C^1} f\in \mathrm{Diff^{1+\alpha}}(M)$
    with $\sup_{n\in \mathbb N}\{\|f_n\|_{C^{1+\alpha}}\}< \infty$, \smallskip
    \item $\mu_n\xrightarrow{*}\mu$.
    \end{itemize}    
    If there exists $u\in\mathbb N$ such that 
    \begin{itemize}
        \item $\essinf_{x\sim\mu}\lambda_u(x,f)>0\gs \esssup_{x\sim\mu}\lambda_{u+1}(x,f)$,\smallskip
    \item 
    $\sum_{i=1}^\td \lambda_i^+(\mu_n,f_n)\xrightarrow{n\to\infty}\sum_{i=1}^\td \lambda_i^+(\mu,f)
    $,
    \end{itemize}
    then $\mu$ is an SRB measure of $f$.
\end{proposition}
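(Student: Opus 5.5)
The plan is to deduce \Cref{thm of SRB} from \Cref{USC of entropy}, combined with the Margulis--Ruelle inequality for $\mu$. Since each $\mu_n$ is an SRB measure of $f_n$, the Pesin entropy formula gives
\[
h_{\mu_n}(f_n)=\int \sum_{i:\lambda_i(x,f_n)>0}\lambda_i(x,f_n)\,\td\mu_n(x)=\sum_{i=1}^{\td}\lambda_i^+(\mu_n,f_n).
\]
Here the last equality is just the definition $\lambda_i^+(m,f)=\int\lambda_i^+(f,x)\,\td m$, summed over $i$.

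The first step is to check that the hypotheses of \Cref{USC of entropy} hold. The gap condition $\essinf_{x\sim\mu}\lambda_u(x,f)>0\gs\esssup_{x\sim\mu}\lambda_{u+1}(x,f)$ is assumed directly. The continuity of $\sum_{i\ls u}\lambda_i$ along the sequence follows from the assumed continuity of $\sum_i\lambda_i^+$ by \Cref{rem of positive LE sum}. To justify the nontrivial direction of that remark, I would argue as follows:
\begin{itemize}
\item For $\mu$, the gap condition gives $\sum_{i\ls u}\lambda_i(\mu,f)=\sum_i\lambda_i^+(\mu,f)$.
\item For $\mu_n$, the inequality $\sum_{i\ls u}\lambda_i(x,f_n)\ls\sum_i\lambda_i^+(x,f_n)$ holds pointwise, so $\limsup_n\sum_{i\ls u}\lambda_i(\mu_n,f_n)\ls\sum_{i\ls u}\lambda_i(\mu,f)$.
\item The reverse inequality follows from upper semi-continuity of $m\mapsto\sum_{i\ls u}\lambda_i(m,\cdot)$. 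This quantity is the infimum over $N$ of $\frac1N\int\log\|\wedge^u Df^N\|\,\td m$, and these integrands converge uniformly under $C^1$ convergence $f_n\to f$.
\end{itemize}
This last point is where the remark's equivalence is used; I would simply cite the remark.

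With the hypotheses verified, \Cref{USC of entropy} yields
\[
\sum_{i=1}^{\td}\lambda_i^+(\mu,f)=\lim_{n\to\infty}\sum_{i=1}^{\td}\lambda_i^+(\mu_n,f_n)=\limsup_{n\to\infty}h_{\mu_n}(f_n)\ls h_\mu(f).
\]
The Margulis--Ruelle inequality, applied to $\mu$ through its ergodic decomposition, gives $h_\mu(f)\ls\int\sum_i\lambda_i^+(x,f)\,\td\mu(x)$. The two inequalities together force equality, so $\mu$ satisfies the Pesin entropy formula and is therefore an SRB measure in the Ledrappier--Young sense recalled above.

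There is no serious obstacle, since all the depth is in \Cref{USC of entropy}. The only care needed is in verifying the continuity hypothesis via \Cref{rem of positive LE sum}, which uses upper semi-continuity of the exponent sums under joint perturbation of the system and the measure.
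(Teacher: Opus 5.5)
Your route is the same as the paper's: use \Cref{rem of positive LE sum} to turn the hypothesis into continuity of $\sum_{i\ls u}\lambda_i$, apply \Cref{USC of entropy}, then squeeze with the Pesin formula for $\mu_n$ and Margulis--Ruelle for $\mu$. The flaw is in the justification you sketch for the remark.

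Your second bullet gives $\limsup_n\sum_{i\ls u}\lambda_i(\mu_n,f_n)\ls\sum_{i\ls u}\lambda_i(\mu,f)$. Upper semi-continuity of $(g,m)\mapsto\sum_{i\ls u}\lambda_i(m,g)$ is literally the same inequality, so it cannot supply the reverse one. What is actually needed is
\[
\liminf_{n\to\infty}\sum_{i\ls u}\lambda_i(\mu_n,f_n)\gs\sum_{i\ls u}\lambda_i(\mu,f).
\]
Since $\sum_i\lambda_i^+(\mu_n,f_n)\to\sum_i\lambda_i^+(\mu,f)=\sum_{i\ls u}\lambda_i(\mu,f)$, this is equivalent to
\[
\int\Big(\sum_{i\ls u}\max\{-\lambda_i(x,f_n),0\}+\sum_{i>u}\lambda_i^+(x,f_n)\Big)\,\td\mu_n(x)\xrightarrow{n\to\infty}0.
\]
In words, the approximating measures must not redistribute positive exponent mass past index $u$, and no semicontinuity property of the partial sums controls this.

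When $\esssup_{x\sim\mu}\lambda_{u+1}(x,f)<0$ strictly, the missing inequality can be proved. Take pairs $(\nu,\tau)$ from the joining in \Cref{lem:joining}.
\begin{itemize}
\item Upper semi-continuity of each $S_j=\sum_{i\ls j}\lambda_i$ gives $S_j(\nu,f_n)\ls S_j(\tau,f)+\delta$.
\item The two-sided gap gives $S_j(\tau,f)\ls S_u(\tau,f)-c$ for every $j\neq u$.
\item The integrated convergence of $\sum_i\lambda_i^+$ then forces $\sum_i\lambda_i^+(\nu,f_n)\gs\sum_i\lambda_i^+(\tau,f)-\delta$ for most pairs.
\end{itemize}
Hence a typical ergodic $\nu$ has exactly $u$ positive exponents, so $S_u(\nu,f_n)=\sum_i\lambda_i^+(\nu,f_n)$.

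However, your hypothesis allows $\lambda_{u+1}=0$, and then the implication genuinely fails. Take the locally constant cocycle $A_0=\mathrm{diag}(\e,1)$, $A_1=$ rotation by $\pi/2$, over the full $2$-shift. The periodic measures on $(0^n1)^\infty$ converge to $\delta_{0^\infty}$, whose exponents are $1,0$. The square of the period product is $-\e^{n}\,\mathrm{Id}$, so both exponents of the periodic measure equal $\frac{n}{2(n+1)}$. Thus the sum of positive exponents converges to $1$, but the top exponent tends to $\frac12$. Embedding this as the normal derivative over a horseshoe (base exponents $\pm\log\beta$ with $\log\beta>1$, and $u=2$) gives a diffeomorphism, with $f_n=f$, for which the remark's implication fails. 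These approximating measures are not SRB, so this does not refute \Cref{thm of SRB} itself.

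So citing the remark is where the real work sits. The paper also invokes it without proof, leaving the details to the reader. To make your argument complete you need either the strict inequality on $\lambda_{u+1}$ or an extra argument that uses the SRB property of the $\mu_n$.
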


\subsubsection{Average expanding diffeomorphisms}$\,$\smallskip

In this section, we focus on partially hyperbolic diffeomorphisms with the uniformly expanding on average property (UEAP) in \cite{COSY}. The idea of UEAP arises in the setting of random dynamics in the work by Dolgopyat and Krikorian \cite{Dmitry2007}. For random walks on homogeneous spaces within a similar framework, the concept was first introduced by Eskin and Lindenstrauss in \cite{Eskin}. A number of contributions, such as those by Liu \cite{liu2016}, Brown and Hertz \cite{brown2017}, and Chung \cite{Chung2020}, elucidated the role of this property for the theory. One may also see the recent paper by DeWitt and Dolgopyat \cite{Dmitry2024}.

To begin with, we introduce the partially hyperbolic diffeomorphisms.

\begin{definition}[Partially Hyperbolic Diffeomorphisms]
A $C^r$ diffeomorphism $f$ is partially hyperbolic, denoted by $f\in \mathrm{PH}^r(M)$, if there exists a $Df$-invariant dominated
splitting $$TM= E^u\oplus E^c\oplus E^s$$ such that for every $x\in M$, 
$$\|Df_x\mid_{E^s(x)}\| <1 \text{   and   }\| Df^{-1}_{f(x)} \mid_{E^u(f(x))}\|<1.$$
\end{definition}
Throughout this section, we assume that the center bundle $E^c$ is $2$-dimensional. 
To proceed, we introduce the definition of center bunched as in  \cite{Amie2010,AmieW2013}.

\begin{definition}[Center Bunched]
For $\theta>0$, we say that $f$ is $\theta$-center bunched if for every
$x\in M$,
$$\|Df_x\mid_{E^s(x)}\|^\theta <\frac{\m(Df_x\mid_{E^c(x)})}{\|Df_x\mid_{E^c(x)}\|} \ls \frac{\|Df_x\mid_{E^c(x)}\|}{\m(Df_x\mid_{E^c(x)})}<\m(Df_x\mid_{E^u(x)})^\theta,$$
where for a linear map $A$, $\m(A)=\|A^{-1}\|^{-1}$.
The set of all the center bunched partially hyperbolic diffeomorphisms (for some $\theta>0$) is denoted by $\mathrm{PH}^r_{\mathrm{bun}}(M)$.
\end{definition}

In \cite{SaghinRadu2025}, it was proved that if $f \in \mathrm{PH}^r_{\mathrm{bun}}(M)$ with $r\gs 2$, then for any non-trivial unstable disk $D_p$ centered at $p$, a unit vector $v\in E^c_p$ and a point $q\in D_p$, $f$ admits a \emph{$u$-holonomy} $h^u_{q,p}:E^c_p\to E^c_q$ (see \cite{SaghinRadu2025} for the precise definition).
Then, we can define a H\"older continuous vector field $\chi_{\cdot,p}(v)$ over $D_p$ by the formula
$$q\in D_p \Rightarrow \chi_{q,p}(v) := \frac{h^u_{q,p}(v)}{\|h^u_{q,p}(v)\|}.$$
The uniformly expanding on average property is defined as follows:

\begin{definition}[Uniformly Expanding on Average Property]
A diffeomorphism $f\in \mathrm{PH}^r_{\mathrm{bun}}(M)$ is said to satisfy the \textit{uniformly expanding on average property} (UEAP) if there exists $C_{U}>0$ such that: for every non-trivial unstable disk $D_p$ and every unit vector $v\in E^c_p$, the following holds
$$\liminf_{n\to \infty}\frac{1}{n \mathrm{Leb}^u(D_p)} \int_{D_p} \log \|D_qf^n (\chi_{q,p}(v))\|\,\td \mathrm{Leb}^u(q) >C_{U}.$$
\end{definition}
\begin{remark}\label{Rek.UEAP}
Although not obvious, the UEAP also holds for diffeomorphisms in a $C^2$ neighborhood with the same $C_U$ (cf. \cite[Proposition 2.3]{COSY}).
\end{remark}

The main result in \cite{COSY} shows the existence, finiteness, and full basin property of SRB measures for these diffeomorphisms with UEAP.
\begin{theorem}[Theorem A of \cite{COSY}]\label{Thm. SRB measures}
Let $f\in \mathrm{PH}^2_{\mathrm{bun}}(M)$ with UEAP and satisfy $\mathrm{Def}(f)>0$,
where
$$\mathrm{Def}(f) := C_{U} -\sup_{x\in M}\limsup_{n\to \infty}\frac{1}{n}\log \mid \det (D_x f^n \mid_{E^c(x)})\mid.$$
Then
$$0 < \#\{m: \;\; m \;\; \text{is an ergodic SRB measure of }\;\;f\} <\infty,$$
and every $m$ is a physical measure, i.e., a measure whose basin has positive Lebesgue measure.
\end{theorem}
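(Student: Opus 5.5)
This statement is Theorem A of \cite{COSY}, quoted here rather than proved. The plan below is how I would reconstruct its proof. I am unsure of one step: exactly how $\mathrm{Def}(f)>0$ enters (see the third paragraph).

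\emph{Existence.} I would build SRB measures by the Pesin--Sinai construction. Fix a non-trivial unstable disk $D_p$ and form the Ces\`aro averages
$$\nu_N=\frac1N\sum_{j=0}^{N-1}f^j_*\Big(\frac{\mathrm{Leb}^u|_{D_p}}{\mathrm{Leb}^u(D_p)}\Big).$$
Every weak$^*$ accumulation point $\nu$ is a $u$-Gibbs state. Its conditional measures along strong-unstable leaves are absolutely continuous, with the standard H\"older density coming from bounded distortion along $E^u$, which is available since $f\in C^2$.

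The key step is to show that $\nu$ has positive center exponents. By holonomy invariance of $u$-Gibbs states in the center-bunched setting, the conditional measures of $\nu$ along unstable plaques are equivalent to $\mathrm{Leb}^u$. The center cocycle is also transported by the $u$-holonomies $h^u_{q,p}$. Hence the UEAP inequality, which is an average over $\mathrm{Leb}^u$ on $D_p$, passes to an integral against $\nu$ of $\log\|Df^n\chi\|$. Taking the worst direction over the projective center bundle gives
$$\int \lambda^c_{\min}\,\td\nu\gs C_U>0$$
on every ergodic component, where $\lambda^c_{\min}$ is the smaller center exponent. The obstacle here is exchanging the $\liminf_n$ with the limit $N\to\infty$ and with the choice of center direction. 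I would first pass to the projectivized center bundle, lift $\nu_N$ there, and apply subadditivity in $n$.

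\emph{From $u$-Gibbs to SRB.} Once both center exponents are positive, $E^u\oplus E^c$ is exactly the unstable Oseledets space. A $u$-Gibbs state whose center exponents are positive is then SRB. To see this, write $\rk=\dim E^u$ and compare partial entropies in the Ledrappier--Young framework:
\begin{itemize}
\item the $u$-Gibbs property gives $h^{\rk}_\nu=\int\sum_{E^u}\lambda_i\,\td\nu$;
\item for the full unstable direction one needs the conditionals along Pesin unstable manifolds tangent to $E^u\oplus E^c$ to be absolutely continuous.
\end{itemize}
I would obtain the second point by rerunning the construction with center-unstable disks (cu-disks tangent to a cone field around $E^u\oplus E^c$). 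On a positive-measure set these are expanded by the positive center exponents. Here I expect $\mathrm{Def}(f)>0$ to be used: it compares $C_U$ with the growth rate of $\det(Df^n|_{E^c})$. I anticipate using it to bound volume distortion of $Df^n|_{E^c}$ uniformly, and hence the Jacobian distortion on cu-disks, so that absolute continuity along full unstable manifolds survives the limit. I have not checked whether this is its actual role in \cite{COSY}.

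\emph{Finiteness and physicality.} The lower bound $\lambda^c_{\min}\gs C_U$ holds uniformly over every ergodic SRB measure. Together with uniform contraction along $E^s$, this gives Pesin blocks of uniform size carrying a uniform proportion of mass, so stable manifolds of uniform size cover a set of definite measure. A Hopf argument then shows:
\begin{itemize}
\item two ergodic SRB measures whose supports come uniformly close coincide, which together with compactness gives finiteness;
\item absolute continuity of the stable lamination gives each ergodic SRB measure a basin of positive Lebesgue measure, i.e.\ it is physical.
\end{itemize}
The main obstacle is the upgrade from $u$-Gibbs to the full Pesin formula, i.e.\ the absolute continuity along center-unstable manifolds in the second step.
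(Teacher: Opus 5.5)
Your key step is false, not just unproved. The paper only quotes this theorem, but \Cref{Cor. hyperbolic Gibbs} records the relevant fact: a Gibbs $u$-state has one center exponent $\gs C_U$ and the other $\ls-\mathrm{Def}(f)<0$.

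You claim the UEAP gives $\int\lambda^c_{\min}\,\td\nu\gs C_U$ on every ergodic component, so that both center exponents are positive and $E^u\oplus E^c$ is the unstable Oseledets space. This contradicts the hypothesis directly. At a regular point, with $u=\dim E^u$,
$$\lambda_{u+1}(x,f)+\lambda_{u+2}(x,f)=\lim_{n\to\infty}\frac1n\log|\det(D_xf^n|_{E^c(x)})|\ls C_U-\mathrm{Def}(f)<C_U,$$
so $\lambda_{u+1}\gs C_U$ forces $\lambda_{u+2}\ls-\mathrm{Def}(f)<0$. This is what $\mathrm{Def}(f)>0$ is for; it is not a distortion bound.

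Your derivation breaks at ``taking the worst direction''. The UEAP only controls the $u$-holonomy-transported fields $\chi_{q,p}(v)$. Limits of your lifted averages on the projectivized center bundle are $u$-invariant lifts, and these sit on the fast center line $E^c_1$. The slow line $E^c_2$ is determined by the future and is not carried by $u$-holonomies; if it were, the corresponding $\chi_{q,p}(v)$ would contract, against the UEAP. So this mechanism only ever sees the top center exponent.

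Everything downstream rests on the wrong picture.
\begin{itemize}
\item \textbf{Pesin--Sinai step.} The Pesin unstable manifolds are tangent to $E^u\oplus E^c_1$. Here $E^c_1$ is only a measurable line field inside a center with no dominated splitting, so there is no cone field for your cu-disk construction.
\item \textbf{Missing idea.} You must show that an ergodic $u$-Gibbs state with one positive and one negative center exponent has absolutely continuous conditionals along these $(u+1)$-dimensional Pesin manifolds. This needs a second, genuinely different use of the UEAP, presumably of the measure-rigidity type in the Brown--Rodriguez Hertz and Eskin--Lindenstrauss works that motivate it. Your proposal has nothing in its place.
\item \textbf{Finiteness.} Uniform exponent bounds alone do not give Pesin blocks of uniform size and mass across measures. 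Block size also depends on convergence rates and on the angle between $E^c_1$ and $E^c_2$. In the paper such blocks come from continuity of exponents via \Cref{uniform size prop}. For Gibbs $u$-states that continuity is the separate \Cref{Thm. continuty of exponents}. Only then does a Hopf or homoclinic argument like the one in the proof of \Cref{Pro.Gibbs uppersemi} become available.
\item \textbf{Physicality.} Using absolute continuity of the Pesin stable lamination is fine once a hyperbolic ergodic SRB measure exists. The stable manifolds are tangent to $E^c_2\oplus E^s$, not $E^s$.
\end{itemize}
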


Combined with our main theorems and several discussions concerning Gibbs $u$-states, we can establish the continuity property of SRB or physical measures for these average expanding diffeomorphisms. The difficulty arises because the center bundle lacks a dominated splitting and contains both positive and negative Lyapunov exponents; in this regime, the UEAP is essential to ensure the continuity of Lyapunov exponents.
\begin{proposition}\label{Pro.Gibbs uppersemi}
Let $f\in \mathrm{PH}^2_{\mathrm{bun}}(M)$ be a diffeomorphism with UEAP and satisfy $\mathrm{Def}(f)>0$. Suppose $f_n\in\mathrm{Diff}^2(M)$ and $f_n\xrightarrow[n\to\infty]{C^2}f$. Let $\mu_n$ be an SRB measure of $f_n$ such that $\mu_n\xrightarrow[n\to\infty]{*} \mu$, then $\mu$ is an SRB measure of $f$. 

Furthermore, if $f$ admits a unique SRB measure $\mu_f$, then there exists a $C^2$ open neighborhood $\mathcal{U}$ of $f$ such that for any $g\in\mathcal{U}$, 
\begin{itemize}
    \item $g$ admits a unique SRB measure $\mu_g$,\smallskip
    \item and $\mu_g$ depends continuously on $g$.
\end{itemize}
\end{proposition}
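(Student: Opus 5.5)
We reduce the first assertion to \Cref{thm of SRB}, applied with $u=\dim E^u+1$. The real work is checking its two hypotheses uniformly along the sequence $f_n$.

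\textbf{Step 1 (uniform spectral gap).} By \Cref{Rek.UEAP}, for $n$ large $f_n\in\mathrm{PH}^2_{\mathrm{bun}}(M)$ satisfies UEAP with the same constant $C_U$. Since $f_n\to f$ in $C^2$, we also have $\mathrm{Def}(f_n)\gs \mathrm{Def}(f)/2>0$ for large $n$.

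Let $m$ be any ergodic SRB measure (hence a Gibbs $u$-state) of $g\in\{f,f_n\}$, and write $\lambda^c_1(m)\gs\lambda^c_2(m)$ for its center exponents.
\begin{itemize}
\item Since $m$ is a Gibbs $u$-state, its conditional measures on unstable disks are equivalent to $\mathrm{Leb}^u$. I expect the $u$-holonomy field $\chi_{\cdot,p}(v)$ to fail to lie in the slow Oseledets direction for $\mathrm{Leb}^u$-a.e.\ point. The UEAP integral then computes the top center exponent, giving $\lambda^c_1(m)\gs C_U$.
\item The sum satisfies $\lambda^c_1(m)+\lambda^c_2(m)=\int\log|\det Dg|_{E^c}|\,\td m\ls C_U-\mathrm{Def}(g)$.
\item Combining the two bounds gives $\lambda^c_2(m)\ls -\mathrm{Def}(f)/2<0$.
\end{itemize}
Hence every such $m$ has exactly $u=\dim E^u+1$ positive exponents, with a gap bounded below uniformly in $n$. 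The same holds for non-ergodic SRB measures through their ergodic components. The weak$^*$ limit $\mu$ of the $\mu_n$ is a Gibbs $u$-state of $f$ by $C^2$ convergence, so $\mu$ satisfies the same bounds. This gives the first hypothesis of \Cref{thm of SRB}.

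\textbf{Step 2 (continuity of the positive exponent sum; main obstacle).} Because of the gap, $\sum_i\lambda_i^+(m,g)=\int\log|\det Dg|_{E^u}|\,\td m+\lambda^c_1(m)$. The first term is continuous in $(g,m)$ because $E^u$ varies continuously. Upper semi-continuity of $\lambda^c_1$ is automatic: it is the top exponent of the continuous 2-dimensional cocycle $Dg|_{E^c}$, given by a subadditive infimum. Lower semi-continuity is the hard part, since $\lambda^c_2=\lambda^c_1+\lambda^c_2-\lambda^c_1$ only yields lower semi-continuity of $\lambda^c_2$ and nothing for $\lambda^c_1$.

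My plan is to represent $\lambda^c_1$ on Gibbs $u$-states by the averaged quantity
\[
\lambda^c_1(m)=\lim_{N\to\infty}\frac1N\int\!\!\int_{D}\log\|D_qg^N(\chi_{q,p}(v))\|\,\td m^u_D(q)\,\td m(D),
\]
where $m^u_D$ are the conditional measures on unstable disks. The idea is to show this limit is uniform in $(g,m)$. That would follow from two facts:
\begin{itemize}
\item the densities of $m^u_D$ with respect to $\mathrm{Leb}^u$ are uniformly log-bounded;
\item the angle between $\chi$ and the slow direction is uniformly controlled on a set of uniformly large mass.
\end{itemize}
Each finite-$N$ functional is continuous in $(g,m)$ under $C^2$ convergence, since $u$-holonomies depend continuously on $g$ for center-bunched systems. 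A uniform limit of continuous functionals is continuous, which gives $\lambda^c_1(\mu_n)\to\lambda^c_1(\mu)$. \Cref{thm of SRB} then shows that $\mu$ is SRB.

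\textbf{Step 3 (uniqueness and continuity).} Suppose the second assertion fails. Then there are $g_n\to f$ in $C^2$ with two distinct ergodic SRB measures $\mu_n^1\neq\mu_n^2$. After passing to subsequences, both converge to SRB measures of $f$, hence both converge to $\mu_f$ by Step 2.

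The uniform gap from Step 1 gives Pesin blocks of uniformly positive mass for all these measures, and on those blocks local stable manifolds of uniform size. Since both $\mu_n^1$ and $\mu_n^2$ accumulate on $\mu_f$, for large $n$ their typical unstable disks and uniform stable manifolds must intersect transversally inside a common Pesin block. The standard Hopf-type argument then forces $\mu_n^1=\mu_n^2$, a contradiction. So there is a $C^2$ neighborhood $\mathcal U$ in which the SRB measure $\mu_g$ is unique; existence comes from \Cref{Thm. SRB measures}.

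Continuity of $g\mapsto\mu_g$ then follows from compactness of the space of probability measures together with the first assertion. Every weak$^*$ limit of $\mu_{g_n}$ with $g_n\to g$ is an SRB measure of $g$, hence equals $\mu_g$.
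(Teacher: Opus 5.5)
Your architecture matches the paper's. The first assertion comes from continuity of $\sum_i\lambda_i^+$ fed into \Cref{USC of entropy} (equivalently \Cref{thm of SRB}) plus the Ruelle inequality. The second comes from a contradiction argument using uniform Pesin blocks. Two steps, however, do not stand as written.

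\textbf{Step 2.} You give only a plan. The step you leave open, namely uniform control on a set of uniformly large mass of the angle between $\chi_{q,p}(v)$ and the slow Oseledets direction in $E^c$, is the whole difficulty. The continuity in $(g,m)$ of your finite-$N$ functionals is also unsupported: they are built from the disintegrations $m^u_D$ and from base data $(p,v)$, none of which depend continuously on $m$. You should not try to reprove this. Continuity of $\lambda^c_1$ along $C^2$-convergent Gibbs $u$-states is exactly \cite[Theorem C]{COSY}, stated in the paper as \Cref{Thm. continuty of exponents}. Your Step 1 bounds are \cite[Corollary 4.1]{COSY}, stated as \Cref{Cor. hyperbolic Gibbs}. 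Incidentally, the bound $\lambda^c_1\geq C_U$ needs only $\|D_qg^N\chi_{q,p}(v)\|\leq\|D_qg^N|_{E^c}\|$, not any non-alignment. Cite these two results, applied to $f_n$ for large $n$ via openness of UEAP and of $\mathrm{Def}>0$. Then your reduction to \Cref{thm of SRB} is complete and is essentially the paper's argument.

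\textbf{Step 3.} The sentence ``the uniform gap from Step 1 gives Pesin blocks of uniformly positive mass'' is not a valid deduction. A uniform spectral gap says nothing about how fast the Oseledets estimates settle. Nor does it bound how small the angles between the measurable center Oseledets directions can be. So it does not bound the mass of blocks with fixed parameters. As the remark after \Cref{uniform size prop} notes, uniform large-mass blocks are in fact equivalent to continuity of the exponent sums.

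What the paper uses is \Cref{Thm. continuty of exponents} fed into \Cref{uniform size prop}, for $f$ and for $f^{-1}$. This gives $\mu_n(\Lambda_n)>1-\rho$ and $\limsup\Lambda_n=\Lambda$, a block for $\mu_f$. It is this convergence of blocks, through \Cref{aprro}, that makes the local manifolds of $f_n$ $C^1$-close to those of $f$. That closeness is what produces transverse intersections for large $n$.

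Your claim that typical manifolds of $\mu^1_n$ and $\mu^2_n$ ``must intersect inside a common Pesin block'' also needs a linking mechanism, since the two sequences may charge different parts of $\Lambda$. The paper obtains it from Katok's closing lemma:
\begin{itemize}
\item there are hyperbolic periodic points $p,p'$ homoclinically related to $\mu_f$, hence to each other;
\item their continuations $\mathrm{Orb}(p_n)$ and $\mathrm{Orb}(p'_n)$ stay homoclinically related, by robustness of transverse intersections;
\item $\mu_n$ is related to $\mathrm{Orb}(p_n)$ and $\mu'_n$ to $\mathrm{Orb}(p'_n)$;
\item \cite[Theorem 1.6]{HHTUCMP2011} then gives $\mu_n=\mu'_n$.
\end{itemize}
Your direct Hopf argument can replace the last citation. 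First, though, you must show that both sequences put positive mass near a common point of the limit block, for instance by taking $\rho<1/2$ and using a small-boundary partition. Your final step, deducing continuity of $g\mapsto\mu_g$ from compactness and the first assertion, is fine.
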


\subsubsection{Lorenz and Lorenz-derived attractors}$\,$\smallskip

Lorenz and Lorenz-like attractors are central objects in the study of robust chaotic dynamics. The geometric Lorenz attractor \cite{LL56Guckenheimer1976, LL57Guckenheimer1979, afraimovich1977origin} is a canonical example: although not structurally stable, it remains robust under perturbations. To capture this phenomenon, Morales, Pacifico, and Pujals introduced \emph{singular hyperbolicity} \cite{LL92Pujals1999}, which was later generalized to higher dimensions as \emph{sectional hyperbolicity} \cite{LL91Morales08}. For recent studies on singular flows, see, e.g., \cite{CrovisierYang15announce,LL47daLuz2018, LL18BonattidaLuz2021, LL111ShiGanWen2014,SYY,li-liu-singularflowcoding}.

Entropy theory was proved to be an effective tool for classifying the topological structure of flows; see \cite{PYYtrans} and \cite{G-Y-Z-singularflow}. Owing to the difficulties arising from singularities, results on entropy are not easy to obtain.
A groundbreaking work was \cite{APPV09}, in which the authors proved that in dimension three, all singular-hyperbolic attractors are robustly expansive. Consequently, the metric entropy is upper semi-continuous. More recently, it was shown in \cite{PYY-Poincare} that all sectional-hyperbolic chain recurrence classes are robustly entropy-expansive. This result has been further extended to all singular star flows in \cite{pacificoYY2025}.

The first application of our main results is an alternative proof for the upper semi-continuity of metric entropy for sectional hyperbolic sets (in particular, the Lorenz attractor) in \cite{PYY-Poincare}, which avoids the technical and complicated analysis in flow theory.

Let $X$ be a $C^{1+\alpha}$ vector field and $\phi^X_t$ the associated flow on a Riemannian
manifold $M$ without boundary. We denote $f:=\phi^X_1$ as the time-one map of $\phi_t^X$. 
The points $\sigma$ with $X(\sigma)=0$ are called singularities. When the singularities are hyperbolic, they are isolated. Points other than singularities are called nonsingular points. The \emph{regular measures} are defined as invariant measures with vanishing weight on singularities.
The metric entropy (resp. partial entropies) for invariant measures of $\phi^X_t$ is defined as that of its time-one map $f$. Then we introduce the concept of sectional hyperbolic sets.

\begin{definition}[Sectional Hyperbolicity]\label{def sectional hyper}
A compact invariant set $\Lambda$ of a flow $X$ is called sectional hyperbolic, if it admits a dominated splitting $E^{ss}\oplus F^{cu}$, such that $D\phi_t^X\mid_{E^{ss}}$
is uniformly contracting, and $D\phi_t^X \mid_{F^{cu}}$ is sectional-expanding, that is, there exist
constants $C_1>0$, $\lambda>1$ such that for every $x\in \Lambda$ and any subspace $V_x\subset F^{cu}_x$ with $dim(V_x) = 2$, we have
$$\mid \det (D\phi^X_t(x)\mid_{V_x})\mid \gs C_1 \lambda^t, \quad\forall\,t>0.$$ 
\end{definition}

It was proved in \cite{PYY-Poincare} that sectional hyperbolic sets are entropy expansive, and thus the metric entropy function is upper semi-continuous.
Using our main theorems, we give a new proof of this result. Indeed, the metric entropy of invariant measures agrees with the $\dim E^{ss}$-dimensional stable entropy of $\phi^X_t$, and the sum of the negative Lyapunov exponents in $E^{ss}$ is continuous.

\begin{proposition}\label{Thm. Lorenz attractor}
Let $\Lambda$ be a compact invariant set that is sectional hyperbolic for a $C^{1+\alpha}$ flow $\phi^X_t$, with all the singularities in $\Lambda$ hyperbolic. Then the metric entropy 
is upper semi-continuous with respect to all invariant measures supported on $\Lambda$ in the weak$^*$ topology.
\end{proposition}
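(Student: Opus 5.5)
The plan is to apply \Cref{thm bestcor} not to $f=\phi^X_1$ but to its inverse $g:=f^{-1}=\phi^X_{-1}$, taking $f_n\equiv g$ and $\rk:=\dim E^{ss}$. Since $X$ is $C^{1+\alpha}$, $g$ is a $C^{1+\alpha}$ diffeomorphism, so the regularity hypotheses hold trivially. For $g$, the bundle $E^{ss}$ is uniformly expanding. I will use three facts about $g$:
\begin{itemize}
\item the top $\rk$ Lyapunov exponents of $g$ (at points of $\Lambda$) are exactly the exponents of $Dg$ along $E^{ss}$;
\item the sum of these exponents varies continuously in the measure;
\item the $\rk$-dimensional partial entropy of $g$ coincides with the full metric entropy.
\end{itemize}
Given these, and since $h_m(f)=h_m(g)$ for every invariant $m$, \Cref{thm bestcor} gives $\limsup_n h_{\mu_n}(f)\ls h_\mu(f)$ for every sequence $\mu_n\to\mu$ of invariant measures on $\Lambda$.

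\emph{Step 1: uniform gap, $\mathrm{Hyp}^\rk(m)>0$ for every $g$-invariant $m$ on $\Lambda$.} Because $E^{ss}$ is uniformly contracted by $f$, every exponent of $g$ along $E^{ss}$ is at least some $c_0>0$. Because $E^{ss}\oplus F^{cu}$ is dominated (with constant $L$), every exponent of $g$ along $F^{cu}$ is at most the minimal $E^{ss}$-exponent of $g$ minus $\frac{\log 2}{L}$, uniformly on $\Lambda$. Hence, at $m$-a.e.\ $x$, $\lambda_\rk(x,g)\gs c_0$ and $\lambda_\rk(x,g)-\lambda_{\rk+1}(x,g)\gs \frac{\log 2}{L}$. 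This yields $\mathrm{Hyp}^\rk(m)\gs\min\{c_0,\frac{\log 2}{L}\}>0$. The bound covers measures charging singularities too, where $F^{cu}$ may contain a contracting direction of $f$ and so $\lambda_{\rk+1}(x,g)$ can be positive: the domination gap still controls it. It also shows that every ergodic component has $\mathrm{Hyp}^\rk>0$, so $\tilde h^\rk_m(g)=h^\rk_m(g)$.

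\emph{Step 2: continuity of the exponent sum.} By Step 1 and Oseledets,
$$\sum_{i=1}^\rk\lambda_i(m,g)=\int\log|\det Dg|_{E^{ss}(x)}|\,\td m(x).$$
The integrand is continuous on $\Lambda$, since a dominated splitting is continuous. Therefore the sum is weak$^*$ continuous on invariant measures supported on $\Lambda$, and the second hypothesis of \Cref{thm bestcor} holds for any $\mu_n\to\mu$.

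\emph{Step 3: $h^\rk_m(g)=h_m(g)$ for ergodic $m$ on $\Lambda$.} There are two cases.
\begin{itemize}
\item If $m$ is a Dirac measure at a hyperbolic singularity, both quantities vanish.
\item If $m$ is regular, the flow direction carries exponent $0$. Sectional expansion says that any $2$-plane in $F^{cu}$ has $f$-exponent sum at least $\log\lambda>0$; pairing each other direction with the flow direction shows that all remaining $f$-exponents in $F^{cu}$ are at least $\log\lambda>0$. Hence all $g$-exponents in $F^{cu}$ are $\ls 0$, and the unstable manifold of $g$ relative to the top $\rk$ exponents is the full Pesin unstable manifold of $g$. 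By the Ledrappier–Young theory, the entropy conditioned on the full unstable foliation equals $h_m(g)$, so $h^\rk_m(g)=h_m(g)$.
\end{itemize}
Integrating over ergodic decompositions gives $\tilde h^\rk_m(g)=h_m(g)=h_m(f)$ for every invariant $m$ on $\Lambda$. Feeding Steps 1–3 into \Cref{thm bestcor} completes the argument.

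The main obstacle I expect is Step 1 at measures giving mass to singularities. There sectional hyperbolicity alone does not make the $F^{cu}$-exponents of $g$ nonpositive, so \Cref{USC of entropy} cannot be quoted directly, and one has to extract a genuine uniform gap from domination. The identification in Step 3 for regular measures, via the sectional-expansion argument, is the other point requiring care.
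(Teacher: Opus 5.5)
Your proposal is correct and follows the paper's argument. The paper also applies \Cref{thm bestcor} to the inverse of the time-one map with $\rk=\dim E^{ss}$, which it phrases as upper semi-continuity of the $\dim E^{ss}$-dimensional $s$-entropy. It obtains continuity of $\int\log|\det Df|_{E^{ss}}|\,\td\nu$ from domination, and identifies entropy with $s$-entropy via Ledrappier--Young for regular measures and trivially at singularities. Your Step 1 derives $\mathrm{Hyp}^\rk>0$ at measures charging singularities from the domination gap, which is more careful than the paper's proof, since that proof only notes that the $E^{ss}$-exponents are uniformly negative.
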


Our main results can also be applied to study examples derived from the Lorenz attractor in \cite{LYYZ-Derived-from-Lorenze}, where the authors considered a DA-type surgery on the Lorenz attractor in dimension $4$. Such surgeries were first studied by Smale \cite{Smale1967} and Mañé \cite{mane1978} to construct fundamental examples in the theory of partially hyperbolic systems. Their construction yields the first example of a singular chain recurrence class that is Lyapunov stable, lies away from homoclinic tangencies, and contains periodic orbits with distinct indices.
In what follows, we prove that for this open set of flow attractors, the $1$-dimensional partial entropy of regular measures supported on its Lyapunov stable chain recurrence classes varies upper semi-continuously. This application lies beyond the scope of classical results on entropy expansiveness and flow dynamics, establishing new continuity properties for the (partial) entropy of singular flows.
We now proceed to the details.

We say a flow admits a homoclinic tangency if it possesses a hyperbolic periodic orbit whose stable and unstable manifolds intersect non-transversely. Denote by $\mathrm{HT}$ the set of flows admitting homoclinic tangencies, and by $\operatorname{Cl}(\mathrm{HT})$ its closure. A flow $X$ is said to be \emph{away from homoclinic tangencies} if $X\notin \operatorname{Cl}(\mathrm{HT})$

Following Conley's theory, see for instance \cite{Go2010}, we denote by $CR(X)$ the chain recurrence set of $X$. For any $x\in CR(X)$, the chain recurrence class containing $x$ is denoted by $C(x,X)$.
A chain recurrence class $\Lambda$ of a flow $\phi^X_t$ is called Lyapunov stable, if for any neighborhood $U$ of $\Lambda$, there is another neighborhood $V$ of $\Lambda$ such that $\phi^X_t(V) \subset U$ for any $t\gs 0$. The following theorem is included in \cite[Theorem A]{LYYZ-Derived-from-Lorenze}.

\begin{proposition}[Theorem A of \cite{LYYZ-Derived-from-Lorenze}]\label{Pro. LYYZ}
On every $4$-dimensional closed Riemannian manifold $M$ there exists a non-empty
open set of flows $\mathcal{V} \subset X^1(M)\setminus\operatorname{Cl}(\mathrm{HT})$ which are derived from Lorenz attractor, and an open set $U \subset M$ such that for any $X \in \mathcal{V}$, one has $\phi^X_t (U) \subset U$
for any $t > 0$, and there is a unique singularity $\sigma\in U$ which is hyperbolic. Then for this $\sigma$,
\begin{itemize}
\item $C(\sigma,X)$ is the unique Lyapunov stable chain recurrence class in $U$;\smallskip
\item $C(\sigma,X)$ contains hyperbolic periodic orbits with index $1$ and $2$;\smallskip
\item the tangent bundle $T_{C(\sigma,X)}M$ contains a $2$-dimensional subbundle $E$ which is sectional-expanding.
\end{itemize}
\end{proposition}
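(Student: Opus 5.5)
This proposition is quoted from \cite[Theorem A]{LYYZ-Derived-from-Lorenze}. Within the present paper the natural proof is simply to invoke that reference. If one wanted to reconstruct it, I would follow the DA (derived-from-Anosov) strategy of Smale \cite{Smale1967} and Mañé \cite{mane1978}, applied to a Lorenz-type flow.

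\textbf{Step 1: the starting flow.} Take a $4$-dimensional flow $X_0$ with a trapping region $U$, i.e.\ $\phi^{X_0}_t(\overline U)\subset U$ for $t>0$. Inside $U$ it should have a geometric Lorenz-like attractor with a single hyperbolic singularity $\sigma$. The tangent bundle over $\overline U$ should carry a dominated splitting $E^{ss}\oplus E^c\oplus E$, all of it robustly encoded by invariant cone fields:
\begin{itemize}
\item $E^{ss}$ is uniformly contracting;
\item $E^c$ is one-dimensional and weakly contracting, so the periodic orbits have index $2$;
\item $E$ is two-dimensional, sectional-expanding and contains the flow direction.
\end{itemize}

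\textbf{Step 2: the DA surgery.} Perform a local surgery in a small tube around one periodic orbit $\gamma$. It changes the center eigenvalue of $\gamma$ from contracting to expanding, so $\gamma$ becomes index $1$ while the index-$2$ orbits away from the tube survive. The surgery is designed so that the cone fields for $E^{ss}$, $E^c$ and $E$ remain invariant and $U$ remains trapping. Because both properties are given by strict cone and trapping conditions, they are $C^1$-open. This yields the open set $\mathcal V$, with $\sigma$ the unique, hyperbolic singularity in $U$, and gives the third bullet of the proposition.

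\textbf{Step 3: absence of tangencies.} Show that $\mathcal V\cap\operatorname{Cl}(\mathrm{HT})=\emptyset$. Every chain recurrent set in $U$ carries a dominated splitting into bundles that are either one-dimensional or sectional-expanding together with the flow direction. A homoclinic tangency for a $C^1$-nearby flow would contradict this robust domination, in the spirit of Wen's criterion.

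\textbf{Step 4: the chain recurrence class of $\sigma$.} This is where I expect the main difficulty. The plan is the classical Lorenz argument. A $2$-disk tangent to the $E$-cone has area growing exponentially under the flow, by sectional expansion. Hence its iterates must eventually cross $W^s_{loc}(\sigma)$, which has codimension two inside the region cut by $E$. Applying this to local unstable manifolds shows:
\begin{itemize}
\item the unstable manifold of every periodic orbit in $U$, of either index, accumulates on $\sigma$;
\item conversely, $W^u(\sigma)$ returns near these orbits.
\end{itemize}
Consequently the index-$1$ orbit $\gamma$ and the index-$2$ orbits all lie in $C(\sigma,X)$. Lyapunov stability and uniqueness would follow from the same accumulation property: any Lyapunov stable class in $U$ contains the unstable set of its points, and that set reaches $\sigma$. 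The delicate point is making the area-growth argument uniform across $\mathcal V$ despite the local loss of hyperbolicity in $E^c$ created by the surgery. I would handle it by checking that the surgery alters only the $E^c$ rates and leaves the $E$-cone estimates untouched.
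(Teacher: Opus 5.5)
Citing \cite[Theorem A]{LYYZ-Derived-from-Lorenze} is exactly what the paper does; it gives no proof of this proposition, so your first sentence is the whole proof here. Your sketched reconstruction, however, contains a step that cannot work as written. In Steps 1--2 you ask for a $D\phi^X_t$-invariant dominated splitting $E^{ss}\oplus E^c\oplus E$ of the \emph{tangent} bundle, encoded by invariant cone fields, with $\langle X\rangle\subset E$. You then want these cones to survive a surgery that makes $E^c$ expanding along $\gamma$, and these two requirements contradict each other. Suppose $E$ dominates $E^c$, say $\|D\phi^X_T|_{E^c(x)}\|\ls\frac12\,\m(D\phi^X_T|_{E(x)})$ for all $x$. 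Then for $p\in\gamma$ and $n\in\mathbb N$,
\[
\|D\phi^X_{nT}|_{E^c(p)}\|\ls 2^{-n}\,\m\bigl(D\phi^X_{nT}|_{E(p)}\bigr)\ls 2^{-n}\,\frac{\|X(\phi^X_{nT}(p))\|}{\|X(p)\|},
\]
and the last quotient is bounded because $\gamma$ is a regular periodic orbit. So $E^c$ would be contracting along every periodic orbit carrying the splitting. All of them would then have index $2$, which is exactly what the surgery is meant to destroy. The same objection applies to the tangent-level domination you use in Step~3. It also applies to your claim in Step~4 that the surgery alters only the $E^c$ rates while keeping all the cones.

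The missing idea is that the robust three-way domination must be imposed on the linear Poincar\'e flow $\psi^X_t$ of the normal bundle, $\mathcal N_C=F\oplus E^c\oplus E^{ss}$, with the usual care near $\sigma$, where $\mathcal N$ is not defined. This is the form the paper records from \cite[Proposition 3.2]{LYYZ-Derived-from-Lorenze} in the proof of Proposition~\ref{Thm. Derived-Lorenz}. Quotienting out the flow direction is precisely what lets $F$ dominate $E^c$ even where $E^c$ expands. At the tangent level one keeps only the $D\phi^X_t$-invariant plane field $E=\langle X\rangle\oplus F$. It is sectional-expanding, which is an area condition and not domination over $E^c$, and it extends continuously to $\sigma$; this is the third bullet. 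Likewise, the robust absence of tangencies in Step~3 should come from this normal splitting into one-dimensional $\psi^X_t$-invariant bundles, since a homoclinic tangency is a non-transversality in normal directions. At a homoclinic point of an index-$1$ orbit, that splitting forces the normal parts of $T_qW^s$ and $T_qW^u$ to be $E^{ss}$ and $F\oplus E^c$. For an index-$2$ orbit they are $E^c\oplus E^{ss}$ and $F$, so the intersection is transverse.
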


Next, together with our main results, we can establish the upper semi-continuity of the $1$-dimensional partial entropy as follows:
\begin{proposition}\label{Thm. Derived-Lorenz}
For any $C^{1+\alpha}$ flow $X\in \mathcal{V}$ and any sequence of regular measures $\{\mu_n\}_{n\in\mathbb N}$ supported on $C(\sigma,X)$ converging to a regular measure $\mu$ in the weak$^*$ topology, the $1$-dimensional partial entropy is upper semi-continuous, i.e.
$$\limsup_n h^{1}_{\mu_n}(\phi_t^X)\ls h^1_{\mu}(\phi^X_t),$$ 
where $\phi_t^X$ is the associated flow of $X$.
\end{proposition}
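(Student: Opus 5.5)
The statement concerns the flow $X\in\mathcal V$ and regular measures on $C(\sigma,X)$. The plan is to apply \Cref{thm bestcor} (equivalently \Cref{first}) to the time-one map $f=\phi^X_1$ with $\rk=1$. Two hypotheses must be checked: $\mathrm{Hyp}^1(\mu)>0$, and $\lambda_1(\mu_n,f)\to\lambda_1(\mu,f)$. The key tool is the $2$-dimensional sectional-expanding subbundle $E$ from \Cref{Pro. LYYZ}. For a regular measure, the flow direction $\langle X\rangle$ carries a zero Lyapunov exponent. One expects $X(x)\in E(x)$ at nonsingular points of $C(\sigma,X)$: the class is Lyapunov stable, hence contains unstable manifolds, and the center-unstable bundle contains the flow direction. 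Therefore, for $\mu$-a.e.\ $x$,
\[
\lambda_1(x,f)\;=\;\lambda_1(x,f)+0\;=\;\lim_{t\to\infty}\frac1t\log|\det D\phi^X_t|_{E(x)}|\;\gs\;\log\lambda>0 .
\]
So the exponents of $E$ are $\{\lambda^E(x),0\}$ with $\lambda^E(x)\gs\log\lambda$, and $\lambda^E$ is the top exponent of $x$ overall. Otherwise the dominated complement of $E$ would contain a larger exponent, which I would rule out using the splitting structure in the construction of \cite{LYYZ-Derived-from-Lorenze}. Consequently $\lambda_1(x,f)-\max\{\lambda_2(x,f),0\}\gs \log\lambda-0$ whenever the second exponent is the zero flow exponent or lies in the complement. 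The latter must have exponents at most $0$ modulo the same structural input, giving $\mathrm{Hyp}^1(\mu)>0$.

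For continuity, the identity above turns $\lambda_1$ into an integral of a continuous-like observable. For regular measures one has
\[
\lambda_1(m,f)=\int \log|\det D\phi^X_1|_{E(x)}|\,\td m(x),
\]
and $E$ extends continuously over the whole class, including at $\sigma$ as a dominated splitting over the compact set. Hence $x\mapsto\log|\det Df_x|_{E(x)}|$ is continuous on $C(\sigma,X)$. Weak$^*$ convergence $\mu_n\to\mu$ then gives $\lambda_1(\mu_n,f)\to\lambda_1(\mu,f)$. The main subtlety is that the displayed formula uses the zero exponent of the flow direction, which is valid only for regular measures. That is exactly why regularity of $\mu_n$ and $\mu$ is assumed, so no mass sits at $\sigma$ where $X$ vanishes. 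The measures $\mu_n$ need not be ergodic, so I would apply the formula to ergodic components, all of which are regular, and then integrate.

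Finally, \Cref{thm bestcor} with $f_n=f$ and $\rk=1$ yields $\limsup_n\tilde h^1_{\mu_n}(f)\ls h^1_\mu(f)$. For each $\mu_n$, $\mathrm{Hyp}^1$ of a.e.\ ergodic component is also positive by the same sectional-expansion bound, so $\tilde h^1_{\mu_n}=h^1_{\mu_n}$. The main obstacle I expect is the structural claim that the top exponent lives in $E$ with a uniform gap above the remaining nonnegative exponents. My first attempt would use the dominated splitting $E^{s}\oplus E$ or $E^{ss}\oplus E^c\oplus E$ from the derived-from-Lorenz construction, where the complement of $E$ is dominated by $E$. Domination together with zero exponent along $X\subset E$ forces all exponents in the complement to be strictly less than $0$.
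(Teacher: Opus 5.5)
Your overall route is the paper's: apply \Cref{thm bestcor} to the time-one map with $f_n=f$ and $\rk=1$. Continuity of $\lambda_1$ comes from $\lambda_1(m,f)=\int\log|\det Df|_{E}|\,\td m$ for regular $m$, with $E$ continuous up to $\sigma$, and you pass through ergodic components. The gap is in verifying $\mathrm{Hyp}^1(\mu)>0$, and in showing that the non-flow exponent of $E$ is the top one. You reduce both to the claim that all exponents outside $E$ are $\ls 0$. You propose to obtain this from a tangent-bundle dominated splitting in which $E\ni X$ dominates its complement. Both the claim and this mechanism fail here.

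\emph{The claim is false.} By \Cref{Pro. LYYZ}, $C(\sigma,X)$ contains hyperbolic periodic orbits of index $1$ and of index $2$. Their measures are regular and supported on the class. One of these two types has two positive normal exponents. Since $E$ carries only the zero flow exponent and one normal exponent, such an orbit has a positive exponent outside $E$. So $\lambda_2(x,f)$ can be positive, and your bound $\lambda_1-\max\{\lambda_2,0\}\gs\log\lambda$ is not available.

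\emph{No such domination exists.} Suppose $\|D\phi^X_T|_{G(x)}\|\ls\frac12\,\m(D\phi^X_T|_{E(x)})$ for the complement $G$. Testing against $X(x)/\|X(x)\|$ gives $\|D\phi^X_{nT}|_{G(x)}\|\ls 2^{-n}\sup\|X\|/\|X(x)\|$ at every nonsingular point. Then all periodic orbits of the class would have exactly one positive normal exponent, hence the same index.

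What is missing is the dominated splitting of the linear Poincar\'e flow on the normal bundle, $\mathcal N_C=F\oplus E^c\oplus E^{ss}$ (Proposition 3.2 of \cite{LYYZ-Derived-from-Lorenze}), together with $E=\langle X\rangle\oplus F$. For a regular measure, the exponents of $f$ are $\hat\lambda_1>\hat\lambda_2>\hat\lambda_3$ (along $F$, $E^c$, $E^{ss}$) plus the zero flow exponent.
\begin{itemize}
\item Domination of $F$ over $E^c$ gives $\hat\lambda_1-\hat\lambda_2\gs c>0$ uniformly, whatever the sign of $\hat\lambda_2$.
\item Sectional expansion of $E$ gives $\hat\lambda_1\gs\log\lambda$.
\end{itemize}
Hence $\hat\lambda_1$ is the top exponent, and its Oseledets space projects to $F$; this justifies your formula for $\lambda_1$. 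Moreover, almost everywhere for every regular measure,
\[
\lambda_1-\max\{\lambda_2,0\}=\hat\lambda_1-\max\{\hat\lambda_2,0\}\gs\min\{c,\log\lambda\}>0 .
\]
This applies in particular to $\mu$ and to the ergodic components of each $\mu_n$. With this substitution the rest of your argument goes through as in the paper.
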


\subsubsection{Standard maps}$\,$\smallskip

A famous example of dynamical systems is the standard map (or Taylor-Chirikov standard map), introduced independently by Taylor and Chirikov (see \cite{CHIRIKOV1979263}). Due to its significance in both physics and dynamical systems \cite{CHIRIKOV1979263, Iz80}, the map has been widely studied. In particular, Gorodetski \cite{Go12} proved that for a generic large parameter, there exists a ``topologically large'' uniformly hyperbolic set for the standard map which is accumulated by elliptic islands.

Taking into account $\mathbb T^2=\mathbb R^2/\mathbb Z^2$ with coordinates $(x,y)\in[0,1)^2$, for each $t\in \mathbb R$, the standard map $f_t:\mathbb T^2\to\mathbb T^2$ is defined as
\[
f_t(x,y)=(2x-y+t\sin(2\pi x),x).
\]

As an easy consequence of \cite[Corollary 2.9]{obatastandardmap}, we can prove that for any $t\in\mathbb R$ large enough, there exists a $C^{1+\alpha}$ neighborhood $\mathcal U$ of $f_t$ in $\mathrm{Diff}^{1+\alpha}(\mathbb T^2)$ such that for any $g\in\mathcal U$, one has $h_{\mathrm{top}}(g)>\frac{\lambda_{\mathrm{min}}(g)}{1+\alpha}$, where
\[
\lambda_{\mathrm{min}}(f):=\min\{\limsup_{n\to\infty}\frac{1}{n}\log\|Df^n\|,\limsup_{n\to\infty}\frac{1}{n}\log\|Df^{-n}\|\}.
\]
Then, combining \cite[Corollary 1]{burguet2024} and \cite[Theorem A]{obatastandardmap}, it follows that for any $t\in\mathbb R$ large enough, there exists a $C^{1+\alpha}$ neighborhood $\mathcal U$ of $f_t$ in $\mathrm{Diff}^{1+\alpha}(\mathbb T^2)$ such that for any $g\in\mathcal U$, $g$ has a unique measure of maximal entropy $\mu_g$, and for any $g_n\xrightarrow{C^{1+\alpha}}g$ in $\mathcal U$, 
\[
\mu_{g_n}\xrightarrow{*}\mu_g\ \text{ and }\ \lambda_1(\mu_{g_n},g_n)\to\lambda_1(\mu_g,g).
\]
Thus, the measures of maximal entropy of diffeomorphisms in the $C^{1+\alpha}$ neighborhood of standard maps are examples that satisfy the continuity of the sum of Lyapunov exponents, which is the main assumption in our results in \Cref{sec:1.1}.

As a deduction of \Cref{USC of entropy}, we obtain the continuity of topological entropy in a $C^{1+\alpha}$ neighborhood $\mathcal U$ of the standard map $f_t$ when $t > 0$ is sufficiently large. 
\begin{proposition}\label{prop standard maps h_top}
    There exists $t_0>0$ such that for any $t\gs t_0$, there exists a $C^{1+\alpha}$ neighborhood $\mathcal U$ of the standard map $f_t$ in $\mathrm{Diff}^{1+\alpha}(\mathbb T^2)$ such that the topological entropy map $h_{\mathrm{top}}:\mathcal U\to \mathbb R$ is continuous.
\end{proposition}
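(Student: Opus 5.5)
Since $\mathbb T^2$ is a surface, I will deduce the statement from \Cref{USC of entropy} together with the variational principle and the uniqueness and continuity of measures of maximal entropy quoted just above.

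\textbf{Setup.} Let $t_0$ be large enough that the facts quoted from \cite{obatastandardmap} and \cite{burguet2024} hold. Fix $t\gs t_0$ and the corresponding neighborhood $\mathcal U$. For every $g\in\mathcal U$ we then have:
\begin{itemize}
\item a unique measure of maximal entropy $\mu_g$;
\item for $g_n\to g$ in $C^{1+\alpha}$ inside $\mathcal U$, $\mu_{g_n}\xrightarrow{*}\mu_g$ and $\lambda_1(\mu_{g_n},g_n)\to\lambda_1(\mu_g,g)$.
\end{itemize}
Shrinking $\mathcal U$ if needed, $C^{1+\alpha}$ convergence inside $\mathcal U$ gives $C^1$ convergence and a uniform bound $\sup_n\|g_n\|_{C^{1+\alpha}}<\infty$. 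So the standing hypotheses of \Cref{USC of entropy} hold.

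\textbf{Lower semi-continuity.} By Katok's horseshoe theorem, the topological entropy of a $C^{1+\alpha}$ surface diffeomorphism is lower semi-continuous in the $C^1$ topology. This is classical, since horseshoes persist under $C^1$ perturbations.

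\textbf{Upper semi-continuity.} Take $g_n\to g$ in $\mathcal U$. By uniqueness, $h_{\mathrm{top}}(g_n)=h_{\mu_{g_n}}(g_n)$. Apply \Cref{USC of entropy} with $\mu_n=\mu_{g_n}$, $\mu=\mu_g$ and $u=1$. We must check two things.

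\emph{First assumption.} Ergodicity of $\mu_g$ is automatic, since a unique measure of maximal entropy is ergodic. We need $\lambda_1(\mu_g,g)>0\gs\lambda_2(\mu_g,g)$.
\begin{itemize}
\item $h_{\mu_g}(g)=h_{\mathrm{top}}(g)>0$, so the Ruelle inequality gives $\lambda_1>0$.
\item Applying the Ruelle inequality to $g^{-1}$ shows $\mu_g$ also has a negative exponent. On a surface this forces $\lambda_2<0$.
\end{itemize}

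\emph{Second assumption.} We need $\lambda_1(\mu_{g_n},g_n)\to\lambda_1(\mu_g,g)$, which is exactly the quoted fact.

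\Cref{USC of entropy} therefore yields $\limsup_n h_{\mathrm{top}}(g_n)\ls h_{\mathrm{top}}(g)$.

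\textbf{Main obstacle.} The only delicate point is the first step: the quoted facts must be available uniformly on one neighborhood $\mathcal U$, including uniform $C^{1+\alpha}$ bounds. I expect this to follow directly from the cited statements, possibly after shrinking $\mathcal U$.
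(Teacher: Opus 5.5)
Your proposal is correct and follows essentially the paper's proof: you get lower semi-continuity from Katok's horseshoes, and upper semi-continuity by applying \Cref{USC of entropy} with $u=1$ to $\mu_{g_n}\xrightarrow{*}\mu_g$, using the quoted continuity of $\lambda_1$ and $h_{\mathrm{top}}(g)=h_{\mu_g}(g)$. Your explicit check of the first hypothesis (the unique measure of maximal entropy is ergodic, and $\lambda_1>0>\lambda_2$ by the Ruelle inequality for $g$ and $g^{-1}$) just fills in what the paper leaves as ``one can easily check''.
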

It should be noted that this result also follows directly from the Corollary~2 of Burguet \cite{burguet2024}, together with the preceding discussion.
However, it is presented here to offer an alternative perspective on this problem.

\subsubsection{Symbolic codings of diffeomorphisms}$\,$\smallskip

A powerful strategy for studying a smooth dynamical system is to construct a semi-conjugacy to a symbolic system. This is called the symbolic coding of the diffeomorphism. 
In \cite{Sarig13}, for any $C^{1+\alpha}$ surface diffeomorphism, Sarig constructed a symbolic coding through a countable Markov shift, which captures all ergodic measures with a prescribed level of hyperbolicity. The method was generalized to higher dimensions by Ben Ovadia in \cite{Snir18coding}.

Another refinement of this technique, introduced by Buzzi, Crovisier, and Sarig in \cite{BCS}, involves localizing the coding to specific dynamically relevant subsets, namely homoclinic classes. They constructed irreducible symbolic codings for these classes, which was a key in their proof of the finiteness of measures of maximal entropy for $C^\infty$ surface diffeomorphisms. Recently, the Strong Positive Recurrence (SPR) property for symbolic codings of diffeomorphisms was systematically investigated by Buzzi, Crovisier, and Sarig in \cite{SPR2025}.

An important fact established by Buzzi, Crovisier, and Sarig is that the Oseledets stable and unstable bundles, while generally only measurable on the base manifold, become Hölder continuous when lifted to the Markov shift associated with the symbolic coding. This regularity implies that the lifted unstable geometric potential is also Hölder continuous. Consequently, whenever a sequence of invariant measures converges in the weak$^*$ topology on the symbolic space, the sum of positive Lyapunov exponents of the corresponding projected measures on the base manifold converges accordingly. By combining this continuity with our main result, we deduce the upper semi-continuity of metric entropy for symbolic codings of diffeomorphisms. To formalize this, we first recall the relevant definitions from symbolic dynamics (see for instance \cite{Sarig13,BCS}).
\begin{definition}[Countable Markov Shift]
Let $G=(V, E)$ be a directed graph with a countable set of vertices $V$ and a set of edges $E \subset V \times V$. The \textit{symbolic space} $\Sigma$ associated with $G$ is the set of all bi-infinite paths on the graph with the standard metric:
\[
\Sigma := \left\{ (v_i)_{i \in \mathbb{Z}} \in V^{\mathbb{Z}} : (v_i, v_{i+1}) \in E \text{ for all } i \in \mathbb{Z} \right\}.
\]
The \textit{left shift map} $\sigma: \Sigma \to \Sigma$ is defined by $(\sigma(\mathbf{v}))_i = v_{i+1}$. The dynamical system $(\Sigma, \sigma)$ is called a \textit{countable Markov shift} or a topological Markov shift. 

The \textit{regular set}, denoted by $\Sigma^{\#}$, consists of sequences that are recurrent in both forward and backward time.
It is a standard result that the set $\Sigma^{\#}$ has full measure with respect to any $\sigma$-invariant probability measure on $\Sigma$.
\end{definition}

Let $f \in \mathrm{Diff}^{1+\alpha}(M)$ and $\mu$ be an ergodic measure for $f$. For any given $\chi > 0$, the measure $\mu$ is said to be $\chi$-hyperbolic if all Lyapunov exponents of $\mu$ have absolute values exceeding $\chi$.
The following theorem summarizes the existence and basic properties of symbolic coding for all $\chi$-hyperbolic measures of $C^{1+\alpha}$ diffeomorphisms.  See \cite{Sarig13}, \cite{Snir18coding} and \cite{BCS} for more details.
\begin{theorem}[Symbolic codings for diffeomorphisms]\label{thm: Sarig coding}
Let $f\in \mathrm{Diff}^{1+\alpha}(M)$. For any $\chi>0$, there exists a locally compact countable Markov shift $(\Sigma_f,\sigma)$ and a H\"{o}lder continuous map $\pi:\Sigma_f\to M$ such that $\pi\circ\sigma=f\circ\pi$ and
\begin{itemize}
    \item[1.] $\pi:\Sigma_f^\#\to M$ is finite-to-one.
    \item[2.] $\nu(\pi(\Sigma_f^\#))=1$ for any $\chi$-hyperbolic measure $\nu\in\mathcal{M}_{\mathrm{e}}(M,f)$. Moreover, there exists an ergodic measure $\bar{\nu}$ on $\Sigma_f$ such that $\pi_*(\bar{\nu})=\nu$. Conversely, if $\bar{\nu}$ is a $\sigma$-ergodic measure on $\Sigma_f$, then $\pi_*(\bar{\nu})$ is $f$-ergodic, hyperbolic, and $h_{\nu}(f)=h_{\bar{\nu}}(\sigma)$.
    \item[3.] For any $x\in\pi(\Sigma_f)$, there exists a splitting $T_xM=E^s(x)\oplus E^u(x)$ such that
    \[
    \limsup_{n\to\infty}\frac{1}{n}\log\|Df^n|_{E^s(x)}\| \ls -\frac{\chi}{2},
    \]
    \[
    \limsup_{n\to\infty}\frac{1}{n}\log\|Df^{-n}|_{E^u(x)}\| \ls -\frac{\chi}{2}.
    \]
    Moreover, on each component $\Sigma_f'$ of $\Sigma_f$, $\dim E^{u/s}(\pi(\underline{x}))$ are constants and the maps $\underline{x}\to E^{u/s}(\pi(\underline{x}))$ are H\"{o}lder continuous.
\end{itemize}
\end{theorem}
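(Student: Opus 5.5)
The statement is a compilation of known results, so the plan is to assemble it from \cite{Sarig13}, \cite{Snir18coding} and \cite{BCS} rather than reprove it. I would only check that the hypotheses match ($f\in\mathrm{Diff}^{1+\alpha}(M)$, a fixed $\chi>0$) and that each item is stated in the form needed later.

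\textbf{Construction.} Following Sarig in dimension two and Ben Ovadia in arbitrary dimension, one first fixes $\chi$ and a small $\varepsilon\ll\chi$. One then builds Pesin charts $\Psi_x^{p^s,p^u}$ at $\chi$-summable points, using the $C^{1+\alpha}$ regularity to control the nonlinearity in charts of size $\varepsilon$-comparable to the Pesin parameters. Next comes the key discreteness step: one selects a countable, locally finite family $\mathcal A$ of such charts with the property that every orbit of a $\chi$-hyperbolic measure is shadowed by an $\varepsilon$-generalized pseudo-orbit, that is, an $\varepsilon$-chain of charts. The edges $\Psi\to\Psi'$ encode overlap of $f(\Psi)$ with $\Psi'$. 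Graph transforms along a chain produce admissible $s$- and $u$-manifolds, whose intersection defines a H\"older map $\widehat\pi$ on this first, infinite-to-one shift. A Bowen--Sinai refinement of the resulting Markov cover, which is locally finite by the choice of $\mathcal A$, yields a genuine countable Markov partition. This gives $\Sigma_f$ and $\pi$ with $\pi\circ\sigma=f\circ\pi$, and local compactness follows from local finiteness.

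\textbf{Items 1 and 2.} Finite-to-one on $\Sigma_f^\#$ is the inverse problem of \cite{Sarig13}/\cite{Snir18coding}: two regular chains shadowing the same orbit have comparable parameters. Hence only finitely many partition elements can code a given point infinitely often, which bounds the fiber. For item 2, a $\chi$-hyperbolic ergodic $\nu$ gives full measure to $\chi$-summable points whose chains recur, so $\nu(\pi(\Sigma_f^\#))=1$. A lift $\bar\nu$ is obtained by taking a weak$^*$ limit of measures on fibers (or by the standard lifting lemma for finite-to-one codings) and then an ergodic component. Conversely, $\pi_*\bar\nu$ is ergodic as a factor. It is hyperbolic by item 3, and entropy is preserved because a finite-to-one, countable-to-one factor map does not change entropy (Abramov--Rokhlin with finite fibers).

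\textbf{Item 3.} This is the part specific to \cite{BCS}. $E^{s}(\pi(\underline x))$ is the tangent space of the stable manifold built by graph transform from the future of $\underline x$, and $E^u$ analogously from the past. Exponential convergence of graph transforms (with contraction rate governed by $\chi-O(\varepsilon)$) makes $E^{u/s}$ depend H\"older on the symbolic coordinates. Dimensions are locally constant and hence constant on irreducible components. The rate bounds $-\chi/2$ come from the chart estimates with $\varepsilon$ small.

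I expect the main obstacle to be the H\"older continuity of the lifted bundles in item 3. The inverse problem is delicate there, and one must show that the chart parameters along regular chains control the angles uniformly enough. Here I would simply invoke the corresponding statement of \cite{BCS}.
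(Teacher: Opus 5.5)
Your proposal takes the same approach as the paper. The paper does not prove this theorem; it presents it as a summary of the coding results of \cite{Sarig13}, \cite{Snir18coding} and \cite{BCS}, and your outline correctly locates each item in that literature: Pesin charts and $\varepsilon$-chains, the Bowen--Sinai refinement, the inverse problem for finite-to-one, lifting plus Abramov--Rokhlin for entropy, and graph transforms for the H\"older bundles. As in the paper, the substantive content of items 1 and 3 (the inverse problem and the H\"older continuity of the lifted bundles) is taken from those references rather than reproved.
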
 
As discussed previously, property 3 in \Cref{thm: Sarig coding} implies the continuity of the sum of positive Lyapunov exponents for the corresponding projected measures under the convergence upon the countable symbolic coding Markov shift. Combining with \Cref{USC of entropy}, the following proposition is obtained.
\begin{proposition}\label{prop: USC of coding system}
Let $f\in\mathrm{Diff}^{1+\alpha}(M)$ and $(\Sigma_f,\sigma)$ be its symbolic coding given by \Cref{thm: Sarig coding}. For any sequence of invariant probability measures $\{\bar{\mu}_n\} \subset \mathcal M(\Sigma_f,\sigma)$ such that $\bar{\mu}_n\xrightarrow{*}\bar{\mu}$ for some $\bar{\mu} \in \mathcal M(\Sigma_f,\sigma)$, 
\[ 
\limsup_{n\to\infty} h_{\bar{\mu}_n}(\sigma)\ls h_{\bar{\mu}}(\sigma).
\]
\end{proposition}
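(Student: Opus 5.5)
The plan is to apply \Cref{USC of entropy} to the projected measures $\mu_n:=\pi_*\bar\mu_n$ and $\mu:=\pi_*\bar\mu$, with the constant sequence $f_n=f$. Since $\pi$ is continuous, $\mu_n\xrightarrow{*}\mu$. Entropy is preserved under projection: by property 2 of \Cref{thm: Sarig coding}, the finite-to-one property on $\Sigma_f^\#$ (which has full measure for every invariant measure), and ergodic decomposition (entropy is affine, and ergodic components project to ergodic components), we get $h_{\bar\nu}(\sigma)=h_{\pi_*\bar\nu}(f)$ for every $\bar\nu\in\mathcal M(\Sigma_f,\sigma)$. So it suffices to prove $\limsup_n h_{\mu_n}(f)\ls h_\mu(f)$.

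First reduce to a single component. Decompose $\Sigma_f$ into its (countably many) irreducible components, on each of which $\dim E^u$ is a constant. Because the components are open and closed, the mass each $\bar\mu_n$ gives to each component converges to the mass $\bar\mu$ gives it. The tails need separate care, since the sum over components is countable. Entropy is bounded by $\dim M\cdot\log\|f\|_{C^1}$, so a total mass $\epsilon$ of components contributes at most $C\epsilon$ to $h$. By tightness of $\bar\mu_n\to\bar\mu$ on the locally compact space $\Sigma_f$, finitely many components carry mass $\ge 1-\epsilon$ uniformly in $n$. Normalize the restrictions of $\bar\mu_n$ to a fixed component $\Sigma'$ with $\dim E^u=u$. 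This reduces the problem to sequences living on one $\Sigma'$; the finitely many remaining pieces are then combined affinely.

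On $\Sigma'$, the hypotheses of \Cref{USC of entropy} need to be checked. For $\pi_*\bar\nu$-a.e. point, property 3 gives a splitting $E^s\oplus E^u$ with rates at most $-\chi/2$ along $E^s$ and at least $\chi/2$ along $E^u$. Hence $\lambda_u\ge\chi/2>0\ge\lambda_{u+1}$ almost everywhere, which is the first assumption. For the second, define $\phi(\underline x)=\log|\det Df_{\pi\underline x}|_{E^u(\pi\underline x)}|$. Since $\underline x\mapsto E^u(\pi\underline x)$ is Hölder and $Df$ is continuous, $\phi$ is continuous and bounded on $\Sigma'$, with $|\phi|\ls \dim M\cdot\log\|f\|_{C^1}$. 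By invariance of $E^u$ and Birkhoff's theorem, $\int\phi\,\td\bar\nu=\sum_{i=1}^u\lambda_i(\pi_*\bar\nu,f)=\sum_i\lambda_i^+(\pi_*\bar\nu,f)$. Weak$^*$ convergence on $\Sigma'$ then gives convergence of these integrals, because $\phi$ is bounded and continuous. \Cref{USC of entropy} applied to the projected normalized pieces gives the inequality on each component. Summing over components with the convergent weights, plus the $C\epsilon$ tail, and letting $\epsilon\to0$ yields the claim.

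I expect the main obstacle to be the tightness and tail step. Weak$^*$ convergence on a non-compact countable Markov shift needs $\bar\mu$ to be a probability measure with no escape of mass; I would interpret the hypothesis that way, so that tightness follows from Prokhorov's theorem. A related subtlety is that the projected measures on different components may overlap on $M$. To avoid this, I would state the component-wise inequality at the symbolic level via the identity $h_{\bar\nu}=h_{\pi_*\bar\nu}$ for each ergodic component, and then use affinity of entropy on $\Sigma_f$ directly.
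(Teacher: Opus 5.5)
Your proposal is correct and follows essentially the same route as the paper. You push forward to $M$, read off the gap $\lambda_u\ge\chi/2>0\ge\lambda_{u+1}$ and the convergence of $\sum_{i\le u}\lambda_i$ (through the continuous lifted unstable Jacobian) from property~3 of \Cref{thm: Sarig coding}, apply \Cref{USC of entropy} with $f_n=f$, and return to $\Sigma_f$ via property~2 and affinity of entropy.

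The only difference is bookkeeping. The paper splits the measures only according to the finitely many possible values of $u=\dim E^u$, so your tightness and tail estimate are not needed. That split is also safer than appealing to irreducible components: in a countable Markov shift those are closed but in general not open, whereas the level sets of $\dim E^u$ are unions of cylinders $[v]$ in the standard codings, hence clopen.
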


It should be noted that this result was established in a more general symbolic setting by Iommi, Todd, and Velozo \cite{Todd22entropyofTMS}, whose proof is entirely within the symbolic framework of countable Markov shifts. In contrast, our approach provides an alternative dynamical proof for this upper semi-continuity result, derived from the smooth dynamics on the base manifold. Furthermore, their work analyzes the upper semi-continuity defect that arises when a sequence of invariant measures loses mass to infinity. Using our quantitative estimate in \Cref{thm best}, we can also provide a dynamical counterpart to this phenomenon: there exists a constant $L$ depending only on $f$ and $\chi$ such that if the sequence of measures loses mass $\rho$ to infinity, the corresponding entropy defect is bounded by $L\rho$.

Since \Cref{USC of entropy} only requires the continuity of the sum of positive Lyapunov exponents on the base manifold, the same proof shows that Proposition \ref{prop: USC of coding system} also holds under a weaker topology: the weak$^*$ topology with respect to admissible potentials. Following \cite{BCS}, a bounded continuous function $\phi$ on $\Sigma_f$ is called an \textit{admissible potential} if $\phi = \varphi \circ \pi$ for some $\varphi: M \to \mathbb{R}$ which is either a Hölder continuous function or the unstable geometric potential $\varphi^u(x) = -\log \mathrm{Jac}(Df|_{E^u(x)})$. Because the space of admissible potentials is a proper subspace of $C^0_{\mathrm{b}}(\Sigma_f, \mathbb{R})$, this convergence imposes a weaker requirement than the standard weak$^*$ topology. Extending the semi-continuity to this weaker topology is based  on our main results and the observation by Buzzi, Crovisier, and Sarig regarding the Hölder continuity of the lifted Oseledets bundles, as discussed previously.

\bigskip

\noindent{\bf Strategy of proof} 

\smallskip
\smallskip

In the setting of $C^{1+\alpha}$ diffeomorphisms, partial entropy (tied to Lyapunov subspaces) lacks direct continuity tools; uniform subordinate partitions on unstable manifolds are difficult to construct for approximating sequences of measures, and the geometric structure of the unstable lamination is complicated.

We establish the upper semi-continuity of partial entropy in this paper. To this end, we present a systematic analysis and apply key results from Pesin theory, along with the SPR theory for diffeomorphisms developed by Buzzi, Crovisier, and Sarig. Furthermore, we propose a novel method to decompose partial entropy into a main term and an error term, and we derive rigorous estimates for both components.
\smallskip 

\begin{itemize}
\item[(1)] Weighted Pesin Blocks:\\
\begin{itemize}
\item Extend classical Pesin theory to define weighted Pesin blocks (\Cref{definition of GPB}). This relaxes the uniform separation condition on Lyapunov exponents and focuses only on the exponent gap required for the partial entropy to be well-defined. \\

\item Adapt the SPR theory for diffeomorphisms developed by Buzzi, Crovisier, and Sarig to weighted Pesin blocks:  Link the continuity of the sum of  Lyapunov exponents to the existence of the uniformly large-measure weighted Pesin blocks (Proposition \ref{uniform size prop}), ensuring a uniform size of weighted Pesin unstable manifolds, which provides a geometric framework to control entropy variation.\\
\end{itemize}

\item[(2)]  Decompose the partial entropy into a main term (subordinate partition entropy on uniform local weighted Pesin unstable lamination) and an error term (entropy outside uniform large-measure weighted Pesin blocks).  \\

\begin{itemize}
\item 
Develop a measure-theoretic approach for (possibly non-invariant) Borel measures to establish the upper semi-continuity of the main term of partial entropy, tailored to weak$^*$ convergence of measures and the $C^1$ continuity of local weighted Pesin unstable manifolds.\\

\item  Use recurrence time estimates (Lemma \ref{lem 3.11}) and small-boundary partitions (Definition \ref{small boundary}) to bound the error term, ensuring it remains uniformly bounded as the approximating measures converge to the limit.
\\

\item Use the joinings and the passage lemma from \cite{DMsymbolicextension} to bridge the ergodic decompositions of the approximating measures and the limit measure, extending the main results from ergodic to general invariant measures.\\

\end{itemize}

\end{itemize}

\bigskip

\noindent{\bf Organization of the paper}  
\smallskip
\smallskip

In \Cref{sec:pes}, we develop the framework of weighted Pesin blocks. We introduce weighted Pesin blocks in \Cref{sec 2.1} and establish their subexponential angle variation. Then, we construct weighted Pesin charts and their corresponding weighted Pesin unstable manifolds in \Cref{sec new 2.3} and \Cref{sec 2.3}. In \Cref{sec 2.4}, a crucial quantitative estimate (\Cref{quant uniform size prop}) is established, which links the continuity of the sum of Lyapunov exponents to the existence of uniformly sized weighted Pesin blocks.

\Cref{sec 3} is dedicated to the upper semi-continuity of the partial entropy.  Subsequent to the entropy analysis of the Borel measures presented in \Cref{sec 3.1}, we construct subordinate partitions in \Cref{sec 3.2} and decompose the partial entropy into a main term and an error term. The upper semi-continuity of the main term is established in \Cref{sec 3.3}, while the error term is controlled in \Cref{sec 3.4}. This is the most delicate part of the proof. In \Cref{sec 3.5}, we extend the argument from ergodic to general invariant measures via the theory of joinings from \cite{DMsymbolicextension}, completing the proofs of our main results (\Cref{thm best,thm bestcor}). \Cref{sec 3.6} proves the upper semi-continuity for generic ergodic measures. The proofs for the applications in \Cref{sec: 1.2} can be found in \Cref{sec example}.

\smallskip

\section{Pesin theory and weight function}\label{sec:pes}
In this section, we adapt Pesin theory to a weighted setting, designed for applications to the partial entropies of invariant measures in \Cref{thm 3.3} and \Cref{thm best}. This weighted perspective extends the classical Pesin framework and provides a complementary approach to the study of partial entropy.

The motivation for introducing this weighted setting is as follows. We aim to investigate the dynamical properties along the $\rk$-dimensional partial direction for invariant measures satisfying $\mathrm{Hyp}^\rk(m)>0$ via Pesin theory. The classical construction of Pesin blocks relies on a $\chi$-hyperbolicity condition, which provides a uniform separating threshold, namely the level \(0\), between the
positive and negative Lyapunov directions. For invariant measures, however, the condition $\mathrm{Hyp}^\rk(m)>0$ only provides a gap between the $\rk$-th and the $(\rk+1)$-th Lyapunov exponents componentwise. The absolute positions of these exponents may vary substantially among different ergodic components; in particular, the $(\rk+1)$-th exponent on one component may lie above the $\rk$-th exponent on another. Hence, even though the desired gap exists along each component, there may be no global separating constant valid for the whole measure.

To overcome this difficulty, we introduce a weight function $\varphi$ that is continuous and non-negative, and we use it to define weighted Pesin blocks together with weighted Pesin charts. In these weighted Pesin charts, the dynamics becomes ``uniformly partially hyperbolic''. This allows us to define weighted Pesin unstable manifolds for every point in a weighted Pesin block. Moreover, these manifolds coincide almost everywhere with the classical $\rk$-dimensional Pesin unstable manifolds and vary continuously.

This refinement provides more explicit information and plays a crucial role in the present study. While the continuity of the sums of the top $\rk$ Lyapunov exponents alone is insufficient to guarantee uniformly sized classical Pesin blocks (see Appendix \ref{appendix a5}), it is precisely enough to ensure uniformly sized weighted Pesin blocks (\Cref{uniform size prop} and \Cref{quant uniform size prop}). This, in turn, establishes the necessary rigidity result for the convergence of measures in the present study.

Let $M$ be a compact Riemannian manifold  without boundary with a Riemannian norm $\|\cdot\|$ that induces a distance $d$. For any two distinct points $x,y\in M$, let $\gamma_{xy}$ denote the geodesic connecting $x$ and $y$, and let
$$P_{\gamma_{xy}}: T_yM\to T_xM $$
be the parallel transport operator along $\gamma_{xy}$, which is an isometric linear isomorphism preserving the Riemannian inner product.

For a $C^{1+\alpha}$ diffeomorphism $f:M\to M$, the $\alpha$-H\"older semi-norm of the differential operator $Df$ is defined as
$$[Df]_\alpha := \sup_{\substack{x,y\in M \\ x\neq y}} \frac{\left\| P_{\gamma_{f(x)f(y)}} \circ D_yf - D_xf \circ P_{\gamma_{xy}} \right\|}{d(x,y)^\alpha}.$$
From the preceding definition, we observe that if we restrict our analysis to a uniformly small local chart, the parallel transport $P_{\gamma_{xy}}$ can be approximated arbitrarily well by the identity operator. To streamline the notation, we suppress this isomorphism in what follows.
The $C^1$ and $C^{1+\alpha}$ norms of the diffeomorphism  $f$ are given by   
 \begin{eqnarray*}\|f\|_{C^1}&:=&\max\{\|Df\|, \|Df^{-1}\|\}, \\[2mm]
 \|f\|_{C^{1+\alpha}}&:=&\max\{\|f\|_{C^1},\,[Df]_\alpha, [Df^{-1}]_\alpha\}.
 \end{eqnarray*}

\subsection{Weighted Pesin block}\label{sec 2.1}

Let $f\in\mathrm{Diff}^{1+\alpha}(M)$ be a diffeomorphism. 
First, we recall the definition of the classical Pesin block (see, e.g., \cite{pesin1976, Pesinbook} or \cite[Definition 1.2]{SPR2025}):

\begin{definition}[Pesin block]
For any parameters $K\gs1$, $\chi>0$ and $\epsilon>0$ satisfying $\epsilon\ll \chi$,
a non-empty set $\Lambda(K,\chi,\epsilon)\subseteq M$ is called a Pesin block if there exist direct sum decompositions
$$T_yM=E^{s}(y)\oplus E^u(y),\quad \forall\ y\in \bigcup_{n=-\infty}^{+\infty} f^n\Lambda(K,\chi,\epsilon)$$
such that for any $x\in\Lambda(K,\chi,\epsilon)$, $n\in \mathbb{Z}$, $m\in \mathbb{N}$,
\begin{align*}
    \|Df^m|_{E^{s}(f^n(x))}\| &\ls K \cdot \e^{-m\chi  + |n|\epsilon},\\[2mm]
    \|Df^{-m}|_{E^u(f^n(x))}\| &\ls K \cdot \e^{-m\chi  + |n|\epsilon}. 
\end{align*}
\end{definition}

In this paper, we extend the classical Pesin block to the following notion of the weighted Pesin block, which can be seen as a partial version of the classical one.

\begin{definition}[Weighted Pesin Block]\label{definition of GPB}
      For parameters $\rk\in\mathbb N$, $K\gs1$, $\chi>0$ and $\epsilon>0$ that satisfy $\epsilon\ll \chi$, more specifically 
\begin{align}\label{var}
       \epsilon\ls\frac{\alpha\cdot\chi^2}{10^3\cdot(\log\|f\|_{C^1}+1)},
\end{align}
      and a continuous weight function $\varphi \in C^0(M)$ with $\varphi(x)\gs 0$,
      a non-empty set $\Lambda^\rk(K,\chi,\epsilon,\varphi)\subseteq M$ is called a weighted Pesin block if there exist direct sum decompositions
      $$T_yM=E^{cs}(y)\oplus E^u(y),\quad \forall\ y\in \bigcup_{n=-\infty}^{+\infty} f^n\Lambda^\rk(K,\chi,\epsilon,\varphi),$$
      such that $\dim E^u(y)=\rk$ and for any $x\in\Lambda^\rk(K,\chi,\epsilon,\varphi)$, $n\in \mathbb{Z}$, $m\in \mathbb{N}$,
\begin{align}\label{GPB def 1}
    \|Df^m|_{E^{cs}(f^n(x))}\| &\ls K \cdot \e^{-m\chi  + |n|\epsilon}\cdot \e^{\Phi_m(f^n(x))},\\[2mm]
    \label{GPB def 2}
    \|Df^{-m}|_{E^u(f^n(x))}\| &\ls K \cdot \e^{-m\chi  + |n|\epsilon}\cdot \e^{\Phi_{-m}(f^n(x))},
\end{align}
    where $\Phi_{m}(x):= \sum_{i=0}^{m-1}\varphi(f^i(x))$ and $\Phi_{-m}(x):= -\sum_{i=1}^{m}\varphi(f^{-i}(x))$. 
\end{definition}

\begin{remark}

The definition of a weighted Pesin block coincides with that of the classical Pesin block when $\varphi \equiv 0$ and $\rk$ is the dimension of the unstable Oseledets bundles. In this sense, the weighted Pesin block can be viewed as a partial version of the classical Pesin block.  Notably, the weight function $\varphi$ constitutes the fundamental distinction between the weighted Pesin blocks and their classical counterparts (for which $\varphi \equiv \text{constant}$). By introducing $\varphi$, we achieve a refined characterization of the spatial distribution of the boundary between the $\rk$-th and the $(\rk+1)$-th Lyapunov exponent on $M$, applicable to general invariant measures satisfying $\mathrm{Hyp}^\rk(\mu) > 0$.
 
Furthermore, the continuity of $\varphi$ ensures the compactness of weighted Pesin blocks (\Cref{pro 2.1}) and their persistence under system perturbations (\Cref{uniform size prop} and \Cref{quant uniform size prop}), and $\varphi \gs 0$ enforces the necessary expansion in the $E^u$ direction, thereby guaranteeing the existence of weighted Pesin unstable manifolds (\Cref{sec new 2.3} and \Cref{sec 2.3}).
\end{remark}

In the following proposition, we present some basic properties of weighted Pesin blocks and the corresponding decompositions, including compactness, uniqueness, and continuity. See Appendix \ref{appendix a1} for details.
\begin{proposition}\label{pro 2.1}
    Fix $K,\chi,\epsilon$ and $\varphi$ as in \Cref{definition of GPB}. Then for each weighted Pesin block $\Lambda^\rk(K,\chi,\epsilon,\varphi)$ we have:
    \begin{itemize}
    \item[$(a)$] The closure of $\Lambda^\rk(K,\chi,\epsilon,\varphi)$ is again a weighted Pesin block with the same parameters; therefore we may always assume without loss of generality that $\Lambda^\rk(K,\chi,\epsilon,\varphi)$ is compact.\smallskip
    \item[$(b)$] The decomposition $T_xM=E^{cs}(x) \oplus E^u(x)$ is unique for every $x\in \Lambda^\rk(K,\chi,\epsilon,\varphi)$.\smallskip
    \item[$(c)$] The maps $x\mapsto E^{cs}(x)$ and $x\mapsto E^u(x)$ are continuous on $\Lambda^\rk(K,\chi,\epsilon,\varphi)$.
    \end{itemize}
\end{proposition}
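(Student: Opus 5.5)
The three assertions follow the classical Pesin-block argument, with the weight $\varphi$ carried along. The proof rests on two observations. First, the bounds \eqref{GPB def 1}--\eqref{GPB def 2} are closed conditions on pairs (point, splitting): for fixed $n,m$, both sides depend continuously on $(x,E^{cs},E^u)$, because $\varphi$ is continuous and so $\Phi_{\pm m}(f^n(x))$ is continuous in $x$. Second, the invariant splitting along a weighted block orbit has a uniform cone/gap structure. Combining \eqref{GPB def 1} and \eqref{GPB def 2} with $n=0$, for unit vectors $u\in E^u(x)$ and $v\in E^{cs}(x)$ we get
\[
\frac{\|Df^m v\|}{\|Df^m u\|}\ls K^2 \e^{-2m\chi}\,\e^{\Phi_m(x)}\,\e^{-\Phi_m(x)}=K^2\e^{-2m\chi}.
\]
Here we use that $\|Df^m u\|\gs \|Df^{-m}|_{E^u(f^m x)}\|^{-1}$ and that $\Phi_{-m}(f^m x)=-\Phi_m(x)$, so the two weight factors cancel exactly. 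This cancellation is the key point: the domination ratio no longer sees $\varphi$. For the shifted points $f^n(x)$ we get the same bound with an extra factor $\e^{2|n|\epsilon}$.

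\emph{(b) Uniqueness.} Suppose $T_xM=E^{cs}\oplus E^u=\tilde E^{cs}\oplus\tilde E^u$ are two splittings, each satisfying the definition, with the same dimensions. Take $w\in\tilde E^{u}(x)$ and write $w=w^{cs}+w^u$ in the first splitting. Apply $Df^{-m}$. By \eqref{GPB def 2} for the tilde splitting (with $n=m$, base point $f^m(x)$), $\|Df^{-m}w\|$ decays like $\e^{-m\chi+m\epsilon}\e^{-\Phi_m(x)}$, up to the factor $\|Df^m w\|$. On the other hand, the component $Df^{-m}w^{cs}$ cannot decay faster than $\|w^{cs}\|$ times the reciprocal of \eqref{GPB def 1} applied at $f^{-m}(x)$, which is of order $\e^{m\chi-m\epsilon}\e^{-\Phi_m(f^{-m}x)}$.

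The weights at $f^{-m}x$ and the forward weights at $x$ do not cancel directly, and this is the main obstacle. I would handle it by the standard trick of comparing ratios instead of absolute norms. Use the angle bound between $E^{cs}$ and $E^u$ at $f^{-m}(x)$, which is at least $K^{-2}\e^{-2m\epsilon}$ by the ratio estimate above. Then compare $\|Df^{-m}w^{cs}\|/\|Df^{-m}w^{u}\|$: the weight sums $\Phi$ appear in both numerator and denominator along the same orbit segment, so they cancel. The ratio then grows like $\e^{2m(\chi-\epsilon)}\|w^{cs}\|/\|w^u\|$. Meanwhile $\tilde E^u$ is $\rk$-dimensional and contracted in backward time at the "unstable" rate relative to $E^{cs}$. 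Letting $m\to\infty$ forces $w^{cs}=0$, so $\tilde E^u\subseteq E^u$, and equality follows by dimension. The symmetric argument in forward time gives $\tilde E^{cs}=E^{cs}$.

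\emph{(a) Closure.} Let $x_j\in\Lambda$ with $x_j\to x$. By compactness of the Grassmannians, pass to a subsequence along which $E^{cs}(f^n x_j)$ and $E^u(f^n x_j)$ converge for every $n$; a diagonal argument over $n\in\mathbb Z$ handles all $n$ at once. Call the limits $E^{cs}(f^n x)$ and $E^{u}(f^n x)$. Several properties pass to the limit:
\begin{itemize}
\item The angle lower bound $K^{-2}\e^{-2|n|\epsilon}$ passes to the limit, so the limit subspaces are still transverse and give a direct sum.
\item $Df$-invariance and the dimension $\rk$ pass to the limit.
\item For fixed $n,m$, the inequalities \eqref{GPB def 1}--\eqref{GPB def 2} pass to the limit, since the norms of restrictions to converging subspaces converge and $\Phi_{\pm m}$ is continuous.
\end{itemize}
Hence $\Lambda\cup\{x\}$ is again a weighted Pesin block. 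By (b) the limit splitting is unique, so the splittings along the orbits of different limit points are consistent.

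\emph{(c) Continuity.} This follows from the same subsequence argument combined with uniqueness (b). Any limit point of $E^{cs/u}(x_j)$ is a splitting at $x$ satisfying the definition, hence it equals $E^{cs/u}(x)$. Since every subsequence has a further subsequence converging to the same limit, the whole sequence converges, which gives continuity on the compact set $\Lambda$.
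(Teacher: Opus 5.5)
Your overall structure matches the paper's: the block inequalities are closed conditions, uniqueness comes from incompatible growth rates, and continuity follows from uniqueness plus compactness. You also correctly note that (b) is what keeps the limit splittings consistent along orbits that meet $\bigcup_n f^n\Lambda$, a point the paper leaves implicit.

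The gap is in your transversality step in (a). There you claim the ratio estimate gives $\angle(E^{cs}(f^nx),E^u(f^nx))\ge K^{-2}\mathrm{e}^{-2|n|\epsilon}$. A domination ratio alone gives no angle bound independent of $\|f\|_{C^1}$, and this bound is false. For an integer $a\ge 3$, the matrix $\bigl(\begin{smallmatrix} a & a^2-3a+1\\ -1 & 3-a\end{smallmatrix}\bigr)$ has determinant $1$ and trace $3$. The induced Anosov automorphism of $\mathbb T^2$ therefore satisfies \eqref{GPB def 1}--\eqref{GPB def 2} at every point, with the eigendirections as splitting, $K=1$, $\varphi\equiv0$, $\chi=\log\frac{3+\sqrt5}{2}$ and any small $\epsilon$. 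Yet the angle between its eigendirections is of order $a^{-2}$, which tends to zero. The correct estimate, \Cref{pro 2.2}, is $\epsilon\|f\|_{C^1}^{-2}K^{-\tau}\mathrm{e}^{-|n|\tau\epsilon}$, where $\tau=2L/\chi$ depends on $\|f\|_{C^1}$, and it needs its own argument. As written, your conclusion that the limit subspaces are transverse rests on a false lemma.

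The repair is the paper's argument, and it uses exactly the cancellation you already found. The block inequalities, and hence your ratio estimate, hold for the limit subspaces $\hat E^{cs}(x),\hat E^u(x)$. If a unit vector $w$ lay in both, taking $u=v=w$ would give $1\le K^2\mathrm{e}^{-2m\chi+m\epsilon}$ for all $m$, which is absurd. Equivalently, $\|w\|=\|Df^{-m}(Df^mw)\|\le K^2\mathrm{e}^{-2m\chi+m\epsilon}\mathrm{e}^{\Phi_{-m}(f^mx)+\Phi_m(x)}\|w\|$ with $\Phi_{-m}(f^mx)+\Phi_m(x)=0$. A small correction: the bound $\|Df^mu\|\ge\|Df^{-m}|_{E^u(f^mx)}\|^{-1}$ uses \eqref{GPB def 2} at the shifted point $f^m(x)$. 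So your ratio carries a factor $\mathrm{e}^{m\epsilon}$ even at $n=0$; this is harmless.

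The ``main obstacle'' you identify in (b) is also not real. Apply \eqref{GPB def 2} at $n=0$ to the second splitting: $\|Df^{-m}w\|\le K\mathrm{e}^{-m\chi}\mathrm{e}^{\Phi_{-m}(x)}\|w\|$. Your lower bound on $\|Df^{-m}w^{cs}\|$ carries $\mathrm{e}^{-\Phi_m(f^{-m}x)}=\mathrm{e}^{\Phi_{-m}(x)}$, which is the same weight. The triangle inequality $\|Df^{-m}w^{cs}\|\le\|Df^{-m}w\|+\|Df^{-m}w^u\|$ then yields $\|w^{cs}\|\le K^2\mathrm{e}^{-m(2\chi-\epsilon)}(\|w\|+\|w^u\|)\to0$, with no angle input at all. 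This is the paper's proof of (b). It should replace your appeal to \eqref{GPB def 2} ``with $n=m$, base point $f^m(x)$'', which bounds the wrong quantity. With these two corrections, your (c) goes through as written.
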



To estimate the size of unstable manifolds, as in the classical Pesin theory \cite{pesin1976}, a subexponential variation of angles comes into play, which holds as follows.

To facilitate a thorough analysis, we formally state the rigorous definition of the angle between two subspaces. Let $T^1_xM:= \{v\in T_xM : \|v\|=1\}$ and let $\angle(v_1,v_2)$ denote the classical Euclidean angle between two vectors $v_1, v_2$ in a Euclidean space $F$. 
The angle between two linear subspaces $F_1$ and $F_2$ of $F$ is given by:
$$\angle(F_1,F_2):=\min\{\angle(v_1,v_2): v_1\in F_1,\; v_2\in F_2\}.$$
  We denote
$$L:= 4\cdot(\log \|f\|_{C^1}+1),\qquad \tau:= 2L\cdot\chi^{-1},$$
throughout this paper.

\begin{proposition}\label{pro 2.2}
    Given a weighted Pesin block $\Lambda^\rk(K,\chi,\epsilon,\varphi)$, for any $n\in \mathbb{Z}$, 
    $$\angle(E^{cs}(f^n(x)),E^u(f^n(x)))\gs \frac{\epsilon}{\|f\|_{C^1}^2}\cdot K^{-\tau} \cdot \e^{-|n|\tau\epsilon}.$$  
\end{proposition}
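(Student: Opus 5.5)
The plan is to argue by contradiction via a direct growth comparison. The key point is that the weight function $\varphi$ cancels when we compare the expansion along $E^u$ with that along $E^{cs}$ over the same orbit segment.

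Fix $x\in\Lambda^\rk(K,\chi,\epsilon,\varphi)$, $n\in\mathbb Z$, and write $y=f^n(x)$. Let $\theta=\angle(E^{cs}(y),E^u(y))$ be realized by unit vectors $u\in E^u(y)$ and $v\in E^{cs}(y)$, so that $\|u-v\|\ls\theta$.

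\textbf{Step 1: turn the backward bound on $E^u$ into a forward expansion bound.} For $m\in\mathbb N$, put $w=Df^m u\in E^u(f^{m}y)=E^u(f^{n+m}x)$. Apply \eqref{GPB def 2} at the point $f^{n+m}(x)$ and use the identity $\Phi_{-m}(f^m y)=-\Phi_m(y)$. This gives
$$1=\|Df^{-m}w\|\ls K\e^{-m\chi+|n+m|\epsilon}\e^{-\Phi_m(y)}\|w\|,$$
that is,
$$\|Df^m u\|\gs K^{-1}\e^{m\chi-(|n|+m)\epsilon}\e^{\Phi_m(y)}.$$
By \eqref{GPB def 1},
$$\|Df^m v\|\ls K\e^{-m\chi+|n|\epsilon}\e^{\Phi_m(y)}.$$
The factor $\e^{\Phi_m(y)}$ cancels in the ratio, so
$$\frac{\|Df^m u\|}{\|Df^m v\|}\gs K^{-2}\e^{2m(\chi-\epsilon)-2|n|\epsilon}.$$
This cancellation is the only place where the weighted structure enters. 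The rest is the classical Pesin argument.

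\textbf{Step 2: choose $m$ and compare with $\theta$.} Take $m$ minimal such that $K^{-2}\e^{2m(\chi-\epsilon)-2|n|\epsilon}\gs2$. Since $\epsilon\ll\chi$ by \eqref{var}, we have $2(\chi-\epsilon)\gs\chi$, and hence
$$m\ls \frac{2\log K+2|n|\epsilon+\log 2}{\chi}+1.$$
With this $m$,
$$\tfrac12\|Df^m u\|\ls\|Df^m u\|-\|Df^m v\|\ls\|Df^m(u-v)\|\ls\|f\|_{C^1}^m\theta.$$
Combining with $\|Df^m u\|\gs\|f\|_{C^1}^{-m}$ yields $\theta\gs\tfrac12\|f\|_{C^1}^{-2m}$.

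\textbf{Step 3: bookkeeping of constants.} Insert the bound on $m$ and use $\log\|f\|_{C^1}\ls L/4$ together with $\tau=2L/\chi$. This gives
$$\|f\|_{C^1}^{-2m}\gs\|f\|_{C^1}^{-2}\,\e^{-\frac{L}{2}\cdot\frac{2\log K+2|n|\epsilon+\log2}{\chi}}\gs \|f\|_{C^1}^{-2}K^{-\tau}\e^{-|n|\tau\epsilon}\cdot \e^{-\tau\log 2/4}.$$
The remaining constant factors, namely $\tfrac12$ and $\e^{-\tau\log2/4}$, have to be absorbed into the prefactor $\epsilon$. This uses \eqref{var}: it gives $\epsilon\ls \chi^2/10^3$, which is small compared with $\e^{-c L/\chi}$-type factors only after some care. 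If $\e^{-\tau\log 2/4}$ turns out to be too small to absorb this way, I would instead rebalance the choice of $m$, choosing the threshold $2$ differently or using the slack between $\tfrac{L}{2}\cdot\tfrac{2\log K}{\chi}=\tfrac{\tau}{2}\log K$ and $\tau\log K$ to absorb it.

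I expect this constant-matching to be the only delicate point. The structural part, Steps 1--2, is straightforward once the cancellation of $\Phi_m$ is observed.
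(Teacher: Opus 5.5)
Your Steps 1 and 2 are correct. The cancellation of $\e^{\Phi_m(y)}$ and the estimate $\tfrac12\|Df^m u\|\le\|Df^m(u-v)\|\le\|f\|_{C^1}^m\theta$ give a valid bound $\theta\ge\tfrac12\|f\|_{C^1}^{-2m}$. The gap is Step 3, and it is not a matter of constant-chasing. With the fixed threshold $2$, this bound is genuinely weaker than the statement, and neither repair you propose can close it.

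Take $K=1$ and $n=0$. The slack $K^{\tau/2}\e^{|n|\tau\epsilon/2}$ you hoped to exploit then equals $1$. Your $m=\lceil \log 2/(2(\chi-\epsilon))\rceil$ is of order $1/\chi$, so your bound is at most $\tfrac12\,2^{-\log\|f\|_{C^1}/\chi}$, which decays faster than any power of $\chi$. By contrast, \eqref{var} allows $\epsilon$ as large as $\alpha\chi^2/(10^3(\log\|f\|_{C^1}+1))$, and the statement asserts $\theta\ge\epsilon\|f\|_{C^1}^{-2}$. Concretely, for $\|f\|_{C^1}=\e$, $\chi=10^{-2}$, $\alpha=\tfrac12$, you get $m=35$ and only $\theta\ge\tfrac12\e^{-70}$, whereas $\epsilon$ may equal $2.5\cdot 10^{-8}$. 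Your remark that $\epsilon\le\chi^2/10^3$ is small compared with $\e^{-cL/\chi}$-type factors therefore runs the wrong way. The culprit is the threshold itself: demanding a ratio $\ge 2$ forces you to wait about $\log 2/(2\chi)$ iterates even when $K=1$, $n=0$, and each iterate costs a factor $\|f\|_{C^1}^{-2}$ in your estimate.

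To repair the argument within your framework, let the threshold depend on $\chi$. Take the first $m\ge1$ with $K^{-2}\e^{2m(\chi-\epsilon)-2|n|\epsilon}\ge\e^{\chi}$, so that $m=1$ when $K=1$, $n=0$. Also bound $\|Df^m u\|$ below by the block estimate $K^{-1}\e^{m(\chi-\epsilon)-|n|\epsilon}\ge K^{-1}\e^{-|n|\epsilon}$ instead of by $\|f\|_{C^1}^{-m}$. This gives $\theta\ge(1-\e^{-\chi})K^{-1}\e^{-|n|\epsilon}\|f\|_{C^1}^{-m}$ with $m<1.51+(\log K+|n|\epsilon)/(\chi-\epsilon)$. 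Using $\log\|f\|_{C^1}\le L/4$, $\tau>8$, and $\epsilon<\chi/10^3$ (from \eqref{var} together with $\chi\le\log\|f\|_{C^1}$), one checks that this is at least $\epsilon\|f\|_{C^1}^{-2}K^{-\tau}\e^{-|n|\tau\epsilon}$. The point is that the prefactor is now $1-\e^{-\chi}$, times a spare factor $\|f\|_{C^1}^{0.49}\ge\e^{0.49\chi}$, and this dominates $\epsilon$.

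The paper avoids any fixed threshold by a different comparison. Assuming the angle is below the claimed bound, Lemma~\ref{lem 2.4} keeps the angle between the iterates of the two vectors below $\epsilon\|f\|_{C^1}^{-2}$ for the first $H=[\tau(\log K+|n|\epsilon)/L]+1$ steps. By Lemma~\ref{lem 2.3}, each one-step expansion ratio is then $<\e^{\epsilon}$, so $\|Df^Hu\|/\|Df^Hv\|<\e^{H\epsilon}$. This contradicts the block lower bound $K^{-2}\e^{2H(\chi-\epsilon)-2|n|\epsilon}\ge\e^{H\epsilon}$. There the $\epsilon$ in the statement is exactly the distortion threshold of Lemma~\ref{lem 2.3}, and $H$ carries no additive $\log 2/\chi$ term.
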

\begin{proof}

    We prove this proposition by contradiction. 
    Suppose  there exists some $n\in \mathbb Z$ such that
    $$\angle(E^{cs}(f^n(x) ),E^u(f^n(x)))< \frac{\epsilon}{\|f\|_{C^1}^2}\cdot K^{-\tau} \cdot \e^{-|n|\tau\epsilon}.$$ 
    It  implies the existence of $v_{1},v_{2}\in T_{f^n(x)}^1M$ with $v_{1}\in E^{cs}(f^nx)$, $v_{2}\in E^u(f^nx)$ and  
    $$\angle(v_1,v_2)<\frac{\epsilon}{\|f\|_{C^1}^2}\cdot K^{-\tau} \cdot \e^{-|n|\tau\epsilon}. $$
    Thus, for any $m\in \mathbb{N}$,
    $$\angle(D_{f^n(x)}f^mv_1,D_{f^n(x)}f^mv_2)< \frac{\epsilon}{\|f\|_{C^1}^2}\cdot K^{-\tau} \cdot \e^{-|n|\tau\epsilon}\cdot \e^{mL} $$
    (see \Cref{lem 2.4} for a detailed computation).  
    Hence, for all $0\ls m \ls H-1$, where $H:=\bigl[\frac{|n|\tau\epsilon}{L}+\frac{\log K\cdot \tau}{L}\bigr]+1$, we have
    $$\angle(D_{f^n(x)}f^mv_1,D_{f^n(x)}f^mv_2)< \frac{\epsilon}{\|f\|_{C^1}^2}. $$
    Then,
    $$ \frac{\|D_{f^{n+m}(x)}f (D_{f^n(x)}f^{m}(v_{2}))\|_{f^{n+m+1}(x)}}{\|D_{f^n(x)}f^{m}(v_{2})\|_{f^{n+m}(x)}}\cdot \Big(\frac{\|D_{f^{n+m}(x)}f (D_{f^n(x)}f^{m}(v_{1}))\|_{f^{n+m+1}(x)}}{\|D_{f^n(x)}f^{m}(v_{1})\|_{f^{n+m}(x)}}\Big)^{-1}<\e^\epsilon,$$
    (see a proof in \Cref{lem 2.3}). 
    Define
    $$v^{m}_{i}:=\frac{D_{f^n(x)}f^{m}(v_{i})}{\|D_{f^n(x)}f^{m}(v_{i})\|_{f^{n+m}(x)}}$$
    for $i=1,2$. Note that $\|v^{m}_{i}\|=1$, and thus
\begin{align*}
    \frac{\|D_{f^n(x)}f^H(v_{2})\|_{f^{n+H}(x)}}{\|D_{f^n(x)}f^H(v_{1})\|_{f^{n+H}(x)}} 
    =\frac{\|D_{f^n(x)}f(v^{0}_{2})\|_{f^{n+1}(x)}}{\|D_{f^n(x)}f(v^{0}_{1})\|_{f^{n+1}(x)}}\cdots 
    \frac{\|D_{f^{n+H-1}(x)}f(v^{H-1}_{2})\|_{f^{n+H}(x)}}{\|D_{f^{n+H-1}(x)}f(v^{H-1}_{1})\|_{f^{n+H}(x)}}
    <\e^{H\epsilon}.
\end{align*}
   On the other hand, by \Cref{GPB def 1} and \Cref{GPB def 2},
\begin{align*}
    \|D_{f^n(x)}f^H(v_2)\|_{f^{n+H}(x)} &\gs K^{-1} \cdot \e^{H(\chi-\epsilon)  -|n|\epsilon}\cdot \e^{\Phi_H(f^n(x))},\\[2mm]
    \|D_{f^n(x)}f^{H}(v_1)\|_{f^{n+H}(x)} &\ls K \cdot \e^{-H\chi  + |n|\epsilon}\cdot \e^{\Phi_{H}(f^n(x))}.
\end{align*}
    Consequently,
    $$\frac{\|D_{f^n(x)}f^H(v_{2})\|_{f^{n+H}(x)}}{\|D_{f^n(x)}f^H(v_{1})\|_{f^{n+H}(x)}} 
    \gs K^{-2}\cdot \e^{2H(\chi-\epsilon)-2|n|\epsilon}.$$
    Taking logarithms, and using $\epsilon\ls100^{-1}\chi$ from \Cref{var},
\begin{align*}
    \log \frac{\|D_{f^n(x)}f^H(v_{2})\|_{f^{n+H}(x)}}{\|D_{f^n(x)}f^H(v_{1})\|_{f^{n+H}(x)}}-H\epsilon
    &\gs 2H(\chi-\epsilon)-2|n|\epsilon-2\log K-H\epsilon\\[2mm]
    &\gs H(2\chi-3\epsilon)-2(|n|\epsilon+\log K)\\[2mm]
    &\gs \bigl(\bigl[\tfrac{2|n|\epsilon+2\log K}{\chi}\bigr]+1\bigr)\cdot\chi-2(|n|\epsilon+\log K)\\[2mm]
    &\gs 0.
\end{align*}
   This implies
   $$ \frac{\|D_{f^n(x)}f^H(v_{2})\|_{f^{n+H}(x)}}{\|D_{f^n(x)}f^H(v_{1})\|_{f^{n+H}(x)}} \gs \e^{H\epsilon},$$
   which contradicts the earlier inequality. Hence the proposition is proved.
\end{proof}

\subsection{Weighted Pesin charts}\label{sec new 2.3}
    In this section, we construct the weighted Pesin charts, which are parallel to  the Pesin charts in classical Pesin theory.
    
    We begin by specifying the constants used in \Cref{sec new 2.3}. Let $C(\epsilon) := (\epsilon - \epsilon e^{-2\epsilon})^{-1}$, and let $C_i$ ($i \in \mathbb{N}$) denote suitably large constants that are universal, independent of $f$ and non-decreasing with respect to the index $i$. Their explicit values are given in Appendix \ref{appendix a2}. See there for more details.
    
    Now define $$\Lambda^*:=\bigcup_{i\in\mathbb{Z}}f^i\Lambda^\rk(K,\chi,\epsilon,\varphi).$$ Similar to the classical Pesin theory, we will introduce a new norm $\|\cdot\|'_x$ on this invariant set $\Lambda^*$. Given any $x\in \Lambda^*$, for any $v,w\in T_xM$ we write $v=v_u+v_{cs}$ and $w=w_u+w_{cs}$ with $v_u,w_u\in E^{u}(x)$ and $v_{cs},w_{cs}\in E^{cs}(x)$. A new inner product $\langle \cdot,\cdot\rangle'$ on $T_xM$ is defined as follows:
\begin{align*}
    \langle v_{u}, w_u\rangle'_{x}
    &:=\sum_{n=0}^{+\infty}\e^{2n(\chi-2\epsilon)} \e^{-2\Phi_{-n}(x)}\langle D_{x}f^{-n}v_{u},D_{x}f^{-n}w_{u}\rangle_{f^{-n}(x)},\\[2mm]
    \langle v_{cs}, w_{cs}\rangle'_{x}  
    &:=\sum_{n=0}^{+\infty}\e^{2n(\chi-2\epsilon)} \e^{-2\Phi_{n}(x)}\langle D_{x}f^{n}v_{cs},D_{x}f^{n}w_{cs}\rangle_{f^{n}(x)},\\[2mm]
    \langle v, w\rangle '_{x}
    &:=\langle v_{u}, w_u\rangle'_{x}+\langle v_{cs}, w_{cs}\rangle'_{x}.
  \end{align*}
   The norm $\|\cdot\|'_x$ induced by this inner product $\langle \cdot, \cdot \rangle '_{x}$ is called the weighted Lyapunov norm.
   
   The following lemma characterizes the relationship between the weighted Lyapunov norm $\|\cdot\|'_x$ and the original Riemannian norm $\|\cdot\|_x$. Note that the weighted Lyapunov norm enables one-step length contraction or expansion properties along the iteration of $f$. See Appendix \ref{appendix a2} for a detailed proof.

\begin{lemma}\label{lem 2.5}
Let $x\in\bigcup_{|i|\ls m}f^i\Lambda^\rk(K,\chi,\epsilon,\varphi)$. For any $v\in T_xM$, $v_{cs}\in E^{cs}(x)$ and $v_u\in E^u(x)$, the following properties hold:
\begin{align*}
    \frac{1}{2}\|v\|_x&\ls\|v\|'_{x}\ls  \frac{C_1}{2}\cdot C(\epsilon)\cdot\|f\|^{2}_{C^1}\cdot (K\e^{m\epsilon})^{\tau+1}\|v\|_x,\\[2mm]
    (\|f\|_{C^1}+\e^{\varphi(f^{-1}(x))+\chi-2\epsilon})^{-1}&\ls\frac{\|D_{x}f^{-1}(v_{u})\|'_{f^{-1}(x)}}{\|v_{u}\|'_{x}}\ls  \e^{-(\varphi(f^{-1}(x))+\chi-2\epsilon)},  \\[2mm]
    \frac{1}{2}\|f\|_{C^1}^{-1}&\ls\frac{\|D_{x}f(v_{cs})\|'_{f(x)}}{\|v_{cs}\|'_{x}}\ls \e^{\varphi(x)-(\chi-2\epsilon)}.
\end{align*}
\end{lemma}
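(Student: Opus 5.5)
The three estimates in \Cref{lem 2.5} will be checked one at a time, directly from the series defining $\langle\cdot,\cdot\rangle'$. Two facts are used throughout. First, the orthogonality in $\langle\cdot,\cdot\rangle'$ of $E^u$ and $E^{cs}$ gives $\|v\|'^2_x=\|v_u\|'^2_x+\|v_{cs}\|'^2_x$. Second, the cocycle identities
\[
\Phi_{-(n+1)}(f(x))=\Phi_{-n}(x)-\varphi(x),\qquad \Phi_{n+1}(x)=\varphi(x)+\Phi_n(f(x))
\]
follow immediately from the definitions of $\Phi_{\pm m}$.

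\textbf{One-step estimates.} Take the $E^u$ part and set $w=D_xf^{-1}v_u\in E^u(f^{-1}x)$. Reindexing the series for $\|v_u\|'^2_x$ with $n\mapsto n+1$ gives
\[
\|v_u\|'^2_x=\|v_u\|^2_x+\e^{2(\chi-2\epsilon)}\e^{2\varphi(f^{-1}x)}\|w\|'^2_{f^{-1}x}.
\]
Dropping the first term yields the upper bound $\|w\|'/\|v_u\|'\ls \e^{-(\varphi(f^{-1}x)+\chi-2\epsilon)}$.

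For the lower bound, use $\|v_u\|_x\ls \|f\|_{C^1}\|w\|_{f^{-1}x}\ls\|f\|_{C^1}\|w\|'_{f^{-1}x}$. Here the comparison $\|w\|\ls\|w\|'$ holds because the $n=0$ term of the series is $\|w\|^2$. Then $\sqrt{a^2+b^2}\ls a+b$ gives
\[
\|v_u\|'\ls(\|f\|_{C^1}+\e^{\varphi+\chi-2\epsilon})\|w\|'.
\]

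The $E^{cs}$ estimate is identical, with forward iterates in place of backward ones. The identity is
\[
\|v_{cs}\|'^2_x=\|v_{cs}\|^2+\e^{2(\chi-2\epsilon)}\e^{-2\varphi(x)}\|Df v_{cs}\|'^2_{f(x)},
\]
which gives the upper bound $\e^{\varphi(x)-(\chi-2\epsilon)}$. For the lower bound, $\|v_{cs}\|\ls\|f\|_{C^1}\|Dfv_{cs}\|'$, and the condition $\e^{\chi-2\epsilon-\varphi}\ls\e^{\chi}\ls\|f\|_{C^1}$ then yields a constant of the form $\tfrac12\|f\|_{C^1}^{-1}$. If this inequality is not available, one takes a slightly weaker constant absorbed into $C_1$. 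Note that $\varphi\gs0$ is what guarantees expansion and contraction in the correct directions.

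\textbf{Comparison with the Riemannian norm.} The lower bound $\|v\|'\gs\frac12\|v\|$ follows from three facts: $\|v\|\ls\|v_u\|+\|v_{cs}\|$, $\|v_\ast\|\ls\|v_\ast\|'$, and $\|v_u\|'+\|v_{cs}\|'\ls\sqrt2\|v\|'$.

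For the upper bound, write $x=f^{j}y$ with $y\in\Lambda^\rk$ and $|j|\ls m$. Apply \Cref{GPB def 2} at $f^jy$:
\[
\|D_xf^{-n}v_u\|\ls K\e^{-n\chi+|j|\epsilon}\e^{\Phi_{-n}(x)}\|v_u\|.
\]
The weights then cancel exactly against $\e^{-2\Phi_{-n}}$. The series is therefore bounded by
\[
K^2\e^{2m\epsilon}\sum_n\e^{-4n\epsilon}\|v_u\|^2\ls C(\epsilon)K^2\e^{2m\epsilon}\|v_u\|^2 .
\]
The $cs$ term is bounded in the same way using \Cref{GPB def 1}.

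It remains to bound $\|v_u\|,\|v_{cs}\|$ by $\|v\|$. This is controlled by $1/\sin\angle(E^{cs},E^u)$, and \Cref{pro 2.2} gives
\[
\frac{1}{\sin\angle(E^{cs},E^u)}\ls C\|f\|^2_{C^1}\epsilon^{-1}K^{\tau}\e^{m\tau\epsilon}.
\]

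Multiplying the two bounds gives roughly $C_1 C(\epsilon)\|f\|_{C^1}^2(K\e^{m\epsilon})^{\tau+1}$. The main obstacle I expect is exactly this bookkeeping of powers. First, the $\epsilon^{-1}$ from the angle must be absorbed, using $\sqrt{C(\epsilon)}$ versus $C(\epsilon)\sim(2\epsilon^2)^{-1}$ together with $\tau\gs1$. Second, the factor $K\e^{m\epsilon}$ coming from the series must combine with the $K^\tau\e^{m\tau\epsilon}$ from the angle into the single power $\tau+1$. Both checks are routine but tedious, and the universal constant $C_1$ is chosen to absorb them.
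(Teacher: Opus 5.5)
Your proof is correct and follows essentially the same route as the paper. You use index shifts in the defining series for the one-step bounds, and for the norm comparison you use \Cref{GPB def 1} and \Cref{GPB def 2} (whose weights cancel) together with the angle bound of \Cref{pro 2.2}. The only cosmetic difference is that you work with the genuine $\ell^2$ norm induced by $\langle\cdot,\cdot\rangle'$, whereas the paper's displayed computations manipulate the corresponding $\ell^1$-type sums; both give the stated bounds.

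Your hedge in the center-stable lower bound is unnecessary. The inequality $\e^{\chi}\ls\|f\|_{C^1}$ always holds: \Cref{GPB def 2} with $\varphi\gs 0$ gives $\|f\|_{C^1}^{-m}\ls K\e^{-m\chi}$ for every $m$, hence $\chi\ls\log\|f\|_{C^1}$, as the paper records at the start of the appendix. So the constant $\frac12\|f\|_{C^1}^{-1}$ is obtained exactly.
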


    To proceed, the return time function $\kappa(x):\Lambda^{*}\to \mathbb{N}$ is defined as
    $$\kappa(x):= \min\{m:x\in\bigcup_{|i|\ls m}f^i\Lambda^\rk(K,\chi,\epsilon,\varphi)\},$$ 
    and the hyperbolicity function is defined as $l(x):\Lambda^{*}\to [1,+\infty)$ as
    $$l(x):= K\cdot \e^{\kappa(x)\epsilon}.$$
    Let $L_x:T_xM\to\mathbb{R}^{\mathrm{d}}$ be the linear map sending $E^u(x)$ to $\mathbb{R}^{\rk}\times\{0\}$ and $E^{cs}(x)$ to $\{0\}\times\mathbb{R}^{cs}$ such that for any $v,u\in T_xM$,
    $$\langle L_xv, L_xu\rangle=\langle v, u\rangle'_{x}.$$
    For any $w=w_u +w_{cs}$ with $w\in\mathbb{R}^{\mathrm{d}}$, $w_u\in\mathbb{R}^{\rk}\times\{0\}$ and $w_{cs}\in\{0\}\times\mathbb{R}^{cs}$, we define the box norm $|\cdot|$ on $\mathbb{R}^{\mathrm{d}}$ by
    $$|w|=\max\{|w_u|,\, |w_{cs}|\},$$
    and for any $x\in\Lambda^*$ we set
    $$R_x(r):=\{v:|v|\ls r,\,v\in\mathbb{R}^{\mathrm{d}}\},$$
    and $R_x:=R_x(\infty)$. We denote the box norm on $R_x$ by $|\cdot|_x$.
    
If $x\in\Lambda^*$, then we have the following estimate which controls the norm and the conorm of the operator $L_x$. See Appendix \ref{appendix a2} for details.
\begin{align}\label{lx norm}
   \frac{1}{4}\ls \m(L_x)\ls\|L_x\|\ls  C_1\cdot C(\epsilon)\cdot\|f\|^{2}_{C^1} \cdot l(x)^{\tau+1}.
\end{align}

   Denote $\Psi_x:\mathbb{R}^{\mathrm{d}}\to M$ by 
   $$\Psi_x:= \exp_x\circ L_x^{-1}$$
   whenever it is well-defined.
   The lift of $f$ by $\Psi_\cdot$ along the orbit is denoted by $\tilde{f}^{\pm1}_x$ whenever it is well-defined:
   $$\tilde{f}_x:= \Psi^{-1}_{f(x)}\circ f\circ\Psi_x
    =L_{f(x)}\circ\exp_{f(x)}^{-1}\circ f \circ \exp_x\circ L_x^{-1},$$
    $$\tilde{f}_x^{-1}
    := \Psi^{-1}_{f^{-1}(x)}\circ f^{-1}\circ\Psi_x
    =L_{f^{-1}(x)}\circ\exp_{f^{-1}(x)}^{-1}\circ f^{-1} \circ \exp_x\circ L_x^{-1}.$$
    Note that there exists a uniform positive number $\delta$ such that for any $x\in M$, $$\exp_x: \{v:\|v\|_x<\delta,\ v\in T_xM\}\to M$$ is a ${C}^{\infty}$ diffeomorphism onto its image with $\|{D}\exp_x\|$ and $\|{D}\exp_x^{-1}\|\ls2$. This means that $\tilde{f}_x$ and $\tilde{f}_{x}^{-1}$ are well-defined on $R_{x}(r_0)\subset \mathbb{R}^{\mathrm{d}}$, where $r_0$ is defined by 
    $$r_0:= \frac{\delta}{C_2\cdot\|f\|_{C^{1}}},$$
    and $C_2$ is chosen large enough. Then by \Cref{lx norm}, the $C^1$-norms of $\Psi_x$ on $R_{x}(r_0)$ and of $\Psi_x^{-1}$ on $\Psi_x(R_{x}(r_0))$ can be estimated as:
\begin{align}\label{psi 1}
    \|\Psi_x|_{R_{x}(r_0)}\|_{C^1}
    \ls  C_3
\end{align}
and
\begin{align}\label{psi -1}
    \|\Psi_x^{-1}|_{\Psi_x(R_{x}(r_0))}\|_{C^1}
    \ls  C_3\cdot C(\epsilon)\cdot\|f\|^{2}_{C^1} \cdot l(x)^{\tau+1}.
\end{align}

    Now we extend $\tilde{f}_x$ and $\tilde{f}_{x}^{-1}$ from the local domain $R_{x}(r(x))$ to the whole space $\mathbb{R}^{\mathrm{d}}$, where $r(x):\Lambda^{*}\to(0,1)$ is a function defined as
    \begin{equation}\label{charts size}
       r(x):= \min\Big\{\frac{r_0}{100}, \Big(\frac{\epsilon}{4C_6C(\epsilon)^3\e^{2\tau\epsilon}\|f\|^{10}_{C^{1+\alpha}}l(x)^{4\tau}}\Big)^{\frac{1}{\alpha}}\Big\}.
    \end{equation}
    Choose a $C^\infty$ bump function $\rho:\mathbb R\rightarrow\mathbb R$ such that:
\begin{equation*}
	\rho(z)=\left\{
	\begin{aligned}
		1 & , &|z|&<1\\
		0 & , & |z|&>2
	\end{aligned}
	\right.
\end{equation*}
and $\rho(z) \in [0,1]$ for all $z\in \mathbb R$.

We define $\tg_x:\mathbb{R}^{\mathrm{d}}\to \mathbb{R}^{\mathrm{d}}$ and $\tg^{-1}_{x}:\mathbb{R}^{\mathrm{d}}\to \mathbb{R}^{\mathrm{d}}$ as:
\begin{align*}
  \tg_x(v)
  :=\rho(\frac{\|v\|_x}{r(x)})\cdot\tilde{f}_x(v)&+(1-\rho(\frac{\|v\|_x}{r(x)}))\cdot D\tilde{f}_x(0)v,\\[2mm]
  \tg^{-1}_{x}(v):= &(\tg_{f^{-1}(x)})^{-1}(v).
\end{align*}
Then the following lemma estimates the Lipschitz constants of the maps $\tg_x-D\tilde{f}_x(0)$ and $\tg^{-1}_{x}-D\tilde{f}_{x}^{-1}(0)$ with respect to $v$. See Appendix \ref{appendix a2} for details.
\begin{lemma}\label{lem 2.6}
    $$\max\{\mathrm{Lip}(\tg_x-D\tilde{f}_x(0))\ ,\ \mathrm{Lip}(\tg^{-1}_{x}-D\tilde{f}_{x}^{-1}(0))\}\ls\frac{\epsilon}{4\|f\|_{C^1}\cdot l(x)}.$$
\end{lemma}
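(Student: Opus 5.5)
The plan is to write $\tg_x - D\tilde f_x(0)$ as the bump-function interpolation
\[
\tg_x(v)-D\tilde f_x(0)v=\rho\Big(\tfrac{\|v\|_x}{r(x)}\Big)\cdot\big(\tilde f_x(v)-D\tilde f_x(0)v\big),
\]
and to bound its Lipschitz constant by the product rule. Set $h(v):=\tilde f_x(v)-D\tilde f_x(0)v$, which is defined on $R_x(r_0)$. The product $\rho(\|v\|_x/r(x))\,h(v)$ is supported where $\|v\|_x\ls 2r(x)$; since $r(x)\ls r_0/100$, this support lies well inside the domain of $\tilde f_x$. For $u,w$ in this region we get
\[
\mathrm{Lip}(\rho h)\ls \sup_{\|v\|\ls 2r(x)}\|Dh(v)\| + \frac{\mathrm{Lip}(\rho)\cdot\mathrm{Lip}(\|\cdot\|_x)}{r(x)}\sup_{\|v\|\ls 2r(x)}|h(v)|.
\]
Points outside the support are handled by joining them along a segment. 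The norm $\|\cdot\|_x$ here is the pulled-back Riemannian norm on $\mathbb R^{\td}$. By \Cref{lx norm} it is comparable to the box norm up to a factor $C_1C(\epsilon)\|f\|^2_{C^1}l(x)^{\tau+1}$, and this factor enters the $\mathrm{Lip}(\|\cdot\|_x)$ term.

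The key step is a Hölder estimate for $D\tilde f_x$. Since $h(0)=0$ and $Dh(0)=0$, we have $|h(v)|\ls \sup_{|w|\ls|v|}\|Dh(w)\|\cdot|v|$. So both terms above reduce to a bound of the form
\[
\|D\tilde f_x(v)-D\tilde f_x(0)\|\ls C\, C(\epsilon)\,\|f\|_{C^{1+\alpha}}^{c}\, l(x)^{2\tau+2}\,|v|^\alpha .
\]
To get this, write $\tilde f_x=L_{f(x)}\circ \exp_{f(x)}^{-1}\circ f\circ\exp_x\circ L_x^{-1}$ and use the $\alpha$-Hölder seminorm $[Df]_\alpha$ together with the smoothness of $\exp$ (with $\|D\exp\|,\|D\exp^{-1}\|\ls 2$). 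The constants come from the outer factors. We have $\|L_{f(x)}\|\ls C_1C(\epsilon)\|f\|^2_{C^1}l(f(x))^{\tau+1}$, and $l(f(x))\ls \e^{\epsilon}l(x)$ because $\kappa(f(x))\ls\kappa(x)+1$. We also have $\|L_x^{-1}\|\ls 4$, and $|L_x^{-1}v|\ls 4|v|$ when we feed it into the Hölder modulus.

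Combining these, the supremum over $\|v\|_x\ls 2r(x)$ is bounded by something like
\[
C_6\,C(\epsilon)^3\,\e^{2\tau\epsilon}\,\|f\|_{C^{1+\alpha}}^{10}\,l(x)^{4\tau}\,r(x)^\alpha .
\]
The $C(\epsilon)$ powers and the conversions between $\|\cdot\|_x$ and $|\cdot|$ are absorbed into the extra $l(x)$ and $C(\epsilon)$ powers, with $l(x)\gs 1$ and $\tau\gs 1$. The definition \eqref{charts size} of $r(x)$ is designed exactly so that this is at most $\epsilon/4$, with the remaining room giving the division by $\|f\|_{C^1}l(x)$. I would fix $C_6$ large relative to $C_1,C_3$ and $\mathrm{Lip}(\rho)$ so that the estimate closes.

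For $\tg_x^{-1}=(\tg_{f^{-1}(x)})^{-1}$, I would write $\tg_y=A+\psi$, where $y=f^{-1}(x)$, $A=D\tilde f_y(0)$ and $\mathrm{Lip}(\psi)\ls\eta$ with $\eta$ from the first part applied at $y$. We have $\|A^{-1}\|\ls 2\|f\|_{C^1}$ by \Cref{lem 2.5}, and $\eta\|A^{-1}\|<1/2$. The inverse is then globally defined, and a Neumann-series argument gives
\[
\mathrm{Lip}(\tg_y^{-1}-A^{-1})\ls \frac{\|A^{-1}\|^2\eta}{1-\|A^{-1}\|\eta}.
\]
Finally, $D\tilde f^{-1}_x(0)=A^{-1}$ by the chain rule.

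The main obstacle is the bookkeeping of constants in this inverse step. The extra factors $\|f\|_{C^1}^2$ and $l(y)\ls \e^{\epsilon}l(x)$ must be absorbed. That is why the target for the forward map should be sharpened to something like $\epsilon/(16\|f\|_{C^1}^3 l(x))$, which the large exponents ($\|f\|^{10}$ and $l^{4\tau}$) in $r(x)$ are there to afford.
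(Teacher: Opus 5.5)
Your forward estimate is essentially the paper's: the Leibniz rule applied on the bump support, the H\"older bound $\|D\tilde f_x-D\tilde f_x(0)\|\lesssim\|L_{f(x)}\|\,\|f\|_{C^{1+\alpha}}\,r(x)^\alpha$, the mean value theorem for the $C^0$ term, and absorption into \eqref{charts size}. There is one small misattribution. The map $v\mapsto\|L_x^{-1}v\|$ is $4$-Lipschitz for the box norm, because $\|L_x^{-1}\|\ls 4$. So the factor $\|L_x\|$ enters, if at all, through the box-norm size of the support in the bound $|h(v)|\ls\sup\|Dh\|\cdot|v|$, not through $\mathrm{Lip}(\|\cdot\|_x)$. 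This is harmless either way.

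The inverse is where you genuinely differ, and your argument is correct. Set $A=D\tilde f_{f^{-1}(x)}(0)$. The paper works pointwise with the identity $D\tg_x^{-1}(t)-A^{-1}=-D\tg_x^{-1}(t)\bigl(D\tg_{f^{-1}(x)}(\tg_x^{-1}t)-A\bigr)A^{-1}$. It bounds $\|D\tg_x^{-1}(t)\|$ and $\|A^{-1}\|$ crudely through $\|L_\cdot\|$, $\|L_\cdot^{-1}\|$ and $\|f\|_{C^1}$. This costs a factor of order $C(\epsilon)^2l(x)^{2\tau+2}$, which the large exponents in $r(x)$ absorb.

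You instead treat $\tg_{f^{-1}(x)}=A+\psi$ as a global Lipschitz perturbation of a linear map, and use the contraction-mapping inverse together with the sharp bound $\|A^{-1}\|\ls 2\|f\|_{C^1}$. That bound does follow from \Cref{lem 2.5}: $A$ is block diagonal for $\mathbb R^\rk\oplus\mathbb R^{cs}$, and the box norm is the maximum over the blocks.

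What your route buys:
\begin{itemize}
\item It loses only a factor $\|f\|_{C^1}^2$.
\item More usefully, it proves that $\tg_{f^{-1}(x)}$ is a global bi-Lipschitz homeomorphism with $\mathrm{Lip}(\tg_x^{-1})\ls\|A^{-1}\|/(1-\|A^{-1}\|\eta)$. The paper tacitly presupposes this when it defines $\tg_x^{-1}$ and bounds $\|D\tg_x^{-1}(t)\|$.
\end{itemize}
The paper's version is terser and needs only \eqref{lx norm}.

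Two constant adjustments in your write-up:
\begin{itemize}
\item The final step $8\|f\|_{C^1}^2\eta\ls\epsilon/(4\|f\|_{C^1}l(x))$ requires $\eta\ls\epsilon/(32\|f\|_{C^1}^3l(x))$, not $16$.
\item Converting the forward bound at $f^{-1}(x)$ uses $l(x)\ls\e^{\epsilon}l(f^{-1}(x))$, which is the opposite direction to the one you wrote. Both directions hold, since $|\kappa\circ f-\kappa|\ls 1$.
\end{itemize}
The margin in $r(x)$ covers both.
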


Now we define the cone spaces on each weighted Pesin chart. For each weighted Pesin chart $(R_x,\Psi_x)$ with the standard splitting $\mathbb{R}^{\rk}\oplus\mathbb R^{cs}$ as before, we define the center-stable cone and the unstable cone at any point $v\in R_x$ and for any $a>0$ as follows:
$$\mathrm{Q}_x^u(v,a):= \{w\in T_vR_x:|w_2|\ls a|w_1|,\ w=w_1+w_2, \ w_1\in \mathbb{R}^{\rk}, \ w_2\in \mathbb R^{cs}\},$$
$$\mathrm{Q}_x^{cs}(v,a):= \{w\in T_vR_x:|w_2|\ls a|w_1|,\ w=w_1+w_2, \ w_1\in \mathbb R^{cs}, \ w_2\in \mathbb{R}^{\rk}\}.$$
Note that on the tangent space we still use the same metric $|\cdot|_x$ as on $R_x$. The next lemma states that if $\mathrm{Lip}(\tg_x-D\tilde{f}_x(0))$ and $\mathrm{Lip}(\tg^{-1}_{x}-D\tilde{f}_{x}^{-1}(0))$ are sufficiently small, then $\tg_x$ and $\tg^{-1}_{x}$ preserve the cones. See Appendix \ref{appendix a2} for a detailed proof.
\begin{lemma}\label{cone pre}
Using the notation above, suppose that $$\max\{\mathrm{Lip}(\tg_x-D\tilde{f}_x(0)),\ \mathrm{Lip}(\tg^{-1}_{x}-D\tilde{f}_{x}^{-1}(0))\}<\frac{\epsilon}{4\|f\|_{C^1}\cdot l(x)}.$$ Then it follows that
\begin{itemize}
\item[(i)]\emph{(Cone-preserving)} For any $w\in R_x$:
\begin{align*}
    D\tg_x(w)( \mathrm{Q}^u_{x}(w,\frac{1}{3}))
    &\subseteq \mathrm{Q}^u_{{f(x)}}(\tg_xw,\frac{1}{3}),\\[2mm]
    D\tg_{x}^{-1}(w)(\mathrm{Q}^{cs}_{{x}}(w,\frac{1}{3}))
    &\subseteq \mathrm{Q}^{cs}_{{f^{-1}(x)}}(\tg_x^{-1}w,\frac{1}{3}).
\end{align*}
\item [(ii)]\emph{(Hyperbolicity)} For any $w\in R_x$, any $v_1\in \mathrm{Q}^u_{x}(w,\frac{1}{3})$ and any $v_2\in \mathrm{Q}^{cs}_{x}(w,\frac{1}{3})$:
\begin{align*}
    |D\tg_x(w) (v_1)|_{f(x)}&\gs \e^{\varphi(x)+\chi-3\epsilon}\cdot|v_1|_x,\\[2mm]
    |D\tg^{-1}_x(w) (v_2)|_{f^{-1}(x)}&\gs \e^{-\varphi(f^{-1}(x))+\chi-3\epsilon}\cdot|v_2|_x.
\end{align*}
\end{itemize}

\end{lemma}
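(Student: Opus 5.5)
We prove Lemma~\ref{cone pre} by the standard cone-field argument, writing the chart maps as a block-diagonal linear part plus a small Lipschitz perturbation. By construction of $L_x$, the derivative $A_x:=D\tilde f_x(0)=L_{f(x)}\circ D_xf\circ L_x^{-1}$ maps $\mathbb R^{\rk}\times\{0\}$ to itself and $\{0\}\times\mathbb R^{cs}$ to itself, because $D_xf$ preserves $E^u$ and $E^{cs}$. So $A_x=\mathrm{diag}(A^u_x,A^{cs}_x)$. Since $L$ is an isometry from $\|\cdot\|'$ to the Euclidean norm on each factor, Lemma~\ref{lem 2.5} (applied at $f(x)$ for the $u$-part and at $x$ for the $cs$-part) gives
\begin{align*}
|A^u_x w_1|&\gs \e^{\varphi(x)+\chi-2\epsilon}|w_1|, & |A^{cs}_x w_2|&\ls \e^{\varphi(x)-\chi+2\epsilon}|w_2|, & \|A_x\|&\ls 2\|f\|_{C^1}.
\end{align*}
The operator-norm bound comes from the two-sided bounds in Lemma~\ref{lem 2.5}. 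For $D\tilde f^{-1}_x(0)=\mathrm{diag}(B^u,B^{cs})$ the roles are symmetric: $|B^{cs}w_1|\gs \e^{-\varphi(f^{-1}x)+\chi-2\epsilon}|w_1|$ and $|B^u w_2|\ls \e^{-\varphi(f^{-1}x)-\chi+2\epsilon}|w_2|$.

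Next, write $D\tg_x(w)=A_x+E$, where $E=D(\tg_x-A_x)(w)$ and $\|E\|\ls\eta:=\frac{\epsilon}{4\|f\|_{C^1}l(x)}$. Here we use that the derivative norm of a $C^1$ map is bounded by its Lipschitz constant. Because we work with the box norm, the Euclidean norms of the components control everything. Take $v=v_1+v_2\in \mathrm{Q}^u_x(w,\frac13)$, so $|v_2|\ls\frac13|v_1|$ and $|v|_x=|v_1|$. The image has $u$-component of size at least $\e^{\varphi(x)+\chi-2\epsilon}|v_1|-\eta|v_1|$ and $cs$-component of size at most $\frac13\e^{\varphi(x)-\chi+2\epsilon}|v_1|+\eta|v_1|$. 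Since $\varphi\gs0$, $l(x)\gs1$ and $\epsilon\ll\chi$, we have $\eta\ls\epsilon/4$. Then
\[
\e^{\varphi(x)+\chi-2\epsilon}-\eta\gs \e^{\varphi(x)+\chi-2\epsilon}(1-\epsilon/4)\gs \e^{\varphi(x)+\chi-3\epsilon}.
\]
This gives (ii) for $v_1$, because the box norm of the image is at least its $u$-component. For the ratio we need
\[
\tfrac13\e^{\varphi-\chi+2\epsilon}+\eta\ls\tfrac13\bigl(\e^{\varphi+\chi-2\epsilon}-\eta\bigr).
\]
This holds because the gap $\e^{\varphi}\bigl(\e^{\chi-2\epsilon}-\e^{-\chi+2\epsilon}\bigr)\gs 2(\chi-2\epsilon)$ dominates $\frac43\eta\ls\epsilon/3$, which is where $\epsilon\ls\chi/100$ from \eqref{var} enters. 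This proves cone preservation in (i) for $\tg_x$.

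The statements for $\tg_x^{-1}$ follow identically from the block structure of $D\tilde f^{-1}_x(0)$ and the hypothesis on $\mathrm{Lip}(\tg^{-1}_x-D\tilde f^{-1}_x(0))$. One bookkeeping point: the weight appearing there is $\varphi(f^{-1}(x))$. The one place needing care is the case where $\varphi$ is large. There the $cs$ contraction factor $\e^{\varphi-\chi}$ may exceed $1$, but only the ratio $\e^{2\chi-4\epsilon}$ between the two blocks matters for cone invariance. The expansion lower bound uses $\e^{\varphi}\gs1$ only to absorb $\eta$, so no issue arises.
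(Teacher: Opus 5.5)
Your treatment of $\tg_x$ is correct and is essentially the paper's argument. The gap is in the inverse direction, which you say ``follows identically.'' It does not.

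\textbf{Why the inverse case is different.} For $\tg^{-1}_x$, both diagonal blocks of $D\tilde f_x^{-1}(0)$ have size $\e^{-\varphi(f^{-1}(x))\pm(\chi-2\epsilon)}$. So both carry the factor $\e^{-\varphi(f^{-1}(x))}\ls 1$. The perturbation bound $\eta=\frac{\epsilon}{4\|f\|_{C^1}l(x)}$, however, is additive and does not scale with this factor. Running your own computation, cone invariance needs
\[
4\eta\ls \e^{-\varphi(f^{-1}(x))}\bigl(\e^{\chi-2\epsilon}-\e^{-\chi+2\epsilon}\bigr).
\]
The expansion estimate in (ii) needs $\eta\ls \e^{-\varphi(f^{-1}(x))+\chi-3\epsilon}(\e^{\epsilon}-1)$. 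Both conditions require $\eta\,\e^{\varphi(f^{-1}(x))}$ to be small (of order $\chi$, respectively $\epsilon$). That is, you need an \emph{upper} bound on the weight.

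Neither of your justifications supplies this. The fact $\varphi\gs0$ now points the wrong way. Your remark that ``only the ratio between the two blocks matters'' is true only for the unperturbed linear map. Once an additive error of fixed size $\eta$ is present, the absolute size of the blocks matters. If $\varphi(f^{-1}(x))$ were large, the error would swamp both blocks and both (i) and (ii) would fail for $\tg_x^{-1}$.

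\textbf{The missing input.} The needed bound comes from the block structure itself. Take $m=1$ in \eqref{GPB def 2} and use $\|Df^{-1}|_{E^u}\|\gs\|f\|_{C^1}^{-1}$; this gives \eqref{eq phi upper bound}, hence $\e^{\varphi(f^{-1}(x))}\ls\|f\|_{C^1}\,l(x)$ for $x\in\Lambda^*$. This is exactly why the hypothesis has $\|f\|_{C^1}\,l(x)$ in the denominator. It yields $\eta\ls\frac{\epsilon}{4}\e^{-\varphi(f^{-1}(x))}$. After factoring out $\e^{-\varphi(f^{-1}(x))}$, your forward computation goes through verbatim.

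\textbf{Two harmless slips.} First, the bound $\|A_x\|\ls 2\|f\|_{C^1}$ does not follow from Lemma~\ref{lem 2.5}, whose upper bound on the $u$-block is $\|f\|_{C^1}+\e^{\varphi(x)+\chi-2\epsilon}$; you never use it, though. Second, in the forward cone condition the gap should be compared with $4\eta$, not $\frac43\eta$. This still holds, since $4\eta\ls\epsilon$.
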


In summary, for any point $x$ inside a given weighted Pesin block, we prove the existence of a local neighborhood $R_x(r(x))$ whose size $r(x)$ is governed by the hyperbolicity function $l(x)$ such that after a suitable normalization $\Psi_x$, the map $\tilde{f}_x$ can be viewed as a small perturbation of $D\tilde{f}_x(0)$. This is achieved by employing a smooth bump function $\rho$ to construct a global diffeomorphism $\tg_x$ on $R_{f^m(x)}\simeq \mathbb{R}^{\mathsf{d}}$ that remains close to $D\tilde{f}_x(0)$. By the graph transformation induced by the sequence of diffeomorphisms $\{\tg_{f^m(x)}\}_{m\in\mathbb{Z}}$ over the sequence of base spaces $\{R_{f^m(x)}\}_{m\in\mathbb{Z}}$:
$$\tilde{g}_{f^m(x)}:\,R_{f^m(x)}\to R_{f^{m+1}(x)},$$
we can obtain invariant fake foliation families $\{\widetilde{\mathcal{F}}_{f^m(x)}\}_{m\in\mathbb{Z}}$, which is the content of the following proposition. We also denote $\mathbb R^{u}:=\mathbb R^\rk$. See Appendix \ref{appendix a2} for details.
\begin{proposition}\label{pro 2.8.2}
For any $x\in\Lambda^*$, there exist unique global fake foliations $\widetilde{\mathcal{F}}_x^{i}$ on $R_x=\mathbb{R}^{\mathrm{d}}$ with $C^{1+\mathrm{H\"older}}$ leaves for $i\in\{u,cs\}$, such that for any $y\in \mathbb{R}^{\mathrm{d}}$,
\begin{enumerate}
    \item \emph{Almost tangency}: The unique leaf $\widetilde{\mathcal{F}}_x^{i}(y)$ containing $y$ is the graph of a $C^{1+\mathrm{H\"older}}$ map $\phi:\mathbb{R}^{i}\to \mathbb{R}^{\hat i}$ with $\|D\phi\| \ls \frac {1}{3}$, where $\hat i\in\{cs,u\}$ and $\mathbb{R}^i\oplus\mathbb{R}^{\hat{i}}= R_x.$\smallskip
    \item \emph{Invariance}: The foliations are invariant under $\tg_{(\cdot)}$, in the sense that
    $$\tg_x(\widetilde{\mathcal{F}}_x^{i}(y))=\widetilde{\mathcal{F}}_{f(x)}^{i}(\tg_x(y)).$$
    \item \emph{Hyperbolicity}: For any leaf $\widetilde{\mathcal{F}}_x^{u}(y)$ and any $z_1,z_2\in\widetilde{\mathcal{F}}_x^{u}(y)$
    \begin{align*}
        |\tg^{-1}_x z_1-\tg^{-1}_xz_2|_{f^{-1}(x)}\ls \e^{-\varphi(f^{-1}(x))-\chi+3\epsilon}\cdot|z_1-z_2|_{x}.
    \end{align*}
   For any leaf $\widetilde{\mathcal{F}}_x^{cs}(y)$ and any $z_1,z_2\in\widetilde{\mathcal{F}}_x^{cs}(y)$
    \begin{align*}
        |\tg_xz_1-\tg_xz_2|_{f(x)}\ls \e^{\varphi(x)-\chi+3\epsilon}\cdot|z_1-z_2|_{x}.
    \end{align*}
\end{enumerate}
\end{proposition}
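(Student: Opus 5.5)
The plan is the standard Hadamard--Perron graph transform over the sequence of global maps $\{\tg_{f^m(x)}\}_{m\in\mathbb Z}$ between the charts $R_{f^m(x)}=\mathbb R^{\rk}\oplus\mathbb R^{cs}$. The key inputs are \Cref{lem 2.6} and \Cref{cone pre}. By \Cref{lem 2.6} each $\tg_{f^m(x)}$ is a globally Lipschitz-small perturbation of the linear map $D\tilde f_{f^m(x)}(0)$. This linear map is block-diagonal with respect to $\mathbb R^{\rk}\oplus\mathbb R^{cs}$, because $L_x$ sends $E^u,E^{cs}$ to the coordinate subspaces and these bundles are $Df$-invariant. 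By \Cref{lem 2.5}, its $u$-block expands by at least $\e^{\varphi+\chi-2\epsilon}$ and its $cs$-block expands by at most $\e^{\varphi-\chi+2\epsilon}$.

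The crucial observation is that the weight $\varphi$ appears in \emph{both} rates. So although there may be no absolute expansion or contraction, the domination ratio at each step is at least $\e^{2\chi-4\epsilon}$, uniformly in $m$. The size of the perturbation, $\epsilon/(4\|f\|_{C^1}l(f^m x))$, is small compared to this gap and to the block norms, uniformly along the orbit, since $l\ge1$.

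First I construct the unstable fake foliation. Let $\mathcal G_m$ be the space of families of graphs over $\mathbb R^{\rk}$ of maps $\phi:\mathbb R^{\rk}\to\mathbb R^{cs}$ with $\mathrm{Lip}(\phi)\le 1/3$, one graph through each point $y\in R_{f^m(x)}$. Each $\tg_{f^{m-1}(x)}$ maps graphs in $\mathrm{Q}^u(\cdot,1/3)$ to graphs of the same type by \Cref{cone pre}(i). Graph-ness, i.e.\ surjectivity onto the $\mathbb R^{\rk}$ factor, follows from a global argument: the $u$-component of $\tg$ restricted to such a graph is a Lipschitz perturbation of an invertible linear map with inverse of norm at most $\e^{-(\varphi+\chi-3\epsilon)}$, hence a bi-Lipschitz homeomorphism of $\mathbb R^{\rk}$.

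The leaf through $y\in R_x$ is defined as the limit of $\tg_{f^{-1}x}\circ\cdots\circ\tg_{f^{-n}x}$ applied to the horizontal plane through $\tg^{-1}_{f^{-n}x}\circ\cdots\circ\tg^{-1}_{x}(y)$. To show convergence, I compare two graphs through the same point in the $C^0$ vertical-distance metric, measured relative to the point. Each backward step contracts this metric by the domination ratio $\e^{-(2\chi-6\epsilon)}$, because the $\varphi$ terms cancel. Hence the limit exists and is independent of choices, and by the same contraction it is the unique invariant family; this gives existence, uniqueness and invariance (2). Leaves through different points are disjoint or equal by uniqueness, since the image of a leaf is the leaf of the image point. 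So they form a foliation, and continuity in $y$ comes from uniform convergence. The center-stable foliation is constructed symmetrically, using $\tg^{-1}$ and the $cs$-cones.

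For the hyperbolicity estimate (3), take $z_1,z_2$ on a $u$-leaf. Their difference, or the tangent vector to the leaf joining them, lies in $\mathrm{Q}^u(\cdot,1/3)$, so by \Cref{cone pre}(ii) the forward image expands by at least $\e^{\varphi(f^{-1}x)+\chi-3\epsilon}$. Inverting gives the stated contraction of $\tg^{-1}$, and the $cs$ case is symmetric.

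I expect the main obstacle to be the $C^{1+\text{H\"older}}$ regularity of the leaves. $C^1$ comes from running the graph transform on the pair consisting of a point and its tangent plane in the Grassmannian over the cone. The tangent planes converge by a fiber contraction argument with rate again given by the domination ratio, which is valid because \Cref{cone pre} applies to the derivatives $D\tg$ at every point. For H\"older continuity of the tangent planes along a leaf, I would use the $C^{1+\alpha}$ bound on $\tilde f_x$ implicit in the choice of $r(x)$ in \eqref{charts size}. A standard H\"older-section argument then applies with a small exponent $\beta$ chosen so that the domination factor beats the $\beta$-power of the base Lipschitz constant, which is bounded by $\|f\|_{C^1}$ times the chart constants. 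The bookkeeping with $l(f^m x)$ growing like $\e^{|m|\epsilon}$ is where care is needed: because $\epsilon\ll\chi^2$ by \eqref{var}, the subexponential growth is absorbed by the gap $2\chi$, and the resulting H\"older constants stay finite.
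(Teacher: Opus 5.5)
Your proposal is correct and follows the same Hadamard--Perron graph-transform route as the paper. The paper only sketches the argument, and it describes the leaves as nested intersections $\bigcap_{n}\tg^{n}(\tilde Q^u_{f^{-n}(x)}(\tg^{-n}(y),\frac13))$ of iterated cones instead of as limits of pushed-forward planes, which is an equivalent characterization.

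One justification needs correcting. On the center-stable side the linear blocks of $D\tilde f^{-1}_x(0)$ are of order $\e^{-\varphi(f^{-1}(x))\pm\chi}$, so the perturbation bound of \Cref{lem 2.6} is small relative to them not because $l\gs 1$. It is small because its factor $1/l(x)$ is matched by $\e^{\varphi(f^{-1}(x))}\ls\|f\|_{C^1}\,l(x)$ from \eqref{eq phi upper bound}, exactly as in the proof of \Cref{cone pre}.
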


\subsection{Weighted Pesin unstable manifolds}\label{sec 2.3}
    Recall that $\Gamma$ denotes the set of regular points.
    For any $x\in\Gamma$ such that $\lambda_\rk(x,f)>\max\{\lambda_{\rk+1}(x,f),0\}$, the classical Pesin unstable manifold of dimension $\rk$ is defined by
    $$W^{\rk}(x)=\{ y\in M: \limsup_{n\rightarrow \infty}\frac{1}{n}\log \td(f^{-n}(x),f^{-n}(y))\ls-\lambda_{\rk}(x,f)+\epsilon,\ \forall \epsilon>0\},$$
    where $\td$ is the distance induced by the Riemannian metric on $M$. By classical Pesin theory, $W^\rk(x)$ is a $\rk$-dimensional injectively immersed $C^1$ manifold.

    In this section, we study the invariant manifolds obtained from the weighted Pesin charts; these are called the weighted Pesin unstable manifolds. We first define the local weighted Pesin unstable manifolds $W^{\mathrm{GPu}}_{\mathrm{loc}}(x)\subseteq M$ for any $x\in\Lambda^*$:
    \begin{equation}\label{defintion of local GPu}
    W^{\mathrm{GPu}}_{\mathrm{loc}}(x):=\Psi_x\left(R_{x}(r(x))\cap\widetilde{\mathcal{F}}_x^{u}(0)\right).
    \end{equation}
    The following lemma shows that \Cref{var} implies that the local weighted Pesin unstable manifolds stay in the weighted Pesin charts $(R_x(r(x)),\Psi_x)$ under backward iteration of $\tg_{(\cdot)}$.  See Appendix \ref{appendix a3} for a detailed proof.  
\begin{lemma}\label{lem 2.9}
    For any $x\in\Lambda^*$, $$\tg^{-1}_x\left(R_{x}(r(x))\cap\widetilde{\mathcal{F}}_x^{u}(0)\right)\subset R_{f^{-1}(x)}\left(r(f^{-1}(x))\right)\cap\widetilde{\mathcal{F}}_{f^{-1}(x)}^{u}(0).$$
\end{lemma}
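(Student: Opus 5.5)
The claim is that $\tg_x^{-1}$ maps $R_{x}(r(x))\cap\widetilde{\mathcal{F}}_x^{u}(0)$ into $R_{f^{-1}(x)}(r(f^{-1}(x)))\cap\widetilde{\mathcal{F}}_{f^{-1}(x)}^{u}(0)$. The plan is to treat leaf membership and size separately: \Cref{pro 2.8.2} gives the first, and the size is a contraction estimate weighed against how much $r(\cdot)$ can shrink in one step.

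\emph{Leaf membership.} Since $\tilde f_x(0)=0$ and the bump-function extension does not change the value at $0$, we have $\tg_x(0)=0$ for every $x\in\Lambda^*$. Applying the invariance in \Cref{pro 2.8.2} at the base point $f^{-1}(x)$ with $y=0$ gives
\[
\tg_{f^{-1}(x)}\bigl(\widetilde{\mathcal F}^u_{f^{-1}(x)}(0)\bigr)=\widetilde{\mathcal F}^u_x(0).
\]
Because $\tg_x^{-1}=(\tg_{f^{-1}(x)})^{-1}$, it follows that $\tg_x^{-1}(\widetilde{\mathcal F}^u_x(0))=\widetilde{\mathcal F}^u_{f^{-1}(x)}(0)$.

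\emph{Contraction.} Take $z\in R_x(r(x))\cap\widetilde{\mathcal F}^u_x(0)$. The point $0$ lies on the same leaf, so the hyperbolicity item of \Cref{pro 2.8.2} with $z_1=z$, $z_2=0$ gives
\[
|\tg_x^{-1}z|_{f^{-1}(x)}\ls \e^{-\varphi(f^{-1}(x))-\chi+3\epsilon}\,|z|_x\ls \e^{-\chi+3\epsilon}\,r(x),
\]
where the last step uses $\varphi\gs0$. So it suffices to prove
\[
\e^{-\chi+3\epsilon}\, r(x)\ls r(f^{-1}(x)).
\]

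\emph{Comparing $r(x)$ and $r(f^{-1}(x))$.} I first bound the one-step change of the return time: $|\kappa(f^{\pm1}(x))-\kappa(x)|\ls1$. This holds because if $x\in f^i\Lambda$ with $|i|\ls m$, then $f^{-1}(x)\in f^{i-1}\Lambda$ with $|i-1|\ls m+1$. Hence $l(f^{-1}(x))\ls l(x)\e^{\epsilon}$.

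Now compare the two terms in the minimum \eqref{charts size}. The first term, $r_0/100$, is the same constant at both points. The second term at $f^{-1}(x)$ is at least $\e^{-4\tau\epsilon/\alpha}$ times its value at $x$. Therefore
\[
r(f^{-1}(x))\gs \e^{-4\tau\epsilon/\alpha}\,r(x).
\]
It remains to check $-\chi+3\epsilon\ls -4\tau\epsilon/\alpha$, that is,
\[
\epsilon\,(3+4\tau/\alpha)\ls\chi .
\]
Here $\tau=8(\log\|f\|_{C^1}+1)/\chi$. Substituting, the condition becomes
\[
3\epsilon+\frac{32\,\epsilon\,(\log\|f\|_{C^1}+1)}{\alpha\chi}\ls\chi .
\]
The standing assumption \eqref{var} gives $\epsilon\ls\alpha\chi^2/(10^3(\log\|f\|_{C^1}+1))$. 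Under it, the second term is at most $32\chi/10^3$ and the first is much smaller than $\chi$, so the inequality holds.

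\emph{Expected obstacle.} The delicate point is the comparison $r(f^{-1}(x))\gs\e^{-4\tau\epsilon/\alpha}r(x)$, which is exactly where the choice \eqref{var} is used. One must confirm that the minimum defining $r$ really is monotone in $l$ with exponent $4\tau/\alpha$, and that $\kappa$ changes by at most one per iterate. The geometric steps follow directly from \Cref{pro 2.8.2}.
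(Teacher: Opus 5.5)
Your proposal is correct and follows the paper's proof: leaf membership from the invariance in \Cref{pro 2.8.2}, the contraction $|\tg_x^{-1}z|_{f^{-1}(x)}\ls \e^{-\chi+3\epsilon}\,r(x)$ from item (3) together with $\varphi\gs0$, and the one-step comparison $r(f^{-1}(x))\gs \e^{-4\tau\epsilon/\alpha}\,r(x)$ checked against \eqref{var}. Your observation that $\kappa$ changes by at most one per iterate supplies the justification that the paper leaves as ``the definition of $r(x)$ gives'', and the bound $3\epsilon\ll\chi$ is valid because $\alpha\ls1$ and $\chi\ls\log\|f\|_{C^1}$, as noted at the start of the appendix.
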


    Since $\tg_{(\cdot)}=\tf_{(\cdot)}$ whenever the orbit stays inside the weighted Pesin charts $(R_x(r(x)),\Psi_x)$, we immediately obtain the following lemma by checking the definition.
\begin{lemma}
    For any $x\in\Lambda^*$, $f^{-1}(W^{\mathrm{GPu}}_{\mathrm{loc}}(x))  \subseteq W^{\mathrm{GPu}}_{\mathrm{loc}}(f^{-1}x).$
\end{lemma}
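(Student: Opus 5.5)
The inclusion $f^{-1}(W^{\mathrm{GPu}}_{\mathrm{loc}}(x))\subseteq W^{\mathrm{GPu}}_{\mathrm{loc}}(f^{-1}x)$ comes from unwinding the definitions and using \Cref{lem 2.9}. The one point that needs checking is that, along the relevant points, the truncated chart map $\tg^{-1}_x$ coincides with the true lift $\tf^{-1}_x$ of $f^{-1}$.

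\textbf{Step 1.} Let $y\in W^{\mathrm{GPu}}_{\mathrm{loc}}(x)$. By \eqref{defintion of local GPu} we can write $y=\Psi_x(v)$ with
$$v\in R_x(r(x))\cap\widetilde{\mathcal F}^u_x(0).$$

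\textbf{Step 2.} Set $w:=\tg^{-1}_x(v)$. By \Cref{lem 2.9},
$$w\in R_{f^{-1}(x)}\bigl(r(f^{-1}(x))\bigr)\cap \widetilde{\mathcal F}^u_{f^{-1}(x)}(0).$$
By definition $\tg^{-1}_x=(\tg_{f^{-1}(x)})^{-1}$, so $\tg_{f^{-1}(x)}(w)=v$.

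\textbf{Step 3.} We use the fact that $\rho\equiv1$ on $[0,1)$. On the set where the argument $\|w\|/r(f^{-1}(x))$ of $\rho$ is below $1$, we have $\tg_{f^{-1}(x)}=\tf_{f^{-1}(x)}$. The box $R_{f^{-1}(x)}(r(f^{-1}(x)))$ sits inside that region, possibly after comparing the box norm $|\cdot|$ with the norm used inside $\rho$ up to a harmless constant. If needed, one could shrink by a constant factor in the definition of $r(\cdot)$ via the $r_0/100$ slack, or simply read the bump's norm as the box norm.

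In either case $\tf_{f^{-1}(x)}(w)=v$, and this makes sense because $r(\cdot)\le r_0/100$ keeps $w$ inside the domain where $\tf$ is defined. Hence
$$\Psi_x^{-1}\circ f\circ\Psi_{f^{-1}(x)}(w)=v.$$

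\textbf{Step 4.} Applying $\Psi_x$ and then $f^{-1}$ gives
$$f^{-1}(y)=\Psi_{f^{-1}(x)}(w)\in \Psi_{f^{-1}(x)}\Bigl(R_{f^{-1}(x)}\bigl(r(f^{-1}(x))\bigr)\cap\widetilde{\mathcal F}^u_{f^{-1}(x)}(0)\Bigr)=W^{\mathrm{GPu}}_{\mathrm{loc}}(f^{-1}x).$$

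\textbf{Main obstacle.} The only delicate point is Step 3, namely checking that the region where $\tg=\tf$ contains the box of radius $r(\cdot)$, so that the norm mismatch inside $\rho$ causes no gap. Everything else is bookkeeping with the charts.
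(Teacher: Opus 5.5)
Your proof is correct and follows the paper's own argument. The paper's one-line justification is exactly that, by \Cref{lem 2.9}, the backward $\tg$-image of the chart piece stays in $R_{f^{-1}(x)}(r(f^{-1}(x)))$, where $\tg_{f^{-1}(x)}=\tf_{f^{-1}(x)}$, so unwinding the charts gives the inclusion. The paper never addresses the norm inside $\rho$; of your remedies, reading the argument of $\rho$ as the box norm $|\cdot|$ is the clean one, making Step 3 immediate. Shrinking $r(\cdot)$ uniformly would not help, since the same $r$ sets both the bump scale and the box; only the box in \eqref{defintion of local GPu} would need shrinking.
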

    We now turn to the definition of the global weighted Pesin unstable manifolds. For any $x\in\Lambda^*$, the global weighted Pesin unstable manifold is defined by
    $$W^{\mathrm{GPu}}(x):= \Big\{y\in M:\limsup_{n\to +\infty} \frac{1}{n}\log(\frac{ d(f^{-n}(x),f^{-n}(y))}{\e^{\Phi_{-n}(x)}})\ls-\frac{1}{2}\chi\Big\}.$$
    Since $\varphi\in C^0(M)$ and $\varphi\gs0$, it follows that $d(f^{-n}x,f^{-n}y)\to 0$ exponentially for any $y\in W^{\mathrm{GPu}}(x)$ as $n\to\infty$. Consequently,
    $$\frac{1}{n}\sum_{i=1}^{n}|\varphi(f^{-i}(x))-\varphi(f^{-i}(y))|\to0.$$
    Thus, for any $x,y\in \Lambda^*$, $y\in W^{\mathrm{GPu}}(x)$ if and only if $x\in W^{\mathrm{GPu}}(y)$. Recall that in classical Pesin theory there is a standard relation between global and local unstable manifolds:
    $$W^u(x)=\bigcup_{n=0}^{+\infty}f^n(W^u_{loc}(f^{-n}(x))).$$
    The following proposition shows that this relation also holds for the weighted Pesin unstable manifolds. See Appendix \ref{appendix a3} for a detailed proof.
\begin{proposition}\label{gpu pro}
    For any $x\in\Lambda^*$,
     $$W^{\mathrm{GPu}}(x)=\bigcup_{n=0}^{+\infty}f^n(W^{\mathrm{GPu}}_{\mathrm{loc}}(f^{-n}(x))).$$
\end{proposition}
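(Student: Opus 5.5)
We prove the two inclusions separately, working throughout in the weighted Pesin charts and using Lemma~\ref{lem 2.9} together with the hyperbolicity estimate of Proposition~\ref{pro 2.8.2}(3).

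\emph{Inclusion ``$\supseteq$''.} Fix $n\gs 0$ and $y\in f^n(W^{\mathrm{GPu}}_{\mathrm{loc}}(f^{-n}(x)))$, and set $z=f^{-n}(y)\in W^{\mathrm{GPu}}_{\mathrm{loc}}(f^{-n}x)$. Iterating Lemma~\ref{lem 2.9} shows that for every $j\gs 0$ the point $\Psi^{-1}_{f^{-n-j}x}(f^{-j}z)$ stays in $R_{f^{-n-j}x}(r(f^{-n-j}x))\cap\widetilde{\mathcal F}^u(0)$, where $\tg=\tf$. Along these leaves Proposition~\ref{pro 2.8.2}(3) contracts the box distance to $0$ by at least $\e^{-\varphi(f^{-i-1}\cdot)-\chi+3\epsilon}$ per step. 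After $j$ steps this gives
\[
|\Psi^{-1}(f^{-j}z)|\ls \e^{\Phi_{-j}(f^{-n}x)-j(\chi-3\epsilon)}\,r(f^{-n}x).
\]
We transfer this to $d$ using \eqref{psi 1}: the $C^1$ norm of $\Psi$ is bounded by $C_3$, uniformly in the point. Consequently
\[
d(f^{-n-j}x,f^{-n-j}y)\ls C\,\e^{\Phi_{-j}(f^{-n}x)-j(\chi-3\epsilon)}.
\]
Now $\Phi_{-(n+j)}(x)$ and $\Phi_{-j}(f^{-n}x)$ differ by the $n$ terms $\varphi(f^{-1}x),\dots,\varphi(f^{-n}x)$, a fixed finite sum bounded by $n\|\varphi\|_\infty$, which disappears in the limit $\frac1{n+j}\log$. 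Since $3\epsilon<\chi/2$, we obtain $y\in W^{\mathrm{GPu}}(x)$.

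\emph{Inclusion ``$\subseteq$''.} This is the main obstacle. Take $y\in W^{\mathrm{GPu}}(x)$, so that
\[
d(f^{-m}x,f^{-m}y)\ls \e^{\Phi_{-m}(x)-m\chi/2+m\epsilon'}
\]
for all large $m$. The chart sizes shrink only subexponentially: $l(f^{-m}x)\ls l(x)\e^{m\epsilon}$, so $r(f^{-m}x)\gtrsim \e^{-4\tau m\epsilon/\alpha}$, and \eqref{psi -1} costs a further factor of order $\e^{(\tau+1)m\epsilon}$. Since $\varphi\gs0$ we have $\Phi_{-m}(x)\ls 0$, and the choice \eqref{var} gives $\chi/2\gg(4\tau/\alpha+\tau+1)\epsilon$. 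Hence for all large $m$ the point $w_m:=\Psi^{-1}_{f^{-m}x}(f^{-m}y)$ lies in $R_{f^{-m}x}(r(f^{-m}x))$, with $|w_m|$ exponentially smaller than the chart size.

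It remains to show that $w_m$ lies on the leaf $\widetilde{\mathcal F}^u_{f^{-m}x}(0)$. We argue by contradiction using the cone/graph structure. Let $u_m\in\widetilde{\mathcal F}^u(0)$ be the point on the $\widetilde{\mathcal F}^{cs}$ leaf of $w_m$, and consider the transverse discrepancy $|w_m-u_m|$ measured along the $cs$ direction. Pushing forward by $\tg$, Lemma~\ref{cone pre} and Proposition~\ref{pro 2.8.2}(3) show that this discrepancy is \emph{multiplied} by at most $\e^{\varphi-\chi+3\epsilon}$ per forward step along the $cs$ fake leaves. Run backward from time $0$ to $-m$, this means the discrepancy at time $-m$ is at least
\[
\e^{-\Phi_{-m}(x)+m(\chi-3\epsilon)}\cdot(\text{discrepancy at }0)\cdot l^{-O(1)}.
\]
On the other hand, the discrepancy at time $-m$ is also bounded by $|w_m|+|u_m|$, which is exponentially small. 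If the discrepancy at time $0$ were nonzero, these two bounds would be incompatible for large $m$; hence it is zero. The delicate part is that $\tg$ agrees with $\tf$ only inside $R(r(\cdot))$, so the step-by-step comparison is legitimate only because the whole backward orbit of $y$ from some time $m_0$ onward stays inside the charts, which is exactly what the previous paragraph established.

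Applying this at time $-m_0$ gives $f^{-m_0}y\in W^{\mathrm{GPu}}_{\mathrm{loc}}(f^{-m_0}x)$, and therefore $y\in f^{m_0}(W^{\mathrm{GPu}}_{\mathrm{loc}}(f^{-m_0}x))$, completing the inclusion.
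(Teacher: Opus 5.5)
Your first inclusion and the step placing the backward orbit of $y$ inside the charts both match the paper's argument. The gap is in the contradiction step: the weight enters with the wrong sign, and the comparison you then make does not produce a contradiction.

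\textbf{The sign error.} Proposition~\ref{pro 2.8.2}(3) says that $\tg$ multiplies distances along $\widetilde{\mathcal F}^{cs}$ leaves by at most $\e^{\varphi-\chi+3\epsilon}$. Going backward $j$ steps from the base point $f^{-m_0}x$ therefore gives only
\[
|\tg^{-j}w-\tg^{-j}u|\gs \e^{\Phi_{-j}(f^{-m_0}x)+j(\chi-3\epsilon)}\,|w-u|,\qquad \Phi_{-j}\ls 0,
\]
not $\e^{-\Phi_{-j}+j(\chi-3\epsilon)}|w-u|$.

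\textbf{Why this breaks your comparison.} The true lower bound is itself exponentially small as soon as the backward averages of $\varphi$ exceed $\chi-3\epsilon$. That is the typical situation: in Proposition~\ref{uniform size prop} the weight extends $\lambda_0>2\chi$. So an upper bound that is merely ``exponentially small'' gives nothing. In particular, your estimate $|w_m|\lesssim \e^{-m(\chi/2-\epsilon'-(\tau+1)\epsilon)}$, obtained after discarding $\Phi_{-m}(x)\ls 0$, is compatible with the correct lower bound once $-\frac1m\Phi_{-m}(x)$ exceeds about $\frac32\chi$. You also never justified why $|u_m|$ is small.

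\textbf{The repair, which is what the paper does.} Compare after dividing by $\e^{\Phi_{-j}(f^{-m_0}x)}$.
\begin{itemize}
\item Keep the weight in your own estimate $d(f^{-m}x,f^{-m}y)\ls \e^{\Phi_{-m}(x)-m\chi/2+m\epsilon'}$, and use $\Phi_{-(m_0+j)}(x)=\Phi_{-m_0}(x)+\Phi_{-j}(f^{-m_0}x)$. Since the chart distortion $l^{\tau+1}$ grows only like $\e^{(\tau+1)j\epsilon}$, this gives $|w_{m_0+j}|/\e^{\Phi_{-j}(f^{-m_0}x)}\to 0$ exponentially.
\item Bound the other point by the contraction along $\widetilde{\mathcal F}^u(0)$ in Proposition~\ref{pro 2.8.2}(3), using that $\tg^{-1}$ fixes $0$: $|\tg^{-j}u|\ls \e^{\Phi_{-j}(f^{-m_0}x)-j(\chi-3\epsilon)}|u|$. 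Alternatively, transversality of the two graphs of slope $\ls\frac13$ gives $|\tg^{-j}u|\ls\frac32|w_{m_0+j}|$. Either way this term also carries the factor $\e^{\Phi_{-j}}$.
\item The normalized lower bound is $\e^{j(\chi-3\epsilon)}|w-u|\to\infty$ unless $w=u$, which forces $w\in\widetilde{\mathcal F}^u(0)$.
\end{itemize}

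Two smaller points. The factor $l^{-O(1)}$ in your lower bound is spurious, since both points are tracked in box norms along the orbit, where Proposition~\ref{pro 2.8.2}(3) applies directly. The comparison must start at time $-m_0$, not time $0$, because $y$ need not lie in the chart at $x$.
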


   Recall that $\Gamma$ is the set of regular points. We further define 
   $$\Gamma_{\rk}:=\Gamma\cap\{x:\lambda_{\rk}(x,f)>\max\{\lambda_{\rk+1}(x,f),0\}\}.$$
   Thus, $\Gamma_\rk$ has full measure for any invariant measure $\mu$ of $f$ with $\mathrm{Hyp^{\mathrm{k}}(\mu)>0}$.
   Next, we define the $\varphi$-Birkhoff regular set $B_\varphi$ by
   $$B_\varphi:=\{x\in M:\lim_{n\to +\infty} \frac{1}{n}\sum_{i=1}^{n}\varphi(f^{-i}(x))\ \text{exists}\}.$$
   By the Birkhoff Ergodic Theorem, $B_\varphi$ has full measure for any $f$-invariant measure. 
   
    The following lemma shows that on the set $\Lambda^*\cap\Gamma_\rk\cap B_\varphi$, which has full measure for any invariant measure $\mu$ of $f$ with $\mathrm{Hyp^{\mathrm{k}}(\mu)>0}$, the weighted Pesin unstable manifold $W^{\mathrm{GPu}}(x)$ coincides with the classical Pesin unstable manifold $W^\rk(x)$. 
\begin{lemma}\label{lem 2.12}
    For any $x\in\Gamma_{\rk}\cap B_{\varphi}\cap\Lambda^*$,
    $$W^{\mathrm{GPu}}(x)=W^\rk(x).$$
\end{lemma}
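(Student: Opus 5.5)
The proof will establish the two inclusions separately. The key preliminary is to identify $E^u(x)$ from the weighted Pesin block with the Oseledets subspace $F(x)=E_1(x)\oplus\cdots\oplus E_i(x)$ of dimension $\rk$. Then we compare the exponential rates that define $W^{\mathrm{GPu}}(x)$ with those that define $W^\rk(x)$.

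\textbf{Step 1 (rates).} Since $x\in B_\varphi$, the limit
$$\bar\varphi(x):=\lim_{n\to\infty}\tfrac1n\sum_{i=1}^n\varphi(f^{-i}x)$$
exists. Hence $\tfrac1n\Phi_{-n}(x)\to-\bar\varphi(x)$.

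From \eqref{GPB def 2}, applied along the orbit (recall $x\in\Lambda^*$), every vector of $E^u(x)$ has backward exponent at most $-(\chi+\bar\varphi(x))+\epsilon$. From \eqref{GPB def 1}, every vector of $E^{cs}(x)$ has forward exponent at most $\bar\varphi(x)-\chi+\epsilon$. These two properties come from the same invariant splitting, and $\dim E^u=\rk$. Comparing with the Oseledets filtration at the regular point $x$ then gives:
\begin{itemize}
\item $E^u(x)$ is the span of the top $\rk$ exponents, so $E^u(x)=F(x)$;
\item $\lambda_\rk(x,f)\gs\bar\varphi(x)+\chi-\epsilon$ and $\lambda_{\rk+1}(x,f)\ls\bar\varphi(x)-\chi+\epsilon$.
\end{itemize}
A dimension count shows that a $\rk$-dimensional subspace on which all backward exponents are at most $-(\bar\varphi+\chi-\epsilon)$ must equal $F(x)$, because $E^{cs}$ is a complement with strictly smaller forward rates. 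As a consequence, the threshold rate $\bar\varphi(x)+\tfrac12\chi$ defining $W^{\mathrm{GPu}}$ lies strictly inside the gap $(\lambda_{\rk+1}(x),\lambda_\rk(x))$, and it is positive since $\varphi\gs0$.

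\textbf{Step 2 ($W^\rk\subseteq W^{\mathrm{GPu}}$).} Let $y\in W^\rk(x)$. Backward distances contract at rate at least $\lambda_\rk(x)-\epsilon'$ for every $\epsilon'>0$. This rate exceeds $\bar\varphi(x)+\chi-2\epsilon$, which in turn exceeds $\bar\varphi(x)+\tfrac12\chi$. Dividing by $\e^{\Phi_{-n}(x)}$ yields $\limsup\le-\tfrac12\chi$, so $y\in W^{\mathrm{GPu}}(x)$.

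\textbf{Step 3 ($W^{\mathrm{GPu}}\subseteq W^\rk$).} This is the main obstacle. Points contracting backward at a rate merely inside the gap must be shown to contract at the full rate $\lambda_\rk$. I will use \Cref{gpu pro} to write
$$y\in f^n\bigl(W^{\mathrm{GPu}}_{\mathrm{loc}}(f^{-n}x)\bigr).$$
Backward iterates of such $y$ stay in the local weighted leaves by \Cref{lem 2.9}. These local leaves are $C^1$ graphs tangent to $E^u$ (\Cref{pro 2.8.2}). By the classical characterization of Pesin local unstable manifolds as the set of points whose backward orbits stay in the charts $R_{f^{-m}x}(r(f^{-m}x))$, they coincide with local pieces of $W^\rk$. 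Concretely, I will argue uniqueness of the invariant graph family: the classical $\rk$-dimensional Pesin local unstable manifold at $f^{-m}x$ is also a $\tg$-invariant family of graphs through $0$ with slopes at most $\tfrac13$, once $r$ is small, because it is tangent to $F=E^u$ and has subexponentially shrinking size. Uniqueness in \Cref{pro 2.8.2} then forces the two families to agree near $0$.

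Alternatively, one can use that the backward orbit of $y-x$ in charts lies in unstable cones. Then the distance ratio along each step is governed by $D\tg^{-1}$ restricted to $E^u$. The Oseledets rate, together with the bounded distortion of $C^{1+\alpha}$ charts (sizes $r(\cdot)$ decay subexponentially, as in \eqref{charts size}), upgrades the contraction to rate $\lambda_\rk(x)-\epsilon'$. Finally, $W^\rk$ is $f$-invariant and $f^n$ maps local pieces into the global manifold, so taking the union over $n$ gives $W^{\mathrm{GPu}}(x)\subseteq W^\rk(x)$.
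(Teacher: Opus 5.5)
\textbf{Comparison with the paper.} Your overall route is sound, and it is more explicit than the paper's. The paper only rewrites $W^{\mathrm{GPu}}(x)$, using $x\in B_\varphi$, as $\{y:\limsup_n\frac1n\log d(f^{-n}x,f^{-n}y)\ls-(C(x)+\frac12\chi)\}$. It then concludes in one line from $\dim W^{\mathrm{GPu}}(x)=\rk=\dim W^\rk(x)$ and the classical description of Pesin unstable manifolds by backward contraction rates. That dimension count is what implicitly places $C(x)+\frac12\chi$ in the gap $(\max\{\lambda_{\rk+1},0\},\lambda_\rk)$, and the inclusion $W^{\mathrm{GPu}}(x)\subseteq W^\rk(x)$ is taken as standard. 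Your Step 1 makes the gap location explicit, your Step 2 is the easy inclusion, and your Step 3 tries to prove what the paper cites.

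\textbf{A slip in Step 1.} The condition $x\in B_\varphi$ controls only backward averages of $\varphi$, and $\frac1m\Phi_m(x)$ need not tend to $\bar\varphi(x)$. So the forward-exponent bound you state on $E^{cs}(x)$ is not justified. Apply \eqref{GPB def 1} at $f^{-m}(x)$ instead. Since $\Phi_m(f^{-m}x)=-\Phi_{-m}(x)$, it gives $\m(Df^{-m}|_{E^{cs}(x)})\gs K^{-1}\e^{m(\chi-\epsilon)-|j|\epsilon}\e^{\Phi_{-m}(x)}$ for $x\in f^{j}\Lambda^\rk$. This backward rate $\chi-\epsilon-\bar\varphi$ on $E^{cs}(x)$, compared with $-\chi-\bar\varphi$ on $E^u(x)$, yields $E^u(x)=F(x)$ and both exponent bounds at the regular point $x$, using backward averages only.

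\textbf{Step 3 needs real repair.} The ``classical characterization'' of local unstable manifolds as the points whose backward orbits stay in the charts is false for strong unstable manifolds. If $\lambda_{\rk+1}(x,f)>0$, points off $W^\rk(x)$ are also attracted by $f^{-1}$; think of a repelling fixed point with $\rk<\td$. Consistently, \Cref{cone pre}(ii) only gives $|D\tg_x^{-1}(w)v|\gs\e^{-\varphi(f^{-1}x)+\chi-3\epsilon}|v|$ on $cs$-cones, which is a contraction once $\varphi>\chi$. This is the typical case: the weight built in \Cref{uniform size prop} approximates $\lambda_0>2\chi$. Likewise, the uniqueness in \Cref{pro 2.8.2} concerns invariant families of global graphs over $\mathbb R^\rk$, and classical local disks are not such a family.

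\textbf{How to fix Step 3.} Use instead the weighted characterization from the second half of the proof of \Cref{gpu pro} (the $\vartheta$-argument). A chart point whose backward $\tg$-orbit stays in the charts and satisfies $|\tg^{-n}z|\,\e^{-\Phi_{-n}}\to0$ lies on $\widetilde{\mathcal F}^u(0)$.
\begin{itemize}
\item Take classical $W^\rk$-disks $D_{f^{-m}x}$ small compared with $r(\cdot)$ (their sizes are tempered). They satisfy this condition because $\lambda_\rk-\bar\varphi\gs\chi-\epsilon$.
\item Hence $D_{f^{-m}x}\subseteq W^{\mathrm{GPu}}_{\mathrm{loc}}(f^{-m}x)$. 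Being a $\rk$-dimensional graph over a ball contained in a graph over $\mathbb R^\rk$, the disk coincides with the GPu leaf over that ball.
\item In charts, $f^{-1}$ contracts GPu leaves by at least $\e^{-(\chi-3\epsilon)}$, while the disks shrink only subexponentially. So every $y\in W^{\mathrm{GPu}}(x)$ has some $f^{-n}y\in D_{f^{-n}x}\subseteq W^\rk(f^{-n}x)$, and therefore $y\in W^\rk(x)$.
\end{itemize}
Alternatively, simply cite, as the paper does, that at a regular point the backward-rate set with threshold in $(\max\{\lambda_{\rk+1},0\},\lambda_\rk)$ equals $W^\rk(x)$.

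\textbf{Your cone-based alternative.} As stated it also falls short. Per-step weighted norms give only the rate $\bar\varphi+\chi-3\epsilon$, not $\lambda_\rk$. Upgrading to $\lambda_\rk$ needs Oseledets-adapted norms along the orbit of $x$ together with a summable-perturbation estimate; bounded distortion alone does not suffice.
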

\begin{proof}
    By the definitions of $B_\varphi$ and $W^{\mathrm{GPu}}(x)$, we obtain 
    $$W^{\mathrm{GPu}}(x)= \Big\{y:\limsup_{n\to +\infty} \frac{1}{n}\log \mathrm{d}(f^{-n}(x),f^{-n}(y))\ls-(C(x)+\frac{1}{2}\chi),\ y\in M\Big\},$$
    where $x\in\Gamma_{\rk}\cap B_{\varphi}\cap\Lambda^*$ and
    $$C(x)=\lim_{n\to +\infty} \frac{1}{n}\sum_{i=1}^{n}\varphi(f^{-i}(x)).$$
    Since $\dim W^{\mathrm{GPu}}(x)=\rk=\dim W^\rk(x)$, the definition of Pesin unstable manifolds yields $W^{\mathrm{GPu}}(x)=W^\rk(x).$
\end{proof}
\begin{remark}
An important fact is that the weighted Pesin
unstable manifolds are defined at every point of a weighted Pesin block, not only
on the Oseledets regular set. This pointwise construction is needed for the
closedness and upper-limit arguments for the unstable laminations in \Cref{sec 3}, while \Cref{lem 2.12} ensures
that it agrees with the classical Pesin unstable manifold $W^\rk$ on the full-measure regular set.
\end{remark}

\subsection{Quantitative uniform size estimate}\label{sec 2.4}
In this section, we prove that the continuity of the sum of Lyapunov exponents implies the uniform large size of weighted Pesin blocks with uniform parameters. Furthermore, a quantitative formulation of this estimate is provided. Recall that for $\rk\in\mathbb N$, $$\mathrm{Hyp^{\mathrm{k}}(\mu)}:=\essinf_{x\sim\mu}\{\lambda_\rk(x,f)-\max\{\lambda_{\rk+1}(x,f),0\}\}.$$

Now we introduce the concept of the upper limit of a sequence of sets, which is slightly different from the classical ones.
Let $S_n\subseteq M,\ n\in\mathbb N$ be a sequence of sets. We define the upper limit of $S_n$ by
\[
\limsup_{n\to\infty}S_n:=\{x\in M:\exists\ x_n\in S_{k_n}\ \text{ s.t.}\ k_n\gs n \text{ and } \lim _{n\to\infty}x_n=x \}.
\]
By definition, the set $\limsup_{n\to\infty}S_n$ can be shown to be closed, and is therefore compact.

\begin{proposition}\label{uniform size prop}
    Let $f_n\in\mathrm{Diff^{1+\alpha}}(M)$ and $\mu_n$ an ergodic measure of $f_n$. Assume
    \begin{itemize}
    \item  $f_n\xrightarrow{C^1} f\in \mathrm{Diff^{1+\alpha}(M)}$
    \item and $\mu_n\xrightarrow{*}\mu$. 
    \end{itemize}
    If there exists $\rk\in\mathbb N$ such that
\begin{itemize}
    \item $\mathrm{Hyp}^{\mathrm{k}}(\mu)>0,$ 
    \item $\sum_{i=1}^\rk\lambda_i(\mu_n,f_n)\xrightarrow{n\to\infty}\sum_{i=1}^\rk\lambda_i(\mu,f),$
\end{itemize}
then for any $\rho>0$ there exist $N\in\mathbb N$, $K\gs1$, $\chi>0$ and $\epsilon>0$ satisfying \Cref{var} and a positive function $\varphi\in C^0(M)$ such that there exist $$\Lambda^\rk_n=\Lambda^\rk_n(K^N,N\chi,N\epsilon,N\varphi)$$ as a weighted Pesin block for $f_n^N$ for any $n$ large enough and $$\Lambda^\rk=\Lambda^\rk(K^N,N\chi,N\epsilon,N\varphi)$$ as a weighted Pesin block for $f^N$ satisfying
\begin{itemize}
    \item $\limsup_{n\to\infty}\Lambda_n^\rk =\Lambda^\rk$,
    \item $\mu_n(\Lambda_n^\rk )>1-\rho \ \text{for any } n\in\mathbb N$ large enough,
    \item $\mu(\Lambda^\rk)>1-\rho$.
\end{itemize}
\begin{remark}
The converse of \Cref{uniform size prop} also holds, thus establishing an equivalence between its condition and its conclusion. Indeed, observe that the continuity of the unstable bundle $E^u(y)$ on weighted Pesin blocks directly implies the continuity of the integrand $y \mapsto \log \|\wedge^\rk(D_y f)|_{E^u(y)}\|$ on such sets. Therefore, the existence of a family of such blocks whose measures can arbitrarily approach $1$ is sufficient to force the convergence of the corresponding integrals under the weak$^*$ convergence of measures, thereby establishing the continuity of the sum of the top $\rk$ Lyapunov exponents. We leave the details to the reader. 
\end{remark}
\begin{proof}
    We note that in the proof of this proposition, the notation $E^{u}(x)$ and $E^{cs}(x)$ denote the subbundles of the Oseledets splitting corresponding to exponents $\lambda_1(x,f)\gs\cdots\gs\lambda_\rk(x,f)$ and $\lambda_{\rk+1}(x,f)\gs\cdots\gs\lambda_\td(x,f)$ respectively, instead of the decomposition given by the weighted Pesin blocks. 

    Denote  $\Upsilon_1=\sup_n\{\|f_n\|_{C^1}\}$, and we prove this proposition with parameters \[K=\Upsilon_1^3,\ \chi=\frac{\mathrm{Hyp^{\mathrm{k}}(\mu)}}{5} \text{ and } \epsilon= \frac{\alpha\cdot\chi^2}{10^3\cdot(\log\Upsilon_1+1)}.\] 
    
    Choose $\rho_0>0$ and $\epsilon'>0$ small enough that will be determined later. Since $\mathrm{Hyp}^{\mathrm{k}}(\mu)>0$, there exists a measurable function defined as $$\lambda_0(x,f)=\frac{\lambda_\rk(x,f)+\max\{\lambda_{\rk+1}(x,f),0\}}{2}$$  such that for $\mu-a.e.\ x\in M$ we have
    $$\lambda_\rk(x,f)>\lambda_0(x,f)+2\chi>\lambda_0(x,f)-2\chi>\max\{\lambda_{\rk+1}(x,f),0\},$$
    and in particular we have $0\ls\lambda_0(x,f)\ls \log\Upsilon_1$.

    By Lusin's Theorem we can choose a compact set $\Gamma_0\subseteq M$ such that $\mu(\Gamma_0)>1-\rho_0$, and $E^{cs}(x), E^{u}(x)$ and $\lambda_i(x,f),\ 0\ls i\ls \td$ are both continuous for $x\in\Gamma_0$. Now consider the splitting $T_{\Gamma_0}M=E^{cs}\oplus E^u$, by the continuity of $E^{cs}$ and $E^u$ there exists a positive number $\sigma>0$ such that $\angle (E^{cs}(x),E^{u}(x))>100\sigma$ for any $x\in\Gamma_0$.


   For any $x \in M$, let $\measuredangle(\cdot, \cdot)$ denote the standard distance in the Grassmannian manifold of subspaces of $T_xM$. To compare subspaces at distinct points, for any $y \in M$ within the injectivity radius of $x$, $T_yM$ can be identified with $T_xM$ via the linear map $\iota_{y,x} := D_y \exp_x^{-1}: T_yM \to T_xM$, which geometrically represents the parallel transport along the unique geodesic connecting $y$ to $x$. Given a subspace $E_y \subseteq T_yM$, we denote its image under this identification by $\hat{E}_y := \iota_{y,x}(E_y) \subseteq T_xM$. The distance between $E_x \subseteq T_xM$ and $E_y \subseteq T_yM$ is then defined by $\measuredangle(E_x, E_y) := \measuredangle(E_x, \hat{E}_y)$.
   
   By the Oseledets Multiplicative Ergodic Theorem, one can choose $\epsilon'>0$ small enough such that there exist a integer $N>0$ and a compact set $\Gamma_1\subseteq\Gamma_0$ such that $\mu(\Gamma_1)>1-2\rho_0$, and for any $x\in\Gamma_1$, $n\gs N$, the following properties hold:
    \begin{itemize}
        \item[1 (cs).] 
        \begin{align*}
            \|Df^n|_{E^{cs}(x)}\|&\ls \e^{(\lambda_0(x,f)-2\chi-\epsilon')\cdot n},\\[2mm]
             \|\wedge^{\td-\rk} (Df^n)|_{E^{cs}(x)}\|&\ls \ \e^{(\sum_{i=\rk+1}^d\lambda_i(x,f)+\epsilon')\cdot n}.
        \end{align*}
        \item[1  (u).]  
        \begin{align*}
       \|Df^{-n}|_{E^{u}(x)}\|&\ls \e^{(-\lambda_0(x,f)-2\chi-\epsilon')\cdot n}, \\[2mm]
        \|\wedge^\rk (Df^{-n})|_{E^u(x)}\|&\ls \e^{(-\sum_{i=1}^\rk \lambda_i(x,f)+\epsilon')\cdot n}.
        \end{align*}
        \item[2 (cs).]
        For any $\td-\rk$ dimensional linear subspace $V\subseteq T_xM$,
        \[
        \|\wedge^{\td-\rk}(Df^n)|_V\|\gs \e^{(\sum_{i=\rk+1}^\td\lambda_i(x,f)-\epsilon')\cdot n}.
        \]
        If $\measuredangle(V,E^{cs}(x))>\sigma$, then\[
        \|\wedge^{\td-\rk}(Df^n)|_{V}\|\gs\e^{(\sum_{i=\rk+1}^\td\lambda_i(x,f)+4\chi-\epsilon')\cdot n}.
        \]
        If $\measuredangle(V,E^{cs}(x))\ls\sigma$, then\[
        \m(Df^{-n}|_{V})\gs\e^{(-\lambda_0(x,f)+2\chi+\epsilon')\cdot n}.
        \]
        \item[2   (u).]
        For any $\rk$-dimensional linear subspace $V\subseteq T_xM$,
        \[
        \|\wedge^{\rk}(Df^{-n})|_V\|\gs \e^{(-\sum_{i=1}^\rk\lambda_i(x,f)-\epsilon')\cdot n}.
        \]
        If $\measuredangle(V,E^{u}(x))>\sigma$, then\[
        \|\wedge^{\rk}(Df^{-n})|_{V}\|\gs\e^{(-\sum_{i=1}^\rk\lambda_i(x,f)+4\chi-\epsilon')\cdot n}.
        \]
        If $\measuredangle(V,E^{u}(x))\ls\sigma$, then\[
        \m(Df^{n}|_{V})\gs\e^{(\lambda_0(x,f)+2\chi+\epsilon')\cdot n}.
        \]
    \end{itemize}
Then we fixed an $N\in\mathbb N$ large enough. By Tietze Extension Theorem we can choose a continuous extension $\varphi\in C^0(M)$ of $\lambda_0\in C^0(\Gamma_1)$ such that $0\ls\varphi\ls\log\Upsilon_1$. In the same manner, for any $1\ls i\ls \td$, one can choose $\bar\lambda_i\in C^0(M)$ as the continuous extension of $\lambda_i\in C^0(\Gamma_1)$ such that $-\log\Upsilon_1\ls\bar\lambda_i\ls\log\Upsilon_1$.
Then by the continuity of these functions, for any $x\in\Gamma_1$ there exists an open neighborhood $U_x$ of $x$ such that for any $y\in U_x$ and $n$ large enough, the following properties hold:
\begin{itemize}
    \item[A (cs).]
    For any $\td-\rk$ dimensional linear subspace $V\subseteq T_yM$, \[
    \|\wedge^{\td-\rk}(Df^N_n)|_V\|\gs\e^{(\sum_{i=\rk+1}^\td\bar \lambda_i(y)-2\epsilon')\cdot N},
    \]
    if $\measuredangle(V,E^{cs}(x))>\sigma$,\[
    \|\wedge^{\td-\rk}(Df^N_n)|_V\|\gs\e^{(\sum_{i=\rk+1}^\td\bar \lambda_i(y)+4\chi-2\epsilon')\cdot N}.
    \]
    \item[B (cs).]
    For any $\td-\rk$ dimensional linear subspace $V\subseteq T_yM$, if $\measuredangle(V,E^{cs}(x))\ls\sigma$,\[
    \m(Df^{-N}_n|_V)\gs\e^{(-\varphi(y)+2\chi)\cdot N}.
    \]
    \item[A  (u).]
    For any $\rk$-dimensional linear subspace $V\subseteq T_yM$,\[
    \|\wedge^{\rk}(Df_n^{-N})|_V\|\gs\e^{(-\sum_{i=1}^\rk\bar \lambda_i(y)-2\epsilon')\cdot N},
    \]
    if $\measuredangle(V,E^{u}(x))>\sigma$,\[
    \|\wedge^{\rk}(Df_n^{-N})|_V\|\gs\e^{(-\sum_{i=1}^\rk\bar \lambda_i(y)+4\chi-2\epsilon')\cdot N}.
    \]
    \item[B  (u).]
    For any $\rk$-dimensional linear subspace $V\subseteq T_yM$, 
    if $\measuredangle(V,E^{u}(x))\ls\sigma$,\[
    \m(Df^{N}_n|_V)\gs\e^{(\varphi(y)+2\chi)\cdot N}.
    \]
\end{itemize}

By the compactness of $\Gamma_1$, we can choose finitely many $x_j\in\Gamma_1$ such that $\{U_{x_j}\}_{j=1}^q$ forms a finite open cover of $\Gamma_1$. We denote $U:=\bigcup_{j=1}^qU_{x_j}$, which is an open neighborhood of the compact set $\Gamma_1$. Thus, when $n$ is large enough, $\mu_n(U)\gs 1-3\rho_0$.

By the ergodicity of $\mu_n$ and the continuity properties of Lyapunov exponents, since $\mathrm{Hyp}^{\rk}(\mu)>0$, it follows that $\mathrm{Hyp}^{\rk}(\mu_n)>0$ for all sufficiently large $n$. Under this condition, the Oseledets splitting $T_x M = E^{cs}(x) \oplus E^u(x)$ with $\dim E^u(x) = \rk$ and $\dim E^{cs}(x) = \td-\rk$ is $\mu_n$-almost everywhere well-defined. Then the \textit{center-stable lift} $\tau^{cs}_{\mu_n} \in \mathcal{P}(\mathcal{G}_{\td-\rk}(TM))$ and the \textit{unstable lift} $\tau^u_{\mu_n} \in \mathcal{P}(\mathcal{G}_{\rk}(TM))$ of $\mu_n$ are defined as probability measures on the respective Grassmannian bundles, characterized by the maps $\iota^{cs}: x \mapsto (x, E^{cs}(x))$ and $\iota^{u}: x \mapsto (x, E^{u}(x))$. Formally, $\tau^{cs}_{\mu_n}$ and $\tau^{u}_{\mu_n}$ are defined by 
\[
\tau^{cs}_{\mu_n}=\iota^{cs}_*(\mu_n),\quad \tau^{u}_{\mu_n}=\iota^{u}_*(\mu_n).
\] By previous discussions, these lifts are well-defined Borel probability measures. 

We further denote \[
B_{cs}:=\{(y,V):y\in U_{x_j}, V\subseteq T_yM \text{ and } \measuredangle(V,E^{cs}(x_j))>\sigma \text{ for some }1\ls j\ls q\},
\]
and\[
\bar B_{cs}:=\{y\in \Gamma_\rk:(y,E^{cs}(y))\in B_{cs}\}.
\]
Then
\begin{align*}
    N\cdot(\sum_{i=\rk+1}^\td \lambda_i(\mu_n,f_n))=&\int\log \|\wedge^{\td-\rk}(Df_n^N|_{(y,V)})\|\,\td\tau_{\mu_n}^{cs}(y,V)\\[2mm]
    =&\int_{(\wedge^{\td-\rk}T_UM)\setminus B_{cs}} \log\|\wedge^{\td-\rk}(Df_n^N|_{(y,V)})\|\,\td\tau_{\mu_n}^{cs}(y,V)\\[2mm]
    &+\int_{B_{cs}} \log\|\wedge^{\td-\rk}(Df_n^N|_{(y,V)})\|\,\td\tau_{\mu_n}^{cs}(y,V)\\[2mm]
    &+\int_{\wedge^{\td-\rk}T_{M\setminus U}M} \log \|\wedge^{\td-\rk}(Df_n^N|_{(y,V)})\|\,\td\tau_{\mu_n}^{cs}(y,V).
\end{align*}
Together with property A (cs), it follows that
\begin{align*}
    \sum_{i=\rk+1}^\td \lambda_i(\mu_n,f_n)\gs& \int_{(\wedge^{\td-\rk}T_UM)\setminus B_{cs}}\left(\sum_{i=\rk+1}^\td\bar \lambda_i(y)-2\epsilon'\right) \,\td\tau_{\mu_n}^{cs}(y,V)\\[2mm]
    &+\int_{B_{cs}} \left(\sum_{i=\rk+1}^\td\bar \lambda_i(y)+4\chi-2\epsilon'\right)\,\td\tau_{\mu_n}^{cs}(y,V)\\[2mm]
    &-3\rho_0(\td-\rk)\log\Upsilon_1\\[2mm]
    =&\int_{\wedge^{\td-\rk}T_UM}\left(\sum_{i=\rk+1}^\td\bar \lambda_i(y)-2\epsilon'\right) \,\td\tau_{\mu_n}^{cs}(y,V)\\[2mm]
    &+\tau^{cs}_{\mu_n}(B_{cs})\cdot 4\chi-3\rho_0(\td-\rk)\log\Upsilon_1\\[2mm]
    \gs&\int\left(\sum_{i=\rk+1}^\td\bar \lambda_i(y)-2\epsilon'\right) \,\td\tau_{\mu_n}^{cs}(y,V)\\[2mm]
    &+\tau^{cs}_{\mu_n}(B_{cs})\cdot 4\chi-6\rho_0(\td-\rk)\log\Upsilon_1.\\[2mm]
    =&\int\sum_{i=\rk+1}^\td\bar \lambda_i(y) \,\td\mu_n(y)-2\epsilon'+\tau^{cs}_{\mu_n}(B_{cs})\cdot 4\chi-6\rho_0(\td-\rk)\log\Upsilon_1.
\end{align*}
Since $\mu_n\xrightarrow{*}\mu$, when $n$ is large enough, we have 
\begin{align*}
    \int\sum_{i=\rk+1}^\td\bar \lambda_i(y) \,\td\mu_n(y)&\gs\int\sum_{i=\rk+1}^\td\bar \lambda_i(y) \,\td\mu(y)-\epsilon'\\[2mm]
    &\gs\int_{\Gamma_1}\sum_{i=\rk+1}^\td\lambda_i(y,f) \,\td\mu(y)-2\rho_0(\td-\rk)\log\Upsilon_1-\epsilon'\\[2mm]
    &\gs \sum_{i=\rk+1}^\td\lambda_i(\mu,f)-4\rho_0(\td-\rk)\log\Upsilon_1-\epsilon'.
\end{align*}
Together we obtain 
\begin{align*}
    \sum_{i=\rk+1}^\td \lambda_i(\mu_n,f_n)\gs\sum_{i=\rk+1}^\td\lambda_i(\mu,f)+\tau^{cs}_{\mu_n}(B_{cs})\cdot 4\chi-10\rho_0\td\log\Upsilon_1-3\epsilon'.
\end{align*}
Moreover, since $\tau^{cs}_{\mu_n}(B_{cs})=\mu_n(\bar B_{cs})$, we have
\[
\mu_n(\bar B_{cs})\ls\frac{\sum_{i=\rk+1}^\td \lambda_i(\mu_n,f_n)-\sum_{i=\rk+1}^\td\lambda_i(\mu,f)+10\rho_0\td\log\Upsilon_1+3\epsilon'}{4\chi}.
\]
In the same manner, if we denote
\[
B_{u}:=\{(y,V):y\in U_{x_j}, V\subseteq T_yM \text{ and } \measuredangle(V,E^{u}(x_j))>\sigma \text{ for some }1\ls j\ls q\},
\]
and\[
\bar B_{u}:=\{y\in \Gamma_\rk:(y,E^{u}(y))\in B_{u}\},
\]
then similarly we can obtain
\[
\mu_n(\bar B_{u})\ls\frac{\sum_{i=1}^\rk \lambda_i(\mu,f)-\sum_{i=1}^\rk\lambda_i(\mu_n,f_n)+10\rho_0\td\log\Upsilon_1+3\epsilon'}{4\chi}.
\]
Note that when $n$ is large enough, we have
$$|\sum_{i=1}^\td\lambda_i(\mu,f)-\sum_{i=1}^\td\lambda_i(\mu_n,f_n)|\ls\epsilon',$$
thus
\[
\mu_n(U\setminus(\bar B_u\cup\bar B_{cs}))\gs 1-3\rho_0-\frac{\sum_{i=1}^\rk \lambda_i(\mu,f)-\sum_{i=1}^\rk\lambda_i(\mu_n,f_n)+10\rho_0\td\log\Upsilon_1+4\epsilon'}{2\chi}.
\]

Now, we denote \[
\epsilon''=\epsilon''(\rho_0,\epsilon'):=3\rho_0+\frac{10\rho_0\td\log\Upsilon_1+4\epsilon'}{2\chi}.
\] 
which can be arbitrarily  small as long as $\rho_0$ and $\epsilon'$ are small enough.
Note that for any $y\in U\setminus(\bar B_u\cup\bar B_{cs})$, there exists an $x_j$ such that $y\in U_{x_j}$ and \[
\measuredangle(E^{cs}(y),E^{cs}(x_j))\ls\sigma,
\]
\[
\measuredangle(E^{u}(y),E^{u}(x_j))\ls\sigma.
\]
Thus, together with properties B (cs) and B (u), it follows that
\[
\m(Df^{-N}_n|_{E^{cs}(y)})\gs\e^{(-\varphi(y)+2\chi)\cdot N},
\]
\[
\m(Df^{N}_n|_{E^u(y)})\gs\e^{(\varphi(y)+2\chi)\cdot N}.
\]
Thus, for any $y\in f^{-N}_n(U\setminus(\bar B_u\cup\bar B_{cs})),$
\[
\|Df_n^{N}|_{E^{cs}(y)}\|\ls\e^{(\varphi(f_n^{N}y)-2\chi)\cdot N},
\]
and for any $y\in f^{N}_n(U\setminus(\bar B_u\cup\bar B_{cs})),$
\[
\|Df_n^{-N}|_{E^{u}(y)}\|\ls\e^{(-\varphi(f_n^{-N}y)-2\chi)\cdot N}.
\]

Now we start to calculate the size of the Pliss times and weighted Pesin blocks using arguments derived from \cite[Section 2]{SPR2025}. To begin with, let us consider the center-stable direction. For $n$ large enough, we define $$\varphi^{cs}_1(y):=-\log \frac{\|Df_n^{N}|_{E^{cs}(y)}\|}{\e^{N\varphi(f_n^Ny)}},\ \varphi^{cs}_j(y):=\varphi^{cs}_1(y)+\varphi^{cs}_1(f^N_ny)+\cdots+\varphi^{cs}_1(f^{N(j-1)}_ny).$$
Then $$\varphi^{cs}_1(y)\gs -2N\log\Upsilon_1,$$ and inside a set with $\mu_n$ measure $$1-\frac{\sum_{i=1}^\rk \lambda_i(\mu,f)-\sum_{i=1}^\rk\lambda_i(\mu_n,f_n)}{2\chi}-\epsilon'',$$ we have $$\varphi^{cs}_1(y)\gs2\chi N. $$

We define $P_N^{cs}:=\{x:\forall j\gs0,\varphi^{cs}_j(x)\gs\chi N\cdot j\}$. The following lemma, which provides a quantitative estimate of the size of Pliss points, is established in \cite[Lemma 2.18 and Remark 2.19]{SPR2025}.
\begin{lemma}\label{pliss point}
Let $T$ be an invertible measure preserving map on a probability space $(\Omega,\nu)$. Suppose $\varphi\in L^\infty(\nu)$ such that $\varphi\gs B$ and in a set with $\nu$ measure $\kappa$ one has $\varphi(y)\gs A$, denote $\varphi_j:=\sum_{i=0}^{j-1}\varphi\circ T^{i}$, then for every $\beta<A$,
 \[
\nu\{x:\forall j\gs0, \varphi_j(x)\gs\beta j\}\gs\frac{\kappa A+(1-\kappa)B-\beta}{A-\beta}.
\]
\end{lemma}
Thus, by this lemma,
\begin{align*}    
\mu_n(P^{cs}_N)&\gs \frac{ (1-\frac{\sum_{i=1}^\rk \lambda_i(\mu,f)-\sum_{i=1}^\rk\lambda_i(\mu_n,f_n)}{2\chi}-\epsilon'')\cdot 2\chi N}{2\chi N-\chi N}\\[2mm]
 &\quad-\frac{ (\frac{\sum_{i=1}^\rk \lambda_i(\mu,f)-\sum_{i=1}^\rk\lambda_i(\mu_n,f_n)}{2\chi}+\epsilon'')\cdot2N\log\Upsilon_1  +\chi N}{2\chi N-\chi N}\\[2mm]
&=1-\frac{(\frac{\sum_{i=1}^\rk \lambda_i(\mu,f)-\sum_{i=1}^\rk\lambda_i(\mu_n,f_n)}{2\chi}+\epsilon'')\cdot(2\chi+2\log\Upsilon_1) }{\chi}.
\end{align*}
In the same manner, we can define
$$\varphi^{u}_1(y):=-\log \frac{\|Df_n^{-N}|_{E^{u}(y)}\|}{\e^{-N\varphi(f_n^{-N}y)}},\ \varphi^{u}_j(y):=\varphi^{u}_1(y)+\varphi^{u}_1(f^{-N}_ny)+\cdots+\varphi^{u}_1(f^{-N(j-1)}_ny),$$
and \[
P^u_N:=\{x:\forall j\gs0,\varphi^{u}_j(x)\gs\chi N\cdot j\}.
\]
Similarly, it follows that
\begin{align*}    
\mu_n(P^{u}_N)\gs 1-\frac{(\frac{\sum_{i=1}^\rk \lambda_i(\mu,f)-\sum_{i=1}^\rk\lambda_i(\mu_n,f_n)}{2\chi}+\epsilon'')\cdot(2\chi+2\log\Upsilon_1) }{\chi}.
\end{align*}
Thus, we denote $P_N:=P_N^u\cap P^{cs}_N$ and deduce that  \[
\mu_n(P_N)\gs 1-\frac{(\frac{\sum_{i=1}^\rk \lambda_i(\mu,f)-\sum_{i=1}^\rk\lambda_i(\mu_n,f_n)}{2\chi}+\epsilon'')\cdot(4\chi+4\log\Upsilon_1) }{\chi}.
\]

 To construct the weighted Pesin blocks, we now consider \cite[Proposition 2.21]{SPR2025} with the following choices:
\begin{itemize}
    \item \( f := f^N_n \)\smallskip
    \item \( n_0 := 1 \)\smallskip
    \item \( P_{n_0} := P_N \)\smallskip
    \item \( \chi := N\chi \)\smallskip
    \item \( \epsilon := N\epsilon \)
\end{itemize}
Furthermore, to adapt this result to our setting, the cocycles in the proposition are replaced with
\[
    \frac{\|Df_n^{N}|_{E^{cs}(y)}\|}{e^{N\varphi(f_n^Ny)}}
    \quad \text{and} \quad
    \frac{\|Df_n^{-N}|_{E^{u}(y)}\|}{e^{-N\varphi(f_n^{-N}y)}},
\]
whose norms are bounded by $\Upsilon_1^{2N}$. Then, the following proposition can be obtained by a proof similar to that of \cite[Proposition 2.21]{SPR2025}. See Appendix \ref{appendix a4} for details.
\begin{proposition}\label{prop 2.15}
  For $n$ sufficiently large, there exists a compact set $\Lambda^\rk_n\subseteq M$ such that
  \begin{equation}
 \label{size inequality}
\mu_n(M\setminus\Lambda^\rk_n)\ls 4\cdot\frac{\chi+\log\Upsilon_1}{\epsilon}\cdot \frac{(\frac{\sum_{i=1}^\rk \lambda_i
(\mu,f)-\sum_{i=1}^\rk\lambda_i(\mu_n,f_n)}{2\chi}+\epsilon'')\cdot(4\chi+4\log\Upsilon_1) }{\chi}
\end{equation}
and $\Lambda_n^\rk$ satisfies the following properties: for any $x\in\Lambda_n^\rk $, $h\in \mathbb{Z}$ and $m\in \mathbb{N}$,
\begin{align*}
    \|Df^{Nm}_n|_{E^{cs}(f^{Nh}_n(x))}\| &\ls \Upsilon_1^{2N} \cdot \e^{-mN\chi  + |h|N\epsilon}\cdot \e^{\sum_{j=1}^mN\varphi(f^{Nj}_n(f^{Nh}_nx)) },\\[2mm]
    \|Df^{-Nm}_n|_{E^{u}(f^{Nh}_n(x))}\| &\ls \Upsilon_1^{2N} \cdot \e^{-mN\chi  + |h|N\epsilon}\cdot \e^{\sum_{j=1}^{m}-N\varphi(f^{-Nj}_n(f^{Nh}_nx)) }.
\end{align*}
\end{proposition}

Recall that $0\ls\varphi\ls\log\Upsilon_1$, which implies
$$ \e^{\sum_{j=1}^mN\varphi(f^{Nj}_n(f^{Nh}_nx))}\ls \e^{\sum_{j=0}^{m-1}N\varphi(f^{Nj}_n(f^{Nh}_nx))}\cdot \Upsilon_1^N.$$
Together with \Cref{prop 2.15}, it follows that\[
\|Df^{Nm}|_{E^{cs}(f^{Nh}_n(x))}\| \ls \Upsilon_1^{3N} \cdot \e^{-mN\chi  + |h|N\epsilon}\cdot \e^{\sum_{j=0}^{m-1}N\varphi(f^{Nj}_n(f^{Nh}_nx))}.
\]
By comparing the definition \Cref{GPB def 1} and \Cref{GPB def 2}, we have shown that $\Lambda^\rk_n$ is a weighted Pesin Block for $f^N_n$ with parameters $\Lambda^\rk_n(K^N,N\chi,N\epsilon,N\varphi).$

When $n$ is large enough, $|\sum_{i=1}^\rk \lambda_i(\mu,f)-\sum_{i=1}^\rk\lambda_i(\mu_n,f_n)|$ can be arbitrarily small due to previous assumptions. Since $\epsilon''=\epsilon''(\rho_0,\epsilon')$ can be arbitrarily small when $\rho_0$ and $\epsilon'$ are small enough, it follows that

$$\frac{\sum_{i=1}^\rk \lambda_i(\mu,f)-\sum_{i=1}^\rk\lambda_i(\mu_n,f_n)}{2\chi}+\epsilon''$$
can be arbitrarily small when $n$ is large enough and $\rho_0,\ \epsilon''$ are small enough. Thus, the right hand side of \Cref{size inequality} can be arbitrarily small, i.e. for any $\rho>0$ as in this proposition, we can let $\mu_n(\Lambda^\rk_n)>1-\rho$ when $n$ is large enough.

We now construct the weighted Pesin block $\Lambda^\rk$ (which might not be the largest one) by
$$\Lambda^\rk=\limsup_{n\to\infty}\Lambda^\rk_n.$$
For the same reason as in the proof of \Cref{pro 2.1} (a), we can prove that $\Lambda^\rk$ is a weighted Pesin block for $f^N$ with parameters
$\Lambda^\rk(K^N,N\chi,N\epsilon,N\varphi)$. Moreover, together with the previous discussion that $\mu_n(\Lambda^\rk_n)>1-\rho$ and $\mu_n\xrightarrow{*}\mu$, it follows that $\mu(\Lambda^\rk)>1-\rho$. To this end, the proof of the proposition is completed.
\end{proof}
\end{proposition}

\begin{remark}
Within the framework of the classical Pesin theory ($\varphi \equiv \text{constant}$), \Cref{uniform size prop} is generally false (cf. Appendix \ref{appendix a5}).
\end{remark}

By checking the proof of \Cref{uniform size prop}, more specifically by calculating \Cref{size inequality} carefully, we directly obtain the following quantitative corollary of the previous lemma when the sum of Lyapunov exponents does not converge. One can directly check that this quantitative version implies the previous one.

\begin{proposition}\label{quant uniform size prop}
      Let $f_n\in\mathrm{Diff^{1+\alpha}}(M)$ and $\mu_n$ be an ergodic measure of $f_n$. Assume
      \begin{itemize}
    \item  $f_n\xrightarrow{C^1} f\in \mathrm{Diff^{1+\alpha}(M)}$
    \item and $\mu_n\xrightarrow{*}\mu$. 
    \end{itemize}
    Denote $\tilde C_f:={10^8\cdot\alpha^{-1}\cdot(\log\|f\|_{C^1}+1)^3}.$

Assume $\rk\in\mathbb N$ such that $\mathrm{Hyp}^{\mathrm{k}}(\mu)>0$, 
then for any $\rho>0$ there exist $N\in\mathbb N$, $K\gs1$, $\chi>0$ and $\epsilon>0$ satisfying \Cref{var} and a positive function $\varphi\in C^0(M)$ such that there exists $$\Lambda^\rk_n=\Lambda^\rk_n(K^N,N\chi,N\epsilon,N\varphi)$$ as a weighted Pesin block for $f_n^N$ for any $n$ large enough and $$\Lambda^\rk=\Lambda^\rk(K^N,N\chi,N\epsilon,N\varphi)$$ as a weighted Pesin block for $f^N$ satisfying
\begin{itemize}
    \item $\limsup_{n\to\infty}\Lambda_n^\rk =\Lambda^\rk$,
    \item for any  $n\in\mathbb N$ large enough,
    $$\mu_n(\Lambda_n^\rk )>1-\frac{\tilde C_f}{\mathrm{Hyp^{\mathrm{k}}(\mu)^4}}\cdot\limsup_{n\to\infty} (\sum_{i=1}^\rk\lambda_i(\mu,f)-\sum_{i=1}^\rk\lambda_i(\mu_n,f_n))-\rho,$$ 
    \item and $$\mu(\Lambda^\rk)>1-\frac{\tilde C_f}{\mathrm{Hyp^{\mathrm{k}}(\mu)^4}}\cdot\limsup_{n\to\infty} (\sum_{i=1}^\rk\lambda_i(\mu,f)-\sum_{i=1}^\rk\lambda_i(\mu_n,f_n))-\rho.$$ 
\end{itemize}
\end{proposition}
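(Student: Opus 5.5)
The plan is to rerun the proof of \Cref{uniform size prop} without assuming convergence of the sums of the top $\rk$ exponents, and to track the defect
$$D:=\limsup_{n\to\infty}\Big(\sum_{i=1}^\rk\lambda_i(\mu,f)-\sum_{i=1}^\rk\lambda_i(\mu_n,f_n)\Big)$$
explicitly. First I will make the same parameter choices: $\Upsilon_1=\sup_n\|f_n\|_{C^1}$, which can be taken arbitrarily close to $\|f\|_{C^1}$ for large $n$ since $f_n\to f$ in $C^1$; then $K=\Upsilon_1^3$, $\chi=\mathrm{Hyp}^\rk(\mu)/5$ and $\epsilon=\alpha\chi^2/(10^3(\log\Upsilon_1+1))$. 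I will also reuse the sets $\Gamma_0,\Gamma_1$, the cover $U$, the continuous extensions $\varphi,\bar\lambda_i$, and the bad sets $\bar B_{cs},\bar B_u$.

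The one place where convergence was used beyond the bad-set estimates is the total-sum step $|\sum_{i=1}^\td\lambda_i(\mu,f)-\sum_{i=1}^\td\lambda_i(\mu_n,f_n)|\ls\epsilon'$. This still holds without any assumption on the partial sums. The total sum equals $\int\log|\det Df_n|\,\td\mu_n$, and this integral converges because $\log|\det Df_n|\to\log|\det Df|$ uniformly and $\mu_n\xrightarrow{*}\mu$. Using it, the $\mu_n(\bar B_{cs})$ bound converts into a bound in terms of the top-$\rk$ defect. Consequently, for all large $n$,
$$\mu_n(U\setminus(\bar B_u\cup\bar B_{cs}))\gs 1-\frac{D+\epsilon'}{2\chi}-\epsilon''.$$

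Next I will apply \Cref{pliss point} exactly as before. This gives
$$\mu_n(P_N)\gs 1-\Big(\frac{D+\epsilon'}{2\chi}+\epsilon''\Big)\cdot\frac{4\chi+4\log\Upsilon_1}{\chi}.$$
Then \Cref{prop 2.15} yields the bound \eqref{size inequality}, with the defect term replaced by $D+\epsilon'$.

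The main step is to expand the coefficient of $D$ in \eqref{size inequality}:
$$4\cdot\frac{\chi+\log\Upsilon_1}{\epsilon}\cdot\frac{4(\chi+\log\Upsilon_1)}{2\chi^2}=\frac{8(\chi+\log\Upsilon_1)^2}{\epsilon\chi^2}.$$
Substituting $\epsilon$, this equals $8\cdot10^3(\chi+\log\Upsilon_1)^2(\log\Upsilon_1+1)/(\alpha\chi^4)$. I then use $\chi\ls\log\|f\|_{C^1}+1$ and $\chi=\mathrm{Hyp}^\rk(\mu)/5$, so that $\chi^{-4}=625\,\mathrm{Hyp}^\rk(\mu)^{-4}$, and absorb the constants. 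This bounds the coefficient by $\tilde C_f/\mathrm{Hyp}^\rk(\mu)^4$, with room to spare since $32\cdot 10^3\cdot 625\cdot 4<10^8$. The terms involving $\epsilon''$ and $\epsilon'$ can be made smaller than $\rho$ by choosing $\rho_0,\epsilon'$ small.

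Finally, set $\Lambda^\rk=\limsup_{n}\Lambda_n^\rk$. Its closedness and weighted Pesin block property follow as in \Cref{pro 2.1}(a). The lower bound on $\mu(\Lambda^\rk)$ comes from upper semicontinuity of the mass of closed sets under weak$^*$ convergence. The closed sets to use are $\overline{\bigcup_{m\gs n}\Lambda_m^\rk}$, which decrease to $\Lambda^\rk$; this gives $\mu(\Lambda^\rk)\gs\limsup_n\mu_n(\Lambda_n^\rk)$.

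I expect the main obstacle to be the careful constant bookkeeping, including the replacement of $\Upsilon_1$ by quantities depending only on $\|f\|_{C^1}$. The structural argument is already contained in the earlier proof.
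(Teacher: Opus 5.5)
Your route is the one the paper intends: rerun the proof of \Cref{uniform size prop}, keep the defect $D$, and expand the coefficient in \eqref{size inequality}. Your constant bookkeeping is fine. So is the argument $\mu(\Lambda^\rk)\gs\limsup_n\mu_n(\Lambda^\rk_n)$ via the decreasing closed sets $\overline{\bigcup_{m\gs n}\Lambda^\rk_m}$.

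The gap is your claim that the total-sum step is the only other place where the convergence hypothesis enters. That step never used it; it is automatic, for the reason you give. The hypothesis is really used in the sentence ``since $\mathrm{Hyp}^\rk(\mu)>0$, it follows that $\mathrm{Hyp}^\rk(\mu_n)>0$ for all sufficiently large $n$''. This is justified at the start of \Cref{sec 3.2} from continuity of the top-$\rk$ sum. Everything you reuse depends on it:
\begin{itemize}
\item the $\mu_n$-a.e.\ Oseledets splitting $E^u\oplus E^{cs}$ with $\dim E^u=\rk$;
\item the lifts $\tau^u_{\mu_n},\tau^{cs}_{\mu_n}$;
\item the bad sets $\bar B_u,\bar B_{cs}\subseteq\Gamma_\rk$, together with $\mu_n(\Gamma_\rk)=1$;
\item the cocycles fed into \Cref{pliss point} and into the tempered-envelope argument of \Cref{prop 2.15}.
\end{itemize}
Without convergence, $\mu_n$ may have $\lambda_\rk(\mu_n,f_n)=\lambda_{\rk+1}(\mu_n,f_n)$ or $\lambda_\rk(\mu_n,f_n)\ls 0$. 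In that case none of these objects exist.

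The fix is a case distinction on $D$. Upper semi-continuity of the sums of the top $\rk-1$ and top $\rk+1$ exponents under $(f_n,\mu_n)\to(f,\mu)$ gives
\[
\liminf_{n\to\infty}\lambda_\rk(\mu_n,f_n)\gs\lambda_\rk(\mu,f)-D,\qquad \limsup_{n\to\infty}\lambda_{\rk+1}(\mu_n,f_n)\ls\lambda_{\rk+1}(\mu,f)+D .
\]
Moreover $\lambda_\rk(\mu,f)\gs\mathrm{Hyp}^\rk(\mu)$ and $\lambda_\rk(\mu,f)-\lambda_{\rk+1}(\mu,f)\gs\mathrm{Hyp}^\rk(\mu)$.

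If $D<\mathrm{Hyp}^\rk(\mu)/2$, ergodicity of $\mu_n$ gives $\mathrm{Hyp}^\rk(\mu_n)>0$ for all large $n$, and your argument goes through verbatim.

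If $D\gs\mathrm{Hyp}^\rk(\mu)/2$, use $\mathrm{Hyp}^\rk(\mu)\ls\log\|f\|_{C^1}$ to get $\tilde C_f D/\mathrm{Hyp}^\rk(\mu)^4\gs\tilde C_f/(2\,\mathrm{Hyp}^\rk(\mu)^3)>1$. Then both measure bounds are vacuous. This regime is also irrelevant downstream: \Cref{thm 3.4} invokes the proposition only when $C_fD/\mathrm{Hyp}^\rk(\mu)^4\ls\td\log\|f\|_{C^1}$, which forces $D\ls\alpha\,\mathrm{Hyp}^\rk(\mu)/10^9$.
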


\begin{remark}\label{remark unifrom size}
This quantitative estimate indicates that a large measure is attained on the weighted Pesin blocks when the sums of Lyapunov exponents are sufficiently close to one another. In \cite{SPR2025} (Entropy hyperbolicity in Section 3.2), this property is examined for hyperbolic measures and specifically for ergodic measures with large metric entropy for $C^{\infty}$ surface diffeomorphisms. Notably, for $C^\infty$ surface diffeomorphisms, a large metric entropy of invariant measures implies that the Lyapunov exponents closely approximate those of the measures of maximal entropy \cite{bcs2022}, and this in turn yields the strong positive recurrent (SPR) property as in \cite{SPR2025} by \Cref{quant uniform size prop}. In this respect, \Cref{quant uniform size prop} can be regarded as a quantitative generalization of the SPR estimate to all partial directions, and it applies to the setting of (non-hyperbolic) invariant limit measures.
\end{remark}

An important fact is that uniform weighted Pesin blocks imply uniform size of the weighted Pesin unstable manifolds. In \Cref{sec 2.3}, we show that for a point $x$ belonging to the weighted Pesin block $\Lambda^\rk(K,\chi,\epsilon,\varphi)$ under given parameters, there exists a local weighted Pesin unstable manifold $W^{\mathrm{GPu}}_{\mathrm{loc}}(x)$ whose domain size is given by
$$r(x)=\min\Big\{\frac{r_0}{100}, \Big(\frac{\epsilon}{4C_6C(\epsilon)^3\e^{2\tau\epsilon}\|f\|^{10}_{C^{1+\alpha}}
K^{4\tau}}\Big)^{\frac{1}{\alpha}}\Big\}.$$
Now consider a sequence $f_n\xrightarrow{C^{1}} f$ with $f_n,f\in \mathrm{Diff}^{1+\alpha}(M)$ and 
$$\Upsilon_{1+\alpha}:=\sup_{n\in\mathbb N}\{\|f_n\|_{C^{1+\alpha}}\}<\infty.$$  
Then for any point inside the weighted Pesin block $\Lambda^\rk_n(K^N,N\chi,N\epsilon,N\varphi)$ as given in \Cref{uniform size prop} or \Cref{quant uniform size prop}, there exists a weighted Pesin unstable manifold of uniform domain size, precisely given by
$$r(\cdot)=\min\Big\{\frac{r_0}{100}, \Big(\frac{N\epsilon}{4C_6C(N\epsilon)^3\cdot\e^{N\epsilon}\cdot\Upsilon_{1+\alpha}^{10N}\cdot
K^{4N\tau}}\Big)^{\frac{1}{\alpha}}\Big\}.$$

We now present a proposition on the continuity of the local weighted Pesin unstable manifolds. By \Cref{pro 2.8.2}, each local weighted Pesin unstable manifold is characterized as the graph of a $C^1$ map $\phi$. Furthermore, previous discussions imply that these functions $\phi$ share a domain of uniform size. The continuity here specifically means that $\phi$ are continuous in the $C^1$ topology. This proposition is necessary for the discussions in \Cref{sec 3}.

\begin{proposition}\label{aprro}
   Under the previous assumptions, let $\{x_n\}_{n\in\mathbb{N}^*}$ be a sequence of points such that $x_n\to x$, $x_n\in \Lambda^\rk_n$ and $x\in \Lambda^\rk$, where $\Lambda^\rk_n$ and $\Lambda^\rk$ are given in \Cref{uniform size prop} or \Cref{quant uniform size prop}. Let $\phi_n$ and $\phi$ denote the maps whose graphs are the local weighted Pesin unstable manifolds of $x_n$ and $x$ as in \Cref{defintion of local GPu}. Let $r>0$ be the uniform size of their domain. Then we have 
   $$d_{C^1}(\phi_n|_{B(r)},\phi|_{B(r)})\xrightarrow{n\to\infty} 0$$
   in the sense of the $C^1$ topology of maps. 
\end{proposition}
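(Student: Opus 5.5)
We prove \Cref{aprro} by realizing each local weighted Pesin unstable manifold as the fixed point of a graph transform along the backward orbit, and then using that the graph transform is a uniform contraction whose ingredients at each finite stage depend continuously on the data. Concretely, for $x_n\in\Lambda^\rk_n$ the map $\phi_n$ is the unique element of the space of $\frac13$-Lipschitz graphs over $\mathbb R^u$ in the chart $R_{x_n}$ that is invariant under the sequence $\{\tg^{(n)}_{f_n^{-j}(x_n)}\}_{j\gs1}$, as in \Cref{pro 2.8.2}. Equivalently, $\phi_n=\lim_{m\to\infty}\Gamma^{(n)}_m(\psi)$, where $\Gamma^{(n)}_m$ is the composition of $m$ graph transforms starting from the chart at $f_n^{-m}(x_n)$ and $\psi$ is any admissible graph, for instance the zero graph. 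The plan has three steps: a uniform contraction estimate, finite-stage convergence as $n\to\infty$, and an $\epsilon/3$ argument.

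\textbf{Step 1: uniform contraction.} By \Cref{cone pre} and the hyperbolicity in \Cref{pro 2.8.2}, with parameters $(K^N,N\chi,N\epsilon,N\varphi)$ shared by all $n$ and $\sup_n\|f_n\|_{C^{1+\alpha}}<\infty$, the graph transform contracts the $C^0$ distance between graphs. The rate is $\e^{-N\chi+3N\epsilon}$ relative to the base expansion, and the weight $\varphi\gs0$ only improves it. Hence $\|\Gamma^{(n)}_m(\psi)-\phi_n\|_{C^0}\ls C\theta^m$ with $\theta<1$ uniform in $n$. The $C^1$ estimate is the usual fibre-contraction argument on tangent planes. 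The plane-field transform contracts in $C^0$ with a uniform rate, and the uniform Hölder bounds on $D\tg$ provided by the choice of $r(\cdot)$ in \Cref{charts size} let us absorb the dependence on base points. This gives $\|\Gamma^{(n)}_m(\psi)-\phi_n\|_{C^1}\ls C\theta'^m$, again uniformly in $n$. The same bound holds for $f$ and $\phi$.

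\textbf{Step 2: finite-stage convergence.} Fix $m$. We need $f_n^{-j}(x_n)\to f^{-j}(x)$ for $j\ls mN$, which follows from $C^1$ convergence of $f_n$ to $f$. We also need the charts $L_{f_n^{-j}(x_n)}$ to converge to $L_{f^{-j}(x)}$. This is the delicate point. The splittings $E^{u/cs}$ at points of $\Lambda^\rk_n$ converge along convergent subsequences to splittings at the limit points. The argument is the one in \Cref{pro 2.1}(a),(b): the limit splitting satisfies the defining inequalities of the block, and by uniqueness it is the splitting of $\Lambda^\rk$. Since every subsequence has a further subsequence converging to the same limit, the whole sequence converges.

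The norms $\|\cdot\|'$ are given by series whose tails are uniformly small, by the block estimates together with the factor $\e^{-2n(\chi-2\epsilon)}$. Each finitely truncated sum depends continuously on $(f_n,x_n,E)$, so the norms converge as well. The matrices $L$ themselves are then determined only up to orthogonal maps within $\mathbb R^u$ and $\mathbb R^{cs}$. I would fix this ambiguity by a continuous normalization, for example a Gram–Schmidt construction relative to fixed local frames, or simply accept that the identification is defined up to such an isometry. With this in place, each $\tg^{(n)}_{f_n^{-j}x_n}$ converges in $C^1$ to $\tg_{f^{-j}x}$ on compact sets. The radii $r(\cdot)$ are uniform, so the bump cut-off is harmless. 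Consequently $\Gamma^{(n)}_m(\psi)\to\Gamma_m(\psi)$ in $C^1$ on $B(r)$.

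\textbf{Step 3: conclusion.} Given $\eta>0$, choose $m$ with $C\theta'^m<\eta/3$, then choose $n$ large so that Step 2 gives a finite-stage error below $\eta/3$. Combining with Step 1 yields $d_{C^1}(\phi_n|_{B(r)},\phi|_{B(r)})<\eta$.

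The main obstacle is Step 2, specifically the convergence of the Lyapunov charts $L_{f_n^{-j}(x_n)}$ along backward orbits. This requires continuity of the block splitting across the sequence $f_n$, which we get from the uniqueness argument of \Cref{pro 2.1}, and uniform tail control of the infinite sums defining $\|\cdot\|'$.
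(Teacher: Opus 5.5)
Your proposal is correct and is essentially the paper's argument. The paper notes that the chart maps $\tilde g_{n,x_n}\to\tilde g_x$ in $C^1$ and invokes continuous dependence of the graph-transform fixed point (Hadamard--Perron). Your scheme of uniform contraction, finite-stage convergence and an $\eta/3$ argument unpacks that appeal carefully, including the convergence of the splittings and Lyapunov charts along the backward orbit via the uniqueness argument of Proposition~\ref{pro 2.1}.

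One small correction: the cut-off radii $r(f_n^{-Nj}(x_n))$ depend on the return time $\kappa$. They need not converge to $r(f^{-Nj}(x))$, so $\tilde g^{(n)}_{f_n^{-Nj}(x_n)}\to\tilde g_{f^{-Nj}(x)}$ can fail on general compact sets. This does not affect your conclusion. As in Lemma~\ref{lem 2.9}, the stage-$j$ preimages of graphs over $B(r)$ stay in a box of size $r\,e^{-j(N\chi-3N\epsilon)}$. Neither cut-off acts on that box, so both maps equal the true lifts $\tilde f$ there, and these do converge.
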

\begin{proof}
    Note that $\tg_{n,x_n}\xrightarrow{C^{1}} \tg_{x}$. Then, by \Cref{pro 2.8.2}, the unique global fake foliation $\widetilde{\mathcal{F}}_{x_n}^{i}(0)\xrightarrow{C^{1}}\widetilde{\mathcal{F}}_{x}^{i}(0)$ for $i=u, cs$. This is because the invariant manifolds are obtained by the graph transformation, which is based on the contraction mapping principle. Since the fixed point of a contraction depends continuously on the map, the manifolds inherit this continuity. We refer the reader to \cite[Proposition 6.2.8]{Katokbook},
    \cite[Proposition 3.1]{Amie2010} or \cite[Theorem 4.1 F]{HPS} for details.
\end{proof}

Finally, we present some properties of the weighted Pesin unstable manifolds. These properties come from \cite[Section 3.3]{Ledrappier_Strelcyn_1982} and will be used in the next section for the construction of subordinated partitions. For convenience, we only provide brief explanations here and leave the details to the reader. 

Denote $\Lambda_m:=\bigcup_{|i|\ls m}f^i\Lambda^\rk(K,\chi,\epsilon,\varphi)$. By definition, $\Lambda_m\subseteq\Lambda_{m+1}$ and $\Lambda^*=\bigcup_{m\gs0}\Lambda_m$ for any $m\in \mathbb{N}$.
\begin{enumerate}
   \item \textit{For each $x\in \Lambda_{m}$, there exist $0< \epsilon (m) < 1$ and $r_m>0$ such that for any $y\in\Lambda_{m}\cap B(x,\epsilon(m)r)$ and $0<r\ls r_m$, $W^{\mathrm{GPu}}_{\mathrm{loc}}(y)\cap B(x,r)$ is connected.}
   
    This follows from the definition and the local graph property of $W^{\mathrm{GPu}}_{\mathrm{loc}}(x)$ as shown in \Cref{pro 2.8.2}.
    \smallskip
   \item \textit{The map $y\mapsto W^{\mathrm{GPu}}_{\mathrm{loc}}(y)\cap B(x,R_m)$ is continuous from $B(x,\epsilon(m)R_m)\cap\Lambda_{m}$ to the space of subsets of $B(x,R_m)$ endowed with the Hausdorff topology.}
   
   This follows from the compactness of $\Lambda_m$ and the continuity of fake foliations as shown in \Cref{pro 2.8.2}. See also the previous discussion.
   \smallskip
   \item \textit{There exists $a_m>0$ such that for each $y\in\Lambda_m$, $W^{\mathrm{GPu}}_{\mathrm{loc}}(y)$ contains a closed ball of center $y$ and radius $a_m$ in $W^{\mathrm{GPu}}(y)$.}
   
   This follows from \Cref{pro 2.8.2} and the definition of $W^{\mathrm{GPu}}_{\mathrm{loc}}(y)$ from (\ref{defintion of local GPu}), together with previous discussions on $r(x)$.
   \smallskip
   
   \item \textit{There exist $b_m>0$ and $c_m>0$ such that if $z\in W^{\mathrm{GPu}}_{\mathrm{loc}}(y)$, then for any $n\in\mathbb N$,
   $$d_{W^{\mathrm{GPu}}}(f^{-n}y,f^{-n}z)\ls b_{m}\e^{-nc_{m}}d_{W^{\mathrm{GPu}}}(y,z).$$}
   
   This follows from the proof of \Cref{gpu pro}, which shows the exponential contraction inside $W^{\mathrm{GPu}}_{\mathrm{loc}}$ under iterations by $f^{-1}$.
   \smallskip
   \item \textit{For any $0<r\ls r_m$, if two points $z_1$ and $z_2$ belong to 
    $$S(x,r)=\bigcup_{y\in\Lambda_{m}\cap B(x,\epsilon(m)r)} W^{\mathrm{GPu}}_{\mathrm{loc}}(y)\cap B(x,r),$$
    and they are not in the same leaf of $ W^{\mathrm{GPu}}_{\mathrm{loc}}(y)\cap B(x,r)$ for some $y\in \Lambda_{m}\cap B(x,\epsilon(m)r)$, then $d_{W^{\mathrm{GPu}}}( z_{1}, z_{2})\gs 2r$.}
    
    This follows from the definition and the local graph property of $W^{\mathrm{GPu}}_{\mathrm{loc}}(x)$ as shown in \Cref{pro 2.8.2}.
\end{enumerate}

\section{Upper semi-continuity of partial entropy}\label{sec 3}
\subsection{Entropy for Borel measures}\label{sec 3.1}
We first recall the standard notions of entropy for measurable partitions and
measure–preserving transformations, see \cite{Rokhlin_1967} for instance.

We use the standard concept of measurable partition. For a measurable set $B$ we denote the partition of two-elements $\{B, B^c\}$ by boldface $\mathbf{B}$. Moreover, for a Borel measure $\mu$ and a Borel set $B$ with positive measure, let $\mu_B$ denote the conditional probability measure of $\mu$ on $B$ and $\mu|_{B}$ denote the restricted Borel measure of $\mu$ on $B$.
Let $(M,\mu)$ be a probability space and $\tA=\{A_i\}_{i\in I}$ a measurable partition.
We use  $0\log 0:=0$ throughout the convention.

\smallskip
\noindent{\rm (1) Entropy of a partition.}
If $\tA=\{A_i\}_{i\in I}$ is a measurable partition with at most countably many $i$ such that $\mu(A_i)>0$, and
$$-\sum_{i\in I}\mu(A_i)\,\log\mu(A_i)<\infty,$$ 
then its entropy is
\[
 H_\mu(\tA)
 := -\sum_{i\in I} \mu(A_i)\log\mu(A_i)\in[0,+\infty).
\]
Otherwise, we define $H_\mu(\tA)=\infty$.

\smallskip
\noindent{\rm (2) Conditional entropy with respect to a measurable partition.}
Let $\tA$ and $\tB$ be measurable partitions of $M$.  
By Rokhlin's disintegration theorem, there exists
a family of probability measures $\{\mu_{x,\tB}\}_{x\in M}$ as the conditional measures of $\mu$ with respect to $\tB$.
The conditional entropy of $\tA$ with respect to $\tB$ is then defined by
\[
 H_\mu(\tA\mid\tB)
 :=
 \int_{x\in M}H_{\mu_{x,\tB}}(\tA)\,\td\mu(x). 
\]
\smallskip
Next, we proceed to the entropy of dynamical systems.
Let $f\colon M\to M$ be a measurable transformation and $\mu$ an $f$–invariant
probability measure.

\smallskip

\noindent{\rm (3)} For a finite measurable partition $\tA$, we denote
\[
 \tA_0^{n-1}
 :=
 \bigvee_{k=0}^{n-1} f^{-k}\tA,
 \quad n\in\mathbb{N}.
\]
The entropy of $f$ with respect to $\tA$ is
\[
 h_\mu(f,\tA)
 :=
 \lim_{n\to\infty}\frac{1}{n}\,H_\mu(\tA_0^{n-1}),
\]
and the limit exists by the subadditivity of $H_\mu(\tA_0^{n-1})$ with respect to $n$.

\smallskip

\noindent{\rm (4)} The metric entropy of $f$ with respect to $\mu$ is
\[
 h_\mu(f)
 \;:=\;
 \sup_{\tA} h_\mu(f,\tA),
\]
where the supremum is taken over all finite measurable partitions $\tA$ of $M$.

\smallskip

Now we sightly extend the definitions of $H_\mu(\tA)$ and $H_\mu(\tA\mid\tB)$
from probability measures to general Borel measures by a simple normalization.
\begin{definition}[Entropy for Borel Measures]\label{def borel entropy}
    For any Borel measure $\mu$ on $M$ and measurable partitions $\tA$ and $\tB$, we define
    $$H_\mu(\tA)=\mu(M)\cdot H_{\bar{\mu}}(\tA),$$
    $$H_\mu(\tA\mid\tB)=\mu(M)\cdot H_{\bar{\mu}}(\tA\mid\tB),$$
    where $\bar{\mu}$ denotes the normalization of $\mu$, i.e. $\bar{\mu}(\cdot)=\frac{\mu(\cdot)}{\mu(M)}$.
\end{definition}

Then the following lemma shows that the conditional entropy is super-additive with respect to the Borel measures. Recall that for measurable partitions, we write $\tB_n\nearrow\tB$ if $\{\tB_n\}$ is an increasing sequence of measurable partitions that generates $\tB$.

\begin{lemma}
    Let $\mu_1, \mu_2$ be two Borel measures, $\tA$ be a finite partition, and $\tB$ be a measurable partition. Then we have
    $$H_{\mu_1+\mu_2}(\tA\mid\tB)\gs H_{\mu_1}(\tA\mid\tB)+H_{\mu_2}(\tA\mid\tB).$$
\begin{proof}
    Since $\tB$ is a measurable partition, there exists a sequence of increasing finite measurable partitions $\tB_n$ such that $\tB_n\nearrow\tB$. Then we have  $$H_{m}(\tA\mid\tB_n)\searrow H_m(\tA\mid\tB),\quad m\in\{\mu_1,\mu_2,\mu_1+\mu_2\}.$$
    Thus without loss of generality we can assume that $\tB$ is a finite partition. Denoting $\beta_1=\mu_1(M),\ \beta_2=\mu_2(M)$, we have
    \begin{align*}
        &H_{\mu_1+\mu_2}(\tA\mid\tB)=H_{\beta_1\bar\mu_1+\beta_2\bar\mu_2}(\tA\mid\tB)\\[2mm]
        =&\sum_{B_n\in\tB}(\beta_1\bar\mu_1(B_n)+\beta_2\bar\mu_2(B_n))\cdot H_{(\beta_1\bar\mu_1+\beta_2\bar\mu_2)_{B_n}}(\tA)
    \end{align*}
    where $(\beta_1\bar\mu_1+\beta_2\bar\mu_2)_{B_n}$ denotes the conditional measure of $\beta_1\bar\mu_1+\beta_2\bar\mu_2$ on $B_n\in\tB$.
    By definition, we have
    $$(\beta_1\bar\mu_1+\beta_2\bar\mu_2)_{B_n}(\cdot)=\frac{\beta_1\bar\mu_1(\cdot\cap B_n)+\beta_2\bar\mu_2(\cdot\cap B_n)}{\beta_1\bar\mu_1(B_n)+\beta_2\bar\mu_2(B_n)}.$$
    Together with
    $$\bar\mu_{i,B_n}(\cdot)=\frac{\bar\mu_i(\cdot\cap B_n)}{\bar\mu_i(B_n)},\quad i=1,2,$$
    it follows that
    $$(\beta_1\bar\mu_1+\beta_2\bar\mu_2)_{B_n}=\sum_{i=1}^2\frac{\beta_i\bar\mu_i(B_n)}{\beta_1\bar\mu_1(B_n)+\beta_2\bar\mu_2(B_n)}\cdot\bar\mu_{i,B_n}.$$
    Then we complete the proof by
    \begin{align*}
        H_{\mu_1+\mu_2}(\tA\mid\tB)=&\sum_{B_n\in\tB}(\beta_1\bar\mu_1(B_n)+\beta_2\bar\mu_2(B_n))\cdot H_{(\beta_1\bar\mu_1+\beta_2\bar\mu_2)_{B_n}}(\tA)\\[2mm]
        \gs&\sum_{B_n\in\tB}[(\beta_1\bar\mu_1(B_n)+\beta_2\bar\mu_2(B_n))\cdot\sum_{i=1}^2 \frac{\beta_i\bar\mu_i(B_n)}{\beta_1\bar\mu_1(B_n)+\beta_2\bar\mu_2(B_n)}\cdot H_{\bar\mu_{i,B_n}}(\tA)]\\[2mm]
        =&\sum_{B_n\in\tB}\beta_1\bar\mu_1(B_n)H_{\bar\mu_{1,B_n}}(\tA)+\beta_2\bar\mu_2(B_n)H_{\bar\mu_{2,B_n}}(\tA)\\[2mm]
        =&\beta_1 H_{\bar\mu_1}(\tA\mid\tB)+\beta_2 H_{\bar\mu_2}(\tA\mid\tB)\\[2mm]
        =&H_{\mu_1}(\tA\mid\tB)+H_{\mu_2}(\tA\mid\tB).
    \end{align*}
\end{proof}
\end{lemma}

We immediately obtain the following corollary that the conditional entropy is increasing with respect to the Borel measures.

\begin{lemma}\label{lem 3.2}
     Let $\mu_1, \mu_2$ be two Borel measures such that $\mu_1\gs\mu_2$, i.e. there exists a non-negative Borel measure $m$ such that $\mu_1=\mu_2+m$. Let $\tA$ be a finite partition and $\tB$ be a measurable partition, then  
    $$H_{\mu_1}(\tA\mid\tB)\gs H_{\mu_2}(\tA\mid\tB).$$
\end{lemma}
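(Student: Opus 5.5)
Write $\mu_1=\mu_2+m$ with $m$ a non-negative Borel measure. Then Lemma 3.2 follows directly from the super-additivity lemma just proved, applied to the pair $(\mu_2,m)$:
\[
H_{\mu_1}(\tA\mid\tB)=H_{\mu_2+m}(\tA\mid\tB)\gs H_{\mu_2}(\tA\mid\tB)+H_{m}(\tA\mid\tB).
\]

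To finish, we only need $H_m(\tA\mid\tB)\gs 0$. By Definition \ref{def borel entropy}, $H_m(\tA\mid\tB)=m(M)\cdot H_{\bar m}(\tA\mid\tB)$. Here $m(M)\gs 0$, and $H_{\bar m}(\tA\mid\tB)$ is an integral of entropies $H_{\bar m_{x,\tB}}(\tA)$ of a finite partition with respect to probability measures. Each such entropy is a finite sum of terms $-p\log p\gs 0$ with $p\in[0,1]$, so the integral is non-negative. Dropping the term $H_m(\tA\mid\tB)$ then gives the claim.

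The only point needing care is the degenerate case $m(M)=0$, where $\bar m$ is undefined. In that case $\mu_1=\mu_2$ and the inequality is an equality, so we adopt the convention $H_m=0$ for the zero measure. The same convention covers $\mu_2=0$ if that case arises inside the super-additivity lemma.

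We do not expect any real obstacle. Finiteness of all quantities is automatic because $\tA$ is finite: every conditional entropy is bounded by $\mu_i(M)\log\#\tA$.
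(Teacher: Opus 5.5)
Your proposal is correct and follows the paper's route: the paper presents this lemma as an immediate corollary of the preceding super-additivity lemma, applied to $\mu_1=\mu_2+m$ and dropping the non-negative term $H_m(\tA\mid\tB)$. Your explicit check that $H_m(\tA\mid\tB)\geq 0$ and your treatment of the degenerate case $m(M)=0$ fill in details the paper leaves implicit.
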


\subsection{Subordinate partitions and partial entropy}\label{sec 3.2}
From \Cref{sec 3.2} to \Cref{sec 3.4}, we complete the proof of \Cref{thm 3.3}. In this section, we establish the framework required to compute and compare partial entropy via subordinate partitions. 

Using the uniform weighted Pesin blocks from Section \ref{sec 2.4}, we construct a sequence of partitions $\xi^u_n$ subordinated to the weighted Pesin unstable manifolds $W^{\mathrm{GPu}}$.
The primary goal in this section is to achieve a sharp decomposition of the $\rk$-dimensional partial entropy into two parts: a principal component, the ``main term,'' which captures the entropy generated within the large-measure lamination $\Xi^u_n$, and an ``error term'' representing the contribution from the complementary region outside the lamination. This structural decomposition provides the necessary groundwork for the estimates in \Cref{sec 3.3} and \Cref{sec 3.4}.

As for \Cref{thm 3.3}, we first prove that $h_{\mu_n}^\rk(f_n)$ is well-defined for sufficiently large $n$. The reason is since the sum of top $\rk$ Lyapunov exponents is continuous by the assumptions, and the sum of top $\rk-1$ Lyapunov exponents is upper semi-continuous by the property of Lyapunov exponents, it follows that 
\[
\liminf_{n\to\infty}\lambda_\rk(\mu_n,f_n)\gs \lambda_\rk(\mu,f).
\]
In the same manner, one has
\[
\limsup_{n\to\infty}\lambda_{\rk+1}(\mu_n,f_n)\ls \lambda_{\rk+1}(\mu,f).
\]
Thus, it follows that
\[
\liminf_{n\to\infty}\mathrm{Hyp}^{\rk}(\mu_n)\gs \mathrm{Hyp}^{\rk}(\mu)>0,
\]
and $h_{\mu_n}^\rk(f_n)$ is well-defined for sufficiently large $n$.

In proving the upper semi-continuity of partial entropy, our objective is to bound $\limsup_{n\to\infty} h^\rk_{\mu_n}(f_n)$. To this end, let $\{n_j\}_{j \in \mathbb{N}}$ be a subsequence realizing this limit superior, that is,
\[
\lim_{j\to\infty} h^\rk_{\mu_{n_j}}(f_{n_j}) = \limsup_{n\to\infty} h^\rk_{\mu_n}(f_n).
\]
By restricting to this subsequence and relabeling, we may assume without loss of generality that the sequence of partial entropies $\{h^\rk_{\mu_n}(f_n)\}_{n\in\mathbb{N}}$ converges. An advantage of this reduction is that any further subsequence extracted from $\{\mu_n\}_{n\in\mathbb{N}}$ will preserve this exact limit. Consequently, throughout the paper, we safely pass to further subsequences whenever necessary to fulfill specific technical conditions, without changing the notation for convenience.

Recall that for any $\mathrm{k}\in\mathbb N$, 
\[
\mathrm{Hyp}^{\mathrm{k}}(\mu):=\essinf_{x\sim\mu}\{\lambda_\rk(x,f)-\max\{\lambda_{\rk+1}(x,f),0\}\}.
\]
For any $f$-invariant measure $\mu$, the $\rk$-dimensional partial entropy $h^\rk_\mu(f)$ is well-defined if $\mathrm{Hyp}^{\mathrm{k}}(\mu)>0$. 

Next, we recall the concept of subordinate partitions and $\rk$-dimensional partial entropy for invariant measures such that $\mathrm{Hyp^{\mathrm{k}}(\mu)>0}$. The subordinate partition, first appearing in \cite{Ledrappier_Strelcyn_1982}, is defined as follows
\begin{definition}[Subordinate Partitions]
Let $\xi$ be a measurable partition of $M$, $f\in\mathrm{Diff}^{1+\alpha}(M)$, $m$ be an $f$-invariant measure, $1\ls \rk\ls \td$ and $\mathrm{Hyp^{\mathrm{k}}}(m)>0$. Then $\xi$ is subordinate to $W^\rk$ with respect to $(f,m)$, where $W^\rk$ denotes the $\rk$-dimensional Pesin unstable manifolds, if for $m-$a.e. $x\in M$ one has
\begin{itemize}
\item[$\mathrm{(a)}$] $\xi(x)\subseteq  W^\rk(x) $ and $\xi(x)$ contains an open neighborhood of $x$ inside $W^\rk(x)$.\smallskip
\item[$\mathrm{(b)}$] $\xi$ is an increasing partition, which means that $f\xi \prec \xi$.\smallskip
\item[$\mathrm{(c)}$] $\bigvee_{n=0}^\infty f^{-n}\xi$ is the partition into points.
\end{itemize}
\end{definition}
\begin{remark}\label{remark: suborGPu is u}
    In the same manner, we can define measurable partitions that are subordinate to $W^{\mathrm{GPu}}$ where $W^{\mathrm{GPu}}$ denotes the $\rk$-dimensional weighted Pesin unstable manifolds defined in \Cref{sec 2.3}. It should be emphasized that these two concepts are equivalent by \Cref{lem 2.12}, since $W^{\mathrm{GPu}}$ coincides with $W^\rk$ for $m-a.e.\ x\in M$. 
\end{remark}

Now following the ideas in \cite{Ledrappier_Strelcyn_1982} and \cite{LEDRAPPIER_YOUNG_A}, we construct subordinate partitions for $(f^N_n,\mu_n)$ and $(f^N,\mu)$ using the weighted Pesin blocks of uniform size constructed in \Cref{uniform size prop} or \Cref{quant uniform size prop}.

Recall the definition of a small boundary condition, which is important in the construction of the subordinate partition in \cite{Ledrappier_Strelcyn_1982}. See also \cite{YANG_expanding_entropy}. In our setting, since we are considering a sequence of measures, we define the small boundary condition for countable measures simultaneously.
\begin{definition}[Small Boundary Condition]\label{small boundary}
    A measurable partition $\tA$ satisfies the small boundary condition, if for any $ \lambda>0$,  $$\sum_{l=1}^{\infty}\mu_n(B(\partial\tA,\e^{-l\lambda}))<\infty$$ for any $n\in\mathbb N$ and $$\sum_{l=1}^{\infty}\mu(B(\partial\tA,\e^{-l\lambda}))<\infty,$$ where $\partial\tA$ means the boundary of partition $\tA$ and 
    $$B(\partial\tA,r):=\{x\in M:d(x,\partial\tA)\ls r\}.$$ A Borel set $A$ is said to satisfy the small boundary condition, if the partition $\{A,M-A\}$ satisfies the small boundary condition.
\end{definition}
In particular, if a measurable partition $\tA$ satisfies the small boundary condition, then $\mu_n(\partial\tA)=\mu(\partial\tA)=0$ for any $n\in\mathbb N$.
By \cite[Proposition 3.2]{Ledrappier_Strelcyn_1982}, for any $x\in M$, $B(x,r)$ satisfies the small boundary condition for $\mathrm{Leb}-a.e.$ $r>0$ small enough. This property helps us to construct the subordinate partitions.


Now we choose $m\in\mathbb N$ large enough and satisfying $N\mid m$, which will go to $\infty$ in the end of the proof. 
For simplicity, we denote $g_n:=f_n^m$ and $g:=f^m$.

Before proceeding, let us formalize the geometric notion for the size of local weighted Pesin unstable manifolds. We say that a local weighted Pesin unstable manifold $W^u_{\mathrm{loc}}(x)$ at $x\in M$ has size $\epsilon_0>0$ if its preimage under the exponential map, $\exp_x^{-1}(W^u_{\mathrm{loc}}(x))$, can be represented as the graph of a $C^1$ map 
\[
\phi_x: \mathrm{Domain}(\phi_x)\subset E^u(x) \to E^u(x)^\perp
\]
satisfying the conditions
\[
E^u(x)(\epsilon_0) \subset \mathrm{Domain}(\phi_x) \quad \text{and} \quad \sup_{v \in E^u(x)(\epsilon_0)} \|D\phi_x(v)\| \ls 10^{-6},
\]
where $E^u(x)(\epsilon_0)$ denotes the ball of radius $\epsilon_0$ centered at the origin in $E^u(x)$. 

Recall that for any given $\rho>0$ small enough, by \Cref{uniform size prop}, it follows that there exist $\epsilon_0>0$ and weighted Pesin blocks $\Lambda^\rk$ and $\Lambda^\rk_n$ such that for $n$ sufficiently large,
\[
\mu_n(\Lambda^\rk_n)\gs 1-\rho,\quad\mu(\Lambda^\rk)\gs 1-\rho,
\]
and they admit local weighted Pesin unstable manifolds $W^{\mathrm{GPu}}_{\mathrm{loc}}(x,f^N)$ and $W^{\mathrm{GPu}}_{\mathrm{loc}}(x,f_n^N)$ of uniform size $\epsilon_0>0$ respectively.

Now we construct a finite measurable partition with the small boundary condition as follows:
choose a finite cover of $M$ by small boundary balls $\{B_1,\cdots,B_\flat\}$ with diameters $$\approx \frac{\epsilon_0}{10^6\cdot \sup_n\{\|g_n\|_{C^1}\}}:=\frac{\epsilon_1}{1000},$$
 and the finite partition is defined as $$\tA:=\{B_1,B_2\setminus B_1,\cdots,B_\flat\setminus(\bigcup_{i=1}^{\flat-1}B_i)\}.$$
The notation $\approx \frac{\epsilon_1}{1000}$ here means $\in [0.99\cdot\frac{\epsilon_1}{1000}, \, 1.01\cdot\frac{\epsilon_1}{1000}]$. 
It is easy to check that $\tA$ is a small boundary partition.

Now we start to construct subordinate partitions using $\tA$, $\Lambda^\rk_n$ and $\Lambda^\rk$ following the methods in \cite{Ledrappier_Strelcyn_1982}. More precisely, for any $x\in\Lambda^\rk_n$ and $x\in\overline{\tA(z)}$, we define\[
W^{u}_{\mathrm{loc},n}(x,\tA(z)):= W^{\mathrm{GPu}}_{\mathrm{loc}}(x,f^N_n)\cap\overline{\tA(z)}.
\]
Similarly for any $x\in\Lambda^\rk$ and $x\in\overline{\tA(z)}$ we define 
\[
W^{u}_{\mathrm{loc},0}(x,\tA(z)):= W^{\mathrm{GPu}}_{\mathrm{loc}}(x,f^N)\cap\overline{\tA(z)}.
\]
All of them are closed by definition. Note that when $x\notin \partial\tA$, there is a unique $\tA(z)$ such that $x\in\overline{\tA(z)}$, which is the element $\tA(x)$.

We further define 
$$\Xi^u_n:=\bigcup_{x,z:\,  x\in\Lambda^\rk_n\cap\overline{\tA(z)}}W^u_{\mathrm{loc},n}(x,\tA(z)),$$
and
$$\Xi^u_0:=\bigcup_{x,z:\,  x\in\Lambda^\rk\cap\overline{\tA(z)} }W^u_{\mathrm{loc},0}(x,\tA(z)).$$
For any $x\in\Xi^u_n$ and $x\notin \partial\tA$, we also denote the unique local unstable leaf containing $x$ as $W^u_{\mathrm{loc},n}(x,\tA(x))$.

\begin{lemma}
    $\Xi^u_n$ are closed for any $\ n\in\mathbb N\cup\{0\}$, and $$    \limsup_{n\to\infty}\Xi^u_n\subseteq \Xi^u_0.$$
\begin{proof}
    Let $x_m\xrightarrow{m\to\infty}x_0$ and $x_m\in\Xi^u_n$. Thus, there exist $y_m\in\Lambda^\rk_n$ and $z_m\in M$ such that $x_m\in W^u_{\mathrm{loc},n}(y_m,{\tA(z_m)})$. Since $\tA$ is a finite partition, by passing to a subsequence we can assume that $\tA(z_m)\equiv\tA(z_0)$ and $y_m\xrightarrow{m\to\infty}y_0$ without loss of generality. Since both $\Lambda^\rk_n$ and $\overline{\tA(z_0)}$ are closed, $y_0\in\Lambda^\rk_n\cap\overline{\tA(z_0)}$.
    
    Since $x_m\in\overline{\tA(z_0)}$, it follows that $x_0\in\overline{\tA(z_0)}$. Moreover, by the continuity of the local weighted Pesin unstable manifolds, $x_m\in W^{\mathrm{GPu}}_{\mathrm{loc}}(y_m,f^N_n)$ implies that $x_0\in W^{\mathrm{GPu}}_{\mathrm{loc}}(y_0,f^N_n)$. Together we have proved that $$x_0\in W^u_{\mathrm{loc},n}(y_0,\tA(z_0))\subseteq \Xi^u_n,$$ thus $\Xi^u_n$ is closed.

    With the fact that $\limsup_{n\to\infty}\Lambda^\rk_n=\Lambda^\rk,$ the second statement follows similarly.   
\end{proof}
\end{lemma}

Note that the previous assumptions on $\tA$ are sufficient to ensure the following two conditions:
\begin{itemize}\label{lem: property of au}
    \item[1.] For any $x\in\Lambda^\rk_n$ there exists a differential map $\sigma_x:[0,1]^\rk\to W^{\mathrm{GPu}}(x)$ such that $\|\td\sigma_x\|<\frac{\epsilon_1}{100}$ and $W^u_{\mathrm{loc},n}(x,\tA(z))\subseteq \mathrm{Im}(\sigma_x)$, where $\mathrm{Im}(\sigma_x)$ denotes the image of $\sigma_x$.
    \smallskip
    \item[2.] For any $a,b\in \tA(z)\cap\Xi^u_n$ in different $W^u_{\mathrm{loc},n}$, which means that $a\in W^u_{\mathrm{loc},n}(p,\tA(z))$ and $b\in W^u_{\mathrm{loc},n}(q,\tA(z))$ but $W^u_{\mathrm{loc},n}(p,\tA(z))\neq W^u_{\mathrm{loc},n}(q,\tA(z))$, one has $$d^u(a,b)>\frac{\epsilon_0}{10}=100\cdot\sup_n\{\|g_n\|_{C^1}\}\cdot\epsilon_1,$$ where $d^u$ denotes the submanifold distance induced by the $\rk$-dimensional weighted Pesin unstable manifolds.
\end{itemize}

\begin{figure}[!htbp]
    \centering
    \includegraphics[width=0.7\linewidth]{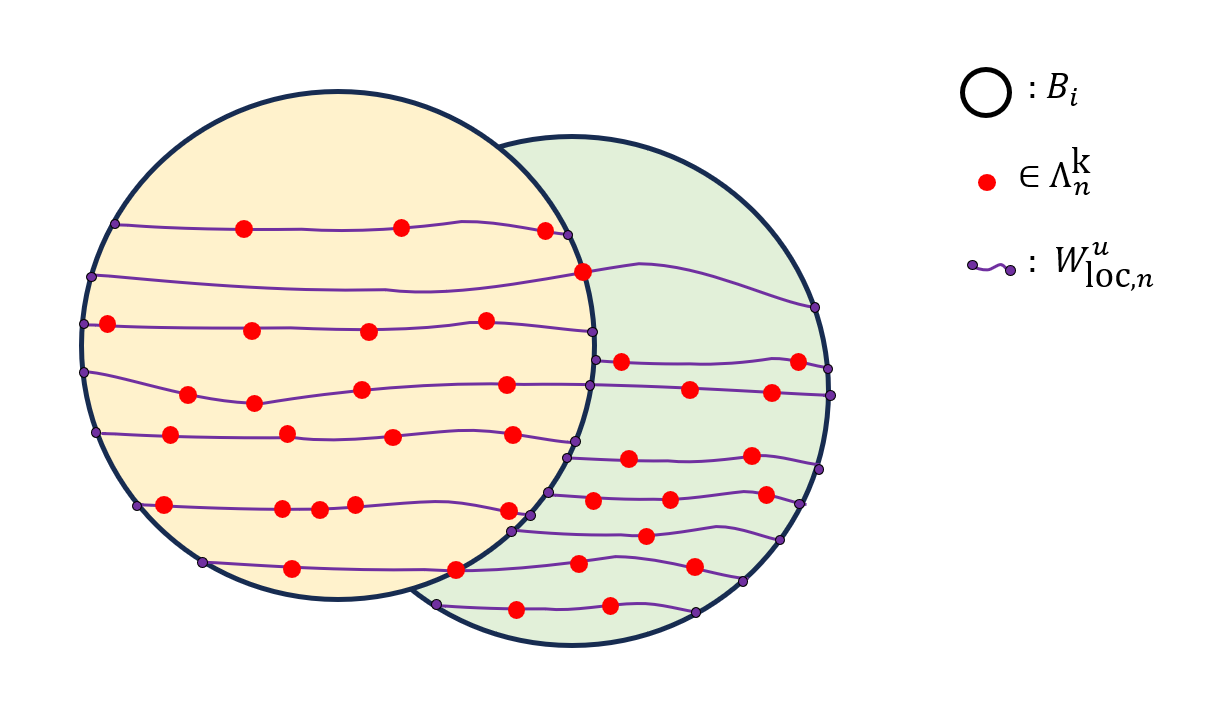}
    \caption{Illustration of the geometric structure}
    \label{fig:placeholder}
\end{figure}

Now we use the local unstable lamination given by weighted Pesin blocks to refine the small boundary partition $\tA$. We define $\tA^u_n$ by:
\begin{enumerate}
    \item If there exists $z\in\Lambda^\rk_n$ such that $x\in W^u_{\mathrm{loc},n}(z,\tA(x))$, then $$\tA^u_n(x):=W^u_{\mathrm{loc},n}(z,\tA(x))\cap\tA(x).$$
    \item If $x\notin\Xi^u_n$, then $\tA^u_n(x):=\tA(x)\setminus\Xi^u_n$.
    \item For other $x$, $\tA^u_n(x):=\{x\}$.
\end{enumerate}
\begin{remark}
    The third case occurs if and only if there exist $y\in\Lambda^\rk_n$ and $z\notin\tA(x)$ such that $x\in W^u_{\mathrm{loc},n}(y,\tA(z))$ and $x\notin W^u_{\mathrm{loc},n}(z',\tA(x))$ for any $z'\in\Lambda^\rk_n$. Consequently, the third case could happen only when $x\in\partial\tA$. Thus, when dealing with the measure-theoretic properties of $\tA^u_n$ in this paper, the third case could be ignored. We also have $\mathbf{\Xi^u_n}\prec\tA^u_n$ by definition (recall that $\mathbf{\Xi^u_n}$ denotes the two-element partition $\{\Xi^u_n,(\Xi^u_n)^c\}$).
\end{remark}

We further denote that
\[
\Xi^u_{n,\infty}:=\bigcup_{i=-\infty}^\infty g_n^i\Xi^u_n,\quad \Xi^u_{0,\infty}:=\bigcup_{i=-\infty}^\infty g^i\Xi^u_0.
\] 
At the end of \Cref{sec 2.3} we show that the conditions required in \cite[Section 3]{Ledrappier_Strelcyn_1982} hold in our setting. According to the classical methods in \cite{Ledrappier_Strelcyn_1982} for constructing the subordinate partitions, the partition $$\xi^u_n:=\bigvee_{i=0}^\infty g^i_n\tA^u_n$$ is a measurable partition subordinate to $W^{\mathrm{GPu}}$ with respect to $(g_n,\mu_n|_{\Xi^u_{n,\infty}})$. By \Cref{remark: suborGPu is u}, it is also subordinate to $W^\rk$.
In the same manner, we can define $\tA^u_0$ and the subordinate partition $\xi^u_0$ with respect to $(g,\mu|_{\Xi^u_{0,\infty}})$ similarly. We remark that both $\mu_n\big|_{\Xi^u_{n,\infty}}$ and $\mu\big|_{\Xi^u_{0,\infty}}$ are invariant Borel measures with respect to $g_n$ and $g$, each forming a component of $\mu_n$ and $\mu$ respectively.

We remark that, by construction, $\Xi^u_{n,\infty}$ is defined as the union of both forward and backward iterates of $\Xi^u_n$, whereas the partition $\xi^u_n$ is generated using only forward iterates of $\tA^u_n$. Since the difference between the forward-only and the forward–backward unions is a set of $\mu_n$-measure zero, it follows that $\mathbf{\Xi^u_{n,\infty}}\prec\xi^u_n$ with respect to $\mu_n$. Similarly, $\mathbf{\Xi^u_{0,\infty}}\prec\xi^u_0$ with respect to $\mu$.

Now we can use the subordinate partitions to calculate the partial entropy $h^\rk_{\mu_n}(g_n)$ and decompose it into the main term and the error term.
More precisely, by the definition of $\xi^u_n$ and the partial entropy, we have
\begin{align*}
h^\rk_{\mu_n}(g_n)&\ls  H_{\mu_n|_{\Xi^u_{n,\infty}}}(g_n^{-1}\xi^u_n\mid \xi^u_n)+\mu_n((\Xi^u_{n,\infty})^c)\cdot m\td\log\Upsilon_1,\\[2mm]
&\ls  H_{\mu_n|_{\Xi^u_{n,\infty}}}(g_n^{-1}\xi^u_n\mid \xi^u_n)+\mu_n((\Xi^u_{n})^c)\cdot m\td\log\Upsilon_1.
\end{align*}
Moreover, by the property of the conditional entropy,
\begin{equation}\label{equ:firstandsecond}
\begin{split}
H_{\mu_n|_{\Xi^u_{n,\infty}}}(g_n^{-1}\xi^u_n\mid \xi^u_n)&=H_{\mu_n|_{\Xi^u_{n,\infty}}}(g_n^{-1}\tA^u_n\mid \xi^u_n),\\[2mm]
&= H_{\mu_n|_{\Xi^u_{n,\infty}}}(g_n^{-1}\tA\mid\xi^u_n) + H_{\mu_n|_{\Xi^u_{n,\infty}}}(g_n^{-1}\tA^u_n\mid g_n^{-1}\tA\vee \xi^u_n).
\end{split}
\end{equation}

For the first term of \Cref{equ:firstandsecond}, \Cref{def borel entropy} and the fact $\mathbf{\Xi^u_n}\prec\xi^u_n$ yield the decomposition $$H_{\mu_n|_{\Xi^u_{n,\infty}}}(g_n^{-1}\tA\mid\xi^u_n) = H_{\mu_n|_{\Xi^u_n}}(g_n^{-1}\tA\mid\xi^u_n) + H_{\mu_n|_{({\Xi^u_{n,\infty}}\setminus\Xi^u_n)}}(g_n^{-1}\tA\mid\xi^u_n).$$ Furthermore, the latter term can be bounded by
\begin{align*}
H_{\mu_n|_{({\Xi^u_{n,\infty}}\setminus\Xi^u_n)}}(g_n^{-1}\tA\mid\xi^u_n)&\ls \mu_n({\Xi^u_{n,\infty}}\setminus\Xi^u_n)\cdot\log\#\tA,\\[2mm]
&\ls \mu_n((\Xi^u_n)^c)\cdot\log\#\tA.
\end{align*}

To estimate the second term of \Cref{equ:firstandsecond}, we refine the conditioning partition by $g_n^{-1}\mathbf{\Xi^u_n}$. Then the property of conditional entropy gives
\begin{align*}
H_{\mu_n|_{\Xi^u_{n,\infty}}}(g_n^{-1}\tA^u_n\mid g_n^{-1}\tA\vee \xi^u_n) &\ls H_{\mu_n|_{\Xi^u_{n,\infty}}}(g_n^{-1}\mathbf{\Xi^u_n}) + H_{\mu_n|_{\Xi^u_{n,\infty}}}(g_n^{-1}\tA^u_n\mid g_n^{-1}\tA\vee\xi^u_n\vee g_n^{-1}\mathbf{\Xi^u_n}),\\[2mm]
&\ls \log 2 + H_{\mu_n|_{\Xi^u_{n,\infty}}}(g_n^{-1}\tA^u_n\mid g_n^{-1}\tA\vee\xi^u_n\vee g_n^{-1}\mathbf{\Xi^u_n}).
\end{align*}

Combining all these estimates, we arrive at the inequality
\begin{equation}\label{entropy split}
\begin{split}
    h^\rk_{\mu_n}(g_n)\ls&\,\,H_{\mu_n|_{\Xi^u_n}}(g_n^{-1}\tA\mid\xi^u_n)
     +\mu_n\left((\Xi^u_n)^c\right)\cdot(\log\#\tA+m\td\log\Upsilon_1)\\[2mm] 
    & + \log 2 + H_{\mu_n|_{\Xi^u_{n,\infty}}}(g_n^{-1}\tA^u_n\mid g_n^{-1}\tA\vee\xi^u_n\vee g_n^{-1}\mathbf{\Xi^u_n}).
\end{split}
\end{equation}

\begin{itemize}
    \item \textbf{The main term:}  The first term in (\ref{entropy split}) \[
H_{\mu_n|_{\Xi^u_n}}(g_n^{-1}\tA\mid\xi^u_n)
\]
is called the main term.
The main term represents the entropy generated by one-step iteration of the finite partition $\tA$ on the unstable lamination $\Xi^u_n$ with large measure, which admits a uniformly contracting property along the backward iteration. 
It will be estimated to be ``upper semi-continuous'' in \Cref{sec 3.3}. \smallskip
\item \textbf{The error term:} The other terms in (\ref{entropy split}) 
\begin{align*}
\mu_n\left((\Xi^u_n)^c\right)\cdot(\log\#\tA+m\td\log\Upsilon_1) + \log 2\\[2mm] + H_{\mu_n|_{\Xi^u_{n,\infty}}}(g_n^{-1}\tA^u_n\mid g_n^{-1}\tA\vee\xi^u_n\vee g_n^{-1}\mathbf{\Xi^u_n})
\end{align*}
is called the error term. It consists of two parts. The first part of the entropy comes from the number of elements of the finite partition $\tA$, which can be controlled by the dimension of $M$. The other part originates from the conditional partition $\tA^u_n$ restricted to each $[\tA\vee g_n\xi^u_n\vee\mathbf{\Xi^u_n}](x)$, which is, more intuitively, from the number of the winding components of $[g_n\xi^u_n](x)$ within the set $\tA(x)$. To bound this part, we need to bound the unstable diameter of $[g_n\xi^u_n](x)$, which leads to the discussion of recurrence time. 
It will be estimated to be uniformly bounded by the loss of continuity of the Lyapunov exponents in \Cref{sec 3.4}.
\end{itemize}

\subsection{The upper semi-continuity estimate of the main term}\label{sec 3.3}
We now proceed to establish the upper semi-continuity of the main term by analyzing the entropy generated along the uniform $\Xi^u_n$ lamination. The core of this analysis lies in the $C^1$-continuity of the local weighted Pesin unstable manifolds (Proposition \ref{aprro}) and their uniform exponential contraction under backward iteration by $g_n$ or $g$. This uniform contraction, inherited from the uniform parameters of the weighted Pesin blocks, ensures that the conditional geometric structures within the lamination evolve stably when the partitions are iterated (\Cref{lem 3.4}). By exploiting this geometric structure, we demonstrate that the complexity within the uniform blocks is well-behaved under the weak$^*$ topology.

First, note that we can choose $m$ large enough such that for any $p\in\mathbb N$ and $x\in\Lambda^\rk_n$, $g_n^{-p}=f_n^{-mp}$ uniformly contracts the unstable distance $d^u$ for any $W^u_{\mathrm{loc},n}(x,\tA(z))$.
Once again, by passing to a subsequence, we may assume that  $\lim_{n\to\infty}(\mu_n|_{\Xi^u_n})$ exists without loss of generality. We denote $\nu:=\lim_{n\to\infty}(\mu_n|_{\Xi^u_n})$.
Since $\limsup_{n\to\infty} \Xi^u_n\subseteq\Xi^u_0$, it follows that
$$\nu(M\setminus\Xi^u_0)=0\quad
\text{and}\quad \nu\ls \mu|_{\Xi^u_0}.$$

Recall that $\xi^u_0$ defined in the previous section is a subordinate partition with respect to $(g,\mu|_{\Xi^u_{0,\infty}})$.
Thus, together with \Cref{lem 3.2}, one has 
\[
h^\rk_{\mu}(g)\gs  H_{\mu|_{\Xi^u_{0,\infty}}}(g^{-1} \xi^u_0\mid \xi^u_0)\gs H_{\mu|_{\Xi^u_{0,\infty}}}(g^{-1}\tA\mid \xi^u_0)\gs H_{\mu|_{\Xi^u_0}}(g^{-1}\tA\mid\xi^u_0).
\]

\begin{remark}
When considering the measure-theoretic properties of a measurable partition with respect to a measure, we may—by definition—disregard null sets of that measure. Accordingly, when analyzing the measure-theoretic properties of measurable partitions with respect to $\mu_n$, we disregard the set $\bigcup_{i=-\infty}^{\infty} g_n^i(\partial\tA)$ for any $n \in \mathbb{N}$. We apply the same procedure to the set $\bigcup_{i=-\infty}^{\infty} g^i(\partial\tA)$ when working with respect to $\mu$.

The notation $\csubset, \csupset, \ceq,$ and $\cneq$ used in the following paper are defined within this framework. Specifically, these symbols imply that we disregard the iterations of $\partial\tA$ when evaluating inclusions, focusing exclusively on points $x$ whose orbits never intersect the boundary of $\tA$ under iteration by $g_n$ or $g$, respectively. However, one should treat the boundary with care when considering topological properties.
\end{remark}

Now we prove the following lemma, which replaces the forward iteration of $\tA^u_n$ with that of $\tA$ to simplify the subsequent analysis. Thus, when considering the coarsening process of $\xi^u_n$ in the main term $H_{\mu_n|_{\Xi^u_n}}(g_n^{-1}\tA\mid\xi^u_n)$, \Cref{lem 3.4} enables us to coarsen $\tA^u_n$ instead of $\xi^u_n=\bigvee_{i=0}^\infty g_n^i\tA^u_n$ (whose geometric structure is complicated).  
\begin{lemma}\label{lem 3.4}
    For $\mu_n-a.e.\ x\in\Xi^u_n$ and any $w\in\mathbb N$, $$[\bigvee_{i=0}^wg_n^i\tA^u_n](x)\ceq [\bigvee_{i=0}^wg^i_n\tA](x)\cap\tA^u_n(x).$$
\begin{proof}
    Since $[\bigvee_{i=0}^w g_n^i\tA^u_n](x)\subseteq [\bigvee_{i=0}^w g^i_n\tA](x)\cap\tA^u_n(x)$ follows directly from the definitions, we only need to prove that $$[\bigvee_{i=0}^w g_n^i\tA^u_n](x)\csupset [\bigvee_{i=0}^w g^i_n\tA](x)\cap\tA^u_n(x).$$
    We are free to assume that the orbit of $x$ never intersects $\partial\tA$ under iteration by $g_n$. Consequently, we have $x\in W^u_{\mathrm{loc},n}(x,\tA(x))$ and for any $j\gs0$,
    \begin{equation}\label{eq in lem3.4}
    \mathrm{diam}^u(g^{-j}_nW^u_{\mathrm{loc},n}(x,\tA(x)))\ls\epsilon_1<\frac{\epsilon_0}{10}.
    \end{equation}
    Note that if $y\in[\bigvee_{i=0}^w g^i_n\tA](x)\cap\tA^u_n(x)$ then for any $0\ls j\ls w$, 
    \[
    g_n^{-j}(y)\in \tA(g^{-j}_nx)\cap g^{-j}_nW^u_{\mathrm{loc},n}(x,\tA(x)).
    \]
    The proof proceeds in two cases:
    \begin{enumerate}
    \smallskip
        \item[(a):] If $$g^{-j}_nW^u_{\mathrm{loc},n}(x,\tA(x))\cap\tA(g^{-j}_nx)\cap\Xi^u_n\cneq \emptyset,$$ then by \Cref{eq in lem3.4} there exists $z\in\Lambda^\rk_n\cap\overline{\tA(g_n^{-j}x)}$ such that
        \[
        g^{-j}_nW^u_{\mathrm{loc},n}(x,\tA(x))\cap\tA(g^{-j}_nx)\subseteq W^{\mathrm{GPu}}_{\mathrm{loc}}(z,f_n^N)\cap\tA(g^{-j}_nx).
        \]
        Thus by definition, we have
        $$g^{-j}_nW^u_{\mathrm{loc},n}(x,\tA(x))\cap\tA(g^{-j}_nx)\subseteq\Xi^u_n$$
        and
        $$g^{-j}_nW^u_{\mathrm{loc},n}(x,\tA(x))\cap\tA(g^{-j}_nx)\subseteq \tA^u_n(g^{-j}_nx).$$

        \smallskip
        \item[(b):] If $$g^{-j}_nW^u_{\mathrm{loc},n}(x,\tA(x))\cap\tA(g^{-j}_nx)\cap\Xi^u_n\ceq \emptyset,$$
        which means that
        $$g^{-j}_nW^u_{\mathrm{loc},n}(x,\tA(x))\cap\tA(g^{-j}_nx)\csubset(\Xi^u_n)^c,$$
        then        
        $$g^{-j}_nW^u_{\mathrm{loc},n}(x,\tA(x))\cap\tA(g^{-j}_nx)\csubset(\Xi^u_n)^c\cap\tA(g^{-j}_nx).$$
        Since $x$ never intersects the boundary, we have $g_n^{-j}x\notin\Xi^u_n$  thus $$\tA^u_n(g_n^{-j}x)=(\Xi^u_n)^c\cap\tA(g^{-j}_nx)$$ and\[
        g^{-j}_nW^u_{\mathrm{loc},n}(x,\tA(x))\cap\tA(g^{-j}_nx)\csubset\tA^u_n(g^{-j}_nx).
        \]
    \end{enumerate}
    Combining two cases, it follows that for $0\ls j\ls w$, \[
        g^{-j}_nW^u_{\mathrm{loc},n}(x,\tA(x))\cap\tA(g^{-j}_nx)\csubset\tA^u_n(g^{-j}_nx).
        \]
        Thus, the proof is complete.
\end{proof}
\end{lemma}

We now define a sequence of finite coarsenings of the measurable partitions $\tA^u_n$ and $\tA^u_0$. Unlike the strategy in \cite{YANG_expanding_entropy}, our setting only admits a weighted Pesin unstable lamination instead of a foliation. Consequently, a coarsening with a zero-measure boundary is not expected to be obtained. 

Therefore, we consider the boundary within the compact subspaces $\Xi^u_n$ and $\Xi^u_0$ instead of $M$, and use the subspace topology. Since the measures we considered in this section are supported in these compact subspaces, it is enough to estimate the entropy. This is formalized in the following lemma:

\begin{lemma}\label{lem 3.5}
There exist measurable partitions $\tA_{n,w}^u\succ\mathbf{\Xi^u_n} $ and $\tA_{0,w}^u\succ \mathbf{\Xi^u_0}$ for any $n,w\in\mathbb N$ such that $\tA^u_{n,w}\nearrow \tA^u_n$  and $\tA^u_{0,w}\nearrow \tA^u_{0}$ when $w\to\infty$, and for any $n\in\mathbb N\cup\{0\}$, inside the compact subspaces $\Xi^u_n$ we have $\mu_n(\partial_{\Xi^u_n}(\tA^u_{n,w}|_{\Xi^u_n}))=0$. Moreover, $\tA^u_{n,w}$ and $\tA^u_{0,w}$ are finite partitions outside $\partial\tA$.
\begin{proof}
For each $1\ls i\ls \flat$, we can choose a transversal of dimensions $\td-\rk$ such that for any $x\in B_i\cap\Lambda_n^\rk $, $W^\mathrm{GPu}_{\mathrm{loc}}(x,f^N_n)$ intersects $T_i$ exactly at one point. Denote $T_{\tA(x)}=T_i$ when $\tA(x)\subseteq B_i$, and $$\tA_{n}^1(x):=\{y\in\tA(x): \exists\ z\in\Lambda^\rk_n \text{ s.t. } y\in W^u_{\mathrm{loc},n}(z,\tA(x))\}.$$  
Thus for $n\in\mathbb N\cup\{0\}$, inside $\tA(x)$ we can define the unstable holonomy map $\pi^u_{n,\tA(x)}$ as
\[
\begin{aligned}
\pi^u_{n,\tA(x)} \colon \tA^1_n(x)\cap\Xi^u_n &\longrightarrow T_{\tA(x)} \\[1mm]
z\quad &\longmapsto W^{\mathrm{GPu}}_{\mathrm{loc}}(z,f^N_n)\cap T_{\tA(x)}.
\end{aligned}
\]
There is a Euclidean structure on each $T_i$, which can be seen as $T_i\subseteq \mathbb R^{\td-\rk}$.

Now we choose a sequence of dyadic square grids $\{\tG_{w,\tA(x)}\}_{w=1}^\infty$  on each $T_{\tA(x)}\subseteq\mathbb R^{\td-\rk}$, each of which is a grid of squares of length $2^{-w}$, that is, $\tG_{w,\tA(x)}$ is a partition of $\mathbb R^{\td-\rk}$ with elements:
$$\prod_{i=1}^{\td-\rk}[\,a_i+\frac{k_i}{2^w},a_i+\frac{k_i+1}{2^w\,}),\ k_i\in\mathbb Z.$$
By choosing the translation $a=(a_1,\cdots,a_{\td-\rk})$ in the Euclidean space, each $\tG_{w,\tA(x)}$ can be chosen such that: 
$$\forall n\in\mathbb N\cup\{0\},\ \mu_n\left((\pi^u_{n,\tA(x)})^{-1}(\partial\tG_{w,\tA(x)})\right)=0.$$
This can be done since the choice of $\tG_{w,\tA(x)}$ is uncountable. Denote $$\tQ_{w,\tA(x)}:=\bigvee_{i=1}^w \tG_{i,\tA(x)}.$$

\begin{figure}[!htbp]
    \centering
    \includegraphics[width=1\linewidth]{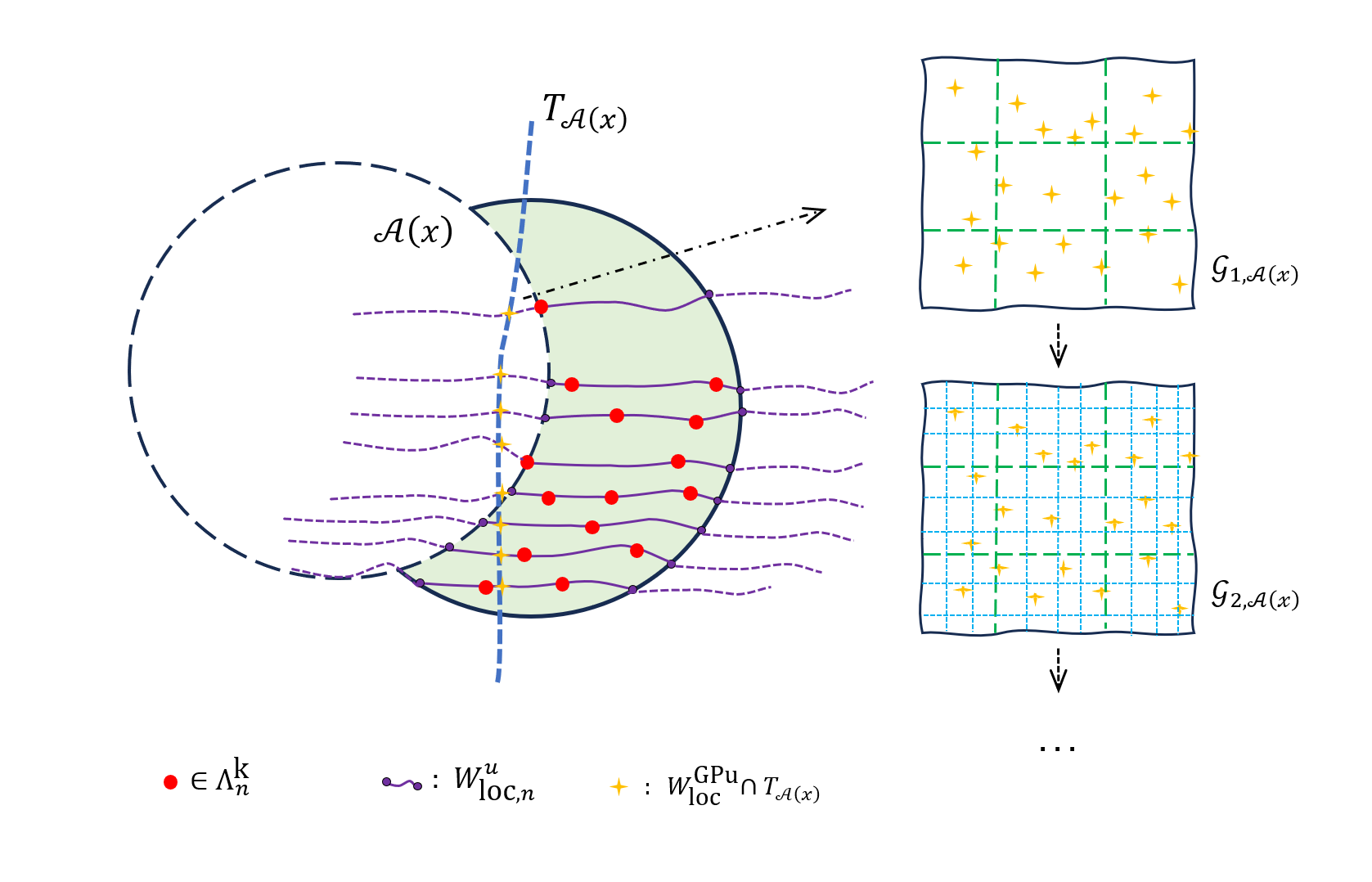}
    \caption{Illustration of the coarsening process ($\dim T_{\tA(x)}=2$)}
    \label{fig:placeholder2}
\end{figure}

Then we use $\tQ_{w,\tA(x)}$ to define $\tA^u_{n,w}$. For each $n\in\mathbb N$, similar to the definition of $\tA^u_n$, we consider three cases:
\begin{itemize}
    \item[Case 1.] If there exists $z\in\Lambda^\rk_n$ such that $x\in W^u_{\mathrm{loc},n}(z,\tA(x))$, then $$p:=\pi^u_{n,\tA(x)}(x)\in T_{\tA(x)}$$ is well-defined and $\tA^u_{n,w}(x)$ is defined as
    $$\tA^u_{n,w}(x):=(\pi^u_{n,\tA(x)})^{-1} (\tQ_{w,\tA(x)}(p))$$
    \item[Case 2.] If $x\notin\Xi^u_n$, then $\tA^u_{n,w}(x)=\tA^u_{n}(x)=\tA(x)\setminus\Xi^u_n.$ \smallskip
    \item[Case 3.] For other $x$, $\tA^u_{n,w}(x)=\tA^u_n(x)=\{x\}$, which could happen only if $x\in\partial \tA$.
\end{itemize}

By definition, $\tA^u_{n,w}$ is a finite partition outside $\partial\tA$ and $\tA_{n,w}^u\succ\mathbf{\Xi^u_n}$. Since the diameter of the elements in $\tQ_{w,\tA(x)}\to0$ when $w\to\infty$,  one has $\tA^u_{n,w}\nearrow \tA^u_n$. Now,  consider the boundary of $\tA^u_{n,w}|_{\Xi^u_n}$ inside the compact subspace $\Xi^u_n$. By previous definitions and the continuity of local weighted Pesin unstable manifolds \Cref{aprro}, it follows that
\[
\partial_{\Xi^u_n}(\tA^u_{n,w}|_{\Xi^u_n})\subseteq\partial\tA\cup\left(\bigcup_{\tA(x)\in\tA}(\pi^u_{n,\tA(x)})^{-1}(\partial\tG_{w,\tA(x)})\right).
\]
Thus $\mu_n(\partial_{\Xi^u_n}(\tA^u_{n,w}|_{\Xi^u_n}))=0$. Similarly, we can define $\tA^u_{0,w}$ by the same $\tQ_{w,\tA(x)}$ and prove the same statements. Hence, the proof is complete.
\end{proof}
\end{lemma}

\begin{remark}
For any $n\in\mathbb N\cup\{0\}$ and $w\in\mathbb N$, if the element $\tA^u_{n,w}(x)$ falls into Case~1 in the definition of $\tA^u_{n,w}$, then it is associated with an element of $\tQ_{w,\tA(x)}$, namely $\tQ_{w,\tA(x)}(p)\subseteq T_{\tA(x)}$. For any distinct $n_1,n_2\in\mathbb N\cup\{0\}$, if $\tA^u_{n_1,w}(y)$ and $\tA^u_{n_2,w}(z)$ both fall into Case~1 and are associated with the same element of $\tQ_{w,\tA(x)}$, these elements are called related.
\end{remark}

\begin{remark}
Since the atoms of $\tA^u_{n,w}$ in Case~2 are outside $\Xi^u_n$, and the atoms in Case~3 are inside $\partial\tA$, when we consider the measure-theoretic properties of $\tA^u_{n,w}$ with respect to $\mu_n|_{\Xi^u_n}$, the atoms in Case~2 and Case~3 can be disregarded.
\end{remark}

By \Cref{lem 3.4} and \Cref{lem 3.5},
\begin{align*}
H_{\mu_n|_{\Xi^u_n}}(g_n^{-1}\tA\mid\xi^u_n)&=\inf_{w\in\mathbb N} H_{\mu_n|_{\Xi^u_n}}(g_n^{-1}\tA\mid\bigvee_{i=0}^wg^i_n\tA\vee\tA^u_n)\\[2mm]
&=\inf_{w\in\mathbb N} H_{\mu_n|_{\Xi^u_n}}(g_n^{-1}\tA\mid\bigvee_{i=0}^wg^i_n\tA\vee\tA^u_{n,w}).
\end{align*}
Denote $\nu:=\lim_{n\to\infty}(\mu_n|_{\Xi^u_n})$, recall that it has been proved that
\[
\nu((\Xi^u_0)^c)=0\quad\text{and}\quad\nu\ls\mu|_{\Xi^u_0}.
\]

By the construction in \Cref{lem 3.5} and the continuity of the local weighted Pesin unstable manifolds \Cref{aprro}, noting that both $\tA$ and $\tA^u_{0,w}$ have  a zero-measure boundary inside the compact subspace $\Xi^u_0$, one can check that
\[
\lim_{n\to\infty}H_{\mu_n|_{\Xi^u_n}}(g_n^{-1}\tA\mid\bigvee_{i=0}^wg^i_n\tA\vee\tA^u_{n,w})=H_\nu(g^{-1}\tA\mid\bigvee_{i=0}^wg^i\tA\vee\tA^u_{0,w}).
\]
The reason is that, in view of the previous discussions and the continuity properties with respect to convergent measures, for each fixed $w\in\mathbb N$, the $\mu_n|_{\Xi^u_n}$ measure of each element of the finite partition $\bigvee_{i=-1}^wg^i_n\tA\vee\tA^u_{n,w}$ converges to the $\nu$ measure of the related element of the finite partition $\bigvee_{i=-1}^wg^i\tA\vee\tA^u_{0,w}$. Taking $\inf_{w\in\mathbb N}$ on both sides, since the infimum of a sequence of continuous functions is upper semi-continuous, we conclude that
\begin{align*}
\limsup_{n\to\infty}H_{\mu_n|_{\Xi^u_n}}(g^{-1}_n\tA\mid\xi^u_n)&\ls H_{\nu}(g^{-1}\tA\mid\xi^u_0)\\[2mm]
&\ls H_{\mu|_{\Xi^u_0}}(g^{-1}\tA\mid\xi^u_0)\\[2mm]
&\ls h^\rk_\mu(g).
\end{align*}
Thus, we complete the upper semi-continuity estimate of the main term.

\subsection{Uniform estimate of the error term}\label{sec 3.4}
This section is devoted to the uniform control of the error term, which quantifies the entropy contribution from the region external to the weighted Pesin blocks. Since the main term accounts for the complexity within the uniform lamination, the remaining analysis focuses on the set where uniform geometric control is absent.
By deriving estimates based on recurrence times (\Cref{lem 3.11}) and the measure deficiency of the weighted Pesin blocks (\Cref{quant uniform size prop}), we prove that the error term is effectively bounded by the defect in the continuity of the Lyapunov exponent sums. 

First we estimate the term $$\mu_n((\Xi^u_n)^c)\cdot(\log\#\tA+m\td\log\Upsilon_1).$$
Recall that $\tA$ is induced by a covering of open balls with diameters  $$\approx\frac{\epsilon_0}{10^6\cdot\sup_n\{\|g_n\|_{C^1}\}}$$ and $M$ is a $\td$-dimensional compact manifold, one can choose $\tA$ such that there exists a uniform constant $C_0>0$ which does not depend on $m$ and satisfies 
$$\#\tA\ls C_0\cdot\sup_n\{\|g_n\|_{C^1}\}^\td\ls C_0\cdot\Upsilon_1^{m\td}.$$
Thus
$$\mu_n((\Xi^u_n)^c)\cdot(\log\#\tA+m\td\log\Upsilon_1).\ls\mu_n((\Lambda^\rk_n)^c)\cdot(2m\td\cdot\log\Upsilon_1+\log C_0).$$

Then we estimate the term \[
H_{\mu_n|_{\Xi^u_{n,\infty}}}(g^{-1}_n\tA^u_n\mid g^{-1}_n\tA\vee\xi^u_n\vee g^{-1}_n\mathbf {\Xi^u_n}).
\]

Before going into details, we roughly explain the ideas of the following proof. As discussed in the end of \Cref{sec 3.2}, we need to control the number of the winding components of $[g_n\xi^u_n](x)$ within the set $\tA(x)$. An important fact is that the local weighted Pesin unstable manifolds have a uniformly large size on the weighted Pesin blocks. Thus, to control the number of winding components, we only need to estimate the unstable diameter of each $[g_n\xi^u_n](x)$. This quantity is related to the ``hyperbolicity'' of the point $g_n^{-1}x$, more specifically, the recurrence time of $g_n^{-1}x$ to $\Xi^u_n$ by $g_n^{-1}$ (see \Cref{lem 3.9}). Fortunately, the integral of this recurrence function can be bounded (see \Cref{lem 3.11}), and thus this term.

Now, we start the proof. Note that when $x\notin\Xi^u_n$, we have $[\tA\vee\mathbf {\Xi^u_n}](x)=\tA^u_n(x)$, and thus
\[
[\tA\vee g_n\xi^u_n\vee\mathbf{\Xi^u_n}](x)\subseteq\tA^u_n(x).
\]
Hence, when $x\notin g^{-1}_n\Xi^u_n$,\[
[g^{-1}_n\tA^u_n](x)\supseteq [g^{-1}_n\tA\vee\xi^u_n\vee g^{-1}_n\mathbf{\Xi^u_n}](x).
\]
Together with the fact that $\mathbf {\Xi^u_n}\prec\xi^u_n$, it follows that

\begin{equation}\label{eq split}
\begin{split}
&H_{\mu_n|_{\Xi^u_{n,\infty}}}(g^{-1}_n\tA^u_n\mid g^{-1}_n\tA\vee\xi^u_n\vee g^{-1}_n\mathbf{\Xi^u_n}) \\[2mm]
=&\int_{g^{-1}_n\Xi^u_n}-\log\mu_{n,\ [g^{-1}_n\tA\vee\xi^u_n\vee g^{-1}_n\mathbf{\Xi^u_n}](x)}([g^{-1}_n\tA^u_n](x))\,\td\mu_n(x)\\[2mm]
\ls&\int_{\Xi^u_n}-\log\mu_{n,\ [g^{-1}_n\tA\vee\xi^u_n\vee g^{-1}_n\mathbf{\Xi^u_n}](x)}([g^{-1}_n\tA^u_n](x))\,\td\mu_n(x)\\[2mm]
&+\int_{g^{-1}_n\Xi^u_n\setminus\Xi^u_n}-\log\mu_{n,\ [g^{-1}_n\tA\vee\xi^u_n\vee g^{-1}_n\mathbf{\Xi^u_n}](x)}([g^{-1}_n\tA^u_n](x))\,\td\mu_n(x)\\[2mm]
:=&\mathrm{(I)} +\mathrm{(II)}.
\end{split}
\end{equation}
We estimate these two terms (I) and (II) one by one.

We first estimate the first term (I) in \Cref{eq split}. Since $\mathbf {\Xi^u_n}\prec\xi^u_n$, for any $x\in\Xi^u_n$,
\[
[g^{-1}_n\tA\vee\xi^u_n\vee g^{-1}_n\mathbf{\Xi^u_n}](x)\subseteq\Xi^u_n.
\]
In particular, any atom of $g^{-1}_n\tA\vee\xi^u_n\vee g^{-1}_n\mathbf{\Xi^u_n}$ that contains a point of $\Xi^u_n$ is contained in $\Xi^u_n$. Then, we establish the following inclusion relation:
\begin{lemma}\label{lem 3.8}
For any $x\in\Xi^u_n$, there exists $z\in M$ such that 
\[[g^{-1}_n\tA^u_n](z) \csupset [g^{-1}_n\tA\vee\xi^u_n\vee g^{-1}_n\mathbf{\Xi^u_n}](x).\]
\begin{proof}
Consider the partition $\tA^u_n$ restricted to the set
\[
g_n([g^{-1}_n\tA\vee\xi^u_n\vee g^{-1}_n\mathbf{\Xi^u_n}](x))=[\tA\vee g_n\xi^u_n\vee \mathbf{\Xi^u_n}](g_nx).
\]
Since $\tA^u_n$ is a partition, we only need to prove that the partition $\tA^u_n$ has only one element that intersects the set $[\tA\vee g_n\xi^u_n\vee \mathbf{\Xi^u_n}](g_nx)$ disregarding $\partial\tA$.
\begin{itemize}
    \item[1.] If $[\tA\vee g_n\xi^u_n\vee \mathbf{\Xi^u_n}](g_nx)\cap(\Xi^u_n)^c\neq\emptyset$, then $$[\tA\vee g_n\xi^u_n\vee \mathbf{\Xi^u_n}](g_nx)\subseteq(\Xi^u_n)^c.$$ Thus, by the definition of $\tA^u_n$ we have $$[\tA\vee g_n\xi^u_n\vee \mathbf{\Xi^u_n}](g_nx)\subseteq (\Xi^u_n)^c\cap\tA(g_nx)=\tA^u_n(g_nx).$$
    Thus, there is only one element of $\tA^u_n$. \smallskip 
    \item[2.] If $[\tA\vee g_n\xi^u_n\vee \mathbf{\Xi^u_n}](g_nx)\subseteq\Xi^u_n$, recall that the construction of $\tA^u_n$ guarantees a minimum $u$-distance that exceeds $\frac{\epsilon_0}{10}$ between any distinct local unstable manifolds inside any element of $\tA$. By this uniform geometric separation, we can ensure that the set $[\tA\vee g_n\xi^u_n\vee \mathbf{\Xi^u_n}](g_nx)$ intersects only a single element of $\tA^u_n$ by showing that
    \[
    \diam^u([\tA\vee g_n\xi^u_n\vee \mathbf{\Xi^u_n}](g_nx))<\frac{\epsilon_0}{10}.
    \]
    Since $x\in\Xi^u_n$, $\tA^u_n(x)$ falls into Case~1  or Case~3 in the definition of $\tA^u_n$. When it falls into Case~3, we have $\tA^u_n(x)=\{x\}$ and thus $[g_n\xi^u_n](g_nx)=\{g_nx\}$. Then the claim follows. When it falls into Case~1, $\tA^u_n(x)\subseteq W^u_{\mathrm{loc},n}(x,\tA(x))$.  Recall that \[
    \diam^u\left(W^u_{\mathrm{loc},n}(x,\tA(x))\right)\ls\epsilon_1,
    \]
    we have\[
    \diam^u\left(g_n W^u_{\mathrm{loc},n}(x,\tA(x))\right)\ls\sup_{n}\{||g_n\|_{C^1}\}\cdot\epsilon_1<\frac{\epsilon_0}{10}.
    \]
    Together with the fact that \[
     [g_n\xi^u_n](g_nx) \subseteq g_n W^u_{\mathrm{loc},n}(x,\tA(x)),
    \]
    the proof of the claim is complete.
\end{itemize}
\end{proof}
\end{lemma}
Thus, for the first term (I), it follows that 
\[
\int_{\Xi^u_n}-\log\mu_{n,\ [g^{-1}_n\tA\vee\xi^u_n\vee g^{-1}_n\mathbf{\Xi^u_n}](x)}([g^{-1}_n\tA^u_n](x))\,\td\mu_n(x)=0.
\]
 
Now we estimate the second term (II). When $x\in g^{-1}_n\Xi^u_n\setminus\Xi^u_n$, since $\mathbf{\Xi^u_n}\prec\xi^u_n$,\[
[g^{-1}_n\tA\vee\xi^u_n\vee g^{-1}_n\mathbf{\Xi^u_n}](x)\subseteq g^{-1}_n\Xi^u_n\setminus\Xi^u_n.
\]
In particular, any element of $g^{-1}_n\tA\vee\xi^u_n\vee g^{-1}_n\mathbf{\Xi^u_n}$ that contains a point in $g^{-1}_n\Xi^u_n\setminus\Xi^u_n$ is contained entirely in $g^{-1}_n\Xi^u_n\setminus\Xi^u_n$.
For $\mu_n-$a.e. $x\in {\Xi^u_{n,\infty}}$, we denote $t(x)$ as the following recurrence time function induced by $g_n^{-1}$:
$$t(x):=\min\{t\in\mathbb N:g_n^{-t}x\in\Xi^u_n\}.$$
Then we can prove the following lemma, which controls the size of $\xi^u_n(x)$ by the  recurrence time function.
\begin{lemma}\label{lem 3.9}
    For $\mu_n-$a.e. $x\in {\Xi^u_{n,\infty}}$,  there exists $\{D_i\}_{i=1}^{\sup_n\{\|g_n\|_{C^1}\}^{t(x)\cdot \rk}}$ such that
    \begin{itemize}
        \item $D_i\subseteq W^{\mathrm{GPu}}(x),$\smallskip
        \item $\xi^u_n(x)\subseteq\bigcup_iD_i,$\smallskip
        \item $\diam^u(D_i)\ls\epsilon_1.$
    \end{itemize}
\begin{proof}
    We can assume that the orbit of $x$ induced by $g_n$ never intersects $\partial\tA$. Since $g^{-t(x)}_nx\in\Xi^u_n\setminus\partial\tA$, $$\xi^u_n(x)\subseteq g^{t(x)}_n(W^u_{\mathrm{loc},n}(g^{-t(x)}_nx)).$$
    By definition, there exists a differential map $\sigma:[0,1]^\rk\to W^{\mathrm{GPu}}(g^{-t(x)}_nx)$ such that $\|\td\sigma\|<\frac{\epsilon_1}{100}$ and $$W^u_{\mathrm{loc},n}(g^{-t(x)}_nx,\tA(g^{-t(x)}_nx))\subseteq \mathrm{Im}(\sigma).$$
    Moreover, there exists a family of affine contractions $$\{\sigma_i:[0,1]^\rk\to[0,1]^\rk\}_{i=1}^{\sup_n\{\|g_n\|_{C^1}\}^{t(x)\cdot \rk}}$$ each with contraction rate $\frac{1}{\sup_n\{\|g_n\|_{C^1}\}^{t(x)}}$ such that $$ [0,1]^\rk\subseteq\bigcup_{i=1}^{\sup_n\{\|g_n\|_{C^1}\}^{t(x)\cdot \rk}}\mathrm{Im}(\sigma_i).$$
    Note that $$\|\td( g_n^{t(x)}\circ\sigma\circ\sigma_i)\|\ls \sup_n\{\|g_n\|_{C^1}\}^{t(x)}\cdot\frac{\epsilon_1}{100}\cdot\frac{1}{\sup_n\{\|g_n\|_{C^1}\}^{t(x)}}=\frac{\epsilon_1}{100},$$
    one can check that $D_i:=\mathrm{Im}(g_n^{t(x)}\circ\sigma\circ\sigma_i)$ satisfy the requirements of the lemma. Thus, the proof is complete.
    
\end{proof}
\end{lemma}

By the same proof in \Cref{lem 3.8}, we can prove the following lemma:
\begin{lemma}\label{lem 3.10}
For $\mu_n-$a.e. $x\in g^{-1}_n\Xi^u_n\setminus\Xi^u_n$ and any $D_i$ given in \Cref{lem 3.9}, there exists $z\in M$ such that 
\[[g^{-1}_n\tA^u_n](z) \csupset [g^{-1}_n\tA\vee\xi^u_n\vee g^{-1}_n\mathbf{\Xi^u_n}](x)\cap D_i.\]
\end{lemma}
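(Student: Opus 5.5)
The plan is to mimic the proof of \Cref{lem 3.8}, applied to the single plaque $D_i$ instead of the whole local leaf. As there, we apply $g_n$ and study the set
\[
S_i:=g_n\bigl([g^{-1}_n\tA\vee\xi^u_n\vee g^{-1}_n\mathbf{\Xi^u_n}](x)\cap D_i\bigr)\subseteq[\tA\vee g_n\xi^u_n\vee\mathbf{\Xi^u_n}](g_nx)\cap g_nD_i .
\]
It suffices to show that, after discarding the $\mu_n$-null set $\bigcup_j g_n^j(\partial\tA)$, the set $S_i$ meets only one atom of $\tA^u_n$. Taking $z$ with $g_nz$ in that atom then gives $[g^{-1}_n\tA^u_n](z)\csupset g_n^{-1}S_i$, which is the claim.

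Since $x\in g^{-1}_n\Xi^u_n$, we have $g_nx\in\Xi^u_n$. The atom $[\tA\vee g_n\xi^u_n\vee\mathbf{\Xi^u_n}](g_nx)$ is therefore contained in $\Xi^u_n\cap\tA(g_nx)$, so only Case~1 atoms of $\tA^u_n$ (and boundary points, which we ignore) can meet $S_i$. So the first alternative of \Cref{lem 3.8} does not occur and we are in its second case. Distinct Case~1 atoms inside the same $\tA$-element are local leaves $W^u_{\mathrm{loc},n}(p,\tA(g_nx))$, and by property~2 of $\tA$ they are $d^u$-separated by more than $\epsilon_0/10$.

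The key step is to check that $S_i$ lies in one $\rk$-dimensional leaf and has $u$-diameter below $\epsilon_0/10$. By \Cref{lem 3.9}, $D_i\subseteq W^{\mathrm{GPu}}(x)$, so $g_nD_i\subseteq W^{\mathrm{GPu}}(g_nx)$ and $S_i$ lies in the single global leaf through $g_nx$. We also have $\diam^u(D_i)\ls\epsilon_1$, hence
\[
\diam^u(g_nD_i)\ls\sup_n\{\|g_n\|_{C^1}\}\cdot\epsilon_1<\frac{\epsilon_0}{10}.
\]
Two points of $S_i$ in different local leaves inside $\tA(g_nx)\cap\Xi^u_n$ would be at $d^u$-distance greater than $\epsilon_0/10$. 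Here $d^u$ is the intrinsic distance along the leaf, and that distance is bounded by the diameter of the connected plaque $g_nD_i$ joining them. This is a contradiction, so $S_i$ lies in a single atom of $\tA^u_n$.

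The main obstacle is making this separation argument rigorous. The two points of $S_i$ must be compared through the leaf metric of $W^{\mathrm{GPu}}(g_nx)$, whereas property~2 is phrased in terms of local leaves of $\Xi^u_n$. I would handle this by observing that $g_nD_i$ is the image of a connected cube under $g_n^{t(x)}\circ\sigma\circ\sigma_i$ composed with $g_n$, so its inner diameter is at most $\sup_n\{\|g_n\|_{C^1}\}\cdot\epsilon_1$. Combined with the local graph property from \Cref{pro 2.8.2} (item~5 at the end of \Cref{sec 2.3}), this excludes two distinct local plaques.
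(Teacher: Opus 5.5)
Your proposal is correct and is exactly the argument the paper intends when it says this lemma follows ``by the same proof'' as \Cref{lem 3.8}. Since $g_nx\in\Xi^u_n$, only the second case of that proof occurs. There, the bound $\diam^u(g_nD_i)\ls\sup_n\{\|g_n\|_{C^1}\}\cdot\epsilon_1<\epsilon_0/10$ coming from the plaque $D_i$ replaces the bound that came from $W^u_{\mathrm{loc},n}(x,\tA(x))$. The $\epsilon_0/10$-separation of distinct local leaves inside an element of $\tA$ then forces the set to meet a single atom of $\tA^u_n$.
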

\begin{remark}\label{remark 6}
    \Cref{lem 3.9} and \Cref{lem 3.10} imply that for $\mu_n-$a.e. $x\in g^{-1}_n\Xi^u_n\setminus\Xi^u_n$,\[
    \#\{ g^{-1}_n\tA^u_n \mid_{[\xi^u_n\vee g^{-1}_n\mathbf{\Xi^u_n}](x)}\}\ls \sup_n\{\|g_n\|_{C^1}\}^{t(x)\cdot \rk}.
    \]
\end{remark}

We propose a measure-theoretic fact about $t(x)$:
\begin{lemma}\label{lem 3.11}
    $$\int_{g^{-1}_n\Xi^u_n\setminus\Xi^u_n}t(x)\,\td\mu_n(x)\ls \mu_n((\Xi^u_n)^c).$$
    \begin{proof}
        Denote $$T_{p,1}:=\{x\in g^{-1}_n\Xi^u_n\setminus\Xi^u_n: t(x)=p\}$$
        and $$T_{p,q}:=g^{-q+1}_n(T_{p,1}),\quad\forall 1\ls q\ls p.$$

        One can directly check that $T_{p,q}\subseteq (\Xi^u_n)^c$ and that they are pairwise disjoint. Thus, by a direct calculation,
        \begin{align*}
            \int_{g^{-1}_n\Xi^u_n\setminus\Xi^u_n}t(x)\,\td\mu_n(x)&=\sum_{p=1}^\infty p\cdot \mu_n(T_{p,1})\\[2mm]
            &=\sum_{p=1}^\infty\sum_{q=1}^p \mu_n(T_{p,q})\\[2mm]
            &\ls \mu_n((\Xi^u_n)^c).
        \end{align*}
    \end{proof}
\end{lemma}
Now we can estimate the second term (II) in \Cref{eq split} by \Cref{remark 6} and \Cref{lem 3.11} as follows:
\begin{align*}
    &\int_{g^{-1}_n\Xi^u_n\setminus\Xi^u_n}-\log\mu_{n,\ [g^{-1}_n\tA\vee\xi^u_n\vee g^{-1}_n\mathbf{\Xi^u_n}](x)}([g^{-1}_n\tA^u_n](x))\,\td\mu_n(x)\\[2mm]
    \ls& \int_{g^{-1}_n\Xi^u_n\setminus\Xi^u_n}  \log\sup_n\{\|g_n\|_{C^1}\}^{t(x)\cdot \rk}\,\td\mu_n(x)\\[2mm]
    =&  \log\sup_n\{\|g_n\|_{C^1}\}\cdot \int_{g^{-1}_n\Xi^u_n\setminus\Xi^u_n}{t(x)\cdot \rk}\,\td\mu_n(x)\\[2mm]
    \ls&\log\sup_n\{\|g_n\|_{C^1}\}\cdot \rk\cdot\mu_n((\Xi^u_n)^c)\\[2mm]
    \ls&m\td\cdot\log\Upsilon_1\cdot\mu_n((\Lambda^\rk_n)^c).
\end{align*}

Thus, combining all previous discussions, the error term is estimated by

\begin{align*}
    &\mu_n\left((\Xi^u_n)^c\right)\cdot(\log\#\tA+m\td\log\Upsilon_1) + \log 2 + H_{\mu_n|_{\Xi^u_{n,\infty}}}(g_n^{-1}\tA^u_n\mid g_n^{-1}\tA\vee\xi^u_n\vee g_n^{-1}\mathbf{\Xi^u_n})\\[2mm]
    &\ls\mu_n((\Lambda^\rk_n)^c)\cdot(3m\td\cdot\log\Upsilon_1+\log C_0)+\log 2.
\end{align*}
Together with inequality~(\ref{entropy split}), it follows that 
\[
 h^\rk_{\mu_n}(g_n)\ls H_{\mu_n|_{\Xi^u_n}}(g_n^{-1}\tA\mid\xi^u_n)+\mu_n((\Lambda^\rk_n)^c)\cdot(3m\td\cdot\log\Upsilon_1+\log C_0)+\log 2.
\]
Now we take $\limsup_{n\to\infty}$ on both sides. By the upper semi-continuity estimate of the main term at the end of \Cref{sec 3.3}, it follows that
\[
\limsup_{n\to\infty} h^\rk_{\mu_n}(g_n)\ls h^\rk_{\mu}(g)+\limsup_{n\to\infty}\mu_n((\Lambda^\rk_n)^c)\cdot(3m\td\cdot\log\Upsilon_1+\log C_0)+\log 2.
\]
Recall that $g_n=f_n^m$ and $g=f^m$, we have\[
\limsup_{n\to\infty} h^\rk_{\mu_n}(f_n)\ls h^\rk_{\mu}(f)+\limsup_{n\to\infty}\mu_n((\Lambda^\rk_n)^c)\cdot3\td\cdot\log\Upsilon_1+\frac{\log C_0+\log 2}{m}.
\]
Letting $m\to\infty$,
\[
\limsup_{n\to\infty} h^\rk_{\mu_n}(f_n)\ls h^\rk_{\mu}(f)+\limsup_{n\to\infty}\mu_n((\Lambda^\rk_n)^c)\cdot3\td\cdot\log\Upsilon_1.
\]
Combining this inequality with \Cref{uniform size prop}, we complete the proof of \Cref{thm 3.3}. 

Moreover, combining this inequality with the quantitative version of the uniform size estimate of the weighted Pesin blocks, which is \Cref{quant uniform size prop}, the following quantitative version of \Cref{thm 3.3} follows.
\begin{theorem}\label{thm 3.4}
      Let $f_n\in\mathrm{Diff^{1+\alpha}}(M)$ and $\mu_n$ be an ergodic measure of $f_n$. Assume 
    \begin{itemize}
    \item $f_n\xrightarrow{C^1} f\in \mathrm{Diff}^{1+\alpha}(M)$
    with $\sup_{n\in \mathbb N}\{\|f_n\|_{C^{1+\alpha}}\}< \infty$, 
    \item $\mu_n\xrightarrow{*}\mu$,
    \end{itemize}    
and denote $$C_f:={\frac{10^9\cdot\dim M}{\alpha}\cdot(\log\|f\|_{C^1}+1)^4}.$$ If there exists $\rk\in\mathbb N$ such that $\mathrm{Hyp}^{\mathrm{k}}(\mu)>0$,
then $h^\rk_{\mu_n}(f_n)$ is well-defined when $n$ is large enough and
\[
\limsup_{n\to\infty} h^\rk_{\mu_n}(f_n)\ls h^\rk_{\mu}(f)+\frac{C_f}{\mathrm{Hyp}^{\mathrm{k}}(\mu)^4}\cdot\limsup_{n\to\infty}(\sum_{i=1}^\rk\lambda_i(\mu,f)-\sum_{i=1}^\rk\lambda_i(\mu_n,f_n)).
\] 
whenever the error term 
$$\frac{C_f}{\mathrm{Hyp}^{\mathrm{k}}(\mu)^4}\cdot\limsup_{n\to\infty} (\sum_{i=1}^\rk\lambda_i(\mu,f)-\sum_{i=1}^\rk\lambda_i(\mu_n,f_n))\ls \td\cdot\log\|f\|_{C^1}.$$

\end{theorem}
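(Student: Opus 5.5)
The plan is to combine the inequality established at the end of \Cref{sec 3.4},
\[
\limsup_{n\to\infty} h^\rk_{\mu_n}(f_n)\ls h^\rk_{\mu}(f)+3\td\log\Upsilon_1\cdot\limsup_{n\to\infty}\mu_n((\Lambda^\rk_n)^c),
\]
with the quantitative measure estimate of \Cref{quant uniform size prop}. That inequality was derived for an arbitrary family of uniform weighted Pesin blocks $\Lambda^\rk_n$, $\Lambda^\rk$ with $\limsup_n\Lambda^\rk_n=\Lambda^\rk$, local unstable manifolds of uniform size, and $C^1$ continuity in the sense of \Cref{aprro}. The continuity of the sum of exponents entered only through the smallness of $\mu_n((\Lambda^\rk_n)^c)$.

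First I need $h^\rk_{\mu_n}(f_n)$ to be well-defined for large $n$. The argument at the start of \Cref{sec 3.2} used exact convergence of the sums, so it must be redone. Upper semi-continuity of $\sum_{i\ls \rk-1}\lambda_i$ and of $\sum_{i\ls \rk+1}\lambda_i$ (and the hypothesis that the defect $\delta:=\limsup_n(\sum_{i\ls\rk}\lambda_i(\mu,f)-\sum_{i\ls\rk}\lambda_i(\mu_n,f_n))$ is controlled) give $\liminf\lambda_\rk(\mu_n)\gs\lambda_\rk(\mu)-\delta$ and $\limsup\lambda_{\rk+1}(\mu_n)\ls \lambda_{\rk+1}(\mu)+\delta$. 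The assumed bound on $C_f\delta/\mathrm{Hyp}^\rk(\mu)^4$ should force $2\delta<\mathrm{Hyp}^\rk(\mu)$, since $C_f$ is huge relative to $\td\log\|f\|_{C^1}$ and $\mathrm{Hyp}^\rk(\mu)\ls\log\|f\|_{C^1}$. Hence $\mathrm{Hyp}^\rk(\mu_n)>0$ for large $n$. The case $\delta<0$ is trivial, replacing $\delta$ by $\max\{\delta,0\}$. For a non-ergodic limit $\mu$ I would argue with the same lower bound directly on the ergodic $\mu_n$.

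Next I rerun the whole argument of \Cref{sec 3.2}--\Cref{sec 3.4} verbatim. The blocks are now the ones from \Cref{quant uniform size prop} instead of \Cref{uniform size prop}. Nothing in the construction of $\tA$, $\tA^u_n$, $\xi^u_n$, the main term estimate, or \Cref{lem 3.8}--\Cref{lem 3.11} depended on the measure of the blocks, only on their uniform parameters and the closedness of $\limsup\Lambda_n^\rk$. So we get the displayed inequality with $\limsup_n\mu_n((\Lambda^\rk_n)^c)\ls \tilde C_f\,\delta/\mathrm{Hyp}^\rk(\mu)^4+\rho$. Multiplying by $3\td\log\Upsilon_1$ and letting $\rho\to0$ gives the bound with constant $3\td\log\Upsilon_1\cdot\tilde C_f$. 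Since $\Upsilon_1\to\|f\|_{C^1}$ after discarding finitely many $n$ (or by replacing $\Upsilon_1$ with $\|f\|_{C^1}+o(1)$), this is at most $3\cdot10^8\td\alpha^{-1}(\log\|f\|_{C^1}+1)^4\ls C_f/\mathrm{Hyp}^\rk(\mu)^4$.

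The main obstacle is the bookkeeping in \Cref{quant uniform size prop}. One must check that the right-hand side of \eqref{size inequality}, with $\chi=\mathrm{Hyp}^\rk(\mu)/5$ and $\epsilon=\alpha\chi^2/(10^3(\log\Upsilon_1+1))$, is indeed bounded by $\tilde C_f\delta/\mathrm{Hyp}^\rk(\mu)^4$ plus terms vanishing as $\rho_0,\epsilon'\to0$. Expanding, the factor is $4(\chi+\log\Upsilon_1)\epsilon^{-1}\cdot(4\chi+4\log\Upsilon_1)\chi^{-1}\cdot(2\chi)^{-1}$, which is of order $(\log\Upsilon_1+1)^3\alpha^{-1}\chi^{-4}\cdot 10^3\cdot 5^4\cdot 32$. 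This fits within $10^8$. A secondary point is that the proof of \Cref{uniform size prop} used $|\sum_{i\ls\td}\lambda_i(\mu_n)-\sum_{i\ls\td}\lambda_i(\mu)|\ls\epsilon'$, which still holds because the total sum is the integral of the continuous $\log|\det Df_n|$. The condition that the error term is $\ls\td\log\|f\|_{C^1}$ is only needed for the well-definedness step, and otherwise the inequality is trivial from Ruelle's bound $h^\rk\ls\td\log\|f\|_{C^1}$.
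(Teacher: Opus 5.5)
Your proposal is correct and follows the paper's own route: the paper proves \Cref{thm 3.4} by inserting the measure bound of \Cref{quant uniform size prop} into the inequality $\limsup_n h^\rk_{\mu_n}(f_n)\ls h^\rk_\mu(f)+3\td\log\Upsilon_1\cdot\limsup_n\mu_n((\Lambda^\rk_n)^c)$ obtained at the end of \Cref{sec 3.4}. Your additional checks fill in details the paper leaves implicit, namely re-deriving well-definedness of $h^\rk_{\mu_n}(f_n)$ from $\liminf_n\mathrm{Hyp}^\rk(\mu_n)\gs\mathrm{Hyp}^\rk(\mu)-2\delta$ together with the smallness of $\delta$ forced by the hypothesis, and verifying $3\td\log\Upsilon_1\cdot\tilde C_f\ls C_f$; note also that $\delta\gs 0$ automatically by upper semi-continuity of the sums of exponents, so the case $\delta<0$ never arises.
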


\subsection{Passage to invariant measures}\label{sec 3.5}

In this section, we establish Theorem \ref{thm best}, which generalizes Theorem \ref{thm 3.3} by relaxing the assumption of ergodicity for measures $\mu_n$. The primary technique for this extension involves the theory of joinings and the passage lemma from \cite{DMsymbolicextension}.

Recall that for any $f$ invariant measure $m$, we define the $\rk$-dimensional partial entropy $\tilde h^\rk_{m}(f)$ by its ergodic decomposition as follows:
$$\tilde h^\rk_{m}(f):=\int_{\mathrm{Hyp}^{\mathrm{k}}(m_x)>0} h^\rk_{m_x}(f)\,\td m(x).$$

We first introduce the definition of joining as in \cite[Section 4]{DMsymbolicextension}.
\begin{definition}[Joining]
    For two measures $\Omega_1,\ \Omega_2\in\mathcal{M}(\mathcal M(X))$, where $\mathcal{M}(X)$ denotes the space of probability measures of $M$, $J$ is called a joining of $\Omega_1$ and $\Omega_2$ if it is a probability measure on $\mathcal M(X)\times\mathcal M(X)$  satisfying
    $$(\pi_1)_* J=\Omega_1,\ (\pi_2)_* J=\Omega_2,$$
    where $\pi_i$ denotes the projection to its $i$-coordinate.
\end{definition}

 Next, we introduce the following passage lemma from \cite[Corollary 4.27]{DMsymbolicextension}. The $d_{\mathcal{M}}$ here denotes the metric on the space of Borel probability measures over $M$, which is compatible with the weak$^*$ topology.
 \begin{lemma}\label{lem:joining}
     Let $\Omega_{\mu_n}$ be the ergodic decomposition measure of $\mu_n$ supported by $\mathcal M_e(M,f_n)$, and by choosing a subsequence we can assume $\Omega_{\mu_n}$ converging to a measure $\Omega$ supported by $\mathcal{M}(M,f)$ such that $$\int_{\mathcal M(M,f)} m\,\td\Omega(m)=\mu.$$ Then for any $\epsilon>0$ there exists an $n(\epsilon)\in\mathbb N$, such that for any $n\gs n(\epsilon)$ we have $d_{C^1}(f,f_n)<\epsilon$, and there exists a joining $J^n$ of $\Omega_{\mu_n}$ and $\Omega$ such that $J^n(\Delta_e^\epsilon)>1-\epsilon$, where
     \[
     \Delta_e^\epsilon=\{(\nu,\tau)\in\mathcal M_e(M,f_n)\times\mathcal M(M,f):d_\mathcal{M}(\nu,\tau)<\epsilon\}. 
     \]
 \end{lemma}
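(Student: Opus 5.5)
The plan is to deduce the statement from weak$^*$ compactness on the space of measures over $\mathcal M(M)$, combined with the coupling characterization of the Lévy--Prokhorov metric (Strassen's theorem). First, for the setup: $\mathcal M(M)$ with the metric $d_{\mathcal M}$ is a compact metric space, so $\mathcal M(\mathcal M(M))$ is weak$^*$ compact and metrizable. Hence we may pass to a subsequence along which $\Omega_{\mu_n}\xrightarrow{*}\Omega$. We must check that $\Omega$ is supported on $\mathcal M(M,f)$ and has barycenter $\mu$.
\begin{itemize}
\item[(i)] For $\Omega$-a.e.\ $\tau$ we want $f_*\tau=\tau$. Each $\Omega_{\mu_n}$ is carried by $\mathcal M_e(M,f_n)\subseteq\mathcal M(M,f_n)$, and $f_n\to f$ uniformly. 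So any limit of $\nu_n\in\mathcal M(M,f_n)$ is $f$-invariant. In particular, the sets $\overline{\bigcup_{n\gs k}\mathcal M(M,f_n)}$ are closed, decrease in $k$, and their intersection lies in $\mathcal M(M,f)$. By the portmanteau theorem, $\Omega$ gives full mass to each of these sets, and hence to $\mathcal M(M,f)$.
\item[(ii)] The barycenter map $\Omega'\mapsto\int m\,\td\Omega'(m)$ is weak$^*$ continuous, because $m\mapsto\int\psi\,\td m$ is continuous for every $\psi\in C^0(M)$. Therefore the barycenter of $\Omega$ is $\lim\mu_n=\mu$.
\end{itemize}

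Next comes the coupling. Let $\pi_P$ be the Lévy--Prokhorov metric on $\mathcal M(\mathcal M(M))$ built from $d_{\mathcal M}$. On a compact (indeed any separable) metric space, $\pi_P$ metrizes weak$^*$ convergence, so $\pi_P(\Omega_{\mu_n},\Omega)\to0$. By Strassen's theorem, whenever $\pi_P(\Omega_{\mu_n},\Omega)<\epsilon$ there is a joining $J^n$ of $\Omega_{\mu_n}$ and $\Omega$ such that
\[
J^n\{(\nu,\tau):d_{\mathcal M}(\nu,\tau)\gs\epsilon\}\ls\epsilon .
\]
Since $(\pi_1)_*J^n=\Omega_{\mu_n}$ is carried by $\mathcal M_e(M,f_n)$ and $(\pi_2)_*J^n=\Omega$ is carried by $\mathcal M(M,f)$, the joining $J^n$ gives full mass to $\mathcal M_e(M,f_n)\times\mathcal M(M,f)$. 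Both sets are Borel: the first is a $G_\delta$ by \cite{Parth1961}, and the second is closed. It follows that $J^n(\Delta^\epsilon_e)\gs1-\epsilon$. To get the strict inequality, apply the same argument with $\epsilon/2$ in place of $\epsilon$, which gives $J^n(\Delta^{\epsilon/2}_e)\gs1-\epsilon/2$; since $\Delta^{\epsilon/2}_e\subseteq\Delta^\epsilon_e$, this yields $J^n(\Delta^\epsilon_e)>1-\epsilon$.

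Finally, choose $n(\epsilon)$ so large that both $d_{C^1}(f,f_n)<\epsilon$ (using $f_n\xrightarrow{C^1}f$) and $\pi_P(\Omega_{\mu_n},\Omega)<\epsilon/2$ hold for all $n\gs n(\epsilon)$ in the subsequence.

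The only genuinely nontrivial ingredient is Strassen's coupling theorem. If one prefers to avoid it, I would instead build $J^n$ by hand. Partition $\mathcal M(M)$ into finitely many Borel sets $P_1,\dots,P_r$ of diameter $<\epsilon$ whose boundaries are $\Omega$-null. Weak$^*$ convergence then gives $\Omega_{\mu_n}(P_i)\to\Omega(P_i)$. Couple the normalized restrictions $\Omega_{\mu_n}|_{P_i}$ and $\Omega|_{P_i}$ as product measures, up to the common mass $\min\{\Omega_{\mu_n}(P_i),\Omega(P_i)\}$, and couple the small leftover masses arbitrarily. The main care needed in this hands-on version is exactly this bookkeeping: checking that the leftover masses tend to $0$, and that the marginals are exactly $\Omega_{\mu_n}$ and $\Omega$.
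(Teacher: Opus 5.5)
Your argument is correct, and it takes a genuinely different route from the paper, because the paper gives no proof. It imports the statement from \cite[Corollary 4.27]{DMsymbolicextension} and writes the preliminary facts directly into the hypotheses: that the subsequential limit $\Omega$ is carried by $\mathcal M(M,f)$ and has barycenter $\mu$.

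You prove those two facts in your steps (i) and (ii). The only dynamical input is that weak$^*$ limits of $f_n$-invariant measures are $f$-invariant, which follows from uniform convergence $f_n\to f$. You then observe that the rest is a purely measure-theoretic statement on the compact metric space $(\mathcal M(M),d_{\mathcal M})$. Weak$^*$ convergence $\Omega_{\mu_n}\to\Omega$ means the L\'evy--Prokhorov distance tends to $0$. Strassen's theorem turns this into a coupling carried within $\epsilon$ of the diagonal, and intersecting with the full-measure Borel set $\mathcal M_e(M,f_n)\times\mathcal M(M,f)$ gives $\Delta^\epsilon_e$.

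What your route buys:
\begin{itemize}
\item It is self-contained.
\item It shows exactly which hypotheses are used: apart from the trivial clause $d_{C^1}(f,f_n)<\epsilon$, only $C^0$ convergence of $f_n$ to $f$ matters, and no regularity or entropy structure enters.
\item It makes explicit, via step (i), the point where varying maps $f_n$ matter, which the paper leaves implicit.
\item Your fallback construction removes even the dependence on Strassen. It uses a finite partition of $\mathcal M(M)$ into sets of diameter $<\epsilon$ with $\Omega$-null boundaries, couples pieces as product measures up to the common mass $\min\{\Omega_{\mu_n}(P_i),\Omega(P_i)\}$, and couples the vanishing leftover masses arbitrarily.
\end{itemize}
What the citation buys is brevity, together with consistency with the joining framework that the paper then uses, through the disintegrations $J^n_\tau$, in the passage from ergodic to invariant measures.
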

Let $\Upsilon_{1+\alpha}=\sup_{n\in \mathbb N}\{\|f_n\|_{C^{1+\alpha}}\}$. Fix $\gamma>0$. Since $\mathrm{Hyp}^{\mathrm{k}}(\mu)>0$, by the definitions one has $$\mathrm{Hyp}^{\mathrm{k}}(\tau)\gs\mathrm{Hyp}^{\mathrm{k}}(\mu)>0,\ \text{ for }\,\Omega-a.e.\ \tau\in\mathcal M(M,f).$$ Applying \Cref{thm 3.4} to $\tau$ and $f$, 
together with the upper semi-continuity property of the sum of Lyapunov exponents, we can obtain the neighborhoods $U_f(\tau)$ and $V_{\tau}$ as shown in the following lemma:
\begin{lemma}
     For any $\gamma>0$ and $\Upsilon_{1+\alpha}>0$ there exist a $C^1$ neighborhood $U_f(\tau)$ of $f$ and a neighborhood under the weak$^*$ topology $V_{\tau}$ of $\tau$ such that for any $f_\nu\in U_f(\tau)$ satisfying $\|f_\nu\|_{C^{1+\alpha}}\ls\Upsilon_{1+\alpha}$, $\nu\in V_{\tau}\cap\mathcal M_e(M,f_\nu)$ 
 and $$C_f:={\frac{10^9\cdot\dim M}{\alpha}\cdot(\log\|f\|_{C^1}+1)^4},$$ 
we have
\begin{itemize}
    \item $\sum_{i=1}^\rk\lambda_i(\tau,f)-\sum_{i=1}^\rk\lambda_i(\nu,f_\nu)+\gamma>0,$
    \item $$\tilde h^\rk_{\nu}(f_\nu)\ls h^\rk_{\tau}(f)+\frac{C_f}{\mathrm{Hyp}^{\mathrm{k}}(\tau)^4}\cdot(\sum_{i=1}^\rk\lambda_i(\tau,f)-\sum_{i=1}^\rk\lambda_i(\nu,f_\nu)+\gamma).$$
\end{itemize} 
\end{lemma}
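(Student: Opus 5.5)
We argue by contradiction, using Theorem \ref{thm 3.4} with the limit measure $\tau$ in place of $\mu$. This is allowed because Theorem \ref{thm 3.4} only needs the limit measure to be $f$-invariant with $\mathrm{Hyp}^{\rk}(\tau)>0$, not ergodic. The two bullet points are handled separately: the first comes from the upper semi-continuity of the top-$\rk$ exponent sum, the second from Theorem \ref{thm 3.4}.

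\emph{First bullet.} For an invariant measure $\nu$ of a diffeomorphism $h$, write
\[
\Sigma_\rk(\nu,h):=\sum_{i=1}^\rk\lambda_i(\nu,h)=\inf_{p\gs1}\frac1p\int\log\|\wedge^\rk D h^p\|\,\td\nu .
\]
The infimum holds by subadditivity. Each function $(h,\nu)\mapsto\frac1p\int\log\|\wedge^\rk Dh^p\|\,\td\nu$ is continuous for $h$ in the $C^1$ topology and $\nu$ in the weak$^*$ topology. Hence $\Sigma_\rk$ is jointly upper semi-continuous. Fix $p$ with $\frac1p\int\log\|\wedge^\rk Df^p\|\,\td\tau<\Sigma_\rk(\tau,f)+\gamma/2$. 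Then choose $U_f(\tau)$ and $V_\tau$ small enough that this quantity stays below $\Sigma_\rk(\tau,f)+\gamma$ for every $f_\nu\in U_f(\tau)$ and $\nu\in V_\tau$. This gives $\Sigma_\rk(\tau,f)-\Sigma_\rk(\nu,f_\nu)+\gamma>0$.

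\emph{Second bullet.} Suppose no neighborhoods work. Then there are $f_n\to f$ in $C^1$ with $\|f_n\|_{C^{1+\alpha}}\ls\Upsilon_{1+\alpha}$, and ergodic $\nu_n\in\mathcal M_e(M,f_n)$ with $\nu_n\to\tau$, such that
\[
\tilde h^\rk_{\nu_n}(f_n)>h^\rk_\tau(f)+\frac{C_f}{\mathrm{Hyp}^\rk(\tau)^4}\bigl(D_n+\gamma\bigr),\qquad D_n:=\Sigma_\rk(\tau,f)-\Sigma_\rk(\nu_n,f_n).
\]
The $D_n$ are bounded by $2\rk\log\Upsilon_1$, so we pass to a subsequence with $D_n\to D$; by the first bullet's argument, $D\gs0$. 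By Theorem \ref{thm 3.4} (second paragraph of Section \ref{sec 3.2}), $\mathrm{Hyp}^\rk(\nu_n)>0$ for large $n$, so $\tilde h^\rk_{\nu_n}(f_n)=h^\rk_{\nu_n}(f_n)$.

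There are now two cases.
\begin{enumerate}
\item If $\frac{C_f}{\mathrm{Hyp}^\rk(\tau)^4}D\ls\td\log\|f\|_{C^1}$, Theorem \ref{thm 3.4} gives
\[
\limsup_n h^\rk_{\nu_n}(f_n)\ls h^\rk_\tau(f)+\frac{C_f}{\mathrm{Hyp}^\rk(\tau)^4}D .
\]
The assumed inequality gives, in the limit, the same left side bounded below by $h^\rk_\tau(f)+\frac{C_f}{\mathrm{Hyp}^\rk(\tau)^4}(D+\gamma)$. This is a contradiction since $\gamma>0$.
\item If the error term is large, i.e.\ $\frac{C_f}{\mathrm{Hyp}^\rk(\tau)^4}D>\td\log\|f\|_{C^1}$, we use the Margulis--Ruelle bound $h^\rk_{\nu_n}(f_n)\ls\td\log\|f_n\|_{C^1}$. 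Since $\|f_n\|_{C^1}\to\|f\|_{C^1}$, the right-hand side of the assumed inequality eventually exceeds this bound, again a contradiction. We can also build this into the choice of $U_f(\tau)$ directly, by requiring $\td\log\|f_\nu\|_{C^1}<\td\log\|f\|_{C^1}+\frac{C_f}{\mathrm{Hyp}^\rk(\tau)^4}\gamma$.
\end{enumerate}

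The main obstacle is the uniformity. Theorem \ref{thm 3.4} is stated only along sequences, so the contradiction argument is what turns it into neighborhoods. For that, the constants must depend only on $f$ and $\tau$: $C_f$ must use $\|f\|_{C^1}$ rather than $\|f_n\|_{C^1}$, and the uniform bound $\Upsilon_{1+\alpha}$ must be built into the hypothesis, exactly as in the statement.
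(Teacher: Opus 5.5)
Your proposal is correct and follows essentially the paper's route: the paper gets $U_f(\tau)$ and $V_\tau$ in one line by applying Theorem~\ref{thm 3.4} to $\tau$ together with upper semi-continuity of $\sum_{i=1}^{\rk}\lambda_i$, and your contradiction argument is the natural way to make that step precise (with the Margulis--Ruelle bound covering the case excluded by the ``whenever'' clause). The only slip is citing the second paragraph of Section~\ref{sec 3.2} for $\mathrm{Hyp}^{\rk}(\nu_n)>0$, since that paragraph assumes continuity of the exponent sums; in your Case 1 the claim follows instead from $D<\mathrm{Hyp}^{\rk}(\tau)/2$ plus upper semi-continuity of the neighbouring exponent sums, and in Case 2 it is not needed, because $\tilde h^{\rk}_{\nu_n}(f_n)\ls \td\log\|f_n\|_{C^1}$ holds either way.
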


Choose a sufficiently small $\epsilon_2<\gamma^2$ such that there exists a set $G_1\subseteq\mathcal M(M,f)$ of $\Omega$ measure larger than $1-\gamma$, and for any $\tau\in G_1$, the diameters of $U_f(\tau)$ and $V_{\tau}$ are larger than $\epsilon_2$. Then we apply \Cref{lem:joining} to $\epsilon_2$ and obtain $n(\epsilon_2)$.  Thus, when $n\gs n(\epsilon_2)$, $f_n\in U_f(\tau)$ for any $\tau\in G_1$.   

Let $J^n_\tau$ denote the conditional probability measure of $J^n$ with $\tau$ fixed on the second coordinate. For $\Omega-$a.e. $\tau\in\mathcal M(M,f)$, $J^n_\tau$ is supported on the ergodic measures of $f_n$. Moreover, there exists a set $G_2\subseteq \mathcal M(M,f)$ of $\Omega$ measure at least $1-\sqrt{\epsilon_2}>1-\gamma$, and for any $\tau\in G_2$, $J^n_\tau$ is up to $\sqrt{\epsilon_2}<\gamma$ supported by the $\epsilon_2$-neighborhood of $\tau$ for any $n\gs n(\epsilon_2)$. Thus, $J^n_\tau$ is up to $\gamma$ supported by $V_{\tau}$ when $\tau\in G:= G_1\cap G_2$.

By previous discussions and direct calculations, for any $n\gs n(\epsilon_2)$, one has
\[
J^n\left(\{(\nu,\tau)\in\mathcal M_e(M,f_n)\times\mathcal M(M,f):\tau\in G,\nu\in V_{\tau}\}\right)\gs 1-3\gamma.
\]
Then we can calculate that
\begin{align*}
    &\tilde h^\rk_{\mu_n}(f_n)=\int\tilde h^\rk_{\nu}(f_n)\,\td J^n(\nu,\tau)\\[2mm]
    =&\int_{\tau\in G,\nu\in V_\tau} \tilde h^\rk_{\nu}(f_n)\,\td J^n(\nu,\tau)+\int_{else}\tilde h^\rk_{\nu}(f_n)\,\td J^n(\nu,\tau)\\[2mm]
    \ls&\int_{\tau\in G,\nu\in V_\tau}\tilde h^\rk_{\nu}(f_n)\,\td J^n(\nu,\tau)+ 3\gamma \cdot\rk \log\Upsilon_1\\[2mm]
    \ls&\int_{\tau\in G,\nu\in V_\tau}\left(h^\rk_{\tau}(f)+\frac{C_f}{\mathrm{Hyp}^{\mathrm{k}}(\tau)^4}\cdot(\sum_{i=1}^\rk\lambda_i(\tau,f)-\sum_{i=1}^\rk\lambda_i(\nu,f_n)+\gamma)\right)\,\td J^n(\nu,\tau)+3\gamma \cdot \rk\log\Upsilon_1.\\[2mm]
    \ls&\int_{\tau\in G,\nu\in V_\tau}\left(h^\rk_{\tau}(f)+\frac{C_f}{\mathrm{Hyp}^{\mathrm{k}}(\mu)^4}\cdot(\sum_{i=1}^\rk\lambda_i(\tau,f)-\sum_{i=1}^\rk\lambda_i(\nu,f_n)+\gamma)\right)\,\td J^n(\nu,\tau)+3\gamma \cdot \rk\log\Upsilon_1.
\end{align*}
Note that we have
\[
\int_{\tau\in G,\nu\in V_\tau}h^\rk_{\tau}(f)\,\td J^n(\nu,\tau)\ls\int h^\rk_{\tau}(f)\,\td J^n(\nu,\tau)=h^\rk_{\mu}(f),
\]
and calculate that
\begin{align*}
    &\int_{\tau\in G,\nu\in V_\tau}\frac{C_f}{\mathrm{Hyp}^{\mathrm{k}}(\mu)^4}\cdot(\sum_{i=1}^\rk\lambda_i(\tau,f)-\sum_{i=1}^\rk\lambda_i(\nu,f_n)+\gamma)\,\td J^n(\nu,\tau)\\[2mm]
    \ls&\int\frac{C_f}{\mathrm{Hyp}^{\mathrm{k}}(\mu)^4}\cdot(\sum_{i=1}^\rk\lambda_i(\tau,f)-\sum_{i=1}^\rk\lambda_i(\nu,f_n)+\gamma)\,\td J^n(\nu,\tau)+3\gamma\cdot \frac{C_f\cdot 3\rk\log\Upsilon_1}{\mathrm{Hyp}^{\mathrm{k}}(\mu)^4}\\[2mm]
    =&\frac{C_f}{\mathrm{Hyp}^{\mathrm{k}}(\mu)^4}\cdot\left( \sum_{i=1}^\rk\lambda_i(\mu,f)-\sum_{i=1}^\rk\lambda_i(\mu_n,f_n)+\gamma\cdot(9 \rk\log\Upsilon_1+1)  \right).
\end{align*}
Together, for any $\gamma>0$, we have
\begin{align*}
\tilde h^\rk_{\mu_n}(f_n)\ls& h^\rk_{\mu}(f)+\frac{C_f}{\mathrm{Hyp}^{\mathrm{k}}(\mu)^4}\cdot\left( \sum_{i=1}^\rk\lambda_i(\mu,f)-\sum_{i=1}^\rk\lambda_i(\mu_n,f_n)+\gamma\cdot(9 \rk\log\Upsilon_1+1)\right)\\&+3\gamma\cdot\rk \log\Upsilon_1
\end{align*}
when $n$ is large enough. Finally, letting $\gamma\to 0$ and $n\to\infty$, the proof of \Cref{thm best} is complete. 
\begin{remark}
 By carefully checking the proof, it follows that the $\rk$-dimensional partial entropy function $\tilde h^\rk_{\mu}(f)$ and $\tilde h^\rk_{\mu_n}(f_n)$ in \Cref{thm bestcor} and  \Cref{thm best}  can be replaced by the following upper extended $\rk$-dimensional partial entropy function $\hat h^\rk_{\mu}(f)$ and $\hat h^\rk_{\mu_n}(f_n)$ defined by:
    \[
    \hat h^\rk_{m}(f):=\int_{\mathrm{Hyp}^{\mathrm{k}}(m_x)>0} h^\rk_{m_x}(f)\,\td m(x)+\int_{\mathrm{Hyp}^{\mathrm{k}}(m_x)\ls0} \log \|f\|_{C^1} \,\td m(x).
    \]
    The original proof is still valid and will lead to the upper extended partial entropy version of \Cref{thm bestcor} and \Cref{thm best}, which are stronger than the original versions. We leave the details to the reader.
\end{remark}

In what follows, we present the proof of \Cref{USC of entropy}, which is related to the upper semi-continuity of the metric entropy. 
\begin{proof}[Proof of Corollary \ref{USC of entropy}] 
     The proof is similar to the proof of \Cref{thm best}. Let $\Omega_{\mu_n}$ and $\Omega$ be as above. Let $\Upsilon_{1+\alpha}=\sup_{n\in \mathbb N}\{\|f_n\|_{C^{1+\alpha}}\}$.  Since $$\essinf_{x\sim\mu}\lambda_u(x,f)>0\gs \esssup_{x\sim\mu}\lambda_{u+1}(x,f),$$ by the definitions one has for $\Omega-a.e.\ \tau\in\mathcal M(M,f)$,     $$\essinf_{x\sim\tau}\lambda_u(x,f)\gs\essinf_{x\sim\mu}\lambda_u(x,f)>0\gs \esssup_{x\sim\mu}\lambda_{u+1}(x,f)\gs\esssup_{x\sim\tau}\lambda_{u+1}(x,f).$$ Thus we have $\mathrm{Hyp}^{u}(\mu)=\essinf_{x\sim\mu}\lambda_u(x,f)$ and $\mathrm{Hyp}^{u}(\tau)=\essinf_{x\sim\tau}\lambda_u(x,f).$ Applying \Cref{thm 3.3} to $\tau$ and $f$, 
     we can obtain the neighborhoods $U_f(\tau)$ and $V_{\tau}$ as shown in the following lemma:
\begin{lemma}\label{lem entropy passage}
     For any $0<\gamma_0\ll \mathrm{Hyp}^{u}(\mu)$ and $\Upsilon_{1+\alpha}>0$, there exist $0<\gamma_1<\gamma_0$, a $C^1$ neighborhood $U_f(\tau)$ of $f$ and a neighborhood under the weak$^*$ topology $V_{\tau}$ of $\tau$ such that for any $f_\nu\in U_f(\tau)$ satisfying $\|f_\nu\|_{C^{1+\alpha}}\ls\Upsilon_{1+\alpha}$, $\nu\in V_{\tau}\cap\mathcal M_e(M,f_\nu)$, 
    if $$|\sum_{i=1}^u\lambda_i(\nu,f_\nu)-\sum_{i=1}^u\lambda_i(\tau,f)|\ls \gamma_1,$$
    then
$$h^u_{\nu}(f_\nu)\ls h^u_{\tau}(f)+\gamma_0.$$

\end{lemma}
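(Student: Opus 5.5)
This follows from \Cref{thm 3.3} (equivalently the ergodic estimate of \Cref{thm 3.4}) by contradiction, fixing $\tau$ with $\mathrm{Hyp}^u(\tau)=\essinf_{x\sim\tau}\lambda_u(x,f)>0$. Given $\gamma_0$, suppose no admissible $\gamma_1$, $U_f(\tau)$, $V_\tau$ exist. Then for each $j$ we can take $\gamma_1=1/j$ and neighborhoods of radius $1/j$. This produces $f_j$ with $d_{C^1}(f_j,f)<1/j$ and $\|f_j\|_{C^{1+\alpha}}\ls\Upsilon_{1+\alpha}$, together with ergodic $\nu_j\in\mathcal M_e(M,f_j)$, $d_{\mathcal M}(\nu_j,\tau)<1/j$, satisfying
\[
\Big|\sum_{i=1}^u\lambda_i(\nu_j,f_j)-\sum_{i=1}^u\lambda_i(\tau,f)\Big|\ls \tfrac1j
\quad\text{and}\quad
h^u_{\nu_j}(f_j)>h^u_\tau(f)+\gamma_0 .
\]
Thus $f_j\xrightarrow{C^1}f$ with uniformly bounded $C^{1+\alpha}$ norms, $\nu_j\xrightarrow{*}\tau$, each $\nu_j$ is ergodic, $\mathrm{Hyp}^u(\tau)>0$, and the top-$u$ exponent sums converge. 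These are exactly the hypotheses of \Cref{thm 3.3} with $\mu=\tau$, $\mu_n=\nu_j$, and $\rk=u$. That theorem gives that $h^u_{\nu_j}(f_j)$ is well defined for large $j$ and $\limsup_j h^u_{\nu_j}(f_j)\ls h^u_\tau(f)$, which contradicts the strict gap $\gamma_0$.

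We must check that $\tau$ need not be ergodic: \Cref{thm 3.3} only asks the limit to be invariant with $\mathrm{Hyp}^{\rk}>0$, and this holds for $\Omega$-a.e. $\tau$, as noted just before the lemma. We must also check that $h^u_{\nu_j}(f_j)$ is meaningful. The argument at the start of \Cref{sec 3.2} gives $\liminf_j\mathrm{Hyp}^u(\nu_j)\gs\mathrm{Hyp}^u(\tau)$, using upper semi-continuity of partial sums of exponents together with continuity of the top-$u$ sum. The requirement $0<\gamma_1<\gamma_0$ is harmless, since $1/j<\gamma_0$ eventually. The condition $\gamma_0\ll\mathrm{Hyp}^u(\mu)$ is not needed in this argument; it only enters later, where $h^u$ is identified with the full entropy.

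The main obstacle is the uniformity of the neighborhoods: they must work simultaneously for all $C^1$-nearby $f_\nu$ with the bound $\Upsilon_{1+\alpha}$, and not just along a single sequence. The contradiction argument handles this automatically, because a failure of uniformity is itself such a sequence, and \Cref{thm 3.3} is already stated for varying $f_n$.

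Alternatively, we could argue quantitatively with \Cref{thm 3.4}. We would choose $\gamma_1$ so that $C_f\gamma_1/\mathrm{Hyp}^u(\tau)^4<\gamma_0/2$, and absorb the remaining $\limsup$ slack into the neighborhoods. This is less clean, since \Cref{thm 3.4} is phrased through limsups, so I would present the sequential contradiction.
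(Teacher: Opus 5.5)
Your proposal is correct and follows the paper's route: the paper obtains this lemma simply by applying Theorem~\ref{thm 3.3} to $\tau$ and $f$. Your sequential contradiction argument just spells out the standard passage from that sequential statement to the uniform-neighborhood formulation.
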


By further shrinking $U_f(\tau)$ and $V_\tau$, and by the upper semi-continuity of the sum of Lyapunov exponents, without loss of generality we can assume that for $\gamma_1$ small enough, one has:
\begin{center}
    For any $f_\nu\in U_f(\tau)$ satisfying $\|f_\nu\|_{C^{1+\alpha}}\ls\Upsilon_{1+\alpha}$ and $\nu\in V_{\tau}\cap\mathcal M_e(M,f_\nu)$, 
    \begin{itemize}
        \item[a.] $\sum_{i=1}^u\lambda_i(\nu,f_\nu)-\sum_{i=1}^u\lambda_i(\tau,f)<\gamma_1^2$;\smallskip
        \item[b.] $|\sum_{i=1}^\td\lambda_i(\nu,f_\nu)-\sum_{i=1}^\td\lambda_i(\tau,f)|<\gamma_1$;\smallskip
        \item[c.] $\sum_{i=u+2}^\td\lambda_i(\nu,f_\nu)-\sum_{i=u+2}^\td\lambda_i(\tau,f)>-\gamma_1$,
    \end{itemize}
\end{center}

Choose $0<\gamma_2\ll\gamma_1$ small enough and $\epsilon_2<\gamma_2^2$ small enough such that there exists a set $G_1\subseteq\mathcal M(M,f)$ with $\Omega$ measure larger than $1-\gamma_2$ and for any $\tau\in G_1$, the diameters of $U_f(\tau)$ and $V_{\tau}$ are larger than $\epsilon_2$.  Apply \Cref{lem:joining} and we can obtain $n(\epsilon_2)$ as in the lemma. Thus, when $n\gs n(\epsilon_2)$, $f_n\in U_f(\tau)$.   

Let $J^n_\tau$ denote the conditional probability measure of $J^n$ with $\tau$ fixed on the second coordinate. For $\Omega-$a.e. $\tau\in\mathcal M(M,f)$, $J^n_\tau$ is supported on the ergodic measures of $f_n$. Moreover, there exists a set $G_2\subseteq \mathcal M(M,f)$ of $\Omega$ measure at least $1-\sqrt{\epsilon_2}>1-\gamma_2$, such that for any $\tau\in G_2$, $J^n_\tau$ is up to $\sqrt{\epsilon_2}<\gamma_2$ supported by the $\epsilon_2$-neighborhood of $\tau$ for any $n\gs n(\epsilon_2)$. Thus $J^n_\tau$ is up to $\gamma_2$ supported by $V_{\tau}$ when $\tau\in G_1\cap G_2$.
By previous discussion and a simple calculation, one has for any $n\gs n(\epsilon_2)$, 
\[
J^n\left(\{(\nu,\tau)\in\mathcal M_e(M,f_n)\times\mathcal M(M,f):\tau\in G_1\cap G_2,\nu\in V_{\tau}\}\right)\gs 1-3\gamma_2.
\]

Denote $$A^n_0:=\{(\nu,\tau)\in\mathcal M_e(M,f_n)\times\mathcal M(M,f):\tau\in G_1\cap G_2,\nu\in V_{\tau}\}.$$
By the assumption, there exists $n_1>n(\epsilon_2)$ such that for any $n\gs n_1$, $$\sum_{i=1}^{u}\lambda_i(\mu_n,f_n)>\sum_{i=1}^{u}\lambda_i(\mu,f)-\gamma_2.$$
Thus when $\gamma_2$ is small enough such that $\gamma_2\cdot(1+6\td\cdot\log\Upsilon_{1+\alpha})<\gamma_1^2$, one has
\begin{align*}
    &\int_{A^n_0}\left(\sum_{i=1}^{u}\lambda_i(\nu,f_n)-\sum_{i=1}^{u}\lambda_i(\tau,f)\right)\,\td J^n(\nu,\tau)\\[2mm]
    \gs& \int\left(\sum_{i=1}^{u}\lambda_i(\nu,f_n)-\sum_{i=1}^{u}\lambda_i(\tau,f)\right)\,\td J^n(\nu,\tau)-3\gamma_2\cdot 2\td\log\Upsilon_{1+\alpha}\\[2mm]
    \gs&-\gamma_2\cdot(1+6\td\cdot\log\Upsilon_{1+\alpha})\\[2mm]
    \gs& -\gamma_1^2.
\end{align*}
Together with 
$$\sum_{i=1}^u\lambda_i(\nu,f_n)-\sum_{i=1}^u\lambda_i(\tau,f)<\gamma_1^2,\quad\forall\, (\nu,\tau)\in A_0^n,$$
by a direct calculation, it follows that there exists $A^n_1\subseteq A^n_0$, $J^n(A^n_1)\gs 1-10\gamma_1$ such that for any $(\nu,\tau)\in A^n_1$, 
\[
|\sum_{i=1}^u\lambda_i(\nu,f_n)-\sum_{i=1}^u\lambda_i(\tau,f)|<\gamma_1.
\]
Together with property b, it follows that  for any $(\nu,\tau)\in A^n_1$,
\[
|\sum_{i=u+1}^\td\lambda_i(\nu,f_n)-\sum_{i=u+1}^\td\lambda_i(\tau,f)|< 2\gamma_1.
\]
Together with property c, for any $(\nu,\tau)\in A^n_1$,
\[
\lambda_{u+1}(\nu,f_n)\ls\lambda_{u+1}(\nu,f_n)-\lambda_{u+1}(\tau,f)<3\gamma_1.
\]
By the Ledrappier-Young entropy formula \cite[Theorem C]{LEDRAPPIER_YOUNG_B}, one has
$h^u_{\tau}(f)=h_{\tau}(f)$ and $h_{\nu}(f_n)\ls h^u_\nu(f_n)+\td\cdot3\gamma_1$.
Applying \Cref{lem entropy passage}, one can calculate that for any $n>n_1$,
\begin{align*}
    &h_{\mu_n}(f_n)-h_{\mu}(f)\\[2mm]
    =&\int \left(h_{\nu}(f_n)-h_{\tau}(f)\right)\,\td J^n(\nu,\tau)\\[2mm]
    \ls& \int_{A^n_1}\left(h_{\nu}(f_n)-h_{\tau}(f)\right)\,\td J^n(\nu,\tau)+10\gamma_1\td\cdot\log\Upsilon_{1+\alpha}\\[2mm]
    \ls&  \int_{A^n_1}\left(h^u_{\nu}(f_n)-h^u_{\tau}(f)\right)\,\td J^n(\nu,\tau)+10\gamma_1\td\cdot\log\Upsilon_{1+\alpha}+\td\cdot3\gamma_1\\[2mm]
    \ls& \gamma_0+10\gamma_1\td\cdot\log\Upsilon_{1+\alpha}+\td\cdot3\gamma_1\\[2mm]
    \ls& \gamma_0\cdot(10\td\cdot\log\Upsilon_{1+\alpha}+3\td+1).
\end{align*}
Taking the limit as $\gamma_0 \to 0$ yields \Cref{USC of entropy}, which completes the proof. As a byproduct of this proof, we also obtain \Cref{rem of positive LE sum}. Its derivation is similar and straightforward. The details are left to the reader.

\end{proof}

\subsection{Upper semi-continuity at generic ergodic measures}\label{sec 3.6}
In this section, we complete the proof of \Cref{USC at generic erg} using \Cref{USC of entropy} and \Cref{thm bestcor}.

\begin{proof}[Proof of \Cref{USC at generic erg}]
By previous discussions, one can assume that $\mu$ is ergodic without loss of generality.
Note that by the property of Lyapunov exponents, it is well-known that for any $m\in\mathbb N$, the sum of Lyapunov exponents is upper semi-continuous, that is
\begin{align*}
    \sum_{i=1}^m\lambda_i: \overline{\M_e(M,f)}&\rightarrow \mathbb R\\
    \mu&\rightarrow\sum_{i=1}^m\lambda_i(\mu,f)
\end{align*}
is upper semi-continuous. Thus, there exists a generic subset $A\subseteq\mathcal M_e(M,f)$ in $\overline{\M_e(M,f)}$ such that for any $m$, the map $\sum_{i=1}^m\lambda_i$ is continuous at any $\mu\in A$. It only remains to prove that properties 1 and 2 hold for any $\mu\in A$.

We first claim that for any $\mu\in A$, $\mu_n\in\mathcal M(M,f)$ and $\mu_n\to\mu$, and for any $m\in\mathbb N$, we have
\[
\lim_{n\to\infty}\sum_{i=1}^m\lambda_i(\mu_n,f)=\sum_{i=1}^m\lambda_i(\mu,f).
\]
By the definition of $A$, for any $\epsilon_2>0$ there exists an $0<\epsilon<\epsilon_2$ such that for any ergodic measure $\nu$ satisfying $d(\mu,\nu)<\epsilon$, 
\[
|\sum_{i=1}^m\lambda_i(\mu,f)-\sum_{i=1}^m\lambda_i(\nu,f)|<\epsilon_2,\ \ \forall\, 0<m\ls \td.
\]
Note that $\mu$ is ergodic, and thus the only ergodic decomposition of $\mu$ is itself. By \Cref{lem:joining}, for $0<\epsilon<\epsilon_2$ as above, there exists $n(\epsilon)\in\mathbb N$ such that for any $n\gs n(\epsilon)$, the ergodic decomposition of $\mu_n$ denoted as $\Omega_{\mu_n}$ satisfies $\Omega_{\mu_n}(B(\mu,\epsilon))>1-\epsilon$.
Thus,
\begin{align*}
    |\sum_{i=1}^m\lambda_i(\mu,f)-\sum_{i=1}^m\lambda_i(\mu_n,f)|&\ls |\sum_{i=1}^m\lambda_i(\mu,f)-\int_{\mathcal M_e(M,f)}\sum_{i=1}^m\lambda_i(\nu,f)\,\td\Omega_{\mu_n}(\nu)|\\[2mm]
    &\ls (1-\epsilon)\cdot \epsilon_2+\epsilon\cdot 2\td\log\|f\|_{C^1}.
\end{align*}
Taking $\epsilon_2\to0$, the claim follows. Together with \Cref{USC of entropy} and \Cref{thm bestcor}, it follows that for any $\mu\in A$, properties 1 and 2 hold. Thus, the proof is complete.
    
\end{proof}

\section{Applications and examples}\label{sec example}
In this section, we present the proofs for the applications presented in \Cref{sec: 1.2}.
\subsection{Measures with dominated splitting}$\,$\smallskip\label{subsection:dominated splitting}

\begin{proof}[Proof of \Cref{prop domniated}]
    By the continuity property of the dominated splitting, it follows that for any $x_n\in\supp\mu_n$, $x_0\in\supp\mu$ such that $x_n\to x_0$, one has $E^\rk_n(x_n)\to E^\rk(x_0)$. Moreover, according to the Oseledets Multiplicative Ergodic Theorem, the sum of the first $\rk$ Lyapunov exponents for an invariant measure is given by the integral of the $\rk$-dimensional Jacobian along the corresponding bundle. Thus, it follows that
    \[
    \sum_{i=1}^\rk\lambda_i(\mu_n,f_n)=\int \log \|\wedge^\rk(D_yf_n)|_{E^{\rk}_n(y)}\|\,\td\mu_n(y)
    \]
    and
    \[
    \sum_{i=1}^\rk\lambda_i(\mu,f)=\int \log \|\wedge^\rk(D_yf)|_{E^{\rk}(y)}\|\,\td\mu(y).
    \]
    Together with the continuity property of the bundles, it follows that
    \[
    \lim_{n\to\infty}\sum_{i=1}^\rk\lambda_i(\mu_n,f_n)=\sum_{i=1}^\rk\lambda_i(\mu,f).
    \]
     Since $\text{Hyp}^\rk(\mu) > 0$ is guaranteed by the property of $L$-dominated splitting between $E^\rk$ and $F$, all conditions of \Cref{thm bestcor} are satisfied. Then applying \Cref{thm bestcor}, the proof is complete.
\end{proof}

\subsection{SRB measures}$\,$\smallskip
\begin{proof}[Proof of \Cref{thm of SRB}]
    We complete the proof of \Cref{thm of SRB} by \Cref{USC of entropy} and \Cref{rem of positive LE sum}.
  Applying  \Cref{USC of entropy}, one has
 \[
\limsup_{n\to\infty} h_{\mu_n}(f_n)\ls h_{\mu}(f).
 \]
 By \Cref{rem of positive LE sum}, it follows that
 \[
    \int \sum_{i:\lambda_i(x,f_n)>0}\lambda_i(x,f_n) \,\td\mu_n(x)\xrightarrow{n\to\infty}\int \sum_{i:\lambda_i(x,f)>0}\lambda_i(x,f) \,\td\mu(x).
\]
Since $\mu_n$ is an SRB measure of $f_n$, by the definition one has
\[
h_{\mu_n}(f_n)=\int \sum_{i:\lambda_i(x,f_n)>0}\lambda_i(x,f_n) \,\td\mu_n(x).
\]
Letting $n\to\infty$, we obtain
\[
h_{\mu}(f)\gs \int \sum_{i:\lambda_i(x,f)>0}\lambda_i(x,f) \,\td\mu(x).
\]
However, by the inequality of Margulis and Ruelle, see \cite{1978Ruelle-inequality}, it follows that
\[
h_{\mu}(f)\ls \int \sum_{i:\lambda_i(x,f)>0}\lambda_i(x,f) \,\td\mu(x).
\]
Thus 
\[
h_{\mu}(f)= \int \sum_{i:\lambda_i(x,f)>0}\lambda_i(x,f) \,\td\mu(x),
\]
which implies that $\mu$ is an SRB measure of $f$.
\end{proof}
\subsection{Average expanding diffeomorphisms}$\,$\smallskip

Before presenting the proof, we remark that it is direct to check that $\mathrm{Def}(\cdot)$ is a lower semi-continuous function, i.e., if $f_n \to f$ in the $C^1$ topology, then $$\liminf_{n\to \infty} \mathrm{Def}(f_n)\gs \mathrm{Def}(f).$$ Together with \Cref{Rek.UEAP}, it follows that $\mathrm{Def}(f)>0$ is a $C^2$ open property.

We proceed to introduce the Gibbs $u$-states. An invariant measure is a \emph{Gibbs $u$-state} if its conditional measures along the strong-unstable foliation are equivalent to the Lebesgue measure on the strong-unstable leaves. We denote the Gibbs $u$-states by the notation $\mathrm{Gibbs}^u(f)$. By \cite{bonatti2005dynamics}, $\mathrm{Gibbs}^u(f)$ is a non-empty, convex and compact subset of the invariant probability measure space. As a direct corollary of the Ledrappier-Young entropy formula \cite[Theorem C]{LEDRAPPIER_YOUNG_B}, every SRB measure is a Gibbs $u$-state. 
From \cite{COSY}, we introduce the following results on the center Lyapunov exponents of the Gibbs $u$-states for diffeomorphisms with UEAP.

\begin{corollary}[Corollary 4.1 of \cite{COSY}]\label{Cor. hyperbolic Gibbs} 
Let $f\in \mathrm{PH}^2_{\mathrm{bun}}(M)$ be a diffeomorphism with UEAP and satisfy $\mathrm{Def}(f)>0$. Let $\mu \in \mathrm{Gibbs}^u(f)$, then
$$\essinf_{x\sim\mu} \lambda_{\dim E^u+1}(x,f)\gs C_{U}$$
and 
$$\esssup_{x\sim\mu}\lambda_{\dim E^u+2}(x,f)\ls -\mathrm{Def}(f)<0.$$ In particular, every ergodic Gibbs $u$-state of $f$ is a hyperbolic measure, with exponents uniformly away from zero.
\end{corollary}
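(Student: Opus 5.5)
The plan is to handle the two center exponents separately. The lower bound on $\lambda_{\dim E^u+1}$ comes from the UEAP, averaged over unstable disks against the conditional measures of $\mu$. The upper bound on $\lambda_{\dim E^u+2}$ then follows from the determinant bound built into $\mathrm{Def}(f)$. Write $u=\dim E^u$; since $\dim E^c=2$, the two center exponents at $\mu$-a.e.\ $x$ are $\lambda_{u+1}(x,f)\gs\lambda_{u+2}(x,f)$.

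\emph{Reduction to ergodic Gibbs $u$-states.} By the standard theory \cite{bonatti2005dynamics}, almost every ergodic component of a Gibbs $u$-state is again a Gibbs $u$-state. So the essential infimum and supremum reduce to bounds for an ergodic $\mu\in\mathrm{Gibbs}^u(f)$, on which the exponents are constant.

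\emph{Lower bound $\lambda_{u+1}(\mu)\gs C_U$.} For $\mu$-a.e.\ $q$ and every unit $w\in E^c_q$ we have
$$\lambda_{u+1}(\mu)\gs\limsup_n\tfrac1n\log\|D_qf^n(w)\|,$$
because $\lambda_{u+1}$ is the top exponent on $E^c$. Apply this with $w=\chi_{q,p}(v)$. Take a small unstable disk $D_p$ on which the conditional measure of $\mu$ has density $\rho$ with respect to $\mathrm{Leb}^u$; the density is log-Hölder and bounded above and below. The integrands $\frac1n\log\|D_qf^n\chi_{q,p}(v)\|$ are bounded by $\log\|f\|_{C^1}$, so reverse Fatou applies and gives
$$\lambda_{u+1}(\mu)\gs\limsup_n\frac1n\int_{D_p}\log\|D_qf^n\chi_{q,p}(v)\|\,\rho\,\td\mathrm{Leb}^u(q).$$

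The remaining issue is that the UEAP is stated for normalized Lebesgue measure, not for $\rho\,\mathrm{Leb}^u$. To handle this, cover $D_p$ by finitely many subdisks $D_{p_j}$ on which $\rho$ is nearly constant, up to a factor $\e^{\delta}$. The UEAP applies to each subdisk. On $D_{p_j}$ the field $\chi_{\cdot,p}(v)$ equals $\chi_{\cdot,p_j}(v')$ up to normalization, where $v'$ is the transported vector; this uses the cocycle property of the $u$-holonomies from \cite{SaghinRadu2025}. Summing the subdisk bounds and letting $\delta\to0$ gives $\lambda_{u+1}(\mu)\gs C_U$. I expect this step to be the main obstacle: turning a disk-averaged $\liminf$ into an almost-everywhere bound. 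In particular, the $\limsup$ and $\liminf$ must be interchanged on each subdisk separately, which needs care beyond a single global reverse-Fatou application.

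\emph{Upper bound $\lambda_{u+2}(\mu)\ls-\mathrm{Def}(f)$.} For $\mu$-a.e.\ $x$, Oseledets gives
$$\lambda_{u+1}+\lambda_{u+2}=\lim_n\tfrac1n\log|\det D_xf^n|_{E^c(x)}|\ls\sup_{y}\limsup_n\tfrac1n\log|\det D_yf^n|_{E^c(y)}|=C_U-\mathrm{Def}(f).$$
Subtracting the bound $\lambda_{u+1}\gs C_U$ yields $\lambda_{u+2}\ls-\mathrm{Def}(f)<0$.

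\emph{Hyperbolicity.} The $E^u$ exponents are positive, the $E^s$ exponents are negative, and the two center exponents are bounded away from zero by $C_U$ and $\mathrm{Def}(f)$. Hence every ergodic Gibbs $u$-state is hyperbolic with exponents uniformly away from zero.
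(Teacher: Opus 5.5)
Your proof is correct. The paper gives no argument for this statement: it is quoted from \cite{COSY} and used as a black box in the proof of Proposition~\ref{Pro.Gibbs uppersemi}, so there is no in-paper route to compare with. Your structure is the natural one: reduce to ergodic components, use UEAP plus the Gibbs $u$-property for the top center exponent, then use the determinant bound encoded in $\mathrm{Def}(f)$ for the bottom one. The upper-bound and hyperbolicity steps are fine as written.

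The step you flag as the main obstacle is not one, and the density comparison can be dropped altogether. The conditional measures of a Gibbs $u$-state are \emph{equivalent} to $\mathrm{Leb}^u$. So the set of $q$ with $\frac1n\log\|Df^n|_{E^c(q)}\|\to\lambda_{u+1}(\mu)$, which has full $\mu$-measure, also has full $\mathrm{Leb}^u$-measure on almost every local unstable plaque. Take any disk $D_p$ inside such a plaque and apply reverse Fatou to normalized $\mathrm{Leb}^u$ on $D_p$. This gives directly
\begin{align*}
C_U&<\liminf_{n\to\infty}\frac{1}{n\,\mathrm{Leb}^u(D_p)}\int_{D_p}\log\|D_qf^n(\chi_{q,p}(v))\|\,\td\mathrm{Leb}^u(q)\\
&\ls\limsup_{n\to\infty}\frac{1}{n\,\mathrm{Leb}^u(D_p)}\int_{D_p}\log\|D_qf^n(\chi_{q,p}(v))\|\,\td\mathrm{Leb}^u(q)\ls\lambda_{u+1}(\mu).
\end{align*}
No interchange of $\liminf$ and $\limsup$ is needed: the UEAP $\liminf$ is simply bounded by the $\limsup$ that reverse Fatou controls. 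Neither the subdisk decomposition nor the holonomy cocycle identity is needed either.

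Even within your route the cocycle identity is superfluous. The bound $\limsup_n\frac1n\log\|D_qf^nw\|\ls\lambda_{u+1}(\mu)$ holds for every unit $w\in E^c_q$, so each subdisk could carry its own field $\chi_{\cdot,p_j}(v_j)$. The finitely many subdisk bounds then combine because the $\liminf$ of a finite sum dominates the sum of the $\liminf$s. As a bonus, the direct argument yields the strict inequality $\lambda_{u+1}(\mu)>C_U$ for every ergodic Gibbs $u$-state.
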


\begin{theorem}[Theorem C of \cite{COSY}]\label{Thm. continuty of exponents}
Let $f\in \mathrm{PH}^2_{\mathrm{bun}}(M)$ be a diffeomorphism with UEAP and satisfy $\mathrm{Def}(f)>0$.  Let $\{f_n\}_{n\in\mathbb N} \subseteq \mathrm{PH}^2_{\mathrm{bun}}(M)$ be a sequence of diffeomorphisms converging to $f$ in the $C^2$-topology. Moreover, suppose that a sequence $\mu_n\in\mathrm{Gibbs}^u(f_n)$ converges to some measure $\mu$ in the weak$^*$ topology. Then
$$\lim_{n\to \infty} \lambda_{\dim E^u+1}(\mu_n,f_n) = \lambda_{\dim E^u+1}(\mu,f)$$ and $$\lim_{n\to \infty}\lambda_{\dim E^u+2}(\mu_n,f_n) = \lambda_{\dim E^u+2}(\mu,f).$$
\end{theorem}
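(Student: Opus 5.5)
The plan is to reduce everything to the top center exponent $\lambda_{\dim E^u+1}$. First I use that the center bundle $E^c$ of the partially hyperbolic splitting is continuous and varies continuously with $f$ in the $C^1$ topology. So the sum
\[
\lambda_{\dim E^u+1}(\mu,f)+\lambda_{\dim E^u+2}(\mu,f)=\int\log|\det(D_xf|_{E^c(x)})|\,\td\mu(x)
\]
is continuous in the pair $(f,\mu)$, hence $\lim_n(\lambda_{\dim E^u+1}+\lambda_{\dim E^u+2})(\mu_n,f_n)$ equals the same sum for $(\mu,f)$. Next, $\lambda_{\dim E^u+1}(\mu,f)=\inf_N\frac1N\int\log\|Df^N|_{E^c}\|\,\td\mu$ is an infimum of continuous functions of $(f,\mu)$, so it is upper semi-continuous: $\limsup_n\lambda_{\dim E^u+1}(\mu_n,f_n)\ls\lambda_{\dim E^u+1}(\mu,f)$. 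It therefore suffices to prove the reverse inequality
\[
\liminf_n\lambda_{\dim E^u+1}(\mu_n,f_n)\gs\lambda_{\dim E^u+1}(\mu,f);
\]
the statement for $\lambda_{\dim E^u+2}$ then follows from the continuity of the sum. I also record that Gibbs $u$-states are closed under this kind of limit (uniform Hölder distortion bounds on strong-unstable leaves for $f_n$ in a $C^2$ neighbourhood), so $\mu\in\mathrm{Gibbs}^u(f)$. By \Cref{Rek.UEAP} and the lower semicontinuity of $\mathrm{Def}$, \Cref{Cor. hyperbolic Gibbs} applies uniformly to all $f_n$ with the same constants $C_U$ and $\mathrm{Def}(f)/2$, giving a uniform gap between the two center exponents.

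For the lower bound I will express the top center exponent of a Gibbs $u$-state through the Hölder holonomy section $\chi_{\cdot,p}(v)$ on unstable disks. Disintegrate $\mu_n$ along local unstable disks $D_p$; the conditionals have densities with respect to $\mathrm{Leb}^u$ bounded above and below uniformly in $n$. I claim that for a.e. disk and every unit $v\in E^c_p$, the vector $\chi_{q,p}(v)$ lies outside the slow Oseledets direction for $\mathrm{Leb}^u$-a.e. $q$. Otherwise, on a set of positive leaf measure the vector would grow at rate $\ls-\mathrm{Def}$, and with bounded densities and bounded one-step growth this contradicts the UEAP average lower bound $C_U$. Granting this,
\[
\lambda_{\dim E^u+1}(\nu,g)=\lim_{N\to\infty}\frac1N\int\log\|D_qg^N\chi_{q,p}(v)\|\,\td\nu(q)
\]
holds for $\nu=\mu_n,g=f_n$ and for $\nu=\mu,g=f$. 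For each fixed $N$, the integrand depends continuously on $q$, on the disk and on $g$, since the $u$-holonomies vary continuously with $g$ in the $C^2$ topology under center bunching. Hence the finite-$N$ quantity $I_N(\nu,g)$ is continuous along the sequence.

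The main obstacle is to make the limit in $N$ uniform in $n$, that is, to show $I_N(\mu_n,f_n)\ls\lambda_{\dim E^u+1}(\mu_n,f_n)+o_N(1)$ uniformly. I would decompose $\chi=a\,e_1+b\,e_2$ in the Oseledets center basis, with $e_1$ fast and $e_2$ slow. Then $\log\|Dg^N\chi\|\ls\log\|Dg^N|_{e_1}\|+\log(1+|b|/|a|)+(\text{exponentially small})$. The uniform exponent gap, together with the UEAP lower bound applied to the pushed-forward disks $g^N D_p$, should give a uniform-in-$n$ bound $\int\log^+(|b|/|a|)\,\td\mu_n\ls C$. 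This follows the Pliss/angle argument of \Cref{uniform size prop}: the center-stable Oseledets direction has small measure where it is close to the holonomy direction. Dividing by $N$ kills this term. Combined with $\frac1N\int\log\|Dg^N|_{e_1}\|\,\td\mu_n=\lambda_{\dim E^u+1}(\mu_n,f_n)$, this yields
\[
\lambda_{\dim E^u+1}(\mu,f)=\lim_N I_N(\mu,f)=\lim_N\lim_n I_N(\mu_n,f_n)\ls\liminf_n\lambda_{\dim E^u+1}(\mu_n,f_n),
\]
which is the required reverse inequality.
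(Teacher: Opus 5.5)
The paper gives no proof of this statement; it imports it as Theorem C of \cite{COSY}. So your argument has to stand on its own.

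Several parts are sound: the continuity of $\int\log|\det Df|_{E^c}|$, the upper semi-continuity of $\lambda_{\dim E^u+1}$, the resulting reduction to $\liminf_n\lambda_{\dim E^u+1}(\mu_n,f_n)\gs\lambda_{\dim E^u+1}(\mu,f)$, and the qualitative claim that a $u$-holonomy section avoids the slow center direction $E^c_2$ almost everywhere. Two minor points need repair:
\begin{itemize}
\item For the holonomy-section claim you must localize. Apply UEAP to a small subdisk around a Lebesgue density point of the bad set, using that holonomy sections restrict to holonomy sections. A bad set of small positive measure on the whole disk does not contradict the bound $C_U$, because the growth rate $\lambda_{\dim E^u+1}$ on the rest of the disk can exceed $C_U$ by far.
\item The convergence $I_N(\mu_n,f_n)\to I_N(\mu,f)$ only makes sense if the base vectors are chosen continuously. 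For instance, use foliated boxes of the continuous strong unstable foliation, a continuous field on the transversals, and box boundaries of $\mu$-measure zero. For an arbitrary measurable choice, $I_N(\nu,g)$ is not even determined by $(\nu,g)$.
\end{itemize}

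The genuine gap is the step you call the main obstacle, and it carries essentially all the content of the theorem. You need $I_N(\mu_n,f_n)\ls\lambda_{\dim E^u+1}(\mu_n,f_n)+\delta_N$ with $\delta_N\to0$ uniformly in $n$, and the proposal does not prove it.

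Invoking the Pliss/angle argument of \Cref{uniform size prop} is circular. That proposition assumes $\sum_{i\ls k}\lambda_i(\mu_n,f_n)\to\sum_{i\ls k}\lambda_i(\mu,f)$. For $k=\dim E^u+1$ this is equivalent to the convergence of $\lambda_{\dim E^u+1}$ you are trying to prove, since the $E^u$ part is continuous. The remark after that proposition even states that uniform blocks and continuity of the exponent sum are equivalent.

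Your displayed inequality is also not right. From $\chi=ae_1+be_2$ one only gets $\log\|Dg^N\chi\|\ls\log\|Dg^Ne_1\|+\log(|a|+|b|r_N)$, where $r_N=\|Dg^Ne_2\|/\|Dg^Ne_1\|$. This exposes two problems:
\begin{itemize}
\item You dropped $\log|a|$, which can be as large as $-\log\sin\angle(e_1,e_2)$.
\item $r_N$ becomes small only after a time governed by the tempered, non-uniform Pesin constants of $(f_n,\mu_n)$.
\end{itemize}
Controlling either quantity uniformly in $n$ is again a uniform Pesin-block statement for $E^c_1\oplus E^c_2$.

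What is really needed is a quantitative, uniform-in-$n$ use of UEAP. Let $\nu$ be invariant with center Oseledets splitting $E^c_1\oplus E^c_2$, and let $\chi$ be a unit vector field in $E^c$. Then the area identity gives
\[
\int\log\|Dg^N\chi\|\,\td\nu=N\lambda_{\dim E^u+1}(\nu,g)+\int\Bigl(\log\sin\angle\bigl(\chi(q),E^c_2(q)\bigr)-\log\sin\angle\bigl(D_qg^N\chi(q),E^c_2(g^Nq)\bigr)\Bigr)\,\td\nu(q).
\]
In addition, $Dg^N$ maps $u$-holonomy sections to $u$-holonomy sections. Together these reduce your obstacle, with $\delta_N=C/N$, to a bound $C$ on $\int-\log\sin\angle(\psi,E^c_2)\,\td\mu_n$ over holonomy-section fields $\psi$, uniform in $n$. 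That is a quantitative non-concentration version of your qualitative claim, and it does not follow from UEAP without substantial further work.

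Alternatively, lift $\mu_n$ along the fast direction $E^c_1$, which is itself invariant under $u$-holonomies. Then $I_N(\mu_n,f_n)=\lambda_{\dim E^u+1}(\mu_n,f_n)$ for every $N$, and the uniformity issue disappears. The difficulty moves instead to showing that a weak$^*$ limit of these lifts is still $u$-holonomy invariant, using the Gibbs densities and the convergence of the holonomies. After that, your UEAP argument rules out mass on $E^c_2$.

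Either route needs an argument that the proposal does not contain.
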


As an immediate corollary of \Cref{USC of entropy}, we derive the following proposition concerning the upper semi-continuity of the metric entropy for Gibbs u-states; the proof is omitted.

\begin{proposition}\label{Pro.Gibbs uppersemientropy}
Let $f\in \mathrm{PH}^2_{\mathrm{bun}}(M)$ be a diffeomorphism with UEAP and satisfy $\mathrm{Def}(f)>0$. Let $\{f_n\}_{n\in\mathbb N} \subseteq \mathrm{PH}^2_{\mathrm{bun}}(M)$ be a sequence of diffeomorphisms converging to $f$ in the $C^2$-topology. Moreover, suppose that a sequence $\mu_n\in\mathrm{Gibbs}^u(f_n)$ converges to some measure $\mu$ in the weak$^*$ topology. Then
$$\limsup_{n\to \infty} h_{\mu_n}(f_n)\ls h_{\mu}(f).$$
\end{proposition}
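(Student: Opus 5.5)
The plan is to deduce \Cref{Pro.Gibbs uppersemientropy} from \Cref{USC of entropy}, taking $u:=\dim E^u+1$. First the regularity hypotheses. Since $f_n\to f$ in $C^2$, the sequence is $C^1$-convergent and $\sup_n\|f_n\|_{C^{1+\alpha}}\ls\sup_n\|f_n\|_{C^2}<\infty$ for any $\alpha\in(0,1)$. Hence all $f_n,f$ lie in $\mathrm{Diff}^{1+\alpha}(M)$ with uniformly bounded norms, and $\mu_n\xrightarrow{*}\mu$ is given.

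Next, the gap condition at the limit. Since $\mathrm{Gibbs}^u$ is closed under weak$^*$ limits along $C^2$-convergent partially hyperbolic sequences, $\mu\in\mathrm{Gibbs}^u(f)$. This is the standard closedness of Gibbs $u$-states (\cite{bonatti2005dynamics}), which uses the continuity of the strong unstable foliation. Then \Cref{Cor. hyperbolic Gibbs} gives
\[
\essinf_{x\sim\mu}\lambda_u(x,f)\gs C_U>0>-\mathrm{Def}(f)\gs\esssup_{x\sim\mu}\lambda_{u+1}(x,f).
\]
This is exactly the first hypothesis of \Cref{USC of entropy}.

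For the continuity hypothesis, split $\sum_{i=1}^u\lambda_i=\sum_{i\ls\dim E^u}\lambda_i+\lambda_u$. The first block equals $\int\log|\det Df|_{E^u}|$ for every invariant measure. The bundle $E^u$ is continuous on $M$ and varies continuously with the diffeomorphism under $C^1$ perturbation, by dominated splitting. So $\int\log|\det D_yf_n|_{E^u_n(y)}|\,\td\mu_n\to\int\log|\det D_yf|_{E^u(y)}|\,\td\mu$ by uniform convergence of the integrands together with weak$^*$ convergence. The term $\lambda_{u}(\mu_n,f_n)\to\lambda_u(\mu,f)$ is \Cref{Thm. continuty of exponents}. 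Summing gives $\sum_{i=1}^u\lambda_i(\mu_n,f_n)\to\sum_{i=1}^u\lambda_i(\mu,f)$, and \Cref{USC of entropy} yields $\limsup_n h_{\mu_n}(f_n)\ls h_\mu(f)$.

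The main obstacle is the step $\mu\in\mathrm{Gibbs}^u(f)$ for a limit along a varying sequence $f_n$, which is needed to invoke \Cref{Cor. hyperbolic Gibbs} for $\mu$. The first thing to try is to quote the perturbative closedness of Gibbs $u$-states from \cite{bonatti2005dynamics} or \cite{COSY}, which is implicitly used in \Cref{Thm. continuty of exponents}. A fallback avoids this step: pass to the limit in the bounds $\lambda_u(\nu,f_n)\gs C_U$ and $\lambda_{u+1}\ls-\mathrm{Def}(f_n)$ via the joining \Cref{lem:joining} and semicontinuity of exponents. The $C^2$-openness of UEAP with the same $C_U$ (\Cref{Rek.UEAP}) and the lower semicontinuity of $\mathrm{Def}$ keep these bounds uniform in $n$.
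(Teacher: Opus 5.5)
Your proposal is correct and follows the paper's argument, which the paper omits and calls an immediate corollary of \Cref{USC of entropy} with $u=\dim E^u+1$. The paper's ingredients are exactly yours: the limit $\mu$ is again a Gibbs $u$-state (cited to \cite{YANG_expanding_entropy} in the proof of \Cref{Pro.Gibbs uppersemi}), \Cref{Cor. hyperbolic Gibbs} gives the gap hypothesis, and \Cref{Thm. continuty of exponents} together with the continuity of $\int\log|\det(Df|_{E^u})|$ gives the convergence of $\sum_{i\ls u}\lambda_i$.

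Your fallback, however, would not work as sketched. Semicontinuity plus the joining only transfers bounds on integrated exponents to the measures $\tau$ charged by the limit $\Omega$, and these $\tau$ need not be ergodic. So it does not yield the essential infimum/supremum bounds over the ergodic components of $\mu$ that \Cref{USC of entropy} requires; compare the example in Appendix~\ref{appendix a5}. The closedness of Gibbs $u$-states is therefore genuinely needed.
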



\begin{proof}[Proof of \Cref{Pro.Gibbs uppersemi}]
Since any SRB measure of $f_n$ is a Gibbs $u$-state, its ergodic decomposition are all Gibbs $u$-states. Moreover, since the limit of Gibbs $u$-states remains to be a Gibbs $u$-state (cf. \cite{YANG_expanding_entropy}), it follows that $\mu$ is a Gibbs $u$-state of $f$. By Corollary~\ref{Cor. hyperbolic Gibbs}, the center Lyapunov exponents of $\mu$ satisfy $$\essinf_{x\sim\mu} \lambda_{\dim E^u+1}(x,f)\gs C_{U}$$
and 
$$\esssup_{x\sim\mu}\lambda_{\dim E^u+2}(x,f)\ls -\mathrm{Def}(f)<0.$$
By the Pesin entropy formula for SRB measures, 
$$h_{\mu_n}(f_n)=\int \log \mid\det Df_n\mid_{E^u(x,f_n)} \mid d\mu_n(x) +\lambda_{\dim E^u+1}(\mu_n,f_n).$$
Together with Proposition~\ref{Pro.Gibbs uppersemientropy}, it follows that
\begin{align*}
    h_\mu(f)&\gs \limsup h_{\mu_n}(f_n)=\limsup \int \log \mid\det Df_n\mid_{E^u(x,f_n)} \mid d\mu_n(x) +\lambda_{\dim E^u+1}(\mu_n,f_n)\\[2mm]
    &=\int \log \mid\det Df\mid_{E^u(x,f)} \mid d\mu(x) +\lambda_{\dim E^u+1}(\mu,f).
\end{align*}
The last equality comes from the continuity of Lyapunov exponents in Theorem~\ref{Thm. continuty of exponents} and the fact that the unstable bundle $E^u$ varies continuously with respect to the partially hyperbolic diffeomorphisms.

By Corollary~\ref{Cor. hyperbolic Gibbs} and Ruelle inequality, we have 
$$h_\mu(f)\ls \int \log \mid\det Df\mid_{E^u(x,f)} \mid d\mu(x) +\lambda_{\dim E^u+1}(\mu,f).$$

By the previous discussion, we have proved that $$h_\mu(f)=\int \log \mid\det Df\mid_{E^u(x)} \mid d\mu(x) +\lambda_{\dim E^u+1}(\mu,f),$$ thus by \cite{LEDRAPPIER_YOUNG_A}, $\mu$ is an SRB measure of $f$.

From now on, we suppose $\mu$ is the unique SRB measure of $f$. Since $\mathrm{Def}(f)>0$ is a $C^2$ open property, $f_n$ are diffeomorphisms with UEAP and satisfy $\mathrm{Def} (f_n) > 0$ for $n$ large enough. Without loss of generality, we may assume all the diffeomorphisms $f_n$ satisfy these properties. Thus, by Theorem~\ref{Thm. SRB measures}, $f_n$ admits finitely many SRB measures, which are physical measures and the union of whose basins has full volume.

We prove by contradiction, i.e., we suppose there exists a sequence of diffeomorphisms $f_n \to f$ in the $C^2$ topology, and each $f_n$ admits two distinct ergodic SRB measures $\mu_n$ and $\mu_n^\prime$. By our previous discussion on the continuity of SRB measures, we have $\lim \mu_n=\mu$ and $\lim \mu_n^\prime=\mu$. 

Because SRB measures are Gibbs $u$-states, by Corollary~\ref{Cor. hyperbolic Gibbs}, Theorem~\ref{Thm. continuty of exponents} together with \Cref{uniform size prop} and  \Cref{remark unifrom size} (see also the SPR study in \cite[Section 3]{SPR2025}), for any $\rho>0$, there are Pesin blocks $\Lambda_n$ and $\Lambda$ of $f_n$ and $f$ respectively with uniform parameters, such that $\mu_n(\Lambda_n)>1-\rho$, $\mu(\Lambda)>1-\rho$ and $\limsup \Lambda_n=\Lambda$. 

By passing to a subsequence, we may assume that $\Lambda_n$ converging to a compact set $\Lambda_0\subset \Lambda$ in the Hausdorff topology. By Katok's closing lemma  \cite{katok1980}, there exists a hyperbolic periodic point $p$ of $f$ close to $\Lambda_0$, such that there exists a positive $\mu$-measure set $A\subset\Lambda_0$ which is close to $p$, and for any $q\in A$,
\begin{itemize}
    \item the local stable manifold of $p$ has non-trivial transverse intersection with the local unstable manifold of $q$;\smallskip
    \item and the local unstable manifold of $p$ has non-trivial transverse intersection with the local stable manifold of $q$.
\end{itemize}
 Thus, the measure $\mu$ is homoclinicly related with $\delta_{\mathrm{Orb}_f(p)}$ in the sense of \cite{BCS}, see \cite{BCS} for the precise definition. 

For sufficiently large $n$, we denote $p_n$ as the analytic continuation of $p$ for the perturbed diffeomorphism $f_n$.
Since the Pesin stable manifold and unstable manifold vary continuously with respect to $\Lambda_n$, for $n$ large enough, one has
\begin{itemize}
    \item  the local stable manifold of $p_n$ has non-trivial intersection with local unstable manifolds of points in $\Lambda_n$ which is close to $p_n$;\smallskip
    \item and the local unstable manifold of $p_n$ has non-trivial intersection with local stable manifolds of points in $\Lambda_n$ which is close to $p_n$. 
\end{itemize} 
This implies that $\mu_n$ is homoclinically related to $\delta_{\mathrm{Orb}_{f_n}(p_n)}$.

In the same manner, we can show that for the sequence of the SRB measures $\mu_n^\prime$, there is another hyperbolic periodic point $p^\prime$ of $f$ such that $\mu$ is homoclinically related to $\delta_{\mathrm{Orb}_f(p^\prime)}$, and for sufficiently large $n$, $\mu_n^\prime$ is homoclinically related to $\delta_{\mathrm{Orb}_{f_n}(p_n^\prime)}$, where $p_n^\prime$ is the analytic continuation periodic point of $p^\prime$ for the perturbed diffeomorphism $f_n$.

Since homoclinically related is an equivalent relation (cf. \cite[Section 2]{BCS}) and $\delta_{\mathrm{Orb}_f(p)}$ and $\delta_{\mathrm{Orb}_f(p^\prime)}$ both are homoclinically related to $\mu$, it follows that $\delta_{\mathrm{Orb}_f(p)}$ and $\delta_{\mathrm{Orb}_f(p^\prime)}$ are homoclinically related. From the definition, for atomic measures on periodic orbits, homoclinically related is equivalent to the periodic orbits to be homoclinically related (cf. \cite[Section 2]{BCS}). Thus, $\mathrm{Orb}_f(p)$ and $\mathrm{Orb}_f(p^\prime)$ are homoclinically related. Note that the homoclinic intersection is robust under perturbation. Thus, $\delta_{\mathrm{Orb}_{f_n}(p_n)}$ and $\delta_{\mathrm{Orb}_{f_n}(p_n^\prime)}$ are homoclinically related for sufficiently large $n$, which implies that $\mu_n$ and $\mu_n^\prime$ are homoclinically related. Then by \cite[Theorem 1.6]{HHTUCMP2011}, one has  $\mu_n=\mu_n^\prime$, a contradiction. Thus, the proof of \Cref{Pro.Gibbs uppersemi} is complete.

\end{proof}

\subsection{Lorenz and Lorenz-derived attractors}$\,$\smallskip

Before the proof, we present some general technical preparation for singular flows.

Let $\mu$ be an invariant measure of $\phi^X_t$. Then, by the Oseledets theorem, for $\mu$-almost every $x$, there exists a sequence of numbers $\lambda_1(x,\phi^X_t)>\cdots>\lambda_{r(x)}(x,\phi^X_t)$, which are referred to as Lyapunov exponents, together with the corresponding Oseledets splitting at $x$:
$$T_xM =E_1(x)\oplus \cdots \oplus E_{r(x)}(x).$$ 
For regular measures, zero always appears as one Lyapunov exponent, and the corresponding subbundle of zero exponent in the Oseledets splitting contains the flow direction. 

Now, we introduce the concept of the linear Poincar\'e flow $\psi_t^X$ over nonsingular points, which is defined as follows:
\begin{definition}[Linear Poincar\'e Flow]\label{def:linear poincare flow}
Let $\mathcal{N}: = \bigcup_{x\in M\setminus \operatorname{Sing}(X)} N_x$ be the normal bundle of $\phi_t^X$,
where $N_x$ is the orthogonal complement of the flow direction $X(x)$, that is,
$$
N_x: = \bigl\{ v\in T_xM : v \perp X(x) \bigr\}.
$$

Let $\pi_x^N \colon T_xM \to N_x$ denote the orthogonal projection onto $N_x$.
For any $x\in M\setminus \operatorname{Sing}(X)$ and any vector $v\in N_x$,
the linear Poincar\'e flow $\psi^X_t(v)$ is the orthogonal projection of $\Phi^X_t(v)$ onto $N_{\phi^X_t(x)}$,
where $\Phi^X_t$ denotes the tangent flow of $\phi^X_t$. More precisely,
$$
\psi^X_t(v)
:= \pi^N_{\phi^X_t(x)}(\Phi^X_t(v))
= \Phi^X_t(v)
- \frac{\langle \Phi^X_t(v), X(\phi^X_t(x)) \rangle}{\langle X(\phi^X_t(x)), X(\phi^X_t(x)) \rangle} X(\phi^X_t(x)),
$$
where $\langle \cdot, \cdot \rangle$ stands for the inner product on the tangent spaces induced by the Riemannian metric.
\end{definition}

The following is the flow version of the Oseledets theorem over the normal bundles, see \cite[Theorem 2.12]{PYY-Poincare} for instance.

\begin{theorem}\label{Thm.Oseledets flow and diffeo}
For almost every $x$ of a regular measure $\mu$, there exist $\iota = \iota(x) \in \mathbb{N}$ and real
numbers
$$\hat{\lambda}_1(x,\psi^X_t)>\cdots>\hat{\lambda}_\iota(x,\psi^X_t)$$
and a $\psi^X_t$ invariant measurable splitting on the normal bundle:
$$N_x = \hat{E}_1(x) \oplus \cdots \oplus \hat{E}_\iota(x)=\pi^N_x(E_1(x))\oplus \cdots \oplus \pi_x^N(E_\iota(x)),$$
such that for every non-zero vector $v_i \in \hat{E}_i(x)$,
$$\lim_{t\to \rm \infty}\frac{1}{t} \log \|\psi_t^X(v_i)\|=\hat{\lambda}_i(x,\psi_t^X).$$ 

Moreover, the Lyapunov exponents of $\mu$ (counting multiplicity) for $\psi_t^X$ is the subset of the exponents for $f$ obtained by removing one of the zero exponents which comes from the flow
direction.
\end{theorem}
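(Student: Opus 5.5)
The plan is to treat the linear Poincar\'e flow as the quotient cocycle of the tangent flow $\Phi^X_t$ by the invariant line field spanned by $X$. Then I would read off its Oseledets data from the Oseledets theorem for $f=\phi^X_1$ (equivalently for $\Phi^X_t$) applied to the regular measure $\mu$.

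\emph{Step 1: intertwining.} Since $\Phi^X_t(X(x))=X(\phi^X_t(x))$, the line $\langle X\rangle$ is $\Phi^X_t$-invariant. Because $\pi^N_y$ kills exactly $\langle X(y)\rangle$, this gives
\[
\pi^N_{\phi^X_t(x)}\circ\Phi^X_t=\psi^X_t\circ\pi^N_x\quad\text{on } T_xM .
\]
Hence $\psi^X_t$ is a linear cocycle over $\phi^X_t$ on $\mathcal N$, conjugate to the induced action on $TM/\langle X\rangle$. Its generator is bounded: $\|\psi^X_t\|\ls\|\Phi^X_t\|$, and the inverse is bounded in the same way. So Oseledets applies to $\psi^X_t$ directly, and the task reduces to identifying its exponents and splitting.

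\emph{Step 2: the flow direction has zero exponent.} Set $a_t(x):=\log\|X(\phi^X_t x)\|-\log\|X(x)\|$. This is an additive cocycle whose time-one generator is bounded by $\log\|f\|_{C^1}$, hence integrable. By Birkhoff, $\frac1t a_t(x)$ converges $\mu$-a.e. The measure is regular, so $\mu$-a.e. $x$ returns infinitely often to some compact set $\{\|X\|\gs c\}$ with $c>0$, and $\|X\|$ is bounded above. The limit is therefore $0$, which means $X(x)\in E_{i_0}(x)$ with $\hat\lambda_{i_0}=0$.

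\emph{Step 3: matching exponents by volumes.} For $0\neq v\in T_xM$ not parallel to $X(x)$, I would use the area identity
\[
\|\Phi^X_t v\wedge X(\phi^X_t x)\| \;=\; \|\psi^X_t(\pi^N_x v)\|\cdot\|X(\phi^X_t x)\| .
\]
The factor $\|X(\phi^X_t x)\|$ is subexponential by Step 2. So the growth rate of $\psi^X_t$ on $\pi^N_x v$ equals the exponential growth rate of the 2-vector $\Phi^X_t v\wedge X(x)$, taken along the exterior cocycle $\wedge^2\Phi^X_t$.

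For $v\in E_i(x)$ with $i\neq i_0$, the vectors $v$ and $X(x)$ lie in different Oseledets spaces. By the Oseledets theorem for $\wedge^2$, this 2-vector grows at rate $\hat\lambda_i+0=\hat\lambda_i$.

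For $v\in E_{i_0}$ one still gets rate $0$ provided $v\notin\langle X\rangle$, since the 2-vector lies in $\wedge^2E_{i_0}$, whose exponent is $0$. Here multiplicity is reduced by exactly one, and $\pi^N_x(E_{i_0})$ has dimension $\dim E_{i_0}-1$.

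Invariance of $\pi^N(E_i)$ under $\psi^X_t$ follows from Step 1. Directness of the sum $\bigoplus_i\pi^N_x(E_i(x))=N_x$ follows from a dimension count, since $\ker\pi^N_x=\langle X(x)\rangle\subset E_{i_0}(x)$. So these spaces form the Oseledets splitting of $\psi^X_t$.

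As a consistency check on the full multiset, I would use
\[
|\det\Phi^X_t|=|\det\psi^X_t|\cdot\frac{\|X(\phi^X_t x)\|}{\|X(x)\|},
\]
which shows that the total exponent sums differ by exactly the removed zero.

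\emph{Main obstacle.} The delicate step is Step 2 together with the uniform applicability of the wedge-norm identity. Near singularities, $\|X\|$ is unbounded below in logarithmic scale, so $\log\|X\|$ need not be integrable. The argument must therefore avoid integrating it and instead rely on the bounded cocycle $a_1$ plus recurrence, as above. I would also verify that the exterior-power Oseledets rates are attained for the specific 2-vector $v\wedge X(x)$, using that it decomposes along $E_i\wedge E_{i_0}$.
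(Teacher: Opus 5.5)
Your proof is correct. There is no internal argument to compare it with: the paper does not prove this statement but imports it from \cite[Theorem 2.12]{PYY-Poincare}. Your three steps together give a complete, self-contained proof:
\begin{itemize}
\item the quotient-cocycle identity $\pi^N_{\phi^X_t(x)}\circ\Phi^X_t=\psi^X_t\circ\pi^N_x$;
\item the bounded additive cocycle $a_1=\log(\|X\circ f\|/\|X\|)$, combined with Birkhoff and recurrence to $\{\|X\|\gs c\}$;
\item the area identity $\|\wedge^2\Phi^X_t(v\wedge X(x))\|=\|\psi^X_t(\pi^N_xv)\|\,\|X(\phi^X_t x)\|$.
\end{itemize}
Your caution against integrating $\log\|X\|$, and your use of recurrence instead, is exactly the right way to handle the singularities; this is the only place regularity of $\mu$ enters.

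A few points should be made explicit when you write it up.

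\emph{Two-sided rate in Step 2.} To conclude $X(x)\in E_{i_0}(x)$, forward rate $0$ is not enough. It only places $X(x)$ in the filtration space $\bigoplus_{\lambda_j\ls 0}E_j(x)$. You need rate $0$ also as $t\to-\infty$, which follows from the same Birkhoff-plus-recurrence argument applied to $f^{-1}$. This two-sided membership is what gives $\ker\pi^N_x=\langle X(x)\rangle\subset E_{i_0}(x)$. Without it $\pi^N_x$ would be injective on every $E_i(x)$, and your dimension count proving directness of $\bigoplus_i\pi^N_x(E_i(x))$ would fail.

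\emph{Real times.} Limits over real $t$ follow from those over integers. The reason is that $\|\Phi^X_{\pm s}\|$, $\|\psi^X_{\pm s}\|$ and $\|X(\phi^X_s y)\|/\|X(y)\|$ are uniformly bounded for $s\in[0,1]$.

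\emph{Indexing.} When $\dim E_{i_0}(x)=1$, the summand $\pi^N_x(E_{i_0}(x))$ is trivial and $0$ disappears from the spectrum of $\psi^X_t$. The displayed splitting in the statement should then be read with that index omitted; your count already handles this.

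\emph{Step 1 is optional.} Step 3 builds the invariant measurable splitting and its growth rates directly from the Oseledets theorem for $f$. That is all the statement asks for. The integrability of $\psi^X_t$ from Step 1 is needed only if you also want to identify your splitting as the Oseledets splitting of $\psi^X_t$ via uniqueness. In that case you must check the rates on each $\pi^N_x(E_i(x))$ as $t\to-\infty$ too, by the same wedge argument.
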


In the following, we present the proofs of \Cref{Thm. Lorenz attractor} and \Cref{Thm. Derived-Lorenz}.

\begin{proof}[Proof of \Cref{Thm. Lorenz attractor}]
To begin with, we introduce the concept of $s$-entropy. The \emph{$\rk$-dimensional $s$-entropy} of an invariant measure $\mu$ with respect to $f$ means the $\rk$-dimensional partial entropy of $\mu$ with respect to $f^{-1}$. In the same manner, we can define the $s$-entropy for flows by its time-one map. More details and discussions can be found, for example, in \cite{LEDRAPPIER_YOUNG_A,LEDRAPPIER_YOUNG_B}.

By the definition of sectional hyperbolicity \Cref{def sectional hyper}, for any measure $\nu$ supported on $\Lambda$ and $\nu$-almost every $x\in\Lambda$, the Lyapunov exponents in $E^{ss}(x)$ are all negative and uniformly away from zero. 

Now suppose $\nu$ is regular, then for $\nu$-almost every $x\in\Lambda$, the Lyapunov exponents in $F_x^{cu}$ are all non-negative: there is only one exponent along the flow direction which is vanishing, and all the rest are positive. The reason is that for any two-dimensional subspace contained in $F^{cu}$ and containing the flow direction, the exponential rate of volume expanding speed is equal to the sum of the two corresponding two exponents in this subspace. Since the exponent corresponding to the flow direction is zero, by the sectional-expanding condition, it follows that the other exponent is positive. Then by the Ledrappier-Young entropy formula \cite{LEDRAPPIER_YOUNG_A}, the metric entropy of a regular measure $\nu$ is equal to its $\dim(E^{ss})$-dimensional $s$-entropy.

Since both the metric entropy and the $\dim(E^{ss})$-dimensional $s$-entropy of atomic measures supported on singularities are vanishing, and the metric entropy function is affine under the ergodic decomposition, we can show that the metric entropy of any invariant measure $\nu$ coincides with its $\dim(E^{ss})$-dimensional $s$-entropy.

By the Oseledets theorem, the integral of the sum of the $\dim(E^{ss})$ least negative Lyapunov exponents equals $$\int \log \bigl|\det Df\big|_{E^{ss}(x)}\bigr|\,\td\nu(x).$$ Since $E^{ss}\oplus F^{cu}$ is a dominated slitting, this integration varies continuously for invariant measures with respect to weak$^*$ topology as discussed in \Cref{subsection:dominated splitting}. Then by \Cref{thm bestcor}, we conclude that the $\dim(E^{ss})$-dimensional $s$-entropy varies upper semi-continuously, and thus the metric entropy varies upper semi-continuously. This completes the proof. 
\end{proof}

\begin{proof}[Proof of \Cref{Thm. Derived-Lorenz}]

We denote $\mathcal{N}_C$ as the normal bundle of $X$ on $C(\sigma,X)$ as defined in \Cref{def:linear poincare flow}. It was proved in \cite{LYYZ-Derived-from-Lorenze} that for the Lorenz-derived flows $X\in\mathcal{V}$, the normal bundle always exhibits a dominated splitting.
\begin{proposition}[Proposition 3.2 of \cite{LYYZ-Derived-from-Lorenze}] The $3$-dimensional normal bundle $\mathcal{N}_C$ admits a dominated splitting of the linear Poincar\'e flow $\psi_t^X$ as $$\mathcal{N}_C=F\oplus E^c \oplus E^{ss}$$ with three one-dimensional subbundles.
\end{proposition}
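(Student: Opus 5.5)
The plan is to obtain the splitting of $\psi^X_t$ by projecting a dominated splitting of the tangent flow $\Phi^X_t$ over $C(\sigma,X)$ onto the normal bundle. Near the singularity the projection degenerates, so the projected splitting must be controlled through the extended linear Poincaré flow.

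\textbf{Step 1: the tangent splitting.} First I record what the construction of $\mathcal V$ in \cite{LYYZ-Derived-from-Lorenze} gives on the tangent bundle. The DA-type surgery is performed in the direction transverse to a geometric Lorenz attractor in dimension $4$, and it is $C^1$-small away from a neighborhood of a periodic orbit. From this I would derive a $\Phi^X_t$-invariant dominated splitting
\[
T_{C(\sigma,X)}M=E^{ss}\oplus E^{c}\oplus E,
\]
with the following properties:
\begin{itemize}
\item $E^{ss}$ is one-dimensional and uniformly contracting;
\item $E^c$ is one-dimensional, and its direction is the one on which the surgery alters the index;
\item $E$ is the two-dimensional sectional-expanding bundle of \Cref{Pro. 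LYYZ}.
\end{itemize}
The justification is a robust cone-field argument: the unperturbed product-like Lorenz model admits cone fields for each of these bundles, and they survive the surgery and small $C^1$ perturbations. On regular points of $C(\sigma,X)$ the flow direction $X(x)$ lies in $E$. This follows from the standard argument of sectional hyperbolicity: $X(x)$ cannot have a component in $E^{ss}\oplus E^c$, because otherwise its norm would be forced to grow or decay exponentially under the flow along the recurrent orbit.

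\textbf{Step 2: projection away from $\sigma$.} For regular $x$ I set
\[
F(x)=\pi^N_x(E(x)),\qquad E^c(x)\mapsto\pi^N_x(E^c(x)),\qquad E^{ss}(x)\mapsto\pi^N_x(E^{ss}(x)).
\]
Each of these is a line: $F$ is one-dimensional because $E\ni X(x)$, and the other two are lines because they are transverse to $X(x)$. Invariance under $\psi^X_t$ follows from $\psi^X_t\circ\pi^N=\pi^N\circ\Phi^X_t$.

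Domination for the pairs $F$ vs.\ $E^c\oplus E^{ss}$ comes from the volume formula. Let $V=\langle X(x),v\rangle\subset E(x)$. Then
\[
|\det\Phi^X_t|_{V}|=\frac{\|X(\phi_tx)\|}{\|X(x)\|}\,\|\psi^X_t\pi^N v\|\cdot\|\pi^N v\|^{-1},
\]
so sectional expansion gives expansion of $\psi_t$ on $F$ relative to the flow speed. On the other hand, domination of $E^c\oplus E^{ss}$ by $E$ in $T M$ gives contraction relative to $\|X\|$ on those bundles. The flow-speed factors cancel in the ratio, which yields domination. The pair $E^c$ vs.\ $E^{ss}$ is inherited directly, because the angles between $E^c\oplus E^{ss}$ and $X$ stay bounded away from zero.

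\textbf{Step 3: uniformity near $\sigma$ (the main obstacle).} The angle estimates in Step 2 degenerate as $\|X\|\to0$. To handle this I would pass to the extended linear Poincaré flow on the closure of $\{X(x)/\|X(x)\|\}$ in the projective bundle, following Li--Gan--Wen. The key input is that at the hyperbolic singularity $\sigma$, with eigenvalue ordering $\lambda_{ss}<\lambda_c<0<\lambda_u$ (or the analogous ordering fixed by the construction), every limit direction of $X(x_n)/\|X(x_n)\|$ as $x_n\to\sigma$ inside $C(\sigma,X)$ lies in $E(\sigma)$. This holds because $W^{ss}(\sigma)\cap C(\sigma,X)=\{\sigma\}$ and $C(\sigma,X)$ accumulates on $\sigma$ only through the center-unstable directions.

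I would try to check this first from the cross-section structure of the Lorenz model, since the surgery does not touch a neighborhood of $\sigma$. Once this is established, the extended flow carries a continuous dominated splitting on the compact closure, and restricting back to regular points gives the splitting $\mathcal N_C=F\oplus E^c\oplus E^{ss}$ with uniform constants.
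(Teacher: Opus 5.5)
\textbf{Step 1 is false for this class, and the whole projection strategy rests on it.} Suppose $T_{C(\sigma,X)}M=E^{ss}\oplus E^c\oplus E$ were $\Phi^X_t$-dominated, with $E^c$ a line dominated by $E\ni X$. Take $x$ on a periodic orbit of period $T$ in $C(\sigma,X)$. Domination gives
\[
\|\Phi^X_{nT}|_{E^c(x)}\|\leq Ce^{-\eta nT}\,\mathrm{m}(\Phi^X_{nT}|_{E(x)})\leq Ce^{-\eta nT}\,\frac{\|X(\phi^X_{nT}x)\|}{\|X(x)\|}=Ce^{-\eta nT},
\]
so $E^c$ would be uniformly contracted on every periodic orbit of the class.

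On the other hand, $F$ is expanding on every periodic orbit: the exponent along $X$ is $0$ and $E=X\oplus F$ is sectionally expanding. Hence the coexistence of hyperbolic periodic orbits of index $1$ and $2$ (Theorem A of \cite{LYYZ-Derived-from-Lorenze}) forces the center exponent to be positive on one of them, whatever index convention you use. Putting $E^c$ above $E$ fails the same way on the other orbit.

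So the flow direction and the center direction cannot be separated by any dominated splitting of the tangent flow. At best the tangent flow has $E^{ss}\oplus F^{cu}$ with $\dim F^{cu}=3$, and that $F^{cu}$ is not sectionally expanding. The DA surgery, which makes the center expanding at some periodic orbit, necessarily destroys any tangent cone field separating $E^c$ from $E$.

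Consequently the domination of $F$ over $E^c$ is a genuinely linear-Poincar\'e-flow phenomenon and cannot be obtained by projecting a tangent splitting. What is missing is a direct argument for $\psi^X_t$ itself. For example, one could use invariant cone fields for the derivatives of Poincar\'e return maps on cross-sections of the Lorenz model, where the flow direction has been quotiented out and a Ma\~n\'e-type DA argument in the center direction makes sense. This has to be combined with an extended (or rescaled) linear Poincar\'e flow argument at $\sigma$. The paper gives no proof of its own; it simply quotes the statement from \cite{LYYZ-Derived-from-Lorenze}.

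\textbf{Two smaller points.}

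\emph{Step 2: the flow-speed factors do not cancel.} Sectional expansion gives $\|\psi^X_t|_F\|\geq C_1\lambda^t\|X(x)\|/\|X(\phi^X_tx)\|$. Domination by $E\ni X$ gives $\|\psi^X_t|_{\pi^N(E^c\oplus E^{ss})}\|\leq C'e^{-\eta t}\|X(\phi^X_tx)\|/\|X(x)\|$. The ratio therefore carries the \emph{square} of the speed ratio, which is not controlled along orbits passing near $\sigma$. The usable comparison is $\|\psi^X_t|_F\|\geq\mathrm{m}(\Phi^X_t|_E)$, which follows from writing the determinant on $E$ as a product of singular values.

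\emph{Step 3 is aimed at the wrong place.} Consider any bundle that genuinely comes from a tangent dominated splitting, such as $E^{ss}$ against a three-dimensional $F^{cu}\ni X$ if it exists. Its angle with $X$ is bounded below by the angle between the bundles, which is uniform on the compact set. So the projected splitting is automatically uniform near $\sigma$. The singularity matters precisely in the $F$ versus $E^c$ comparison, where no tangent splitting is available. There you must control the limit directions of $X/\|X\|$ at $\sigma$ inside the extended linear Poincar\'e flow.
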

Moreover, through the proof of \cite[Proposition 3.3]{LYYZ-Derived-from-Lorenze} in \cite[Section 5]{LYYZ-Derived-from-Lorenze}, we can obtain that the continuous sectional-expanding bundle $E$ in \Cref{Pro. LYYZ} is defined by $$E(x)=X(x)\oplus F(x)$$ on nonsingular points in $C(\sigma,X)$, and $E$ extends continuously to the singularities in $C(\sigma,X)$.

By Theorem~\ref{Thm.Oseledets flow and diffeo}, any regular measure $\nu$ supported on $C(\sigma,X)$ has three Lyapunov exponents
$\hat\lambda_1(x,\psi_t^X)>\hat\lambda_2(x,\psi_t^X)>\hat\lambda_3(x,\psi_t^X)$ for $\nu$ almost every $x$ on the normal bundle $\mathcal{N}_C$ corresponding to the dominated splitting $F\oplus E^c\oplus E^{ss}$.
Once again, by the property of dominated splitting, the exponents are uniformly away from each other. In particular, it follows that
\[
\essinf_{x\sim\nu}\{\hat\lambda_1(x,\psi_t^X)-\hat\lambda_2(x,\psi^X_t)\}>0.
\]

Denote $f$ as the time-one map of $\phi^X_t$. By Theorem~\ref{Thm.Oseledets flow and diffeo} again, the $f$-invariant measure $\nu$ has four Lyapunov exponents, which are $$\hat\lambda_1(x,\psi_t^X)>\hat\lambda_2(x,\psi_t^X)>\hat\lambda_3(x,\psi_t^X)\ \text{ and }\ 0.$$ 
Note that by the Oseledets theorem, for any ergodic regular measure $m$, one has
\begin{equation}\label{eq:integral in flows}
    \int \log|\det(Df\mid_{E(x)})|\,\td m(x)=\int (\hat\lambda_1(x,\psi_t^X)+0)\,\td m(x)=\int \hat\lambda_1(x,\psi_t^X)\,\td m(x)
\end{equation}

Since the bundle $E=X\oplus F$ is sectional-expanding, it follows that $\hat\lambda_1(x,\psi_t^X)$ is positive and uniformly bounded away from zero for any ergodic regular measures by \Cref{eq:integral in flows}. Thus by the ergodic decomposition, $\hat\lambda_1(x,\psi_t^X)$ is positive and uniformly bounded away from zero for almost every point of any regular measure.

Thus, $\hat\lambda_1(x,\psi_t^X)$ is the largest Lyapunov exponents for almost every point of any regular measure, and the corresponding Oseledets subbundle $E_1(x)=F(x)$. 
Noting that the bundle $E=X\oplus F$ is invariant and continuous for the diffeomorphism $f$, together with \Cref{eq:integral in flows}, it follows that 
\[
\lambda_1(\mu,\phi_t^X):=\int \hat\lambda_1(x,\psi_t^X)\,\td\mu(x)
\]
varies continuously with respect to regular measures $\mu$, and is uniformly bounded away from zero and $\lambda_2(\mu,\phi_t^X)$. 
Now, one can check that the conditions in \Cref{thm bestcor} holds for $f_n=f$ and $u=1$, and \Cref{Thm. Derived-Lorenz} is a direct consequence of \Cref{thm bestcor}. Thus, the proof is complete.

\end{proof}

\subsection{Standard maps}$\,$\smallskip
\begin{proof}[Proof of \Cref{prop standard maps h_top}]
        By Katok's horseshoe theorem \cite{katok1980}, the topological entropy map is lower semi-continuous. Since the Lyapunov exponent of the measures of maximal entropy $\lambda_1(\mu_g,g)$ is continuous with respect to $g\in\mathcal U$, one can easily check that the assumption of \Cref{USC of entropy} is satisfied for $\mu_g$, $\mu_{g_n}$ and $u=1$. Since the metric entropy of the measures of maximal entropy is equal to the topological entropy, \Cref{USC of entropy} implies the upper semi-continuity of the topological entropy map, and the proof is complete.
    \end{proof}

\subsection{Symbolic codings of diffeomorphisms}$\,$\smallskip

\begin{proof}[Proof of \Cref{prop: USC of coding system}]
We prove the proposition in the stronger form where the convergence
\(\bar\mu_n\to\bar\mu\) is taken with respect to the weak$^*$ topology of admissible potentials.
 Let $\{\bar{\mu}_n\}_{n\in\mathbb{N}}$ be a sequence of $\sigma$-invariant probability measures in $\mathcal{M}(\Sigma_f, \sigma)$ such that $\bar{\mu}_n \xrightarrow{*} \bar{\mu}$ with respect to the space of admissible potentials $C_{\mathrm{adm}}(\Sigma_f, \mathbb{R})$. Let $\mu_n = \pi_*(\bar{\mu}_n)$ and $\mu = \pi_*(\bar{\mu})$ be the respective pushforward measures on $M$. 
 Since the unstable dimension is constant on each component and there are only
finitely many possible values of \(u\), we decompose the measures according to these
values and apply the following argument componentwise.
 By \Cref{thm: Sarig coding} (3), without loss of generality, we can assume that there exists $u\in\mathbb N$ such that $\dim E^u(\pi(\underline{x}))=u$ for $\bar\mu_n$ and $\bar\mu$ almost every $\underline{x}\in\Sigma_f$. 

We first verify the weak$^*$ convergence of the projected measures. For any Hölder continuous function $\varphi \in C^\alpha(M, \mathbb{R})$, its lift $\phi = \varphi \circ \pi$ belongs to $C_{\mathrm{adm}}(\Sigma_f, \mathbb{R})$ by definition. Thus, we have:
\[
\lim_{n\to\infty} \int_M \varphi\,\td\mu_n = \lim_{n\to\infty} \int_{\Sigma_f} \varphi \circ \pi\,\td\bar{\mu}_n = \int_{\Sigma_f} \varphi \circ \pi\,\td\bar{\mu} = \int_M \varphi \,\td\mu.
\]
Since $C^\alpha(M, \mathbb{R})$ is dense in $C^0(M, \mathbb{R})$ under the $C^0$ norm, it follows that $\mu_n \xrightarrow{*} \mu$ in the standard weak$^*$ topology on $M$.

Next, we establish the convergence of the sum of positive Lyapunov exponents. Let $\varphi^u(x) = \log |\det(Df|_{E^u(x)})|$ be the unstable geometric potential. According to \Cref{thm: Sarig coding} (3), for all $\underline{x} \in \Sigma_f$, the splitting $T_{\pi(\underline{x})}M = E^s \oplus E^u$ is such that $E^u$ corresponds to exponents $\gs \frac{\chi}{2}$, and $E^s$ corresponds to exponents $\ls -\frac{\chi}{2}$. Thus, for any $\pi_*( \bar{\nu})$, the sum of positive Lyapunov exponents is given by $\int_M \varphi^u(x) \, d\nu(x)$. Since $\varphi^u \circ \pi$ is an admissible potential in $C_{\mathrm{adm}}(\Sigma_f, \mathbb{R})$, the convergence $\bar{\mu}_n \to \bar{\mu}$ implies:
\[
\lim_{n\to\infty} \int_M \varphi^u \, \td\mu_n = \lim_{n\to\infty} \int_{\Sigma_f} \varphi^u \circ \pi \, \td\bar{\mu}_n = \int_{\Sigma_f} \varphi^u \circ \pi \, \td\bar{\mu} = \int_M \varphi^u \, \td\mu.
\]
This implies the continuity of the sum of positive Lyapunov exponents:
\[
\sum_{i=1}^u \lambda_i(\mu_n, f) \xrightarrow{n\to \infty} \sum_{i=1}^u \lambda_i(\mu, f).
\]
Note that by \Cref{thm: Sarig coding} (3), each projected measure is $\frac{\chi}{2}$-hyperbolic, ensuring $\mathrm{Hyp}^u(\mu)>0$. Thus, by applying \Cref{USC of entropy} to the sequence $\mu_n \to \mu$ with $f_n = f$, we obtain $$\limsup_{n\to\infty} h_{\mu_n}(f) \ls h_\mu(f).$$ Finally, \Cref{thm: Sarig coding} (2) ensures that the entropy is preserved, i.e., $h_{\bar{\mu}_n}(\sigma) = h_{\mu_n}(f)$ and $h_{\bar{\mu}}(\sigma) = h_\mu(f)$. Thus,
\[
\limsup_{n \to \infty} h_{\bar{\mu}_n}(\sigma) = \limsup_{n \to \infty} h_{\mu_n}(f) \ls h_\mu(f) = h_{\bar{\mu}}(\sigma),
\]
which completes the proof.
\end{proof}

\appendix
\section{Proofs in \Cref{sec:pes}}
This appendix provides detailed proofs for the technical results stated in \Cref{sec:pes}. We sequentially prove \Cref{pro 2.1}, \Cref{lem 2.5}, \Cref{lx norm}, \Cref{lem 2.6}, \Cref{cone pre}, \Cref{pro 2.8.2}, \Cref{lem 2.9}, \Cref{gpu pro}, and \Cref{prop 2.15}.

Before going into details, we note that from \Cref{GPB def 2}, by letting $m\to \infty$, we obtain $\chi\ls \log\|f\|_{C^1}$. Indeed, we have
$$\frac{1}{\|f^{m}\|_{C^1}}\ls\|Df^{-m}|_{E^{u}(x)}\|\ls K\cdot  \e^{-m\chi},$$
hence $\chi\ls \frac{\log K}{m}+\log\|f\|_{C^1}$, and taking $m\to\infty$ yields $\chi\ls \log\|f\|_{C^1}$.
     
From \Cref{GPB def 2} we obtain
$$\frac{1}{\|f\|_{C^1}}\ls\|Df^{-1}|_{E^{u}(f^n(x))}\|\ls K\cdot  \e^{|n|\epsilon}\cdot \e^{-\varphi(f^{n-1}(x))},$$
which implies
\begin{equation}\label{eq phi upper bound}
     \varphi(f^{n-1}(x))\ls\log (K \cdot \e^{|n|\epsilon})+\log\|f\|_{C^1}.
\end{equation}

\subsection{Proofs in \Cref{sec 2.1}}\label{appendix a1}
\begin{proof}[Proof of \Cref{pro 2.1}]\label{proof 2.1}
(a). We only need to show that $x$ satisfies the requirements of $\Lambda^\rk(K,\chi,\epsilon,\varphi)$ whenever there exists a sequence $x_i\in \Lambda^\rk(K,\chi,\epsilon,\varphi)$ with $x_i \to x$. 
By passing to a subsequence, without loss of generality we may assume that the corresponding subspaces $E^{cs}(x_{i})$ and $E^u(x_{i})$ converge to linear subspaces $\hat{E}^{cs}(x)$ and $\hat{E}^u(x)$ in $T_xM$. We will show that $\hat{E}^{cs}(x)$ and $\hat{E}^u(x)$ satisfy \Cref{GPB def 1} and \Cref{GPB def 2}. Fix $m>0$. Since
\begin{align*}
    \|Df^m|_{E^{cs}(x_i)}\| &\ls K \cdot \e^{-m\chi}\cdot \e^{\Phi_m(x_i)}\\[2mm]
    \|Df^{-m}|_{E^u(x_i)}\| &\ls K \cdot \e^{-m\chi}\cdot \e^{\Phi_{-m}(x_i)},
\end{align*}
and using the continuity of $D_{(\cdot)}f^{\pm m}$ and $\varphi(\cdot)$, letting $i \to +\infty$ yields
\begin{align*}
    \|Df^m|_{\hat{E}^{cs}(x)}\| &\ls K \cdot \e^{-m\chi}\cdot \e^{\Phi_m(x)}\\[2mm]
    \|Df^{-m}|_{\hat{E}^u(x)}\| &\ls K \cdot \e^{-m\chi}\cdot \e^{\Phi_{-m}(x)}.
\end{align*}
Now fix $n\in\mathbb Z$ and define $\hat{E}^{cs/u}(f^n(x)):= D_xf^n(\hat{E}^{cs/u}(x))$. Then, using the continuity of $f^n(\cdot)$ and the same argument, we obtain
\begin{align*}
    \|Df^m|_{\hat{E}^{cs}(f^n(x))}\| &\ls K \cdot \e^{-m\chi + |n|\epsilon}\cdot \e^{\Phi_m(f^n(x))}\\[2mm]
    \|Df^{-m}|_{\hat{E}^{u}(f^n(x))}\| &\ls K \cdot \e^{-m\chi + |n|\epsilon}\cdot \e^{\Phi_{-m}(f^n(x))}.
\end{align*}
Thus the estimates hold for all $m\in\mathbb N$ and $n\in\mathbb Z$. Observe that the definition of the weighted Pesin block does not impose any control on the angle between $E^u(\cdot)$ and $E^{cs}(\cdot)$; consequently the angle between $\hat{E}^{cs}(x)$ and $\hat{E}^u(x)$ may be zero. It remains to prove that $T_xM = \hat{E}^{cs}(x) \oplus \hat{E}^u(x)$. We establish this by contradiction, deriving a contradiction from the hyperbolicity of the weighted Pesin block. If $T_xM \not= \hat{E}^{cs}(x) \oplus \hat{E}^u(x)$, since $\dim\hat{E}^{u}(x)+\dim\hat{E}^{cs}(x)=\td$, this forces $\hat{E}^{cs}(x)\cap\hat{E}^u(x)\neq \emptyset$. Then there exists a vector $w\in \hat{E}^{cs}(x)\cap\hat{E}^u(x)$ with $\|w\|_x\neq 0$, and we have
\begin{align*}
    \|w\|_x
    &=\|D_{f^m(x)}f^{-m}(D_xf^mw)\|_x\\[2mm]
    &\ls K \cdot \e^{-m\chi+m\epsilon}\cdot \e^{\Phi_{-m}(f^{m}x)}\cdot\|D_xf^mw\|_{f^m(x)}\\[2mm]
    &\ls K^2 \cdot \e^{-2m\chi+m\epsilon}\cdot\e^{\Phi_{-m}(f^{m}x)+\Phi_m(x)}\cdot\|w\|_x,
\end{align*}
where the first and second inequalities both follow directly from \Cref{GPB def 2} and \Cref{GPB def 1}, and we note that $\Phi_{-m}(f^{m}x)+\Phi_{m}(x)=0$. Hence $\chi \ls m^{-1}\log K + \epsilon$ for all $m\in\mathbb N$. Letting $m\to+\infty$ gives $\chi \ls \epsilon$, which contradicts \Cref{var}. Thus we have proved that $T_xM = \hat{E}^{cs}(x) \oplus \hat{E}^u(x)$.

(b). The uniqueness of the decomposition $T_xM = E^{cs}(x) \oplus E^u(x)$ for any $x\in \Lambda^\rk(K,\chi,\epsilon,\varphi)$ is proved by contradiction. Suppose there exists another splitting $T_xM = F^{cs}(x) \oplus F^u(x)$ satisfying \Cref{GPB def 1} and \Cref{GPB def 2}. Without loss of generality, assume $E^u(x) \neq F^u(x)$. Then we can choose $e_x\in F^u(x)$ with $\|e_x\|_x=1$ such that
$$e_x = a\cdot e^{cs} + b\cdot e^u,$$
where $a,b\neq0$, $e^{cs}\in E^{cs}(x)$ and $e^u\in E^u(x)$ with $\|e^{cs}\|_x=\|e^{u}\|_x=1$. Consequently,
\begin{align*}
    \|D_xf^{-m}e_x\|_{f^{-m}(x)} 
    &=\|a D_xf^{-m}e^{cs}+b D_xf^{-m}e^u\|_{f^{-m}(x)}\\[2mm]
    &\gs |a|\,\|D_xf^{-m}e^{cs}\|_{f^{-m}(x)} - |b|\,\|D_xf^{-m}e^u\|_{f^{-m}(x)}\\[2mm]
    &\gs |a|\,\|Df^m|_{E^{cs}(f^{-m}(x))}\|^{-1} - |b|\,\|Df^{-m}|_{E^u(x)}\|\\[2mm]
    &\gs |a| K^{-1} \e^{m(\chi-\epsilon)} \e^{\Phi_{-m}(x)} - |b| K \e^{-m\chi} \e^{\Phi_{-m}(x)}\\[2mm]
    &= \e^{\Phi_{-m}(x)}\bigl(|a| K^{-1} \e^{m(\chi-\epsilon)} - |b| K \e^{-m\chi}\bigr).
\end{align*}
Note that because $\chi-\epsilon>0$, as $m\to+\infty$ we have
\[
|a| K^{-1} \e^{m(\chi-\epsilon)}\to+\infty \quad \text{and}\quad |b| K \e^{-m\chi}\to 0.
\]
It follows that
\[
\e^{-\Phi_{-m}(x)}\|D_xf^{-m}e_x\|_{f^{-m}(x)} \to +\infty,
\]
and hence $\e^{-\Phi_{-m}(x)}\|Df^{-m}|_{F^u(x)}\| \to +\infty$, which contradicts the definition of $F^u(x)$. Therefore the decomposition $T_xM = E^{cs}(x) \oplus E^u(x)$ is unique.

(c). Finally we prove the continuity of the decomposition. Let $x_n\to x$ with $x_n\in \Lambda^\rk(K,\chi,\epsilon,\varphi)$. By compactness of $\Lambda^\rk(K,\chi,\epsilon,\varphi)$ (established in part (a)), we have $x\in \Lambda^\rk(K,\chi,\epsilon,\varphi)$. The uniqueness of the decomposition $T_xM = E^{cs}(x) \oplus E^u(x)$ then implies $E^{cs}(x_n)\to E^{cs}(x)$ and $E^u(x_n)\to E^u(x)$. Thus the maps $x\mapsto E^{cs}(x)$ and $x\mapsto E^u(x)$ are continuous.
\end{proof}

\begin{lemma}\label{lem 2.3}
    Let $f\in \Diff(M)$ and $\epsilon>0$. If $\angle(v_{1},v_{2})<\epsilon\cdot\|f\|_{C^1}^{-2}$, then for any $x\in M$ and $v_{1},v_{2}\in T_{x}^1M$, 
    $$\frac{\|D_{x}f(v_{1})\|_{f(x)}}{\|D_{x}f(v_{2})\|_{f(x)}}<\e^{\epsilon}.$$
\end{lemma}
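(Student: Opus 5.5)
The plan is to compare the two images through a decomposition of $v_1$ relative to $v_2$, and to control the difference using the operator norm of $D_xf$ together with the conorm $\m(D_xf)=\|D_xf^{-1}\|^{-1}\gs \|f\|_{C^1}^{-1}$. The statement is symmetric in $v_1,v_2$, so it suffices to bound the ratio in one direction.

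First, since $v_1,v_2$ are unit vectors with $\angle(v_1,v_2)=\theta<\epsilon\|f\|_{C^1}^{-2}$, the chord length satisfies $\|v_1-v_2\|_x=2\sin(\theta/2)\ls\theta$. The triangle inequality then gives
\[
\|D_xf(v_1)\|_{f(x)}\ls \|D_xf(v_2)\|_{f(x)}+\|D_xf\|\cdot\|v_1-v_2\|_x\ls \|D_xf(v_2)\|_{f(x)}+\|f\|_{C^1}\,\theta .
\]

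Second, dividing by $\|D_xf(v_2)\|_{f(x)}\gs \m(D_xf)\gs \|f\|_{C^1}^{-1}$ yields
\[
\frac{\|D_xf(v_1)\|_{f(x)}}{\|D_xf(v_2)\|_{f(x)}}\ls 1+\|f\|_{C^1}^2\,\theta<1+\epsilon .
\]
Since $1+\epsilon\ls \e^{\epsilon}$ for $\epsilon>0$, the strict inequality follows.

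No step here is a real obstacle. The only points needing care are the convention that $\angle$ denotes the Euclidean angle in $T_xM$ (with any parallel-transport identification suppressed, as the paper does), and the fact that $\|f\|_{C^1}$ bounds both $\|Df\|$ and $\|Df^{-1}\|$, which is what supplies the conorm bound.
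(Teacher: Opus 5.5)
Your argument is correct and is essentially the paper's. The paper writes $v_1=\cos\theta\, v_2+\sin\theta\, v^{*}$ with $v^{*}\perp v_2$ and bounds the ratio by $\cos\theta+\|f\|_{C^1}^2\sin\theta$, using the same operator-norm/conorm estimate that you apply to the difference $v_1-v_2$ (of length $2\sin(\theta/2)\ls\theta$). Your closing step $1+\|f\|_{C^1}^2\theta<1+\epsilon<\e^{\epsilon}$ is slightly cleaner than the paper's detour through $\arcsin\frac{\e^{\epsilon}-1}{\|f\|_{C^1}^2}$.
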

\begin{proof}
   For any $x$, $v_{1}$ and $v_{2}$ as above, define $\hat{V}:=\mathrm{Span}\{v_{2}\}$ as the linear subspace spanned by $v_{2}$. Then $T_{x}M=\hat{V}\oplus \hat{V}^{\perp}$, where $\hat{V}^{\perp}$ denotes the orthogonal complement of $\hat{V}$. Thus,
   $$v_{1}=\cos\angle(v_{1},v_{2})\cdot v_{2}+\sin\angle(v_{1},v_{2})\cdot v^{*},$$
   where $v^{*}\in \hat{V}^{\perp}$ and $\|v^{*}\|=1$. Moreover,
\begin{align*}
    \frac{\|D_{x}f(v_{1})\|_{f(x)}}{\|D_{x}f(v_{2})\|_{f(x)}}
    &\ls |\cos\angle(v_{1},v_{2})|+|\sin\angle(v_{1},v_{2})|\cdot \frac{\|D_{x}fv^{*}\|_{f(x)}}{\|D_{x}fv_{2}\|_{f(x)}}\\[2mm]
    &\ls |\cos\angle(v_{1},v_{2})|+|\sin\angle(v_{1},v_{2})| \cdot \sup_{x\in M}\|D_{x}f\| \cdot (\inf_{x\in M} \m(D_{x}f))^{-1}\\[2mm]
    &\ls |\cos \angle(v_{1},v_{2})|+ \|f\|_{C^1}^2\cdot |\sin\angle(v_{1},v_{2})|.
\end{align*} 
    When $\angle(v_{1},v_{2}) \to 0$, $\sin\angle(v_{1},v_{2})\to 0$ and $\cos\angle(v_{1},v_{2}) \to 1$. Then set
    $$\delta_{0}:=\frac{\epsilon}{\|f\|_{C^1}^2}<\arcsin\frac{\e^\epsilon-1}{\|f\|_{C^1}^2},$$
    and for $0\ls\angle(v_{1},v_{2})<\delta_{0}$, it follows that
    $$\frac{\|D_{x}f(v_{1})\|_{f(x)}}{\|D_{x}f(v_{2})\|_{f(x)}}\ls|\cos\angle(v_{1},v_{2})|+ \|f\|_{C^1}^2\cdot |\sin\angle(v_{1},v_{2})|<\e^{\epsilon}.$$ 
\end{proof}
\begin{lemma}\label{lem 2.4}
    Let $f\in \Diff(M)$. For any $x\in M$ and linearly independent vectors $v_{1},v_{2}\in T_{x}^1M$, denote $\angle_{m}:= \angle(D_xf^mv_1, D_xf^mv_2)$ for any $m\gs 0$. Then
    $$\e^{-L}\ls \frac{\angle_{m+1}}{\angle_{m}}\ls \e^{L}.$$
\end{lemma}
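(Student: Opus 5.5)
The bound $\angle_{m+1}/\angle_m\ls \e^{L}$ (and the reverse) is one step of a general estimate: the derivative $D_{f^m(x)}f$ changes the angle between two unit vectors by at most a factor controlled by $\|f\|_{C^1}^2$. Since $\angle_{m+1}$ is the angle between $D_{f^m(x)}f$ applied to the two (normalized) vectors defining $\angle_m$, it suffices to prove the case $m=0$ for an arbitrary point and arbitrary linearly independent unit vectors. The lower bound then follows by applying the same argument to $f^{-1}$, which has the same $C^1$ norm by definition of $\|f\|_{C^1}$.

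For the one-step estimate, fix a linear isomorphism $A=D_xf$ with $\|A\|,\|A^{-1}\|\ls\|f\|_{C^1}$, and unit vectors $v_1,v_2$ with angle $\theta\in(0,\pi]$. I would work in the plane $P=\mathrm{Span}\{v_1,v_2\}$. There $\sin\theta$ equals the area of the parallelogram spanned by $v_1,v_2$. Likewise $\sin\angle(Av_1,Av_2)$ equals the area spanned by $Av_1,Av_2$ divided by $\|Av_1\|\,\|Av_2\|$. The area ratio is $|\det(A|_P)|$, which lies between $\m(A)^2\gs\|f\|_{C^1}^{-2}$ and $\|A\|^2\ls\|f\|_{C^1}^2$. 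The lengths $\|Av_i\|$ lie in $[\|f\|_{C^1}^{-1},\|f\|_{C^1}]$. Hence
\[
\|f\|_{C^1}^{-4}\sin\theta\ls \sin\angle(Av_1,Av_2)\ls \|f\|_{C^1}^{4}\sin\theta .
\]

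The main obstacle is passing from sines to angles, since $\sin$ is not monotone on $[0,\pi]$. The plan is to use $\frac{2}{\pi}\theta\ls\sin\theta\ls\theta$ on $[0,\pi/2]$. For obtuse angles, I would first replace $v_2$ by $-v_2$, which changes both $\angle_0$ and $\angle_1$ to $\pi$ minus themselves. The angle in the paper's sense is a minimum over vectors in the spanned lines and is therefore at most $\pi/2$, so this normalization is harmless. This gives
\[
\angle_1\ls \tfrac{\pi}{2}\|f\|_{C^1}^4\,\angle_0\ls \e^{4\log\|f\|_{C^1}+1}\angle_0\ls\e^{L}\angle_0,
\]
using $\pi/2<\e$ and $L=4(\log\|f\|_{C^1}+1)$. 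The symmetric argument gives $\angle_1\gs \e^{-L}\angle_0$. Finally, I would iterate along the orbit with $D_{f^m(x)}f$ in place of $A$.
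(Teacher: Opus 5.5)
Your proposal is correct and is essentially the paper's proof. The paper likewise bounds $\sin\angle_{m+1}/\sin\angle_m$ between $\|f\|_{C^1}^{-4}$ and $\|f\|_{C^1}^{4}$, using the ratio of parallelogram areas (at most $\|Df\|^2$) together with the ratios of vector lengths, and then passes to angles with $\frac{2x}{\pi}\ls\sin x\ls x$ on $[0,\pi/2]$.

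One minor remark concerns your reduction to acute angles. The paper defines $\angle(v_1,v_2)$ for vectors as the ordinary Euclidean angle in $[0,\pi]$; the minimum is used only for subspaces. So your reduction amounts to reading $\angle$ as the angle between the spanned lines. The paper's own proof makes the same tacit restriction. Under the literal reading, the cases where $\angle_m$ or $\angle_{m+1}$ is obtuse need a short extra estimate. For example, in the mixed case one shows that the acute angle is at least $\|f\|_{C^1}^{-4}/\sqrt{2}$.
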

\begin{proof}
    The notation $|u\times v|$ denotes the area of the parallelogram spanned by $u$ and $v$. For any $m\gs0$,
\begin{align*}
    \frac{|D_{x}f^{m+1}(v_{1})\times D_{x}f^{m+1}(v_{2})|}{|D_{x}f^{m}(v_{1})\times D_{x}f^{m}(v_{2})|} 
    &=  \frac{|D_{f^m(x)}f(D_{x}f^{m}v_{1})\times D_{f^m(x)}f(D_{x}f^{m}v_{2})|}{|D_{x}f^{m}(v_{1})\times D_{x}f^{m}(v_{2})|}\\[2mm]
    &\ls \sup_{E} |\det(D_{f^m(x)}f|_{E})|\\[2mm]
    &\ls \|D_{f^m(x)}f\|^2 \ls \sup_{x\in M}\|D_{x}f\|^2,
\end{align*}
    where $E$ runs over all $2$-dimensional subspaces of $T_{f^mx}M$. Moreover, by definition,
\begin{align*}
    &\frac{\|D_{x}f^{m+1}(v_{1})\|_{f^{m+1}(x)}}{\|D_{x}f^{m}(v_{1})\|_{f^{m}(x)}}\cdot\frac{\|D_{x}f^{m+1}(v_{2})\|_{f^{m+1}(x)}}{\|D_{x}f^{m}(v_{2})\|_{f^{m}(x)}}\cdot\frac{|\sin\angle_{m+1}|}{|\sin\angle_{m}|} \\[2mm]
    &=\frac{|D_{x}f^{m+1}(v_{1})\times D_{x}f^{m+1}(v_{2})|}{|D_{x}f^{m}(v_{1})\times D_{x}f^{m}(v_{2})|}
    \ls \sup_{x\in M}\|D_{x}f\|^2.
\end{align*}
    Thus,
    $$ \frac{|\sin\angle_{m+1}|}{|\sin\angle_{m}|} \ls \sup_{x\in M}\|D_{x}f\|^2\cdot\inf_{x\in M}\m(D_{x}f)^{-2}\ls \|f\|_{C^1}^4.$$
    Similarly, we obtain the lower bound:
    $$ \frac{|\sin\angle_{m+1}|}{|\sin\angle_{m}|} \gs \inf_{x\in M}\m(D_{x}f)^{2}\cdot\sup_{x\in M}\|D_{x}f\|^{-2}\gs \|f\|_{C^1}^{-4}.$$
    Since $\frac{2x}{\pi}\ls\sin x\ls x$ for $0\ls x\ls \frac{\pi}{2}$ and $L=4(\log \|f\|_{C^1}+1)$, it follows that
    $$\e^{-L}\ls \frac{\angle_{m+1}}{\angle_{m}}\ls \e^{L}.$$
\end{proof}

\subsection{Proofs in \Cref{sec new 2.3}}\label{appendix a2}
\begin{proof}[Proof of \Cref{lem 2.5}]
(1). By directly checking the definition, we have $\|v_u\|_x\ls\|v_u\|'_x$ and $\|v_{cs}\|_x\ls\|v_{cs}\|'_x$. To prove the left‑hand side of the inequality, we first note that
\begin{align*}
    \|v\|'_x
    &=\sqrt{(\|v_{cs}\|'_x)^2+(\|v_u\|'_x)^2}\\[2mm]
    &\gs\frac{\|v_{cs}\|'_x+\|v_u\|'_x}{2}\\[2mm]
    &\gs\frac{\|v_{cs}\|_x+\|v_u\|_x}{2}\\[2mm]
    &\gs\frac{\|v\|_x}{2},
\end{align*}
hence we obtain 
$$\frac{1}{2}\|v\|_x\ls\|v\|'_x.$$
To prove the right‑hand side, we proceed with the following calculation. Firstly, by the definition of $\|\cdot\|'_x$ and \Cref{definition of GPB}:
\begin{align*}
    \|v\|'_x
    &=\sqrt{(\|v_{cs}\|'_x)^2+(\|v_u\|'_x)^2}\\[2mm]
    &\ls\|v_{cs}\|'_x+\|v_u\|'_x\\[2mm]
    &\ls \sum_{n=0}^{+\infty}\e^{n(\chi-2\epsilon)} \e^{-\Phi_{n}(x)}\|D_{x}f^{n}v_{cs}\|_{f^n(x)}+\sum_{n=0}^{+\infty}\e^{n(\chi-2\epsilon)} \e^{-\Phi_{-n}(x)}\|D_{x}f^{-n}v_{u}\|_{f^{-n}(x)}\\[2mm]
    &\ls \sum_{n=0}^{+\infty}\e^{n(\chi-2\epsilon)} \e^{-\Phi_{n}(x)}\|Df^{n}|_{E^{cs}(x)}\|\|v_{cs}\|_x+\sum_{n=0}^{+\infty}\e^{n(\chi-2\epsilon)} \e^{-\Phi_{-n}(x)}\|Df^{-n}|_{E^u(x)}\|\|v_{u}\|_x\\[2mm]
    &\ls K\e^{m\epsilon}\sum_{n=0}^{+\infty}\e^{-2n\epsilon}\|v_{cs}\|_x+K\e^{m\epsilon}\sum_{n=0}^{+\infty}\e^{-2n\epsilon}\|v_{u}\|_x\\[2mm]
    &\ls(1-\e^{-2\epsilon})^{-1}\cdot K\e^{m\epsilon}\cdot(\|v_{cs}\|_x+\|v_{u}\|_x).
    \end{align*}
Now it remains to establish the relation between $\|v_{cs}\|_x+\|v_{u}\|_x$ and $\|v\|_x$, for which we need to use \Cref{pro 2.2}.
\begin{align*}
    \|v_{cs}\|_x+\|v_{u}\|_x&\ls\frac{\sqrt{2}}{\sqrt{1-\cos\angle(v_u,v_{cs})}} \cdot\|v\|_x\\[2mm]
    &\ls \sin(\tfrac{1}{2}\epsilon\|f\|_{C^1}^{-2}K^{-\tau}\e^{-m\tau\epsilon})^{-1}\cdot \|v\|_x\\[2mm]
    &\ls \frac{1}{2}C_1\cdot\frac{\|f\|^{2}_{C^1}}{\epsilon} \cdot K\e^{m\epsilon}\cdot\|v\|_x,
\end{align*}
where $C_1>4\pi$ is a constant large enough. Thus 
$$\|v\|'_{x}\ls  \frac{C_1}{2}\cdot C(\epsilon)\cdot\|f\|^{2}_{C^1}\cdot (K\e^{m\epsilon})^{\tau+1}\|v\|_x.$$
Then the proof of (1) is completed.

(2). The inequality is proved by a direct calculation. We first prove the right‑hand side.
\begin{align*}
    \|D_{x}f^{-1}v_{u}\|'_{f^{-1}(x)}
    &=\sum_{n=0}^{+\infty}\e^{n(\chi-2\epsilon)} \e^{-\Phi_{-n}(f^{-1}(x))}\|D_{f^{-1}(x)}f^{-n}(D_{x}f^{-1}v_{u})\|_{f^{-(n+1)}(x)}\\[2mm]
    &=\sum_{n=0}^{+\infty}\e^{n(\chi-2\epsilon)} \e^{-\Phi_{-n}(f^{-1}(x))}\|D_{x}f^{-(n+1)}v_{u}\|_{f^{-(n+1)}(x)}\\[2mm]
    &=\e^{-(\chi-2\epsilon)} \e^{-\varphi(f^{-1}(x))}\sum_{n=0}^{+\infty}\e^{(n+1)(\chi-2\epsilon)} \e^{-\Phi_{-(n+1)}(x)}\|D_{x}f^{-(n+1)}v_{u}\|_{f^{-(n+1)}(x)}\\[2mm]
    &=\e^{-(\chi-2\epsilon)} \e^{-\varphi(f^{-1}(x))}\sum_{n=1}^{+\infty}\e^{n(\chi-2\epsilon)} \e^{-\Phi_{-n}(x)}\|D_{x}f^{-n}v_{u}\|_{f^{-n}(x)}\\[2mm]
    &\ls \e^{-(\varphi(f^{-1}(x))+\chi-2\epsilon)}\sum_{n=0}^{+\infty}\e^{n(\chi-2\epsilon)} \e^{-\Phi_{-n}(x)}\|D_{x}f^{-n}v_{u}\|_{f^{-n}(x)}\\[2mm]
    &= \e^{-(\varphi(f^{-1}(x))+\chi-2\epsilon)}\|v_{u}\|'_x.
\end{align*}
This completes the proof of the right‑hand side. Next we prove the left‑hand side.
\begin{align*}
    \|v_{u}\|'_x
    &=\sum_{n=0}^{+\infty}\e^{n(\chi-2\epsilon)} \e^{-\Phi_{-n}(x)}\|D_{x}f^{-n}v_{u}\|_{f^{-n}(x)}\\[2mm]
    &=\|v_{u}\|_x+\sum_{n=1}^{+\infty}\e^{n(\chi-2\epsilon)} \e^{-\Phi_{-n}(x)}\|D_{x}f^{-n}v_{u}\|_{f^{-n}(x)}\\[2mm]
    &=\|v_{u}\|_x+\e^{\varphi(f^{-1}(x))+\chi-2\epsilon}\sum_{n=1}^{+\infty}\e^{(n-1)(\chi-2\epsilon)} \e^{-\Phi_{-(n-1)}(f^{-1}(x))}\|D_{f^{-1}(x)}f^{-(n-1)}(D_xf^{-1}v_{u})\|_{f^{-n}(x)}\\[2mm]
    &=\|v_{u}\|_x+\e^{\varphi(f^{-1}(x))+\chi-2\epsilon}\sum_{n=0}^{+\infty}\e^{n(\chi-2\epsilon)} \e^{-\Phi_{-n}(f^{-1}(x))}\|D_{f^{-1}(x)}f^{-n}(D_xf^{-1}v_{u})\|_{f^{-(n+1)}(x)}\\[2mm]
    &\ls \|D_{f^{-1}(x)}f\|\|D_{x}f^{-1}v_{u}\|_{f^{-1}(x)}+\e^{\varphi(f^{-1}(x))+\chi-2\epsilon}\|D_{x}f^{-1}v_{u}\|'_{f^{-1}(x)}\\[2mm]
    &\ls \bigl(\|f\|_{C^1}+\e^{\varphi(f^{-1}(x))+\chi-2\epsilon}\bigr)\cdot\|D_{x}f^{-1}v_{u}\|'_{f^{-1}(x)}.
\end{align*}
Thus the proof of (2) is completed.

(3). The proof is similar to (2). However, the specific results have a slight difference; therefore, for the completeness of the proof, we provide the full calculation to the reader. Since
\begin{align*}
    \|D_{x}fv_{cs}\|'_{f(x)}
    &=\sum_{n=0}^{+\infty}\e^{n(\chi-2\epsilon)} \e^{-\Phi_{n}(f(x))}\|D_{f(x)}f^{n}(D_{x}fv_{cs})\|_{f^{n+1}(x)}\\[2mm]
    &=\sum_{n=0}^{+\infty}\e^{n(\chi-2\epsilon)} \e^{-\Phi_{n}(f(x))}\|D_{x}f^{(n+1)}v_{cs}\|_{f^{n+1}(x)}\\[2mm]
    &=\e^{-(\chi-2\epsilon)} \e^{\varphi(x)}\sum_{n=0}^{+\infty}\e^{(n+1)(\chi-2\epsilon)} \e^{-\Phi_{n+1}(x)}\|D_{x}f^{(n+1)}v_{cs}\|_{f^{n+1}(x)}\\[2mm]
    &=\e^{-(\chi-2\epsilon)} \e^{\varphi(x)}\sum_{n=1}^{+\infty}\e^{n(\chi-2\epsilon)} \e^{-\Phi_{n}(x)}\|D_{x}f^{n}v_{cs}\|_{f^{n}(x)}\\[2mm]
    &\ls \e^{\varphi(x)-(\chi-2\epsilon)}\|v_{cs}\|'_x,
\end{align*}
and moreover,
\begin{align*}
    \|v_{cs}\|'_x
    &=\sum_{n=0}^{+\infty}\e^{n(\chi-2\epsilon)} \e^{-\Phi_{n}(x)}\|D_{x}f^{n}v_{cs}\|_{f^{n}(x)}\\[2mm]
    &=\|v_{cs}\|_x+\sum_{n=1}^{+\infty}\e^{n(\chi-2\epsilon)} \e^{-\Phi_{n}(x)}\|D_{x}f^{n}v_{cs}\|_{f^{n}(x)}\\[2mm]
    &=\|v_{cs}\|_x+\e^{-\varphi(x)+\chi-2\epsilon}\sum_{n=1}^{+\infty}\e^{(n-1)(\chi-2\epsilon)} \e^{-\Phi_{n-1}(f(x))}\|D_{f(x)}f^{(n-1)}(D_xfv_{cs})\|_{f^{n}(x)}\\[2mm]
    &=\|v_{cs}\|_x+\e^{-\varphi(x)+\chi-2\epsilon}\sum_{n=0}^{+\infty}\e^{n(\chi-2\epsilon)} \e^{-\Phi_{n}(f(x))}\|D_{f(x)}f^{n}(D_xfv_{cs})\|_{f^{n+1}(x)}\\[2mm]
    &\ls \|D_{f(x)}f^{-1}\|\|D_{x}fv_{cs}\|_{f(x)}+\e^{-\varphi(x)+\chi-2\epsilon}\|D_{x}fv_{cs}\|'_{f(x)}\\[2mm]
    &\ls \bigl(\|f\|_{C^1}+\e^{\chi-2\epsilon}\bigr)\cdot\|D_{x}fv_{cs}\|'_{f(x)}\\[2mm]
    &\ls 2\|f\|_{C^1}\cdot\|D_{x}fv_{cs}\|'_{f(x)}.
\end{align*}
Thus,  the proof (3) is completed.
\end{proof}

\begin{proof}[Proof of \Cref{lx norm}]
    By definition, if $x\in \Lambda^{*}$ and $\kappa(x)=m$, then
\begin{align*}
    \|L_x\|&=\sup_{v\not=0,\;v\in T_xM}\frac{|L_xv|_x}{\|v\|_x}\\[2mm]
    &\ls\sup_{v\not=0,\;v\in T_xM}\frac{|L_xv_u|_x+|L_xv_{cs}|_x}{\|v\|_x}\\[2mm]
    &=\sup_{v\not=0,\;v\in T_xM}\frac{\|v_u\|'_x+\|v_{cs}\|'_x}{\|v\|_x}\\[2mm]
    &\ls\sup_{v\not=0,\;v\in T_xM}2\frac{\|v\|'_x}{\|v\|_x}.
\end{align*}
   Together with \Cref{lem 2.5}, this implies
\begin{align*}
    \|L_x\|\ls C_1\cdot C(\epsilon)\cdot\|f\|^{2}_{C^1} \cdot l(x)^{\tau+1}.
\end{align*}
   Similarly, we can estimate the lower bound for $\m(L_x)$:
\begin{align*}
    \m(L_x)&=\inf_{v\not=0,\;v\in T_xM}\frac{|L_xv|_x}{\|v\|_x}\\[2mm]
    &\gs\inf_{v\not=0,\;v\in T_xM}\frac{|L_xv_u|_x+|L_xv_{cs}|_x}{2\|v\|_x}\\[2mm]
    &=\inf_{v\not=0,\;v\in T_xM}\frac{\|v_u\|'_x+\|v_{cs}\|'_x}{2\|v\|_x}\\[2mm]
    &\gs\frac{1}{4}.
\end{align*}
\end{proof}

\begin{proof}[Proof of \Cref{lem 2.6}]
Recall $\tilde{f}_x(v)$ and $\tg_x(v)$, which are defined by:
$$\tilde{f}_x(v):= \Psi^{-1}_{f(x)}\circ f\circ\Psi_x(v)$$
and
$$\tg_x(v)
  :=\rho(\frac{\|v\|_x}{r(x)})\cdot\tilde{f}_x(v)+(1-\rho(\frac{\|v\|_x}{r(x)}))\cdot D\tilde{f}_x(0)v,$$
  noting that 
  \begin{align*}
      D\tilde{f}_x(0)&=D_{f(x)}\Psi^{-1}_{f(x)}\cdot D_xf\cdot D_{0}\Psi_x\\[2mm]
      &=L_{f(x)}\cdot D_xf\cdot L_x^{-1}.
  \end{align*}
  
By the definition of the bump function, one can easily check that it suffices to estimate the Lipschitz constant of these functions inside \(R_x(2r(x))\). In fact, 
$\mathrm{Lip}(\tg_x-D\tilde{f}_x(0))$ is bounded by $\|D\tg_x(t)-D\tilde{f}_x(0)\|$, so we just need to estimate $\sup_{t\in R_x(2r(x))}\|D\tg_x(t)-D\tilde{f}_x(0)\|$, by Leibniz rule:
\begin{align*}
\sup_{t\in R_x(2r(x))}\|D\tg_x(t)-D\tilde{f}_x(0)\|
&=\sup_{t\in R_x(2r(x))}\|D\bigl(\rho(\frac{\|t\|_x}{r(x)})\cdot(\tilde{f}_x-D\tilde{f}_x(0)(t)\bigr)\|\\[2mm]
&\ls C_4\frac{1}{r(x)}\|(\tilde{f}_x-D\tilde{f}_x(0))|_{R_{x}(2r(x))}\|_{C^0}\\[2mm]
&\quad+\|(D\tilde{f}_x-D\tilde{f}_x(0))|_{R_{x}(2r(x))}\|_{C^0},
\end{align*}
where $C_4>1$ is a constant depending on the norm of bump function $\|\rho\|_{C^1}$. Since $f\in \mathrm{Diff^{1+\alpha}}(M)$ and by \Cref{psi 1}, thus 
\begin{align*}
    \|(D\tilde{f}_x-D\tilde{f}_x(0))|_{R_{x}(2r(x))}\|_{C^0}\ls C^2_3\cdot C(\epsilon)\cdot\|f\|^{2}_{C^1} \cdot l(f(x))^{\tau+1} \cdot\|f\|_{C^{1+\alpha}}\cdot r(x)^{\alpha},
\end{align*}
where $C_3=10C_1$ is a large constant. Similarly, by the Mean Value Theorem
\begin{align*}
    \|(\tilde{f}_x-D\tilde{f}_x(0))|_{R_{x}(2r(x))}\|_{C^0}&\ls\|(D\tilde{f}_x-D\tilde{f}_x(0))|_{R_{x}(2r(x))}\|_{C^0}\cdot 2r(x) \\[2mm]
    &\ls 2C^2_3\cdot C(\epsilon)\cdot\|f\|^{2}_{C^1} \cdot l(f(x))^{\tau+1} \cdot\|f\|_{C^{1+\alpha}}\cdot r(x)^{\alpha+1}
\end{align*}
Then we have
\begin{align*}
\mathrm{Lip}(\tg_x-D\tilde{f}_x(0))
    &\ls 2C_4\cdot C^2_3\frac{1}{r(x)}\cdot C(\epsilon)\cdot\|f\|^{2}_{C^1} \cdot l(f(x))^{\tau+1} \cdot\|f\|_{C^{1+\alpha}}\cdot r(x)^{\alpha}\cdot r(x)\\[2mm]
    &\quad + C^2_3\cdot C(\epsilon)\cdot\|f\|^{2}_{C^1} \cdot l(f(x))^{\tau+1} \cdot\|f\|_{C^{1+\alpha} }\cdot r(x)^{\alpha}\\[2mm]
    &\ls C_5\cdot C(\epsilon)\cdot \e^{(\tau+1)\epsilon}\cdot\|f\|^{3}_{C^{1+\alpha}} \cdot l(x)^{\tau+1} \cdot r(x)^{\alpha}\\[2mm]
    &\ls \frac{\epsilon}{4\|f\|_{C^1}\cdot l(x)},
\end{align*}
where $C_5=4C_4\cdot C^2_3$ is a large constant. A similar estimate holds for \(\mathrm{Lip}(\tg^{-1}_{x}-D\tilde{f}_{x}^{-1}(0))\):
\begin{align*}
    \mathrm{Lip}(\tg^{-1}_{x}-D\tilde{f}_{x}^{-1}(0))
    &\ls\sup_{t\in R_x(2r(x))}\|D\tg^{-1}_{x}(t)-D\tilde{f}_{x}^{-1}(0)\|\\[2mm]
    & =\sup_{t\in R_x(2r(x))}\|D\tg^{-1}_{x}(t)\cdot\bigl(D\tg_{f^{-1}(x)}(\tg^{-1}_{x}t)-D\tilde{f}_{f^{-1}(x)}(0)\bigr)\cdot D\tilde{f}_{x}^{-1}(0)\|\\[2mm]
    &\ls C^4_3\cdot C(\epsilon)^2\cdot\|f\|_{C^{1}}^6\cdot l(x)^{2\tau+2}\sup_{t\in R_x(2r(x))}\|D\tg_{f^{-1}(x)}(\tg^{-1}_{x}t)-D\tilde{f}_{f^{-1}(x)}(0)\|\\[2mm]
    &\ls C^4_3\cdot C(\epsilon)^2\cdot\|f\|_{C^{1}}^6\cdot l(x)^{2\tau+2}\sup_{p\in R_x(2r(f^{-1}(x)))}\|D\tg_{f^{-1}(x)}(p)-D\tilde{f}_{f^{-1}(x)}(0)\|\\[2mm]
    &\ls C_6\cdot C(\epsilon)^3\cdot\|f\|^{9}_{C^{1+\alpha}} \cdot l(x)^{3\tau+3} \cdot r(x)^{\alpha}\\[2mm]
    &\ls \frac{\epsilon}{4\|f\|_{C^1}\cdot l(x)}.
\end{align*}
where $C_6=C_5\cdot C_3^4$ is a large constant. This completes the proof.
\end{proof}

\begin{proof}[Proof of \Cref{cone pre}]
    (i). Let $v\in \mathrm{Q}_{x}^u(w,\frac{1}{3})$. By \Cref{lem 2.6} and the definition of the box norm $|\cdot|$, 
    \begin{align*}
        |(D\tg_x(w)\cdot v)_{cs}|_{f(x)}&\ls|(D\tilde{f}_x(0)\cdot v)_{cs}|_{f(x)}+\mathrm{Lip}(\tg_x-D\tilde{f}_x(0))\cdot|v_u|_x\\[2mm]
        &\ls |D\tilde{f}_x(0)\cdot v_{cs}|_{f(x)}+\frac{\epsilon}{4\|f\|_{C^1}\cdot l(x)}\cdot|v_u|_x\\[2mm]
        &\ls \e^{\varphi(x)-\chi+2\epsilon} \cdot\frac{1}{3}|v_u|_x+\frac{1}{4}\epsilon\cdot|v_u|_x.
    \end{align*}
    Similarly, we have
    \begin{align*}
        |(D\tg_x(w)\cdot v)_u|_{f(x)}\gs \e^{\varphi(x)+\chi-2\epsilon}\cdot |v_u|_x-\frac{1}{4}\epsilon\cdot|v_u|_x.
    \end{align*}
    Then 
    \begin{align*}
      \frac{|(D\tg_x(w)\cdot v)_{cs}|_{f(x)}}{|(D\tg_x(w)\cdot v)_u|_{f(x)}}
      &\ls\frac{\e^{\varphi(x)-\chi+2\epsilon} \cdot\frac{1}{3}|v_u|_x+\frac{1}{4}\epsilon\cdot|v_u|_x}{\e^{\varphi(x)+\chi-2\epsilon}\cdot |v_u|_x-\frac{1}{4}\epsilon\cdot|v_u|_x} \\[2mm]
      &\ls\frac{\frac{1}{3}\e^{-\chi+2\epsilon}+\frac{1}{4}\epsilon}{\e^{\chi-2\epsilon}-\frac{1}{4}\epsilon} \\[2mm]
      &\ls \frac{1}{3},
\end{align*}
where the second inequality is due to  $\varphi\gs 0$ and the last inequality is due to  $ \chi>100\epsilon$.

Now let $v\in \mathrm{Q}_{x}^{cs}(w,\frac{1}{3})$. Note that by \Cref{eq phi upper bound} we have:
$$\varphi(f^{-1}(x))\ls \log\|f\|_{C^1} +\log l(x).$$
It is similar to get that 
    \begin{align*}
      \frac{|(D\tg_{x}^{-1}(w)\cdot v)_u|_{f^{-1}(x)}}{|(D\tg_{x}^{-1}(w)\cdot v)_{cs}|_{f^{-1}(x)}}
      &\ls\frac{|(D\tilde{f}_{x}^{-1}(0)\cdot v)_u|_{f^{-1}(x)}+\mathrm{Lip}(\tg^{-1}_{x}-D\tilde{f}_{x}^{-1}(0))\cdot|v_{cs}
      |_x}{|(D\tilde{f}_{x}^{-1}(0)\cdot v)_{cs}|_{f^{-1}(x)}-\mathrm{Lip}(\tg^{-1}_{x}-D\tilde{f}_{x}^{-1}(0))\cdot|v_{cs}|_x} \\[2mm]
      &\ls\frac{|D\tilde{f}_{x}^{-1}(0)\cdot v_u|_{f^{-1}(x)}+(4\|f\|_{C^1} \cdot l(x))^{-1}\epsilon\cdot|v_{cs}|_x}{|D\tilde{f}_{x}^{-1}(0)\cdot v_{cs}|_{f^{-1}(x)}-(4\|f\|_{C^1} \cdot l(x))^{-1}\epsilon\cdot|v_{cs}|_x} \\[2mm]
      &\ls\frac{\e^{-(\varphi(f^{-1}(x))+\chi-2\epsilon)} \cdot\frac{1}{3}|v_{cs}|_x+(4\|f\|_{C^1} \cdot l(x))^{-1}\epsilon\cdot|v_{cs}|_x}{\e^{-(\varphi(f^{-1}(x))-\chi+2\epsilon)}\cdot |v_{cs}|_x-(4\|f\|_{C^1} \cdot l(x))^{-1}\epsilon\cdot|v_{cs}|_x} \\[2mm]
      &\ls\frac{\frac{1}{3}\e^{-\chi+2\epsilon}+\frac{1}{4}\epsilon}{\e^{\chi-2\epsilon}-\frac{1}{4}\epsilon} \\[2mm]
      &\ls \frac{1}{3}.
    \end{align*}
    This proves (i).

    (ii). By (i), we have $D\tg_x(w)\cdot v_1\in\mathrm{Q}^u_{{f(x)}}(\tg_xw,\frac{1}{3})$. It follows that
    \begin{align*}
        |D\tg_x(w)\cdot v_1|_{f(x)}=|(D\tg_x(w)\cdot v_1)_u|_{f(x)}&\gs |(D\tilde{f}_x(0)\cdot v_1)_u|_{f(x)}-\frac{1}{4}\epsilon\cdot|(v_1)_u|_x \\[2mm]
        &\gs(\e^{\varphi(x)+\chi-2\epsilon}-\frac{1}{4}\epsilon)\cdot|(v_1)_u|_x\\[2mm]
        &\gs \e^{\varphi(x)+\chi-3\epsilon}\cdot|(v_1)_u|_x\\[2mm]
        &=\e^{\varphi(x)+\chi-3\epsilon}\cdot|v_1|_x.
    \end{align*}
    
   The proof for the center-stable direction is similar. 
   By \Cref{eq phi upper bound} we have
   $$\e^{\varphi(f^{-1}(x))}\ls \|f\|_{C^1}\cdot l(x).$$
   Hence one computes
   $$(4\|f\|_{C^1}\cdot l(x))^{-1}\cdot\epsilon\ls \e^{-\varphi(f^{-1}(x))+\chi-3\epsilon}\cdot(\e^{\epsilon}-1).$$
   Then similarly,
   \begin{align*}
       |D\tg_x^{-1}(w)\cdot v_2|_{f^{-1}(x)}
       &=|(D\tg_x^{-1}(w)\cdot v_2)_{cs}|_{f^{-1}(x)}\\[2mm]
       &\gs\bigl(\e^{-\varphi(f^{-1}(x))+\chi-2\epsilon}-\frac{\epsilon}{4\|f\|_{C^1}\cdot l(x)}\bigr)\cdot|(v_2)_{cs}|_{x}\\[2mm]
       &\gs \e^{-\varphi(f^{-1}(x))+\chi-3\epsilon}\cdot|v_2|_x,
   \end{align*}
   which completes the proof of (ii).
\end{proof}

\begin{proof}[Proof of \Cref{pro 2.8.2}]
    The argument is standard, and we sketch the proof here. Based on the above discussion, fix $x\in \Lambda^*$ and consider the sequence of diffeomorphisms $\{\tg_{f^m(x)}\}_{m\in\mathbb{Z}}$ that act on $\{R_{f^m(x)}\}_{m\in\mathbb{Z}}$.

    By the hyperbolicity property (ii) in Lemma \ref{cone pre}, the Graph Transform Principle and the Contraction Mapping Theorem hold as in the Hadamard–Perron theorem (see, e.g., \cite[Theorem 4.1]{HPS} or \cite[Theorem 6.2.8]{Katokbook}). This allows us to construct the fake foliation families $\{\widetilde{\mathcal{F}}^{i}_{f^m(x)}\}_{m\in\mathbb{Z}}$ in the same manner that, for any $x\in \Lambda^* $ and $y\in R_x$,
     \begin{align*}
     \widetilde{\mathcal{F}}^{u}_{x}(y)=\bigcap_{n=0}^{+\infty}\tilde{g}^{n}\bigl( \tilde{Q}_{f^{-n}(x)}^{u}(\tg^{-n}(y), \frac13)\bigr),\quad  \widetilde{\mathcal{F}}^{cs}_{x}(y)=\bigcap_{n=0}^{+\infty}\tilde{g}^{-n}\bigl( \tilde{Q}_{f^{n}(x)}^{cs}(\tg^{n}(y), \frac13)\bigr)
       \end{align*}
     where $\tilde{g}^{n}$ denotes the composition of $\tilde{g}_{(\cdot)}$ along the forward orbit of $x$, $\tilde{g}^{-n}$ is defined similarly and $\tilde{Q}_{(\cdot)}^{u/s}$ represents the cone $Q_{(\cdot)}^{u/s}$ projected onto $R_{(\cdot)}$ via the exponential map. And for any $y\in R_x$,
      \begin{align*}
     T_y\widetilde{\mathcal{F}}^{u}_{x}(y) \subset  Q_{x}^{u}(y, \frac13),\quad T_y\widetilde{\mathcal{F}}^{cs}_{x}(y) \subset Q_{x}^{cs}(y, \frac13).
    \end{align*}
     Then the almost tangency property (1), the invariance property (2) and the Hyperbolicity (3) follow immediately.
\end{proof}

\subsection{Proofs in \Cref{sec 2.3}}\label{appendix a3}
\begin{proof}[Proof of \Cref{lem 2.9}]
    For any $z\in R_{x}(r(x))\cap\widetilde{\mathcal{F}}_x^{u}(0)$, by \Cref{pro 2.8.2} we know
    $$|z-0|_x=|z_u|_x\ls r(x).$$
    On the other hand, the definition of $r(x)$ gives
\begin{align}\label{rx}
    \e^{-\frac{4\tau}{\alpha}\epsilon}\ls\frac{r(f^{-1}(x))}{r(x)}\ls \e^{\frac{4\tau}{\alpha}\epsilon}.
\end{align}
    Recall that $L= 4\cdot(\log \|f\|_{C^1}+1)$ and $\tau= 2L\cdot\chi^{-1}$. Since 
\begin{align*}
       \epsilon\ls\frac{\alpha\cdot\chi^2}{10^3\cdot(\log\|f\|_{C^1}+1)},   
\end{align*}
    together with \Cref{pro 2.8.2} (3) and recalling that $\varphi\gs 0$,
\begin{align*}
    |\tg_x^{-1} z-0|_{f^{-1}(x)}=|\tg_x^{-1}z_u|_{f^{-1}(x)}
    &\ls \e^{-(\chi-3\epsilon)}\cdot r(x)\\[2mm]
    &\ls \e^{-100\frac{L\epsilon}{\alpha \chi}}\cdot r(x)\\[2mm]
    &\ls \e^{-\frac{4\tau}{\alpha}\epsilon}\cdot r(x)\\[2mm]
    &\ls r(f^{-1}(x)).
\end{align*}
    Together with the invariance property of fake foliations as in \Cref{pro 2.8.2}, the proof is complete.
\end{proof}

\begin{proof}[Proof of \Cref{gpu pro}]
    We first prove 
    $$\bigcup_{n=0}^{+\infty}f^n(W^{\mathrm{GPu}}_{\mathrm{loc}}(f^{-n}(x)))\subseteq W^{\mathrm{GPu}}(x).$$
    For any $y\in\bigcup_{n=0}^{+\infty}f^n(W^{\mathrm{GPu}}_{\mathrm{loc}}(f^{-n}(x)))$ there exists some $m\in\mathbb{N}$ such that $y\in f^m(W^{\mathrm{GPu}}_{\mathrm{loc}}(f^{-m}(x)))$, which means that $$\Psi_{f^{-m}(x)}^{-1}y\in R_{f^{-m}(x)}(r(f^{-m}(x)))\cap\widetilde{\mathcal{F}}_{f^{-m}(x)}^{u}(0).$$ By \Cref{lem 2.9}, for any $q\in \mathbb{N}$ we have 
    $$(\Psi_{f^{-(q+m)}(x)}^{-1}\circ f^{-q}\circ\Psi_{f^{-m}(x)})\circ\Psi_{f^{-m}(x)}^{-1}(y)\in \Psi_{f^{-(q+m)}(x)}^{-1}(W^{\mathrm{GPu}}_{\mathrm{loc}}(f^{-(q+m)}(x))).$$
    By \Cref{cone pre}, we can calculate
    $$|\Psi_{f^{-(q+m)}(x)}^{-1}\circ f^{-(q+m)}(y)-\Psi_{f^{-(q+m)}(x)}^{-1}\circ f^{-(q+m)}(x)|_{f^{-(q+m)}(x)}\ls \e^{-q(\chi-3\epsilon)+\Phi_{-q}(f^{-m}(x))}\cdot r(f^{-m}x).$$
    Together with \Cref{psi 1} we have
    \begin{align*}
    \log d(f^{-(q+m)}(y), f^{-(q+m)}(x))
    &\ls -q(\chi-3\epsilon)+\Phi_{-q}(f^{-m}(x))+\log C_3+\log r(f^{-m}x)\\[2mm]
    &\ls -(q+m)(\chi-3\epsilon)+\Phi_{-(q+m)}(x)+\hat{C}\\[2mm]
    &\ls -\frac{1}{2}(q+m)\chi+\Phi_{-(q+m)}(x)+ \hat{C},
    \end{align*}
    where 
    $$\hat{C}:= \log C_3 +m(\chi-3\epsilon)-\Phi_{-m}(x)+\log r(f^{-m}x)$$
    is a constant independent of $q$.
    Letting $q\to\infty$, we obtain:
    \begin{align*}
        \limsup_{q\to +\infty} \frac{1}{q+m}\log\frac{ d(f^{-(q+m)}(x),f^{-(q+m)}(y))}{\e^{\Phi_{-(q+m)}(x)}}\ls-\frac{1}{2}\chi,
    \end{align*}
    which implies that $y\in W^{\mathrm{GPu}}(x)$ by checking the definition.

    To complete the proof, it remains to show that if $y\in W^{\mathrm{GPu}}(x)$, then $y\in f^N(W^{\mathrm{GPu}}_{\mathrm{loc}}(f^{-N}(x)))$ for some $N\in\mathbb{N}$. By the definition of $W^{\mathrm{GPu}}(x)$, if $y\in W^{\mathrm{GPu}}(x)$, then there exists $N_0\in\mathbb{N}$ such that for all $n\gs N_0$,
    $$ \frac{1}{n}\log\frac{ d(f^{-n}(x),f^{-n}(y))}{\e ^{\Phi_{-n}(x)}}\ls-\frac{1}{4}\chi.$$
    Since $\varphi\gs0$, this implies
    $$ d(f^{-n}(x),f^{-n}(y))\ls \e^{-\frac{1}{4}n\chi}.$$
    Together with \Cref{psi -1} and \Cref{var}, we obtain
    \begin{align*}
     |\Psi^{-1}_{f^{-n}(x)}(f^{-n}(x))-\Psi^{-1}_{f^{-n}(x)}(f^{-n}(y))|_{f^{-n}(x)}
     &\ls C_3\cdot C(\epsilon)\cdot\|f\|^{2}_{C^1} \cdot l(f^{-n}(x))^{\tau+1}\cdot \e^{-\frac{1}{4}n\chi}\\[2mm]
     &\ls C_3\cdot C(\epsilon)\cdot\|f\|^{2}_{C^1} \cdot l(x)^{\tau+1}\cdot \e^{2n\tau\epsilon}\cdot \e^{-25n\frac{L\epsilon}{\alpha\chi}}\\[2mm]
     &= C_3\cdot C(\epsilon)\cdot\|f\|^{2}_{C^1} \cdot l(x)^{\tau+1}\cdot \e^{2n\tau\epsilon}\cdot \e^{-12.5n\frac{\tau\epsilon}{\alpha}}\\[2mm]
     &\ls \hat{B}\cdot \e^{-n\frac{5\tau}{\alpha}\epsilon},
    \end{align*}
    where
    $$\hat{B}:= C_3\cdot C(\epsilon)\cdot\|f\|^{2}_{C^1} \cdot l(x)^{\tau+1}$$
   is a constant independent of $n$. Together with \Cref{rx}, there exists $N\in \mathbb{N}$ such that for all $n\gs N$, $\Psi_{f^{-n}x}^{-1}(f^{-n}(y))$ stays in the chart $R_{f^{-n}(x)}(100^{-1}r(f^{-n}(x)))$. By the definition of local weighted Pesin unstable manifolds, to complete the proof of the lemma we only need to show that $\Psi_{f^{-N}(x)}^{-1}(f^{-N}(y))\in \widetilde{\mathcal{F}}_{f^{-N}(x)}^{u}(0)$.
    
    We finish the proof by contradiction.
     Suppose $\Psi_{f^{-N}(x)}^{-1}(f^{-N}(y))\notin \widetilde{\mathcal{F}}_{f^{-N}(x)}^{u}(0)$. Then we can find a unique point 
     $$\vartheta:=\widetilde{\mathcal{F}}_{f^{-N}(x)}^{u}(0)\cap\widetilde{\mathcal{F}}_{f^{-N}(x)}^{cs}(\Psi_{f^{-N}(x)}^{-1}(f^{-N}(y)))$$ 
     such that $\vartheta\neq\Psi_{f^{-N}(x)}^{-1}(f^{-N}(y))$ and $\vartheta\in R_{f^{-N}(x)}(r(f^{-N}(x)))$. 
    Since both $\vartheta$ and $\Psi_{f^{-N}(x)}^{-1}(f^{-N}(y))$ belong to $\widetilde{\mathcal{F}}_{f^{-N}(x)}^{cs}(\Psi_{f^{-N}(x)}^{-1}(f^{-N}(y)))$, which is the graph of a $C^1$ map $\phi:\mathbb{R}^{cs}\to \mathbb{R}^{\rk}$ with $\|D\phi\| \ls \frac {1}{3}$, we have
    \begin{align*}
    |\vartheta-\Psi_{f^{-N}(x)}^{-1}(f^{-N}(y))|_{f^{-N}(x)}
    &=|\Psi_{f^{-N}(x)}^{-1}(f^{-N}(y))-\vartheta|_{f^{-N}(x)}\\[2mm]
    &=|(\Psi_{f^{-N}(x)}^{-1}(f^{-N}(y))-\vartheta)_{cs}|_{f^{-N}(x)}.  
    \end{align*}
    Thus, by the properties of fake foliations and \Cref{cone pre} (ii), we can compute
    \begin{equation}\label{align325}
    \begin{split}
    &|\tg^{-n}(\vartheta)-\tg^{-n}(\Psi_{f^{-N}(x)}^{-1}(f^{-N}(y)))|_{f^{-n-N}(x)}\\[2mm]
    &\gs \e^{\Phi_{-n}{(f^{-N}(x))}+n(\chi-3\epsilon)}\cdot |\vartheta-\Psi_{f^{-N}(x)}^{-1}(f^{-N}(y))|_{f^{-N}(x)}.
    \end{split}
    \end{equation}    
    On the other hand, 
    since 
    \[
    \vartheta\in \widetilde{\mathcal{F}}_{f^{-N}(x)}^{u}(0)\cap R_{f^{-N}(x)}(r(f^{-N}(x)))=\Psi_{f^{-N}(x)}^{-1}\circ W^{\mathrm{GPu}}_{\mathrm{loc}}(f^{-N}(x)),
    \]
    by the properties of local weighted Pesin unstable manifolds just proved,
    $$\frac{|\tg^{-n}(\vartheta)-0|_{f^{-n-N}(x)}}{\e^{\Phi_{-n}{(f^{-N}(x))}}}\xrightarrow0$$
    exponentially.
    Moreover, since $y\in W^{\mathrm{GPu}}(x)$,
    $$\frac{|\tg^{-n} (\Psi_{f^{-N}(x)}^{-1}(f^{-N}(y)))-0|_{f^{-n-N}(x)}}{\e^{\Phi_{-n}{(f^{-N}(x))}}}\xrightarrow[]{n\to\infty} 0$$
    exponentially. 
    Together we obtain
    $$\frac{|\tg^{-n}(\vartheta)-\tg^{-n}(\Psi_{f^{-N}(x)}^{-1}(f^{-N}(y)))|_{f^{-n-N}(x)}}{\e^{\Phi_{-n}{(f^{-N}(x))}}}\xrightarrow[]{n\to\infty}0.$$
    This contradicts \Cref{align325}, thus completing the proof. 
\end{proof}

\subsection{Proofs in \Cref{sec 2.4}}\label{appendix a4}
\begin{proof}[Proof of \Cref{prop 2.15}]
    Denote $M':=\Gamma_\rk\cap B_\varphi$, and for any $x\in M'$ define
    \begin{align*}
        \Pi(x):=\max\{\Pi^{cs}(x),\ \Pi^{u}(x)\},
    \end{align*}
    where $\Pi^{cs}(x)$ and $\Pi^{u}(x)$ are defined by:
    \begin{align*}
        \Pi^{cs}(x)&:=\sup_{k\gs 0}\bigl\{\|Df_n^{Nk}|_{E^{cs}(x)}\|\cdot e^{-\sum_{j=1}^kN\varphi(f_n^{Nj}(x))}\cdot e^{kN\chi}\bigr\};\\[2mm]
        \Pi^{u}(x)&:=\sup_{k\gs 0}\bigl\{\|Df_n^{-Nk}|_{E^{u}(x)}\|\cdot e^{\sum_{j=1}^k N\varphi(f_n^{-Nj}(x))}\cdot e^{kN\chi}\bigr\}.
    \end{align*}
    Then we have
    $$P_N\cap M'\subset\{x:\Pi(x)\ls \Upsilon_1^{N}\}.$$
    Let $\Pi_\epsilon$ denote the $\epsilon$-tempered envelope of $\Pi$, which is defined by
    $$ \Pi_\epsilon(x) := \sup\{\e^{-|m|\epsilon}\cdot \Pi(f_n^{Nm} (x)) : m \in \mathbb{Z}\}.$$
    See \cite[Section 2.4]{SPR2025} for more details. For every $h\in \mathbb{Z}$ and $m\in \mathbb{N}$,
    \begin{align*}
    \|Df^{Nm}_n|_{E^{cs}(f^{Nh}_n(x))}\| &\ls \Pi_\epsilon(x) \cdot \e^{-mN\chi  + |h|N\epsilon}\cdot \e^{\sum_{j=1}^mN\varphi(f^{Nj}_n(f^{Nh}_nx)) },\\[2mm]
    \|Df^{-Nm}_n|_{E^{u}(f^{Nh}_n(x))}\| &\ls \Pi_\epsilon(x) \cdot \e^{-mN\chi  + |h|N\epsilon}\cdot \e^{\sum_{j=1}^{m}-N\varphi(f^{-Nj}_n(f^{Nh}_nx)) }.
    \end{align*}
    Note that for $\Pi^{cs}$ we have
    $$\Upsilon_1^{-N}\cdot e^{N\chi} \cdot\Pi^{cs}(x)\ls \max\{1, \Upsilon_1^N \cdot e^{N\chi}\cdot\Pi^{cs}(f^N_n(x))\},$$
    and a similar estimate holds for $\Pi^u$. Consequently,
    $$e^{-N\chi-N\log\Upsilon_1}\ls \frac{\Pi(f^N_n(x))}{\Pi(x)}\ls e^{N\chi+N\log\Upsilon_1}.$$ 
    Finally, we introduce the following lemma (see \cite[Lemma 2.20]{SPR2025}), whose  statement is presented here:
\begin{lemma}
Let $T$ be an invertible ergodic probability preserving map on a probability space $(\Omega, \mathcal{F}, \nu)$. Suppose $\Pi : \Omega \to (0, \infty)$ is measurable.   Let $\epsilon > 0$, and define the $\epsilon$-tempered envelope $\Pi_\epsilon$ by
$$
\Pi_\epsilon : \Omega \to (0, \infty], \quad \Pi_\epsilon(x) := \sup\{e^{-|n|\epsilon} \Pi(T^n (x)) : n \in \mathbb{Z}\}.
$$
$\mathrm{(1)}$. If $\log \frac{\Pi \circ T}{\Pi} \in L^1(\nu)$, then $\Pi_\epsilon(x)$ is finite at almost every point $x$.\\[2mm]
$\mathrm{(2)}$. If  $\e^{-c_0} \ls \frac{\Pi(T(x))}{\Pi(x)} \ls \e^{c_0}$ for some constant $c_0 > 0$, then for any $t > 0$,
$$\nu\{x : \Pi_\epsilon(x) > t\} \ls 4 \max\left(\frac{c_0}{\epsilon}, 1\right) \nu\{x : \Pi(x) > t\}.$$
\end{lemma}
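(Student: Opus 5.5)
The plan is to argue along individual orbits and then convert orbit counts into measures with the ergodic theorem. Write $g:=\log\frac{\Pi\circ T}{\Pi}$, so that $\log\Pi(T^nx)-\log\Pi(x)$ is the Birkhoff sum $S_ng(x)=\sum_{i=0}^{n-1}g(T^ix)$ for $n\gs0$, and $-\sum_{i=n}^{-1}g(T^ix)$ for $n<0$.

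For (1), I would show that an integrable coboundary has zero mean, $\int g\,\td\nu=0$.
\begin{itemize}
\item By the Birkhoff theorem, $\frac1nS_ng(x)\to\int g\,\td\nu$ almost everywhere.
\item Choose $R$ with $B_R:=\{|\log\Pi|\ls R\}$ of positive measure. By Poincar\'e recurrence, a.e.\ $x\in B_R$ returns to $B_R$ infinitely often.
\item Along those return times $S_ng(x)$ stays bounded by $2R$, so the limit $\int g\,\td\nu$ must be $0$. Ergodicity then spreads the conclusion to a.e.\ $x\in\Omega$.
\end{itemize}
The same argument for $T^{-1}$ gives $\log\Pi(T^nx)=\log\Pi(x)+o(|n|)$ as $|n|\to\infty$. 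Hence $\e^{-|n|\epsilon}\Pi(T^nx)\to0$ in both directions, and the supremum defining $\Pi_\epsilon(x)$ is finite.

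For (2), fix $t$ and set $A:=\{\Pi>t\}$ and $B:=\{\Pi_\epsilon>t\}$. The geometric step is as follows.
\begin{itemize}
\item If $x\in B$, there is $n$ with $\Pi(T^nx)>t\,\e^{|n|\epsilon}$, so $y=T^nx\in A$ has excess $s(y):=\log(\Pi(y)/t)>|n|\epsilon$.
\item Since $|g|\ls c_0$, the excess decreases by at most $c_0$ per step. So if $y$ lies in a maximal run $I=\{T^ax,\dots,T^bx\}\subseteq A$ of length $\ell=b-a+1$, bounded on both sides by points outside $A$ (where the excess is $\ls0$), then $s(y)\ls c_0\min(\text{distance from } y \text{ to the two ends})\ls c_0\ell/2$.
\item Therefore $x$ lies in the run $I$ enlarged by at most $c_0\ell/(2\epsilon)$ indices on each side.
\end{itemize}
Consequently, along the orbit, $B$ is covered by the enlarged runs, each of length at most $\ell(1+c_0/\epsilon)\ls 2\max(c_0/\epsilon,1)\,\ell$. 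I would keep a factor $2$ of slack for the rounding and the endpoint bookkeeping, which is where the constant $4$ comes from.

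To pass to measures, I would apply the Birkhoff theorem to $\mathbf 1_A$ and $\mathbf 1_B$ along a typical orbit, counting indices in $[-L,L]$ with $L\to\infty$.
\begin{itemize}
\item Runs entirely inside $[-L,L]$ contribute at most $4\max(c_0/\epsilon,1)$ times their length to the count of $B$.
\item The remaining issue is runs that are very long or cross the window boundary. The excess bound $c_0\ell/2$ only requires the run to be bounded on both sides.
\item If $\nu(A)=1$, the inequality is trivial. Otherwise, ergodicity makes a.e.\ orbit leave $A$ infinitely often in both time directions, so all runs are finite.
\item A crossing run near $\pm L$ enlarges the window by an amount that is $o(L)$ almost surely. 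This follows because the run containing index $\pm L$ has length $o(L)$, by a standard argument from recurrence to $A^c$ with positive density.
\end{itemize}

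I expect this last boundary control to be the main technical obstacle, that is, making the $o(L)$ estimate rigorous. The cleanest route is to apply the ergodic theorem to the first-return structure of $A^c$; once that is in place, dividing by $2L+1$ gives $\nu(B)\ls4\max(c_0/\epsilon,1)\,\nu(A)$.
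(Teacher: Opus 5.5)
The paper does not prove this lemma. It is quoted from \cite{SPR2025} (Lemma~2.20 there) and used as a black box when bounding the measure of the weighted Pesin blocks, so your argument has to stand on its own.

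It does, with two small repairs. Part (1) is the standard fact that an integrable coboundary has zero mean, and your recurrence argument for $\int g\,d\nu=0$ is fine. The run decomposition in part (2) is sound, but two details need tightening.

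\emph{The excess bound is off by one.} From $s(T^{a-1}x)\le 0$, $s(T^{b+1}x)\le 0$ and $|g|\le c_0$ you get $s(T^nx)\le c_0\min(n-a+1,\,b+1-n)\le c_0(\ell+1)/2$, not $c_0\ell/2$ (take $\ell=1$). So the enlargement is fewer than $\frac{c_0}{2\epsilon}(\ell+1)$ indices per side. The enlarged run then has fewer than $\ell+\frac{c_0}{\epsilon}(\ell+1)\le 3\max(c_0/\epsilon,1)\,\ell$ indices, which your slack absorbs.

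\emph{The boundary control must cover more runs.} You need to handle every run whose enlargement meets $[-L,L]$, including runs lying entirely outside the window, not only the run through $\pm L$. The statement you need is the following. Let $\cdots<m_{-1}<m_0\le 0<m_1<\cdots$ be the visits of the orbit to $A^c$. Birkhoff for $\mathbf 1_{A^c}$ under $T$ and $T^{-1}$ gives $m_j/j\to 1/\nu(A^c)$, hence $m_{j+1}-m_j=o(|m_j|)$; that is, a run at position $p$ has length $o(|p|)$ as $|p|\to\infty$. If its enlargement meets $[-L,L]$, then $|p|\le L+O(\ell)$. Hence $\ell=o(L)$ and the run lies in $[-(1+o(1))L,(1+o(1))L]$. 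Dividing by $2L+1$ then gives $\nu(B)\le 3\max(c_0/\epsilon,1)\,\nu(A)$.

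For comparison, the constant $4=2\cdot 2$ points to a shorter route. Split $B=B_+\cup B_-$ by the sign of a witness $n$. If $x\in B_+$ with witness $n\ge 0$, the bounded ratio gives $\Pi(T^kx)>t\,e^{n\epsilon-c_0|n-k|}\ge t$ for all integers $k\in[n-n\epsilon/c_0,\,n]\cap[0,n]$. Hence $\frac{1}{n+1}\sum_{k=0}^{n}\mathbf 1_A(T^kx)\ge \frac{1}{2\max(c_0/\epsilon,1)}$. The maximal ergodic inequality then yields $\nu(B_+)\le 2\max(c_0/\epsilon,1)\,\nu(A)$. The same holds for $B_-$ with $T^{-1}$, and adding gives the stated bound.

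This route avoids the window bookkeeping, the separate case $\nu(A)=1$, and even the ergodicity assumption. Your covering argument is longer, but it is a transparent orbit-by-orbit count and gives the slightly better constant $3\max(c_0/\epsilon,1)$.
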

By (2) of this lemma,
inequality \eqref{size inequality} follows. This completes the proof. 
\end{proof}

\section{Counterexample for \Cref{uniform size prop} in classical regime}\label{appendix a5}

In this section, we present counterexamples for \Cref{uniform size prop} in the classical regime ($\varphi\equiv \text{constant}$).
 We construct a partially hyperbolic Anosov diffeomorphism $f$ with a $2$-dimensional unstable direction, and a sequence of ergodic measures $\mu_n$ converging to an invariant measure $\mu$, such that for any $K,\epsilon,\chi>0$ and $\varphi\equiv \text{constant}$, at least one of the following holds:
\begin{itemize}
    \item[a.] $\mu_n(\Lambda^1(K,\chi,\epsilon,\varphi))=0$ for any $n$ large enough;\smallskip
    \item[b.] $\mu(\Lambda^1(K,\chi,\epsilon,\varphi))=0$,
\end{itemize}
where $\Lambda^1(K,\chi,\epsilon,\varphi)$ is the classical Pesin block of $f$ (along the strong unstable direction).
This phenomenon implies that it is insufficient to use classical Pesin blocks to study the continuity properties of the partial entropy; thus, it is necessary to employ weighted Pesin blocks as defined in \Cref{definition of GPB}.

Let $f\in\Diff(M)$ be a partially hyperbolic Anosov diffeomorphism with a $2$-dimensional unstable direction, that is, there exists a dominated splitting $$TM=E^u_1\oplus E^u_2\oplus E^s$$ such that $\dim E^u_1=\dim E^u_2=1$, $E^u_1$ and $E^u_2$ are uniformly expanding and $E^s$ is uniformly contracting. Moreover, let $p$ and $q$ be two fixed points satisfying the following properties:
\begin{itemize}
    \item[(i)] $\lambda_1(p),\lambda_2(p)\in[99,100]$;\smallskip
    \item[(ii)] $\lambda_1(q),\lambda_2(q)\in [3,4]$;\smallskip
    \item[(iii)] $p$ and $q$ are homoclinically related, i.e., there exist two transverse intersections $a\in W^u(q)\cap W^s(p)$ and $b\in W^u(p)\cap W^s(q)$.
\end{itemize}

Now, fix any decreasing sequence $\{\alpha_n\}_{n\in \mathbb{N}^*}$ with $\alpha_n\to 0$ as $n\to +\infty$. Without loss of generality, we may assume that $\alpha_1$ is sufficiently small. Then, by the Anosov Closing Lemma \cite[Theorem 6.4.15]{Katokbook}, for each $n\in\mathbb{N}$ there exists $\beta_n:=\beta(\alpha_n)\ls\alpha_n$ such that any periodic $\beta_n$-pseudo-orbit can be $\alpha_n$-shadowed by a unique periodic orbit. See \cite{Katokbook} for the precise definitions.

\begin{figure}[htbp!]
    \centering
    \includegraphics[width=0.7\linewidth]{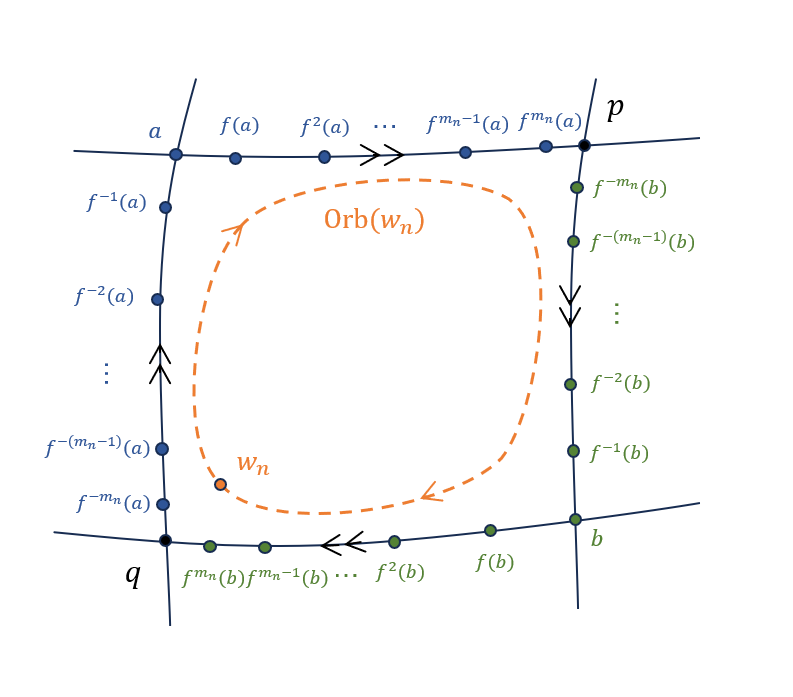}
    \caption{Illustration of the pseudo-orbit construction}
    \label{fig:pseudo_orbit}
\end{figure}

Next, we construct a pseudo-orbit (see \Cref{fig:pseudo_orbit}). For any $n>0$, by the definition of $W^u$ and $W^s$, there exists a positive integer $m_n\gs n$ such that 
\[
d(f^{m_n+1}(a),p)<\tfrac{1}{2}\beta_n,\quad d(f^{-m_n}(a),q)<\tfrac{1}{2}\beta_n,
\]
and 
\[
d(f^{m_n+1}(b),q)<\tfrac{1}{2}\beta_n,\quad d(f^{-m_n}(b),p)<\tfrac{1}{2}\beta_n.
\]
Consequently, the following sequence
\[
L_n:=\bigl(f^{-m_n}(a),\dots,a,\dots,f^{m_n}(a),\,f^{-m_n}(b),\dots,b,\dots,f^{m_n}(b)\bigr)
\]
forms a periodic $\beta_n$-pseudo-orbit. Therefore, there exists a unique $(4m_n+2)$-periodic orbit $\orb(w_n)$ that $\alpha_n$-shadows $L_n$.

Let $\mu_n$ be the periodic ergodic measure supported on $\orb(w_n)$. Then we claim the following:
\begin{itemize}
    \item[(1)] $\mu_n\to \mu:= \frac{1}{2}\delta_p+\frac{1}{2}\delta_q$ as $n\to+\infty$;\smallskip
    \item[(2)] $\lambda_1(\mu_n),\,\lambda_2(\mu_n)\in[10,90]$ for all sufficiently large $n$.
\end{itemize}
\begin{proof}
(1) For any $g\in C(M)$ and any $\epsilon>0$, there exists $\Delta_{g,\epsilon}>0$ such that $|g(x)-g(y)|<\epsilon$ whenever $d(x,y)<\Delta_{g,\epsilon}$. Note that
\[
\int_M g(x)\,\td\mu_n = \frac{1}{4m_n+2}\sum_{i=0}^{4m_n+1} g(f^i(w_n)).
\]
By the definitions of $W^u$ and $W^s$, we can choose $N_\epsilon>0$ so that for all $j>N_\epsilon$,
\[
d(f^{j}(a),p)<\tfrac{1}{2}\Delta_{g,\epsilon},\quad d(f^{-j}(a),q)<\tfrac{1}{2}\Delta_{g,\epsilon}
\]
and
\[
d(f^{j}(b),q)<\tfrac{1}{2}\Delta_{g,\epsilon},\quad d(f^{-j}(b),p)<\tfrac{1}{2}\Delta_{g,\epsilon}.
\]
Moreover, since $\{\alpha_n\}$ decreases to $0$ and $\beta_n\ls\alpha_n$, we can choose $N>0$ such that $\beta_n\ls \frac{1}{2}\Delta_{g,\epsilon}$ for all $n>N$. This implies that among the points $f^i(w_n)$ there are at least $2(m_n-N_\epsilon)$ points in $B(p,\Delta_{g,\epsilon})$ and at least $2(m_n-N_\epsilon)$ points in $B(q,\Delta_{g,\epsilon})$. Consequently,
\begin{align*}
\int_M g(x)\,\td\mu_n
&=\frac{1}{4m_n+2}\sum_{i=0}^{4m_n+1} g(f^i(w_n))\\[2mm]
&\ls \frac{2(m_n-N_\epsilon)}{4m_n+2}\bigl(g(p)+g(q)+2\epsilon\bigr)+\frac{4N_\epsilon+2}{4m_n+2}\,\|g\|_\infty.
\end{align*}
Letting $n\to+\infty$ (hence $m_n\to+\infty$) gives
\[
\limsup_{n\to+\infty}\int_M g(x)\,\td\mu_n \ls \int_M g(x)\,\td\Bigl(\frac{1}{2}\delta_p+\frac{1}{2}\delta_q\Bigr)+\epsilon.
\]
A similar estimate yields the lower bound. Since $\epsilon>0$ is arbitrary, we obtain
\[
\lim_{n\to+\infty}\int_M g(x)\,\td\mu_n = \int_M g(x)\,\td\mu.
\]
As $g$ was arbitrary, then $\mu_n\to\mu$ in the weak$^*$ topology.\smallskip

(2) Because
\[
\lambda_1(\mu_n)=\int_M \log\|Df|_{E^u_1(x)}\|\,\td\mu_n\quad \text{and} \quad 
\lambda_2(\mu_n)=\int_M \log\|Df|_{E^u_2(x)}\|\,\td\mu_n,
\]
and the functions $\log\|Df|_{E^u_{1/2}(x)}\|$ are continuous, statement (1) implies
\[
\lambda_1(\mu_n)\to \frac{1}{2}\lambda_1(p)+\frac{1}{2}\lambda_1(q)\quad \text{and}\quad
\lambda_2(\mu_n)\to \frac{1}{2}\lambda_2(p)+\frac{1}{2}\lambda_2(q).
\]
Since $\frac{1}{2}(100+4)=52$ and $\frac{1}{2}(99+3)=51$, for sufficiently large $n$ we have $\lambda_1(\mu_n),\lambda_2(\mu_n)\in[10,90]$. This establishes (2).
\end{proof}

In this construction, the measures $\mu_n$ and $\mu$ cannot share uniform Pesin blocks of positive measure. 
Specifically, if property (a) fails, it implies that $\varphi \in [10, 90]$. 
Consequently, $p, q \notin \Lambda^1(K, \chi, \epsilon, \varphi)$ for any $K, \chi,$ and $\epsilon$, 
which forces $\mu(\Lambda^1(K, \chi, \epsilon, \varphi)) = 0$. 
This concludes the proof for the example.





\bibliographystyle{abbrv}
{\footnotesize\bibliography{library}}

\end{document}